\let\Re\undefined
\DeclareMathOperator{\Re}{Re}
\DeclareMathOperator{\Tr}{tr}
\newtheorem{thm}{Theorem}[section]
\newtheorem{Athm}{Theorem}
\newtheorem*{thm*}{Theorem}
\newtheorem*{que*}{Question}
\newtheorem{prop}[thm]{Proposition}
\newtheorem*{prop*}{Proposition}
\newtheorem{defn}[thm]{Definition}
\newtheorem{lem}[thm]{Lemma}
\newtheorem{cor}[thm]{Corollary}
\newtheorem{Acor}[Athm]{Corollary}
\newtheorem{rem}[thm]{Remark}
\numberwithin{equation}{section}
\def\bb #1{\mathbb{#1}}
\def\cal #1{\mathcal{#1}}
\def\rm #1{\mathrm{#1}}
\def\sf #1{\mathsf{#1}}
\def\mod {\textnormal{Mod}(S)}
\def\Exmod {\textnormal{Mod}^{\pm}(S)}
\def\tech {\mathcal{T}(S)}
\def\h3 {\mathcal{H}_3(S)}
\newcommand{\nn}{\nonumber}
\newcommand{\rmd}{\mathrm{d}}
\title{the covariance metric in the Blaschke locus}
\author{Xian Dai and Nikolas Eptaminitakis}
\address{\newline Xian Dai \newline Ruhr-Universit\"at Bochum \newline Fakult\"at f\"ur Mathematik \newline e-mail: xian.dai@ruhr-uni-bochum.de \newline \newline Nikolas Eptaminitakis \newline Institut für Differentialgeometrie\newline Leibniz Universität Hannover \newline 
Welfengarten 1, 30167 Hannover, Germany,
\newline e-mail: 
nikolaos.eptaminitakis@math.uni-hannover.de}
\begin{document}

\maketitle

\begin{abstract}
    We prove that the  Blaschke locus has the structure of a finite dimensional smooth manifold away from the Teichm{\"u}ller space and study its Riemannian manifold structure with respect to the covariance metric introduced by Guillarmou, Knieper and Lefeuvre in \cite{GeodesicStretch}. We also identify some families of geodesics in the Blaschke locus arising from Hitchin representations for orbifolds and show that they have infinite length with respect to the covariance metric.
\end{abstract}

\section{Introduction}

Classical Teichmüller theory is a rich field which involves the interplay of tools from analysis, geometry and topology. One beautiful theorem dating back to the early twentieth century states that the Teichmüller space is a finite dimensional smooth contractible manifold (see for example \cite{Teichmuller}, \cite{AhlforsTeich}, \cite{Mike_Harmonic}). Among many different proofs of this fact, one approach, due to Fischer and Tromba (\cite{FischerTromba}), is based on global analysis and Riemannian geometry, by viewing the Teichmüller space as a space of isotopy classes of hyperbolic metrics on a closed connected oriented  surface $S$ with genus $\mathscr{G} \geq 2$. Many other interesting results can also be obtained from this Riemannian geometrical characterization: for example, the Weil-Petersson metric on the Teichmüller space is Kähler and has negative sectional curvature (see e.g. \cite[Section 5]{Tromba_Book}).

In this note, we will explore properties of a finite dimensional subspace of the space of isotopy classes 
of negatively curved metrics that contains the Teichm{\"u}ller space, using this Riemannian geometrical approach. 
Given a complex structure $J$ on $S$ and a holomorphic cubic differential with respect to $J$, one can lift them to a universal cover $\tilde{S}$ of $S$ and produce a parametrization $f:\tilde{S}\to \bb{R}^3$  of a hypersurface of special type arising from affine differential geometry, 
called a \emph{hyperbolic affine sphere} 
(see \cite{Loftin_Survey}, and also Section \ref{subsec:affinesphere}). 
A  hyperbolic affine sphere in $\bb{R}^3$ is a surface of constant negative \emph{affine mean curvature} (see Section \ref{subsec:affinesphere}). It  comes naturally equipped with an affine invariant Riemannian metric which descends to a uniquely determined negatively curved metric on $S$ in the conformal class of $J$, called a  $\emph{Blaschke metric}$.
We denote the space of Blaschke metrics on $S$ by $\cal{M}^B$ and the space of smooth hyperbolic metrics on $S$ by $\cal{M}_{-1}$.
A hyperbolic metric $\sigma\in \cal{M}_{-1}$ on  $S$ is a special case of a Blaschke metric: 
in this case, the hyperbolic affine sphere determined by the complex structure 
corresponding to $\sigma$ and the zero cubic differential can be taken to be the hyperboloid model of hyperbolic space; the descended Blaschke metric is exactly the hyperbolic metric $\sigma.$
Therefore, upon taking quotients by $\cal{D}_0$, the space of smooth diffeomorphisms isotopic to the identity, the Teichmüller space $\cal{T}(S)=\cal{M}_{-1}/\cal{D}_0$
is contained in the space  $\cal{M}^B/\cal{D}_0$, which we call the \emph{Blaschke locus}.

Our work is in two directions:
the first goal is to understand the topology and regularity of the Blaschke locus, which is finite dimensional.
The second is to study some of its Riemannian geometric properties with respect to a Riemannian metric constructed in \cite{GeodesicStretch}  on the space of isotopy classes of negatively curved metrics which restricts to the Weil-Petersson metric on $\cal{T}(S)$.

\smallskip

\subsection{Structure of the Blaschke locus and relation to higher Teichmüller theory.}

Our first result is concerned with the  topology and smooth structure of $\cal{M}^B/ \cal{D}_0$. The proof follows the spirit of Tromba's proof of the fact that the Teichmüller space is a smooth manifold (\cite[Corrolary 2.4.6]{Tromba_Book}).

\begin{Athm} [Theorem \ref{thm:contratible}, Theorem \ref{smoothnessBlaschkeLocus}] \label{Theorem A} 
 The Blaschke locus $\cal{M}^B/\cal{D}_0$ is a contractible space. Moreover, it has the structure of a smooth manifold of dimension $16\mathscr{G}-17$ away from the Teichmüller space $\cal{T}(S)$. 
\end{Athm}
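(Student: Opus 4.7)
The plan is to realize the Blaschke locus $\cal{M}^B/\cal{D}_0$ as the quotient of a finite-dimensional smooth vector bundle over $\cal{T}(S)$ by a natural circle action, in the spirit of Tromba's slice-theoretic treatment of Teichmüller space combined with the Labourie--Loftin parametrization. Given a pair $(J, q)$ consisting of a complex structure on $S$ and a holomorphic cubic differential with respect to $J$, the affine sphere construction recalled above produces a Blaschke metric on $S$; taking such pairs modulo $\cal{D}_0$ yields the total space $\cal{E}$ of a smooth complex vector bundle over $\cal{T}(S)$ whose fiber over $[J]$ is $H^0(S, K_J^{\otimes 3})$, of complex dimension $5\mathscr{G}-5$ by Riemann--Roch. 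Hence $\cal{E}$ is a smooth, contractible manifold of real dimension $16\mathscr{G}-16$.

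The crucial additional observation is that the Wang equation governing the conformal factor of the Blaschke metric with respect to the hyperbolic representative in the conformal class of $J$ depends on $q$ only through its pointwise squared norm $|q|^2$. Consequently, $(J, q)$ and $(J, e^{i\theta} q)$ induce the same Blaschke metric on $S$; geometrically, rotating $q$ by a phase rotates the associated hyperbolic affine sphere about its affine normal in $\bb{R}^3$ without altering the induced Riemannian metric. This yields a smooth $S^1$-action on $\cal{E}$. Since for $\mathscr{G} \geq 2$ any biholomorphism of $(S, J)$ lying in $\cal{D}_0$ is necessarily the identity, the only phase fixing a nonzero $q$ is trivial, and so the action is free off the zero section. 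One therefore identifies $\cal{M}^B/\cal{D}_0$ with $\cal{E}/S^1$, and away from the fixed locus, which corresponds precisely to $\cal{T}(S)$, this quotient inherits a smooth manifold structure of dimension $16\mathscr{G}-16-1 = 16\mathscr{G}-17$.

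For contractibility, the fiberwise rescaling $H_t([(J, q)]) = [(J, tq)]$, $t \in [0,1]$, is $S^1$-equivariant and descends to a continuous deformation retraction of $\cal{M}^B/\cal{D}_0$ onto $\cal{T}(S)$; since Teichmüller space is itself contractible, so is the Blaschke locus.

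The principal obstacle lies in the first step: making the parametrization $(J, q) \leftrightarrow g_B$ rigorous as a smooth map at the level of Fréchet (or Sobolev) manifolds. This requires existence, uniqueness, and smooth dependence on parameters of solutions to the Wang equation over varying $(J, q)$, together with a Tromba-style slice construction for the $\cal{D}_0$-action on metrics, ensuring that the assignment descends to a homeomorphism (and a diffeomorphism off $\cal{T}(S)$) from $\cal{E}/S^1$ onto $\cal{M}^B/\cal{D}_0$. Once this analytic scaffolding is in place, the $S^1$-quotient argument, the dimension count, and the contractibility are formal consequences.
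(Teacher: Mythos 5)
Your strategy coincides with the paper's: parametrize $\cal{M}^B/\cal{D}_0$ by the cubic-differential bundle modulo the $S^1$-action via Wang's equation, take the free quotient off the zero section for the dimension count $16\mathscr{G}-16-1=16\mathscr{G}-17$, and contract fiberwise. The issue is that several steps you fold into ``analytic scaffolding'' and then declare formal are in fact the substance of the proof, and one of them requires an input you never name.

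First, you only show that the assignment $(J,q)\mapsto g_B$ descends to $Q_3(S)/S^1$ and is surjective onto $\cal{M}^B/\cal{D}_0$; the identification also needs injectivity, i.e.\ that a Blaschke metric determines $(J,q)$ up to $\cal{D}_0$ and a phase. This is Proposition \ref{equalBlaschke}: from $g_1=\psi^*g_2$ one recovers $\sigma_1=\psi^*\sigma_2$ as the unique hyperbolic representatives of the conformal classes, Wang's equation forces $|q_1|=|\psi^*q_2|$ pointwise, and the ratio is then an $S^1$-valued meromorphic function, hence a constant phase. Second, and this is the real crux, even granting smooth dependence of the Wang solution on $(\sigma,q)$ (Lemma \ref{lem,smoothness}, via the implicit function theorem), the smooth structure on $(\cal{M}^B/\cal{D}_0)\setminus\cal{T}(S)$ is not a formal consequence: one must show the parametrizing map is an \emph{immersion} transverse to the $\cal{D}_0$-orbits in $\cal{M}_-^{k,\alpha}$, so that its image is a genuine slice. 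In Lemma \ref{lem,injectiveDiff} this is where the serious input enters: after the transverse-traceless part is killed by an $L^2$ pairing, one is left with a conformal variation $\dot{u}_0\,g_0$ equal to a potential tensor, and concluding $\dot{u}_0=0$ requires the injectivity of the X-ray transform $I_0^{g_0}$ on functions for negatively curved surfaces; the residual kernel is then exactly the $S^1$-direction. Nothing in ``existence, uniqueness and smooth dependence'' yields this. Finally, your contraction $[(J,q)]\mapsto[(J,tq)]$ lives on $Q_3(S)/S^1$; to transport it to $\cal{M}^B/\cal{D}_0$ with its own quotient topology you need the identification to be a homeomorphism, and continuity of the \emph{inverse} (Proposition \ref{prop,homeo}) is a separate nontrivial argument, recovering $\sigma$ via the Poincar\'e map and $\langle q\rangle$ via a compactness argument on the coefficients of $q$ in a frame. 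So the outline is right, but the three load-bearing steps are missing rather than routine.
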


To motivate our interest in the Blaschke locus and the idea behind the proof of Theorem \ref{Theorem A}, we now further explain the relation between the Blaschke locus and  Teichmüller space, from a different point of view. 
For this, we start with a brief exposition of closely relevant objects---the Hitchin components. Classically, besides viewing the Teichmüller space $\cal{T}(S)$ as a space of (equivalence classes of)  Riemannian metrics of constant negative curvature (or hyperbolic structures) on $S$, one can also view it as a space of (equivalence classes of) Riemann surface structures (complex structures) on $S$ or as a connected component of the space of (conjugacy classes of) representations of $\pi_1(S)$ into $\mathrm{PGL}(2,\mathbb{R})$.  The Hitchin component $\mathcal{H}_n(S)$, as an important example of higher rank Teichm{\"u}ller spaces (see \cite{InvitationHigherTeich} for a survey), generalizes $\cal{T}(S)$ from the representation theory viewpoint and is a connected component of the space of (conjugacy classes of) representations from $\pi_1(S)$ into $\mathrm{PGL}(n,\mathbb{R})$ for $n\geq 2$. 
When $n=2$, the Teichmüller space $\cal{T}(S)$ coincides with $\mathcal{H}_2(S)$ and embeds into all other Hitchin components $\mathcal{H}_n(S)$. When $n=3$, a complex analytical counterpart of $\mathcal{H}_3(S)$ was discovered independently by Loftin \cite{Loftin_AffineSphere} and Labourie \cite{FlatProjectiveCubicDifferentials} in analogy to the viewpoint of $\cal{T}(S)$ as spaces of Riemann surfaces. 
Let $Q_3(S)$ be the vector bundle over the Teichm{\"u}ller space $\cal{T}(S)$ whose fiber over a  complex structure (or, more accurately, an equivalence class of complex structures) is given by the vector space  of holomorphic cubic differentials on $S$, which by the Riemann-Roch theorem is finite dimensional (cf. \eqref{eq:Q_n_def}).
They showed that there exists a mapping class group equivariant homeomorphism between the Hitchin component $\mathcal{H}_3(S)$ and the holomorphic cubic differentials vector bundle $Q_3(S)$. 
One can then ask whether there is a natural Riemannian geometrical generalization of the Teichmüller space $\cal{T}(S)$. 
As hinted previously, the Blaschke locus $\cal{M}^B/\cal{D}_0$ as a space of negatively curved Riemannian metrics, even though not in bijection to $\mathcal{H}_3(S)$, plays this role. Modulo an $S^1$ action on the vector bundle $Q_3(S)$ (which identifies
a holomorphic cubic differential $q$ with $e^{2\pi i\theta}q$ for any $\theta\in [0,1)$), using the holomorphic data $Q_3(S)$ as a bridge, one obtains the following mapping class group equivariant homeomorphisms
\begin{equation}\label{eq:homeo}
\mathcal{H}_3(S)/S^1 \stackrel{\text{homeo}}{\simeq} Q_3(S)/S^1 \stackrel{\text{homeo}}{\simeq} \cal{M}^B/\cal{D}_0,
\end{equation}
 where the $S^1$ action on $\mathcal{H}_3(S)$ is simply obtained by pullback of the $S^1$ action on $Q_3(S)$. 

The above three objects are generalizations of $\cal{T}(S)$ from different viewpoints (representation theoretic, complex analytic and Riemannian geometrical respectively). A more detailed characterization of them and their relations will be described in Section \ref{Sec:BlaschkeLocus}. In particular, the first homeomorphism is proved and implied from \cite{Loftin_AffineSphere} and \cite{FlatProjectiveCubicDifferentials}. The second bijection is first shown in \cite{OuyangTamburelli}. We explain the second homeomorphism in Proposition \ref{prop,homeo}. These identifications will be crucial for the proof of Theorem \ref{Theorem A}.

\smallskip

\subsection{The covariance metric in the Blaschke locus.} 
As mentioned before, we also study the Riemannian geometry of the
Blaschke locus $\cal{M}^B/\cal{D}_0$
with respect to the \emph{covariance metric} $G(\cdot,\cdot)$  introduced by
Guillarmou, Knieper and Lefeuvre (\cite{GeodesicStretch})\footnote{This Riemannian metric is referred to as the pressure metric in \cite{GeodesicStretch}.}.
This metric extends the Weil-Petersson metric
from the Teichm{\"u}ller space (viewed as the space of isotopy classes of hyperbolic metrics) to a Riemannian metric on the space of isotopy classes of  metrics of variable negative curvature, using techniques originating from the study of the X-ray transform on closed Anosov  manifolds (\cite{guillarmou_anosov}, \cite{Guill-lef}).
In our case, starting with the simple observation that the extended mapping class group is a subgroup of the group of isometries for the covariance metric (Proposition \ref{prop,mappingClassGrpInv}), the mapping class group equivariant homeomorphisms from $\cal{H}_3(S)/S^1$ to $\cal{M}^B/ \cal{D}_0$ in \eqref{eq:homeo} allow us to identify certain families of covariance metric geodesics in $\cal{M}^B/\cal{D}_0$ arising from special orbifold Hitchin representations (Section \ref{orbifolds}). 
Briefly,  let a two-dimensional  orbifold $Y$ be given as a quotient of a Riemann surface $X_J$ (with underlying smooth surface structure $S$) by a finite diffeomorphism group $\Sigma$.
One can then associate to $Y$ a special one-parameter family of representations in  $\mathcal{H}_3(S)$, denoted by $\mathcal{H}_3(Y)$, as a fixed point set of the group action  of $\Sigma$ on $\mathcal{H}_3(S)$, with $\Sigma$ understood as a subgroup of the extended mapping class group (see Section \ref{subsec:MCGHitchin}). We show

\begin{Athm} [Lemma \ref{CircleActionY}, Theorem \ref{thm,geodesics}] \label{Theorem 1}
Let $Y$ be a non-orientable orbifold of negative Euler characteristic with orientation double cover $Y^{+}$ given by a sphere with 3 cone points of respective orders $m_1\geq 3$, $m_2\geq 3$ and $m_3\geq 4$. Then $\mathcal{H}_3(Y)/\mathbb{Z}_2$ is homeomorphic to a half line and embeds as a  geodesic (unparametrized) in $\mathcal{M}^B/\mathcal{D}_0$ with respect to the covariance metric $G(\cdot,\cdot)$, where the $\mathbb{Z}_2$ action on $\mathcal{H}_3(Y)$ is induced from the $S^1$ action on $\mathcal{H}_3(S)$. 
\end{Athm}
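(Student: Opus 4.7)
The overall strategy is to combine a Riemann--Roch type computation on the rigid triangle orbifold $Y^{+}$ with the Riemannian principle that fixed point sets of isometric actions are totally geodesic. My plan has three ingredients: (i) identify $\mathcal{H}_3(Y^{+})$ with a complex line via the Labourie--Loftin parametrization and show that the orientation-reversing covering involution $\tau$ of $Y^{+}\to Y$ acts complex-antilinearly, so that $\mathcal{H}_3(Y)\cong\mathbb{R}$; (ii) observe that the $S^1$-action on $\mathcal{H}_3(S)$ restricts to $\{\pm 1\}$ on this real line, yielding $\mathcal{H}_3(Y)/\mathbb{Z}_2\cong [0,\infty)$; and (iii) exploit the $\Sigma$-equivariance of the homeomorphism \eqref{eq:homeo} together with the isometric $\Sigma$-action on $\mathcal{M}^B/\mathcal{D}_0$ (Proposition \ref{prop,mappingClassGrpInv}) to conclude that the image is a one-dimensional totally geodesic submanifold, hence an unparametrized geodesic.

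For step (i), I would use that the orbifold $(0;m_1,m_2,m_3)$ is rigid (its Teichmüller space is a single point), so $\mathcal{H}_3(Y^{+})$ reduces to the space of holomorphic cubic differentials on the underlying $\mathbb{CP}^1$ with poles of order at most $\lfloor 3(m_i-1)/m_i\rfloor$ at the cone points. Under the hypotheses $m_1,m_2\geq 3$ and $m_3\geq 4$ this pole order equals $2$ at each cone point, producing a degree-zero line bundle on $\mathbb{CP}^1$ with one-dimensional space of sections, so that $\mathcal{H}_3(Y^{+})\cong\mathbb{C}$. Because $\tau$ is orientation-reversing, it acts as a complex-antilinear involution on this $\mathbb{C}$, whose real fixed locus is $\mathcal{H}_3(Y)\cong\mathbb{R}$; the subgroup of $S^1$ preserving $\mathbb{R}\subset\mathbb{C}$ is precisely $\{\pm 1\}$, giving $\mathcal{H}_3(Y)/\mathbb{Z}_2\cong[0,\infty)$.

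For step (iii), the mapping-class-group equivariance of the homeomorphism $\mathcal{H}_3(S)/S^1\cong\mathcal{M}^B/\mathcal{D}_0$ places the image of this half line inside the fixed point set $\mathrm{Fix}(\Sigma)\subset\mathcal{M}^B/\mathcal{D}_0$, which is a totally geodesic subset of the smooth part of $\mathcal{M}^B/\mathcal{D}_0$ by the isometry property. A tangent-space computation at a nonzero point $[r]$ on the half line---using rigidity of $Y^{+}$ to rule out $\Sigma^{+}$-invariant Teichmüller directions, and using the $S^1$- and $\Sigma$-quotient of the trivial-character fiber $\mathbb{C}$ to supply exactly one real direction---shows that $\mathrm{Fix}(\Sigma)$ is locally one-dimensional along the half line, and a connected one-dimensional totally geodesic submanifold is, up to parametrization, a geodesic.

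The main obstacle I anticipate is the local one-dimensionality of $\mathrm{Fix}(\Sigma)$ near the interior of the half line: one must rule out that other $\Sigma^{+}$-character components of the fiber of $Q_3(S)$ at the lift of $Y^{+}$ contribute additional tangent directions to $\mathrm{Fix}(\Sigma)$ at $[r]$, which reduces to a character-theoretic check that any such components produce separate half lines in $\mathrm{Fix}(\Sigma)$ meeting the present one only at the Teichmüller endpoint. Once that is established, the remaining pieces---the Riemann--Roch count, the antilinearity of $\tau$, and the standard isometry/fixed-point principle---complete the argument essentially routinely.
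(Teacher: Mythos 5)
Your overall strategy coincides with the paper's: identify $\mathcal{H}_3(Y)$ with the real span of a single invariant holomorphic cubic differential, observe that the $S^1$ action restricts to $\{\pm 1\}$ on this real line so that the quotient is a half line, and then combine the equivariance of \eqref{eq:homeo} with the isometric action of $\Sigma$ (Proposition \ref{prop,mappingClassGrpInv}) and the totally geodesic property of fixed point sets of isometries. Your Riemann--Roch computation on the rigid triangle orbifold replaces the paper's citation of \cite{Orbifolds} (quoted as Propositions \ref{OneDimHitchin} and \ref{CyclicOneDimHitchin}); the pole-order count and the antilinearity of the covering involution are correct, so steps (i) and (ii) match Lemma \ref{CircleActionY}.

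There are two points where your step (iii) diverges from what is actually needed. First, a genuine gap: you invoke ``the $\Sigma$-equivariance of the homeomorphism \eqref{eq:homeo},'' but that homeomorphism is only established to be equivariant for the orientation-preserving mapping class group (Theorem \ref{thm, HtichinMap}, Corollary \ref{cor, mappingClassEquiv}), whereas $\Sigma$ necessarily contains orientation-reversing classes because $Y$ is non-orientable. For those elements the paper argues separately in Lemma \ref{lem fixedPointSet}: the condition $\psi^A\cdot q=q$ forces $\psi^*q=\overline{q}$, and the solution of Wang's equation \eqref{WangEquation} is unchanged under simultaneously replacing $(J,q)$ by $(-J,\overline{q})$, so the Blaschke metric class is still fixed. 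Without this computation you have only shown the half line to be pointwise fixed by the index-two orientation-preserving subgroup $\Sigma^{+}$, and this step must be added. Second, the ``main obstacle'' you flag---establishing by a character-theoretic tangent-space analysis that $\mathrm{Fix}(\Sigma)$ is locally one-dimensional at interior points of the half line---is not how the paper proceeds: it applies the fixed-point principle in the form that a one-dimensional \emph{connected subset} pointwise fixed by a subgroup of isometries is an unparametrized geodesic (citing Kobayashi) directly to the image of the half line, with the one-dimensionality supplied globally by the identification $\mathrm{Fix}_{\Sigma}\mathcal{H}_3(S)\cong \mathrm{Hit}(\pi_1 Y,\mathrm{PGL}(3,\mathbb{R}))$ from Theorem \ref{thm,orbifoldLocus} and Proposition \ref{SigmaEquiHitchinSection} rather than by a local computation. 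So the obstacle you leave open is circumvented in the paper, but the orientation-reversing equivariance is a concrete missing step in your argument.
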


\noindent These geodesics, which are homeomorphic to half lines, have starting points in Teichm{\"u}ller space $\cal{T}(S)$ and eventually leave all compact sets of $\mathcal{M}^B/\mathcal{D}_0$ (see Theorem \ref{thm, singularFlat}). We further proceed in Section \ref{infiniteLength} to estimate their covariance metric lengths. We show in Corollary \ref{CircleActionY},

\begin{Athm} [Corollary \ref{CircleActionY}]
The covariant metric geodesics  in $\mathcal{M}^B/\mathcal{D}_0$  corresponding to $\mathcal{H}_3(Y)/\mathbb{Z}_2$ given in Theorem \ref{Theorem 1}  have infinite length. 
\end{Athm}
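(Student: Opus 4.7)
The plan is to estimate from below the covariance norm of the velocity vector along an explicit parametrization of the geodesic ray, and conclude that the arc-length integral diverges.

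First I would fix a parametrization $t\in [0,\infty)\mapsto g_t\in \mathcal{M}^B/\mathcal{D}_0$ of the ray $\mathcal{H}_3(Y)/\mathbb{Z}_2$, where $t=0$ corresponds to the unique point in Teichm\"uller space fixed by the $\Sigma$-action defining $Y^+$, and $t\to\infty$ corresponds to the exit of every compact set of $\mathcal{M}^B/\mathcal{D}_0$ (Theorem \ref{thm, singularFlat}). Via the chain of identifications in \eqref{eq:homeo}, $g_t$ is the Blaschke metric associated to the real cubic differential $tq_0$ on the underlying Riemann surface of $Y^+$, where $q_0$ is a distinguished nonzero $\Sigma$-equivariant cubic differential.

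Next I would compute the velocity tensor $\dot g_t = \tfrac{d}{dt}g_t$ as a symmetric $2$-tensor on $S$, and evaluate $G(\dot g_t,\dot g_t)$ using the variance formulation of the covariance metric from \cite{GeodesicStretch}. The key step is an asymptotic lower bound for $\|\dot g_t\|_G$ as $t\to\infty$. For this I would exploit that the $\Sigma$-equivariance of $g_t$ and $\dot g_t$ reduces the variance computation to a problem on the orbifold $Y^+$, topologically a sphere with three cone points, where the Wang/affine sphere equation reduces to an ODE with explicit asymptotic control; together with the Labourie-Loftin type asymptotics of the Blaschke metric as $|q|\to\infty$, according to which $g_t$ degenerates to a flat cone metric associated with $q_0$ and the Bowen--Margulis measure of the associated geodesic flow concentrates in a controlled way. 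The target is a lower bound of the form $\|\dot g_t\|_G\geq c(t)$ for some $c$ which is not integrable at $\infty$ (for example $c(t)=c_0/t$), which forces $\int_0^\infty \|\dot g_t\|_G\,dt=\infty$ and yields the claim.

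The main obstacle is the quantitative lower bound on the variance: controlling both the pullback of $\dot g_t$ to the unit tangent bundle of $g_t$ and the Bowen--Margulis measure along the degenerating family $g_t$ requires delicate asymptotic analysis. A potentially more tractable alternative is to bypass direct variance computations by comparing the covariance length along the ray to another monotonic quantity whose divergence is established more easily, such as the change in an entropy-like invariant along the ray, or the Weil-Petersson displacement of a reference projection onto Teichm\"uller space measured at nearby points. Either way, the heart of the argument lies in transferring the geometric blow-up of $g_t$ (which manifests as escape to infinity in $\mathcal{M}^B/\mathcal{D}_0$) into a non-integrable lower bound on the dynamical variance defining $G$.
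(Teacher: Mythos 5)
Your overall shape is right---parametrize the ray, bound $\|\dot g_t\|_G$ from below by something like $c_0/t$, and conclude divergence of the arc-length integral---and your target rate $t^{-1}$ is in fact the one that works. But the route you propose to the lower bound has a genuine gap, and it is precisely the step you yourself flag as ``the main obstacle.'' You aim to lower-bound the \emph{variance} term of the covariance metric, via $\Sigma$-equivariant reduction of Wang's equation to an ODE on $Y^+$ and asymptotic control of the relevant dynamical measure. None of this is needed, and parts of it are problematic: a sphere with three cone points has no continuous symmetry, so Wang's equation does not reduce to an ODE there; the measure entering the covariance metric is the (normalized) Liouville measure, not the Bowen--Margulis measure; and obtaining a quantitative \emph{lower} bound on the dynamical variance along a degenerating family of metrics is exactly the kind of delicate estimate one wants to avoid. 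The alternative you sketch (comparison with an entropy-like invariant or a Weil--Petersson displacement) is not developed either, so as written the proof does not close.

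The paper's proof (Theorem \ref{thm,infiniteLength}) sidesteps the variance entirely. Since $\Pi^{g}$ is a positive operator (Theorem \ref{thm,PiProperty}), one simply discards the variance term and keeps the remaining summand in $G_{g_t}(h_t,h_t)$, namely the square of the mean:
\begin{equation}
G_{g_t}(h_t,h_t)\;\geq\;\frac{\big|\langle 1,\dot u_t\rangle_{L^2(S,dv_{g_t})}\big|^2}{\mathrm{Area}(S,g_t)^2},\qquad h_t=\dot u_t\, g_t .
\end{equation}
The numerator is then controlled by purely elliptic/PDE means: integrating the linearized Wang equation \eqref{eq:DevBlaschke} over $S$, using the monotonicity $\dot u_t\geq 0$ (Lemma \ref{lem:MonotoneDensity}) and $K_{g_t}<0$ to get $\langle \dot u_t,1\rangle_{L^2(S,dv_{g_t})}\gtrsim \langle |q|^2/g_t^3,1\rangle_{L^2(S,dv_{g_t})}$, and then invoking Loftin's barrier (Proposition \ref{prop:SuperSub}) to bound this below by $C t^{-2/3}$; combined with $\mathrm{Area}(S,g_t)\lesssim t^{1/3}$ this yields $\sqrt{G_{g_t}(h_t,h_t)}\gtrsim t^{-1}$. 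If you want to salvage your approach, replace the variance estimate by this ``mean term'' estimate; as it stands, the heart of your argument is missing.
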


\noindent In fact, our proof works more generally for any curve in $\mathcal{M}^B/\mathcal{D}_0$  parametrized by a ray starting from $\cal{T}(S)$  in a fixed fiber of the bundle $Q_3(S)/S^1$, using identification \eqref{eq:homeo},

\begin{Acor} [Theorem \ref{thm,infiniteLength}]
Let $\sigma $ be a hyperbolic metric on $S$ and $q$ be a nonzero  cubic differential which is holomorphic with respect to the complex structure determined by $\sigma$.
Then the curve $\{[g_t]\}_{t\geq0} \subset \cal M^B/\cal D_0$, where $g_t\subset \cal M^B$ satisfies  Wang's equation \eqref{WangEquation} with cubic differential $\sqrt{t}q$,
has infinite length with respect to the  covariance metric.
\end{Acor}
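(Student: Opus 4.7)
The plan is to bound the covariance speed $\|\dot g_t\|_G$ along the curve from below by a function whose integral over $[0,\infty)$ diverges. First, I would parametrize the velocity. In a local conformal chart, write $g_t = e^{2u_t}|dz|^2$, where $u_t$ is determined by Wang's equation \eqref{WangEquation} with cubic differential $\sqrt{t}q$. Differentiating in $t$ yields a linear elliptic equation for $\dot u_t$; the symmetric $2$-tensor $2\dot u_t\, g_t$ then projects to its solenoidal (divergence-free) representative $h_t$, which is the well-defined tangent vector at $[g_t]$ to the Blaschke locus $\mathcal{M}^B/\mathcal{D}_0$.

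Next, I would lower bound $\|h_t\|_G$ pointwise in $t$. Using the variance formulation of the covariance metric from \cite{GeodesicStretch}, $G_{g_t}(h_t,h_t)$ equals (up to a normalization) the variance of a canonical lift of $h_t$ to the unit tangent bundle of $g_t$ with respect to the Bowen--Margulis measure, and in particular it is controlled from below by a second-moment average of the $X$-ray transform of $h_t$ over periodic orbits of the geodesic flow. The explicit $\sqrt{t}$-scaling in Wang's equation, combined with the asymptotic degeneration of $g_t$ to the singular flat structure determined by $q$ (Theorem \ref{thm, singularFlat}), should then yield
\[
\|h_t\|_G \geq \frac{c}{t}\quad\text{for all $t$ sufficiently large,}
\]
with $c>0$ depending only on $q$. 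Non-vanishing of the constant is guaranteed by $q\neq 0$: the tangent vector $h_t$ is never trivial along the ray, and its $X$-ray transform does not vanish identically.

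Integrating the bound gives $\mathrm{length}_G(\{[g_t]\}_{t\geq 0}) \geq c\int_{t_0}^\infty t^{-1}\, dt = +\infty$, which proves the claim. This reasoning parallels and extends the analysis that leads to Corollary \ref{CircleActionY} in the orbifold setting: since neither the variance formulation of $G$ nor the scaling information from Wang's equation invokes the discrete symmetry group of an orbifold, the argument is expected to apply verbatim to arbitrary $q\neq 0$, in accordance with the remark immediately preceding the statement.

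The main obstacle is the quantitative estimate $\|h_t\|_G \geq c/t$ itself. The covariance norm can a priori decay rapidly along rays that exit every compact set of $\mathcal{M}^B/\mathcal{D}_0$, so the key technical work is to isolate, directly from the structure of $h_t$ given by differentiating Wang's equation in $t$, a non-degenerate pairing whose contribution to the variance of the $X$-ray transform scales precisely like $1/t^2$ as $t\to\infty$. This in turn requires a careful analysis of the Blaschke geodesic flow in the asymptotic regime where $g_t$ approaches the singular flat metric associated with $q$.
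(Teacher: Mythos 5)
Your overall skeleton (establish $\sqrt{G_{g_t}(h_t,h_t)}\geq c/t$ for large $t$, then integrate) matches the paper's, but the proposal does not actually prove the key estimate: you explicitly defer the lower bound $\|h_t\|_G\geq c/t$ as ``the main obstacle,'' and the mechanism you sketch for obtaining it is not the one that works. You propose to control $G_{g_t}(h_t,h_t)$ from below by the \emph{variance} of the lift of $h_t$ (moreover with respect to the Bowen--Margulis measure, whereas the covariance metric here is built from the \emph{Liouville} measure). This targets precisely the wrong term. The tangent vector along this ray is $h_t=\dot{u}_t\,g_t$, a conformal multiple of $g_t$, and for conformal tensors the variance of the mean-zero part $P_{g_t}(\pi_2^*h_t)=P_{g_t}(\pi_0^*\dot u_t)$ is exactly the contribution that is delicate (cf.\ Remark \ref{rem,ConformalDegenerate}: the pure variance form is degenerate on $\cal{M}_-/\cal{D}_0$ in conformal directions). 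Quantifying it as $t\to\infty$ via the degeneration to the flat metric of Theorem \ref{thm, singularFlat} is exactly the hard analysis you have not supplied, and nothing in the proposal guarantees the resulting constant survives in the limit.

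The paper's proof of Theorem \ref{thm,infiniteLength} sidesteps this entirely: it discards the variance term (which is $\geq 0$ by Theorem \ref{thm,PiProperty}) and keeps only the squared-mean term, $G_{g_t}(h_t,h_t)\geq |\langle 1,\dot u_t\rangle_{L^2(S,dv_{g_t})}|^2/\mathrm{Area}(S,g_t)^2$. The mean of $\dot u_t$ is then computed by integrating the linearized Wang equation \eqref{eq:derivative} over $S$, which gives $\langle(6+4K_{g_t})\dot u_t,1\rangle=4\langle |q|^2/g_t^3,1\rangle$; combining $\dot u_t\geq 0$ (the monotonicity of Lemma \ref{lem:MonotoneDensity}, a maximum-principle argument) with $K_{g_t}<0$ yields $\langle \dot u_t,1\rangle\geq \tfrac23\langle|q|^2/g_t^3,1\rangle$, and Loftin's barrier $0\leq u_t\leq\log x_t$ with $x_t\sim t^{1/3}$ (Proposition \ref{prop:SuperSub}, Lemma \ref{lm:root}) bounds the right side below by $C t^{-2/3}$. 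Together with $\mathrm{Area}(S,g_t)=O(t^{1/3})$ this gives $\sqrt{G_{g_t}(h_t,h_t)}\geq C' t^{-1}$. If you want to salvage your approach, replace the variance/X-ray second-moment step by this mean-term computation; as written, the proposal is missing the idea that makes the estimate tractable.
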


It remains as a question whether there are other candidates for finite covariance metric length paths leaving all compact sets of $\mathcal{M}^B/\mathcal{D}_0$ but not in the Teichm{\"u}ller space $\cal{T}(S)$. In general, incomplete paths in moduli spaces indicate meaningful geometric phenomena. For instance, in the Teichm{\"u}ller space $\cal{T}(S)$, Wolpert exhibits some incomplete paths for the Weil-Petersson metric in \cite{Wolpert_Noncomplete}. These are paths realizing ``pinched Riemann surfaces''.

In the Appendix \ref{furtherEstimates} we end with some further estimates of the covariance metric in $\mathcal{M}^B/\mathcal{D}_0$.
An explicit formula (Proposition \ref{prop: FiberDirection}) for the covariance metric $G(\cdot,\cdot)$ at a point in $\cal{T}(S)$ with tangent vectors corresponding to a direction tangential to the fiber of $Q_3(S)/S^1$ is given as a direct application of \cite[Lemma A.1, Remark A.2]{GuilMon}. We hope that  this formula can be further simplified in the future. 

\smallskip 

\subsection{ Outline of the proofs.} 
We briefly discuss our proofs of the main theorems in the sequel. Both for regularity results and the study of the covariance metric in $\mathcal{M}^B/ \cal{D}_0$, an important tool used in our investigation is a single partial differential equation, called Wang's equation (\ref{WangEquation}). It underlies the second identification \eqref{eq:homeo} and has natural connection to Blaschke metrics and the theory of affine differential geometry. 

For regularity results, the ideas are as follows:
\begin{itemize}
    \item The proof  of the smoothness of the Blaschke locus $\cal{M}^B/\cal{D}_0$ relies on constructing smooth charts for it away from $\cal{T}(S)$, and is modeled  on the  construction of charts for the Teichmüller space outlined in \cite[Section 2.4]{Tromba_Book}.    
    There, the key observation is that the finite dimensional space of transverse traceless (= divergence free and trace free) symmetric two tensors with respect to a fixed hyperbolic metric can be locally identified with a slice of smooth hyperbolic metrics inside the Hilbert manifold of hyperbolic metrics of fixed Sobolev regularity. This slice locally parametrizes $\cal{T}(S)$, thus providing a natural local coordinate for it.
    For us, the coordinates for the Blaschke locus are constructed via local identification with the vector bundle of holomorphic cubic differentials over those slices for $\cal{T}(S)$. 
    
    \item The link between local slices for the bundle of holomorphic cubic differentials and local slices for the space of Blaschke metrics, viewed as a subset of the Banach manifold of negatively curved metrics of $C^{k,\alpha}$ regularity, is given by  Wang's equation (\ref{WangEquation}). 
    It allows us to produce smooth diffeomorphisms between those slices, by means of the implicit function theorem and the inverse function theorem for Banach spaces.
    
\end{itemize}

To study the covariance metric in $\cal{M}^B/\cal{D}_0$, we use a mixture of global geometry concerning mapping class groups actions and local estimates using tools from partial differential equations:

\begin{itemize}
    \item The equivariance of the  homeomorphisms in \eqref{eq:homeo} with respect to the mapping class group action allows one to preserve the fixed point sets of actions of certain subgroups of it  on the different spaces. By showing that the covariance metric is extended mapping class group invariant, some one-dimensional fixed points sets arising from geometric symmetry in the Hitchin components pull back by   \eqref{eq:homeo} to geodesics in the Blaschke locus.
    Then, analytical tools can be applied.
    \item To estimate the lengths of the  geodesics above,  Wang's equation is again a key, together with some standard techniques from the theory of partial differential equations. These include the existence of supersolutions and subsolutions for Wang's equation, previously obtained by Loftin (Proposition \ref{prop:SuperSub}), and some maximum principle arguments. 
\end{itemize}

\smallskip

\subsection{ Structure of the article.} The article is organized as follows. In Section \ref{sec: Prelim}, we recall some fundamental results from Teichmüller theory and Weil-Petersson geometry. We then introduce Higgs bundles, Hitchin components and Hitchin maps. We also include a short discussion on orbifolds and orbifold representations. 
Section \ref{sec: MappingClassGrp} is devoted to explaining the actions of the extended mapping class group on various mathematical objects from different areas. This will play an important role in the proofs of Section \ref{sec:geodesics}. Section \ref{sec:PressureMetric} contains an exposition on the covariance metric introduced in \cite{GeodesicStretch} in the space of negatively curved metrics. We also show in this section that the covariance metric is extended mapping class group invariant. In Section \ref{Sec:BlaschkeLocus}, we introduce Blaschke metrics, the Blaschke locus, and explain the identification \eqref{eq:homeo}. We also discuss some important results concerning the regularity and topology of  Blaschke locus.  In Section \ref{sec:geodesics}, we prove some results about geodesics in the Blaschke locus with respect to the covariance metric and estimate their lengths. Finally, we end with some further estimates of the covariance metric in the Blaschke locus near the Teichmüller space $\cal{T}(S)$ in  Appendix \ref{furtherEstimates}.

\smallskip

\subsection*{Acknowledgements.}
The authors would like to thank Kiril Datchev, Dan Fox, Gerhard Knieper, Qiongling Li and Gabriele Viaggi for helpful discussions, and Alex Nolte for  the reference \cite{bers}.
We would also like to thank the referee for carefully reading the manuscript and providing constructive feedback.
X. Dai was funded by the Deutsche Forschungsgemeinschaft (DFG, German Research Foundation) – Project-ID 281071066 – TRR 191
and by the DFG under Germany's excellence strategy exc 2181/1 -
390900948 (the Heidelberg structures excellence cluster).

\section{Preliminaries} \label{sec: Prelim}

This section develops the background material we will need in later sections. A reader familiar with this material may  skip it. We begin in Section \ref{subsec: teich} with an exposition on the classical Teichm{\"u}ller space and the Weil-Petersson metric. Then in Section \ref{subsec,HitchinComponents}, we introduce some basics on Higgs bundles and Hitchin components. We conclude with a discussion of orbifolds and orbifold representations in Section \ref{orbifolds}.

\subsection{Teichm{\"u}ller space and Weil-Petersson metric} \label{subsec: teich}

In this subsection, we will discuss Teichm{\"u}ller space from the viewpoint of Riemannian geometry, initiated by Tromba and Fischer \cite{FischerTromba}.

Let $S$ be a closed orientable smooth surface of genus $\mathscr{G}\geq 2$. We denote by $\mathcal{M}$ the space of smooth Riemannian metrics on $S$, by $\mathcal{M}_-$ the subspace of negatively curved smooth Riemannian metrics, and by $\mathcal{M}_{-1}$ the subspace of hyperbolic metrics on $S$. 
We also  denote by $\cal{D}$ be the diffeomorphism group of $S$, 
 by $\mathcal{D}^+$  the group of orientation preserving diffeomorphism on $S$ (when $S$ is given an orientation), and by $\cal{D}_0$  the normal subgroup of $\cal{D}$ consisting of smooth diffeomorphisms isotopic to the identity.

\begin{defn} \label{def,teich}

The \emph{Teichm{\"u}ller space}, denoted as $\cal{T}(S)$, is the quotient space $\cal{M}_{-1}/\cal{D}_0$, where the right action of $\mathcal{D}_0$ on $\mathcal{M}_{-1}$ is given by
\begin{align*}
    \mathcal{M}_{-1} \times \mathcal{D}_0 &\to \mathcal{M}_{-1},\\
    (\sigma, \psi) & \mapsto \psi^*\sigma.
\end{align*}
We denote the equivalence class of $\sigma\in \cal{M}_{-1}$ by $[\sigma]\in \cal{M}_{-1}/\cal{D}_0$.
\end{defn}

Equivalently,  the Teichm{\"u}ller space $\cal{T}(S)$ is the space of (oriented) complex structures on $S$ up to $\mathcal{D}_0$-action. According to another viewpoint, the Teichm{\"u}ller space $\cal{T}(S)$ is a connected component of the representation space $\textnormal{Hom}(\pi_1(S), \textnormal{PGL}(2,\mathbb{R}))/\textnormal{PGL}(2,\mathbb{R})$. This will be discussed in Section \ref{subsec,HitchinComponents}.

\begin{rem}\label{rem:orientation}
We remark that with Definition \ref{def,teich} above, the Teichmüller space does not ``see'' the orientation on $S$, in the sense that if $\psi$ is an orientation reversing isometry for a metric $\sigma$, then $[\psi^*\sigma]=[\sigma]\in \cal{T}(S)$. 
When $\cal{T}(S)$ is viewed as the space of oriented hyperbolic structures modulo $\cal{D}_0$, which is a point of view often taken in the literature (see e.g. \cite{WP_isometry}), if $\psi:S\to S$ is an orientation reversing isometry for a hyperbolic metric $\sigma$ and $\mathfrak{G}$ is the positively oriented hyperbolic structure that $\sigma$ determines, then the hyperbolic structure obtained by pulling back the charts of $\mathfrak{G}$ by $\psi$, $\rm{mod}\; \cal{D}_0$, is an element of the Teichm\"uller space of $\overline{S}$, where $\overline{S}$ has the opposite orientation from $S$.
\end{rem}

Given a Riemann surface $X_J$ with complex structure $J$, we denote by $K_J$ the canonical line bundle associated to $X_J$, which is the $(1,0)$-part of the complexified cotangent bundle $T^*X_J^{\mathbb{C}}=\mathbb{C}\otimes_{\mathbb{R}}T^*X_J$.
Further, we denote by $H^{0}(X_{J}, K_{J}^d)$ the space of $J$-holomorphic differentials of order $d$,
and by $H^{0}(X_{[J]}, K_{[J]}^d)$ the space of their $\mathcal{D}_0$-equivalence classes. 
Explicitly, if $\psi\in \cal{D}_0$, the $J$-holomorphic differential $q\in H^{0}(X_{J}, K_{J}^d)$ and the $\psi^*J$-holomorphic differential $\psi^*q\in H^{0}(X_{\psi^*J}, K_{\psi^*J}^d)$  represent the same point in $H^{0}(X_{[J]}, K_{[J]}^d)$, denoted as $[q]$.
The action of the groups $\mathcal{D}$ and $\mathcal{D}_0$ on complex structures and holomorphic differentials will be explained in detail in Section \ref{sec: MappingClassGrp}. 
It is well known that the cotangent space of the Teichm{\"u}ller space $\cal{T}(S)$ at $[\sigma]$ can be identified with the space of quadratic differentials $H^{0}(X_{[J]}, K_{[J]}^2)$ on the Riemann surfaces $X_{[J]}=(S, [J])$ where $[J]$ is associated to the ($\mathcal{D}_0$-equivalence class of) hyperbolic metrics $[\sigma]$. An extensively studied Riemannian metric on the Teichm{\"u}ller space $\cal{T}(S)$, defined using holomorphic quadratic differentials, is the following \emph{Weil-Petersson metric}.

\begin{defn}

The Weil-Petersson metric is a cometric on $\cal{T}(S)$ defined by
$$\big\langle [q_1],[q_2] \big\rangle_{\scriptscriptstyle  WP}([\sigma])=  \textnormal{Re}\int_{X_J} \frac{q_1\overline{q_2}}{\sigma^2} d v_{\sigma},$$
where $[\sigma] \in \cal{T}(S)$ and $[q_1],[q_2]$  are holomorphic quadratic differentials with respect to $[J]$ and $\sigma,J, q_1,q_2$ are representatives picked from their equivalence classes so that $q_1, q_2$ are holomorphic with respect to $J$.

\end{defn}

It is well known that the Weil-Petersson metric is K\"ahler (\cite{Ahlfors_Kahler}) and negatively curved (\cite{Ahlfors_Curvatures}, \cite{Tromba_Book}, \cite{Wolpert_Curvatures}). The isometry group of the Weil-Petersson metric is the mapping class group \cite{WP_isometry}. Also, although the Weil-Petersson metric is not complete  (\cite{Chu_incomplete}, \cite{Wolpert_Noncomplete}), it exhibits many nice properties of complete negatively curved metrics (see \cite{Wolpert_Noncomplete}, \cite{Wolpert_NielsenProblem}).
In  Section \ref{subsec: TeichPressure} we will discuss a different interpretation of the  Weil-Petersson metric (Theorem \ref{thm,Pressure=WP}).

\subsection{Hitchin components and Hitchin map}\label{subsec,HitchinComponents}

In this subsection, using Higgs bundles techniques, we introduce the \emph{Hitchin component}, which is a connected component of the representation space
$$\textnormal{Rep}(\pi_1 S,\mathrm{PGL}(n,\mathbb{R})):= \textnormal{Hom}(\pi_1 S,\mathrm{PGL}(n,\mathbb{R}))/\mathrm{PGL}(n,\mathbb{R}),$$
for $n\geq 2$, as a generalization of the classical Teichm{\"u}ller space $\cal{T}(S)$. 
\subsubsection{Higgs bundles, Hitchin components and Hitchin Sections} \label{subsec: HitchinComp}

In this subsection, $n\geq 2$ is an integer. 
The  discussion here holds for much wider classes of groups, but for our purposes we will restrict to the group $G=\mathrm{PGL}(n,\mathbb{R})$.

Let $\mathfrak{sl}(n,\bb{R})=\mathfrak{so}(n,\mathbb{R})\oplus sym^0(n,\bb{R})$ be the Cartan decomposition of $\mathfrak{sl}(n,\mathbb{R})$, where  $sym^0(n,\bb{R})$ is the set of $n \times n$ symmetric matrices of trace zero. Denote $ sym^0(n,\bb{C})=sym^0(n,\bb{R}) \otimes \bb{C}$.  The following definition is a special case of \cite[Definition 3.14]{Orbifolds}.

\begin{defn}\label{def,Higgsbundel}
A $\mathrm{PGL}(n,\mathbb{R})$-Higgs bundle on a Riemann surface $X_J$ is a pair $(\cal{E},\varphi)$, where
\begin{itemize}
    \item $\cal{E}$ is a holomorphic Lie algebra bundle with typical fiber $\mathfrak{sl}(n,\mathbb{C})$ and structure group $\mathrm{PO}(n,\mathbb{C})$,
    \item $\varphi \in H^0(X_J; K_J \otimes ad_{sym^0(n,\bb{C})}(\cal{E}))$ is a holomorphic section, called the \emph{Higgs field}. 
\end{itemize}
Here by $ad_{sym^0(n,\bb{C})}(\cal{E})$ we mean the bundle of symmetric adjoint endomorphisms of $\cal{E}$, i.e. endomorphisms of $\cal{E}$ locally of the form $ad_\xi: \mathfrak{sl}(n,\mathbb{C})\to\mathfrak{sl}(n,\mathbb{C}) $ for some $\xi\in sym^0(n,\bb{C})$. 
\end{defn}

\noindent The space of gauge equivalence classes of $\mathrm{PGL}(n, \mathbb{R})$-Higgs bundles with some ``good'' conditions forms the \emph{moduli space of $\mathrm{PGL}(n, \mathbb{R})$-Higgs bundles}, denoted by $\mathcal{M}_{Higgs}(\mathrm{PGL}(n,\mathbb{R}))$. (The ``good'' conditions are polystablity conditions for the Higgs bundles. One can find an introduction in \cite{Baraglia_thesis}, for example.)

Suppose that the holomorphic vector bundle $\cal{E}$ has holomorphic structure $\bar{\partial}_\cal{E}$ and is equipped with a Hermitian metric $H$. Recall that the \emph{Chern connection} $A_H$ of $\cal{E}$ is the unique connection that is compatible with $H$ and satisfies
$A_{H}^{0,1}=\bar{\partial}_{\cal{E}}$.  The following is important.

\begin{thm}[{Hitchin \cite{Hitchin87theself-duality}, Simpson \cite{Simpson-Variation-of-Hodge}}] \label{rem FlatConnection}
Let $(\cal{E}, \varphi)$ be a polystable $\mathrm{PGL}(n,\bb{R})$-Higgs bundle and let $H$ be a Hermitian metric on $\cal{E}$. A connection $D=A_H+ \varphi+ {\varphi}^{*H}$ on $(\cal{E}, \varphi, H)$ is flat if and only if  the following \emph{Hitchin equation} is satisfied,
\begin{align}
    F_{A_H}+[\varphi,{\varphi}^{*H}]=0,
   \label{eq HitchinEquation}
\end{align}
where $F_{A_H}$ is the curvature of the connection $A_H$. Moreover, the holomorphic vector bundle $\cal{E}$ admits a Hermitian metric $H$ satisfying the Hitchin equation if and only if $(\cal{E},\varphi)$ is polystable.
\end{thm}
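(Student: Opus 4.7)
The plan splits naturally into two parts, matching the two equivalences in the statement.

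For the first equivalence I would compute the curvature of $D = A_H + \varphi + \varphi^{*H}$ directly. Expanding
$$F_D = F_{A_H} + d_{A_H}(\varphi + \varphi^{*H}) + (\varphi + \varphi^{*H})\wedge(\varphi + \varphi^{*H})$$
and decomposing by Hodge type, three observations simplify matters on the Riemann surface $X_J$: the $(2,0)$ component reduces to $\partial_{A_H}\varphi$, which vanishes since $\dim_{\mathbb{C}} X_J = 1$; the $(0,2)$ component is $\bar\partial_{A_H}\varphi^{*H}$, which is the $H$-adjoint of $\bar\partial_{\mathcal{E}}\varphi = 0$ and vanishes for the same dimensional reason (using $A_H^{0,1}=\bar\partial_{\mathcal{E}}$); and the $(1,1)$ component collapses to $F_{A_H} + [\varphi,\varphi^{*H}]$ once one uses $\varphi\wedge\varphi = 0 = \varphi^{*H}\wedge\varphi^{*H}$ on a curve. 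Hence $F_D = 0$ if and only if \eqref{eq HitchinEquation} holds.

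The second equivalence is the Hitchin–Kobayashi correspondence for Higgs bundles. The \emph{only if} direction is a Chern–Weil/slope argument: given a $\varphi$-invariant coherent subsheaf $\mathcal{F} \subset \mathcal{E}$ that would violate polystability, I would compare $\deg(\mathcal{F})$ and $\deg(\mathcal{E})$ using the metrics induced on $\mathcal{F}$ and $\mathcal{E}/\mathcal{F}$ by a solution $H$ of \eqref{eq HitchinEquation}. Positivity of the second fundamental form combined with $\varphi$-invariance of $\mathcal{F}$ (so that the off-diagonal Higgs block does not contribute with the wrong sign) would force $\mu(\mathcal{F}) \le \mu(\mathcal{E})$, with equality only if $\mathcal{E}$ splits $\varphi$-invariantly and orthogonally, contradicting the destabilizing hypothesis.

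The hardest step is the \emph{if} direction, which is the main obstacle. Here I would follow Hitchin and Simpson and study the Donaldson-type functional
$$\mathcal{M}(H) = \int_{X_J} \bigl|F_{A_H} + [\varphi,\varphi^{*H}]\bigr|^2_H \, dv,$$
either by variational minimization on an appropriate space of Hermitian metrics on $\mathcal{E}$ (of fixed determinant, say) or by running the Yang–Mills–Higgs heat flow $H_t^{-1}\partial_t H_t = -2i\Lambda(F_{A_{H_t}} + [\varphi,\varphi^{*H_t}])$ from an initial reference metric $H_0$. Short-time existence and monotonicity of $\mathcal{M}$ along the flow are relatively standard parabolic estimates; the crux is to obtain a uniform $C^0$-bound on $\log(H_t H_0^{-1})$ along the flow, which rules out escape to infinity and yields a smooth limit solving \eqref{eq HitchinEquation}. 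Polystability enters precisely at this estimate via an Uhlenbeck-type compactness argument: were the $C^0$-bound to fail, a rescaled weak limit would produce a weakly holomorphic $\varphi$-invariant projector, hence a $\varphi$-invariant destabilizing coherent subsheaf, contradicting the hypothesis. Given the depth and technical weight of this analytic step, my proposal would be to cite \cite{Hitchin87theself-duality, Simpson-Variation-of-Hodge} rather than reprove it in detail.
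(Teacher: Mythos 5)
The paper does not prove this statement at all: it is quoted verbatim as a theorem of Hitchin and Simpson, with the proof deferred entirely to \cite{Hitchin87theself-duality} and \cite{Simpson-Variation-of-Hodge}, so your decision to cite those references for the analytic core coincides exactly with the paper's treatment, and your sketch of the two halves (direct curvature computation for the first equivalence; Chern--Weil slope comparison and the Donaldson functional/heat flow with a $C^0$-estimate for the Hitchin--Kobayashi direction) is the standard and correct outline of the argument in those references. One small bookkeeping remark: in the type decomposition of $F_D$ the terms $\varphi\wedge\varphi$ and $\varphi^{*H}\wedge\varphi^{*H}$ live in the $(2,0)$ and $(0,2)$ slots (where they vanish for dimension reasons), while what actually clears the $(1,1)$ component down to $F_{A_H}+[\varphi,\varphi^{*H}]$ is the vanishing of $\bar\partial_{A_H}\varphi$ (holomorphicity of the Higgs field) together with that of its adjoint $\partial_{A_H}\varphi^{*H}$ --- all facts you invoke, just attached to the wrong slots; the conclusion is unaffected.
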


\noindent We will come back to a special case of the Hitchin equation (Eq. \eqref{WangEquation}), which is of central importance for this note, in Section \ref{subsec:affinesphere}.

Hitchin \cite{LieGrpandTeichmullerSpace} further introduces the Hitchin component using Higgs bundles and the Hitchin section. We briefly discuss the Hitchin components and the Hitchin section here and refer the reader to section 2 of \cite{Baraglia_thesis} for a more comprehensive exposition.
Let $\mathfrak{g}_{\mathbb{C}}$ be $\mathfrak{sl}(n,\bb{C})$ and let $\mathfrak{g}$ be $\mathfrak{sl}(n,\bb{R})$ which is a split real form fixed by an antilinear Lie algebra involution $\tau$ of $\mathfrak{g}_{\mathbb{C}}$. Given a principal 3-dimensional subalgebra $\mathfrak{s}=\text{span}\{x, e, \tilde{e}\}$ of $\mathfrak{sl}(n,\mathbb{C})$ consisting of a semisimple element $x$ and regular nilpotent elements $e$ and $\tilde{e}$ with commutation relations
$$[x,e]=e, \qquad [x,\tilde{e}]=-\tilde{e}, \qquad [e,\tilde{e}]=x,$$
the Lie algebra 
$\mathfrak{sl}(n,\mathbb{C})$ decomposes into a direct sum of irreducible subspaces under the adjoint representation of $\mathfrak{s}$:
$$\mathfrak{sl}(n,\mathbb{C})= \bigoplus\limits_{i=1}^{n-1} V_i.$$
We take $e_1,\cdots, e_{n-1}$ as the highest weight elements of $V_1, \cdots, V_{n-1}$, where $e_1=e$. Another decomposition is 
$$\mathfrak{sl}(n,\mathbb{C})= \bigoplus\limits_{d=-n+1}^{n-1} \mathfrak{g}_{\mathbb{C}}^{(d)},$$
where $\mathfrak{g}_{\mathbb{C}}^{(d)}$ is the subspace of $\mathfrak{sl}(n,\mathbb{C})$ on which $ad_x$ acts with eigenvalue $d$. Associated to this decomposition is a natural Lie algebra bundle

$$\cal{E}_{can}:=\bigoplus\limits_{d=-n+1}^{n-1} \mathfrak{g}_{\mathbb{C}}^{(d)}\otimes K_J^d.$$
This is a common choice of the holomorphic bundle $\cal{E}$ in Definition \ref{def,Higgsbundel}. With this defined, we can  introduce the Hitchin section in the setting of $\mathcal{M}_{Higgs}(\mathrm{PGL}(n,\mathbb{R}))$.

\begin{defn} \label{HitchinSection}
A Hitchin section $s_J$ is a map from $\bigoplus\limits_{i=2}^{n} H^{0}(X_J,K_J^i)$ to $\mathcal{M}_{Higgs}(\mathrm{PGL}(n,\mathbb{R}))$ defined as follows:
for $q=(q_{2},q_{3},\cdots, q_{n})\in \bigoplus\limits_{i=2}^{n} H^{0}(X_J,K_J^i)$, the image $s_J(q)$ is a Higgs bundle $\cal{E}_{can}$
with its Higgs field $\varphi(q) \in H^0(X, K_J \otimes ad_{sym^0(n,\bb{C})}(\cal{E}_{can}))$ given by
$$\varphi(q)=\tilde{e}+q_{2}e_{1}+q_{3}e_{2}+ \cdots q_{n}e_{n-1}.$$
\end{defn}

Hitchin in \cite{LieGrpandTeichmullerSpace} shows that any Higgs bundle in the image of the Hitchin section $s_J$ has the associated flat connection $D$ (Theorem \ref{rem FlatConnection}) with holonomy in $\mathrm{PGL}(n,\mathbb{R})$ (See \cite[ Section 3]{Orbifolds}). This leads to the following important definition of the \emph{Hitchin component}:

\begin{defn} [\cite{LieGrpandTeichmullerSpace}]\label{def,HitchinComponent}
When $n\geq2$, the Higgs bundles in the image of the Hitchin section $s_J$ are stable and have holonomy in $\mathrm{PGL}(n,\mathbb{R})$.   Moreover, the corresponding representations form a connected component of the representation space $\textnormal{Rep}(\pi_1 S,\mathrm{PGL}(n,\mathbb{R}))$.
     This connected component is homeomorphic to a Euclidean space of dimension $(2\mathscr{G}-2)(n^2-1)$ and is called the \emph{Hitchin component}, denoted as $\mathcal{H}_n(S)$.
\end{defn}

 An element in $\mathcal{H}_n(S)$ is a conjugacy class of representations, called a (conjugacy class of a) \emph{Hitchin representation}. Given a representation $\rho$, we denote its conjugacy class by $[\rho]$. When $n=2$, the Hitchin section exactly parametrizes the Teichm{\"u}ller space, i.e., 
 one has $\mathcal{H}_2(S)=\mathcal{T}(S)$. When $n>2$, one can choose a principal $3$-dimensional subalgebra $\mathfrak{s}\cong \mathfrak{sl}(2,\mathbb{C})$ of the form $\mathfrak{s}=\text{span}\{x, e, \tilde{e}\}$ which is $\tau$-invariant, and it induces an inclusion $\mathfrak{sl}(2,\mathbb{R}) 	\hookrightarrow \mathfrak{sl}(n, \mathbb{R})$.  This induces a group homomorphism $\kappa_{\mathbb{C}}$ from $\mathrm{PGL}(2,\mathbb{C})\simeq\mathrm{Int}(\mathfrak{sl}(2,\mathbb{C}))$ to $\mathrm{PGL}(n,\mathbb{C})\simeq\mathrm{Int}(\mathfrak{sl}(n,\mathbb{C}))$ (see \cite[Section 2.2]{Orbifolds}). By restricting the groups respectively to $\mathrm{PGL}(2,\mathbb{R})$ and $\mathrm{PGL}(n,\mathbb{R})$, one obtains the \emph{principal representation}
$\kappa: \mathrm{PGL}(2,\mathbb{R}) \to \mathrm{PGL}(n,\mathbb{R})$ and an embedding of the Teichm{\"u}ller space $\mathcal{T}(S)$ in the Hitchin component $\mathcal{H}_n(S)$. In other words, the principal representation $\kappa$ sends $ [\rho_0]\in\mathcal{T}(S)$ to $[\rho]=[\kappa\circ \rho_0]\in\mathcal{H}_n(S)$. We often call $[\rho]=[\kappa\circ \rho_0]$ (a conjugacy class of) \emph{Fuchsian representations} of $\mathcal{H}_n(S)$ and the space of conjugacy classes of Fuchsian representations is called the \emph{Fuchsian locus} of $\mathcal{H}_n(S)$.

\begin{rem} 
We note that in the literature, the Hitchin component is usually defined as a component of $ \textnormal{Hom}(\pi_1 S,\mathrm{PSL}(n,\mathbb{R}))/\mathrm{PSL}(n,\mathbb{R})$ (\cite{LieGrpandTeichmullerSpace}, \cite{FlatProjectiveCubicDifferentials}). In this note, we define $ \cal{H}_n(S) $ up to $\mathrm{PGL}(n,\mathbb{R})$ conjugacy (to work with orbifolds and orientation reversing maps). The differences between these definitions are further discussed in Remark \ref{rem PGLvsPSL}.\end{rem}

\subsubsection{Hitchin map}\label{sussec:HitchinMap}
The holonomy maps of flat connections $D$ associated to Higgs bundles in the images of Hitchin section $s_{J}$ induce a homeomorphism $\displaystyle  H_{J}: \bigoplus\limits_{i=2}^{n} H^{0}(X_{J},K_{J}^i) \to \mathcal{H}_n(S)$. This map describes a parametrization of the Hitchin component $\mathcal{H}_n(S)$ by holomorphic differentials and is often called the \emph{Hitchin parametrization}. 
However, one drawback of the Hitchin parametrization is that it depends on a specific choice of complex structure $J$. In particular, it breaks the invariance with respect to the mapping class group action, which will be extensively discussed in Section \ref{sec: MappingClassGrp}. An attempt towards a mapping class equivariant construction is the following, due to Labourie \cite{CrossRatiosEnergyFunctional}. Let $Q_n(S)$ denote the vector bundle over the Teichm{\"u}ller space  $\tech$ whose fiber over $X_{[J]}$ is 
\begin{equation}\label{eq:Q_n_def}
    Q_n(S)\big|_{[J]}= H^{0}(X_{[J]}, K_{[J]}^3) \oplus \cdots \oplus H^{0}(X_{[J]}, K_{[J]}^n). 
\end{equation}
 Then consider the \emph{Hitchin map} $\mathbf{H}$ defined as
 \begin{equation}
\begin{aligned}
    \mathbf{H}: Q_n(S)  &\longrightarrow \mathcal{H}_n(S)\\ 
       ([J, q_3,\cdots , q_n]) &\mapsto  H_{J}(0, q_3, \cdots ,q_n).
\end{aligned}\label{labourie_map}
\end{equation}
 Here $J$ is a representative of the equivalence class $[J]$ and $q_i$ are $i$-th order differentials holomorphic with respect to $J$ that are representatives from $[q_i]$. The image of $H_J$ is obtained by taking holonomy of the flat connection $D$ associated to the Higgs bundle $s_J(0, q_3, \cdots q_n)$ (Recall Definition \ref{HitchinSection}). 

\begin{rem}\label{equivariantAction}
The vector space $ H^{0}(X_{J},K_{J}^i)$ can be identified with the space $H^{0}(X_{[J]}, K_{[J]}^i)$.  Via the map $H_J$, the holonomy defined using the flat connection associated to $([J, q_3, \cdots, q_n])$ induces representations well defined up to conjugation.  
\end{rem}

The Hitchin map is always a surjective mapping class group equivariant map. A natural question to ask is whether the Hitchin map is a homeomorphism. A recent result from Sagman and Smille (\cite{CounterLabouire}) shows that it fails to be injective and therefore not a homeomorphism when $n\geq 4$. However, in this note we will only focus on the case $n=3$, for which the homeomorphism result is well known. 

\begin{thm}[{\cite[Theorem 2]{Loftin_AffineSphere}, \cite[Theorem 1.0.2]{FlatProjectiveCubicDifferentials}}] \label{thm, HtichinMap}
The Hitchin map $$ \mathbf{H}: Q_3(S) \to \h3 $$ is a mapping class group equivariant homeomorphism, where the mapping class group actions on $Q_3(S)$ and on $\mathcal{H}_3(S)$ (as outer automorphism group action) are the left actions which will be explained in Section \ref{sec: MappingClassGrp}.  
\end{thm}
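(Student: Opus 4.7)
The plan is to construct an explicit inverse to $\mathbf{H}$ via the theory of hyperbolic affine spheres in $\mathbb{R}^3$, following the approaches of Loftin and Labourie. The central analytic tool would be Wang's equation, a semilinear elliptic PDE on $S$ that specializes the Hitchin equation \eqref{eq HitchinEquation} to the $n=3$ setting relevant here. For each choice of complex structure $J$ and $J$-holomorphic cubic differential $q$, it admits a unique smooth solution for a conformal factor whose exponential defines a Blaschke metric in the conformal class $[J]$.

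First I would exhibit a candidate inverse $\Psi: \mathcal{H}_3(S) \to Q_3(S)$ geometrically. Given $[\rho]\in\mathcal{H}_3(S)$, one invokes the theorem of Goldman and Choi--Goldman identifying $[\rho]$ as the holonomy of a unique convex $\mathbb{RP}^2$-structure on $S$; the Cheng--Yau theorem then provides a unique hyperbolic affine sphere $\tilde f: \tilde S \to \mathbb{R}^3$ asymptotic to the cone over the associated convex domain. The Blaschke metric and Pick cubic form of $\tilde f$ are $\rho$-equivariant, hence descend to a pair $(J_\rho, q_\rho)$ with $q_\rho$ a $J_\rho$-holomorphic cubic differential, and I would set $\Psi([\rho]) = [J_\rho, q_\rho]$.

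Next I would verify $\mathbf H \circ \Psi = \mathrm{id}$ and $\Psi \circ \mathbf H = \mathrm{id}$. Going the other way, starting from a representative $(J, q)$, I would solve Wang's equation by the method of sub- and supersolutions to produce a conformal factor $u$ and a Blaschke metric $e^u \sigma_J$; the lift of this data to $\tilde S$ then determines an affine immersion $f:\tilde S \to \mathbb{R}^3$ unique up to $\mathrm{SL}(3,\mathbb{R})$, and the $\pi_1(S)$-equivariance of $f$ yields a representation whose $\mathrm{PGL}(3,\mathbb{R})$-conjugacy class I would claim equals $\mathbf H([J,q])$. To justify this last claim, I would identify the flat bundle $f^* T\mathbb R^3$ on $\tilde S$ with the flat $\mathrm{PGL}(3,\mathbb R)$-bundle produced by Theorem \ref{rem FlatConnection} applied to $s_J(0,q)$, matching the Chern connection and Higgs field data with the affine connection and cubic form of the affine sphere. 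Uniqueness in Cheng--Yau and in Wang's equation then forces the two compositions to recover their respective inputs.

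Continuity of $\mathbf H$ would follow from $C^{k,\alpha}$-dependence of the solution to Wang's equation on $(J, q)$ via standard elliptic regularity, together with continuity of the holonomy of the resulting flat connection. Continuity of $\Psi$ reduces to smooth dependence of the Cheng--Yau affine sphere on the underlying convex domain, which is standard. Mapping class group equivariance is essentially automatic: Wang's equation, the affine sphere construction, the Pick form, and the holonomy are all natural under pullback by diffeomorphisms of $S$, so the construction descends to the $\mathcal{D}_0$-quotients on both sides. The main technical obstacle I anticipate is bridging the Higgs-theoretic definition of $\mathbf H$ (via Theorem \ref{rem FlatConnection}) with the projective-geometric input of Cheng--Yau, that is, explicitly matching the flat $\mathrm{PGL}(3,\mathbb R)$-structure on the affine sphere's pullback bundle with that on the canonical Hitchin bundle $\mathcal{E}_{can}$.
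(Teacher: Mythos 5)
The paper does not prove this theorem: it is quoted directly from Loftin and Labourie, and your outline reproduces essentially the strategy of those references (Choi--Goldman plus Cheng--Yau to construct the inverse from hyperbolic affine spheres, Wang's equation for the forward direction, and naturality of the whole construction under pullback for mapping class group equivariance). The one place your sketch understates the difficulty is the continuity of $\Psi$ --- the dependence of the Cheng--Yau affine sphere on the underlying convex domain is the technically hardest step in both cited works rather than a standard fact --- but as a blueprint your argument matches the accepted proof.
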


\begin{rem}\label{rmk_topology}
It will be useful for later to remark that the bundle $Q_n(S)$ over $\cal{T}(S)$, with the latter viewed as $\cal{M}_{-1}/\cal{D}_0$ and with the fiber over each $[\sigma]$ consisting of holomorphic differentials with respect to the positively oriented complex structure determined by $[\sigma]$, has the natural $C^{\infty}$ topology (which is the quotient topology descended from the $C^\infty$ topology of objects without taking quotients by the $\mathcal{D}_0$ action). For holomorphic differentials, the $C^{\infty}$ topology is equivalent to the compact open topology due to Weierstrass' Theorem. Therefore a sequence of pairs of hyperbolic metrics and holomorphic differentials  $\{(\sigma_k, q_k)\}_{k\geq 0}$ converges to a pair of hyperbolic metric and holomorphic differential $(\sigma, q)$ if the hyperbolic metrics $\sigma_k$ converge to $\sigma$ in $C^{\infty}$ topology and the lifts of holomorphic differentials $q_k$ to the universal covers $\mathbb{D}$ converge uniformly on compact subsets of $\mathbb{D}$ to the lift of $q$.
\end{rem}

\subsection{Orbifolds and orbifolds representations}\label{orbifolds}

\subsubsection{Orbifolds}
An orbifold is a space that is locally modelled on $\mathbb{R}^n$ modulo finite group actions. In the special case that all of these finite groups are trivial, we obtain a manifold. Otherwise,  orbifolds  have singularities. For a general introduction on orbifolds, we refer the reader to \cite[Chapter 13]{ThurstonBooks}. Much of the presentation in this subsection follows \cite[Section 2]{Orbifolds}.
We restrict our discussion to $n=2$. Let $Y$ be a closed connected smooth orbifold of dimension $2$. There are three types of singularities of $Y$:
\begin{enumerate}
    \item $p$ is a cone point of order $m$: there is a neighborhood of $p$ that is isomorphic to $\mathbb{R}^2/ \mathbb{Z}_m$ where $\mathbb{Z}_m$ acts on $\mathbb{R}^2$ by rotation.
    \item $p$ is a mirror point: there is a neighborhood of $\mathbb{R}^2/ \mathbb{Z}_2$ where  $\mathbb{Z}_2$ acts by reflection in the $y$-axis. 
    \item $p$ is a corner reflector of order $n$: there is a neighborhood of $p$ that is isomorphic to $\mathbb{R}^2/D_n$ where $D_n$ is the dihedral group of order $2n$, with presentation
    $$\langle a, b: a^2=b^2=(ab)^n=1 \rangle.$$
\end{enumerate}
For a $2$-dimensional orbifold $Y$, we will denote by $k$ the number of cone points (of respective orders $m_1, \cdots, m_k$) and by $l$ the number of corner reflectors (of respective orders $n_1, \cdots, n_l$). We will also denote by $\tilde{Y}$ the orbifold universal cover of $Y$. The \emph{orbifold fundamental group} is denoted by $\pi_1(Y)$, which is defined to be the group of deck transformations of the universal cover $\tilde{Y}$. We say $Y$ \emph{orientable} if its underlying topological space $|Y|$ is orientable and if $Y$ has only cone points as singularities.
The \emph{Euler characteristic} of an orbifold $Y$ is defined as
$$\chi(Y)= \chi(|Y|)- \sum_{i=1}^{k} (1-\frac{1}{m_i})-\frac{1}{2}\sum_{j=1}^{l} (1-\frac{1}{n_j}).$$
We will assume in this note that $Y$ has negative Euler characteristic:  $\chi(Y)<0$.
We say that a 2 dimensional orbifold is a \emph{good orbifold} if it has some covering orbifold which is a surface. Every orbifold of negative Euler characteristic is a good orbifold. It can be seen as a quotient of a closed orientable surface and has a presentation defined as follows:

\begin{defn}
A presentation of  a closed connected orbifold $Y$ is a triple $(S, \Sigma, \varphi)$, where $S$ is a  smooth closed connected orientable surface, $\Sigma$ is a finite subgroup of $\cal{D}$ and $\varphi$ is an orbifold isomorphism $\varphi: Y \to S/ \Sigma$. We then write $Y \simeq [S/ \Sigma]$, with $\varphi$ ignored. 
\end{defn}

\begin{rem}\label{rem:OrbifoldComplexStructure}
If $X_J=(S,J)$ is a Riemann surface and $\Sigma$ acts on $X_J$ by holomorphic or anti-holomorphic maps, then $Y=Y_J$ inherits the ``complex structure'' from $X_J$ and we denote it as $Y \simeq [X_J/ \Sigma]$. For nonorientable orbifolds, these are called \emph{orbifold dianalytic structures}. Precisely, an orbifold dianalytic structure on $Y$ is an orbifold complex structure on its orientable  double cover $Y^+$ with $\bb{Z}/2\bb{Z}$ action given by an anti-holomorphic involution, see \cite[Section 5.1]{Orbifolds}. 
\end{rem}

\subsubsection{Hitchin representations for orbifolds}

Thurston studied the space of hyperbolic structures on a closed $2$-orbifold $Y$ of negative Euler characteristic \cite[Chapter 13]{ThurstonBooks}. This is called the \emph{Teichm{\"u}ller space} of $Y$, denoted as $\mathcal{T}(Y)$. Similarly to the case of closed surfaces, by taking the holonomy representations of hyperbolic structures on $Y$, this space of hyperbolic structures on $Y$ is identified with a connected component of the representation space 
$$\textnormal{Rep}(\pi_1 Y,\mathrm{PGL}(2,\mathbb{R})):= \textnormal{Hom}(\pi_1 Y,\mathrm{PGL}(2,\mathbb{R}))/\mathrm{PGL}(2,\mathbb{R}).$$
Similarly to the closed surface case, one can define \emph{Fuchsian representations} and the \emph{Fuchsian locus} for orbifolds using the principal representation
$\kappa: \mathrm{PGL}(2,\mathbb{R}) \to \mathrm{PGL}(n,\mathbb{R})$. A (conjugacy class of) representations $[\rho]: \pi_1 Y \to \mathrm{PGL}(n,\mathbb{R})$ is called a (conjugacy class of) \emph{Fuchsian representations} if there exists $[\rho_0]\in \mathcal{T}(Y)$ such that $[\rho]=[ \kappa\circ \rho_0]$. The space of (conjugacy class of) Fuchsian representations is called the \emph{Fuchsian locus} of $\textnormal{Rep}(\pi_1 Y,\mathrm{PGL}(n,\mathbb{R}))$.

The \emph{Hitchin component} of the orbifold $Y$ is defined using Fuchsian representations.

\begin{defn}
The \emph{Hitchin component} of $Y$, denoted as $\textnormal{Hit}(\pi_1 Y, \mathrm{PGL}(n,\mathbb{R}))$,  is the connected component of $\textnormal{Rep}(\pi_1 Y,\mathrm{PGL}(n,\mathbb{R})):= \textnormal{Hom}(\pi_1 Y,\mathrm{PGL}(n,\mathbb{R}))/\mathrm{PGL}(n,\mathbb{R})$ that contains the Fuchsian locus.  An element in $\textnormal{Hit}(\pi_1 Y, \mathrm{PGL}(n,\mathbb{R}))$ is called a Hitchin representation.
\end{defn}

\begin{rem}[{\cite[Remark 2.5]{Orbifolds}}] \label{rem PGLvsPSL}
If $Y$ is orientable (for instance if $Y=S$ is a closed orientable surface), then any Fuchsian representation of $\pi_1(Y)$ is in fact contained in $\mathrm{Hom}(\pi_1 Y , \mathrm{PSL}(n,\mathbb{R}))$. It may happen that there are two Hitchin components (for example, when $n$ is even) if we consider such representations up to $\mathrm{PSL}(n,\mathbb{R})$-conjugacy. These representations in  two connected components are  related by an inner  automorphism of $\mathrm{PGL}(n,\mathbb{R})$. Therefore $\mathrm{PGL}(n,\mathbb{R})$-conjugacy identifies these components. On the other hand, when $Y$ is nonorientable, the images of Fuchsian representations of $\pi_1(Y)$ are in $\mathrm{PGL}(n,\mathbb{R})$ (and may not be able to be restricted to $\mathrm{PSL}(n,\mathbb{R})$). 
\end{rem}

\section{Mapping class group actions}\label{sec: MappingClassGrp}
We give an exposition on how the extended mapping class group acts on various mathematical objects.

\subsection{Extended mapping class group action on Riemannian metrics}
\label{subsec: MappingClassGrpMetrics}

Let $S$ be a closed orientable smooth surface of genus $\mathscr{G}\geq 2$ as in Section \ref{subsec: teich}. Recall that
  $\mathcal{M}_-$ denotes the space of negatively curved smooth Riemannian metrics on $S$, on which the diffeomorphism groups $\cal{D}$, $\mathcal{D}^+$ (when $S$ is oriented), and $\cal{D}_0$ act by pullback.
The \emph{extended mapping class group} is given by the quotient group 
$\textnormal{Mod}^{\pm}(S):= \mathcal{D}/ \mathcal{D}_0$.
Two smooth diffeomorphisms $f_1$ and $f_2$ represent the same point in $\textnormal{Mod}^{\pm}(S)$ if and only if  $f_1$ is smoothly isotopic to $f_2$. In other words, the extended mapping class group $\textnormal{Mod}^{\pm}(S)$ is the group of isotopy classes of elements in $\cal {D}$.
When $S$ is given an orientation, we also denote by $\mod= \mathcal{D}^+/ \mathcal{D}_0$ the 
\emph{mapping class group} which is the group of isotopy classes of all orientation-preserving smooth diffeomorphisms of $S$. The mapping class group $\textnormal{Mod}(S)$ is an index 2 subgroup of $\textnormal{Mod}^{\pm}(S)$. Given $\psi\in\cal {D}$, we denote by $[\psi]$ the induced element in $\textnormal{Mod}^{\pm}(S)$. 
The action of $\cal D$ on $\mathcal{M}_-$ by pullback induces an action of $\textnormal{Mod}^{\pm}(S)$ on the quotient space $\mathcal{M}_-/ \mathcal{D}_0$. 
This space  will be discussed further in Section \ref{subsec: negMetrics}. 
Note that this action of $\textnormal{Mod}^{\pm}(S)$ is a right action on $\mathcal{M}_-/ \mathcal{D}_0$. To be consistent with the outer automorphism group action introduced in the next subsection, people also often use the induced left action of $\textnormal{Mod}^{\pm}(S)$ on $\mathcal{M}_-/ \mathcal{D}_0$ given by $[\psi]\cdot [g]=[(\psi^{-1})^*g]$. 

\subsection{Extended mapping class group action on Hitchin components}\label{subsec:MCGHitchin}

\subsubsection{Outer automorphism group}\label{subsec outer auto} 
The \emph{outer automorphism group} $\textnormal{Out}(\pi_1 S)$ is defined as the quotient
$$\textnormal{Out}(\pi_1 S)= \textnormal{Aut}(\pi_1 S)/ \textnormal{Inn}(\pi_1 S),$$
where $\textnormal{Aut}(\pi_1 S)$ is the group of automorphisms of $\pi_1 S$ and $\textnormal{Inn}(\pi_1 S)$ denotes the group of all \emph{inner automorphisms}:  for any $h\in \pi_1 S$, the associated inner automorphism is defined by
\begin{align*}
    I_h: \pi_1 &S \to \pi_1 S\\
    &g \mapsto hgh^{-1}.
\end{align*}
By the Dehn-Nielsen-Baer Theorem \cite[Theorem 8.1]{FarbMappingClassGroup}, the extended mapping class group $\textnormal{Mod}^{\pm}(S)$ is isomorphic to the outer automorphism group $\textnormal{Out}(\pi_1 S)$. There is a natural left action of $\textnormal{Out}(\pi_1 S)$ on $\mathcal{H}_n(S)$. Given a representation $\rho\in \mathcal{H}_n(S)$ and any $\psi \in \textnormal{Out}(\pi_1 S)$,  define
$$ \psi \cdot \rho= \rho \circ \psi^{-1}.$$
This action preserves the Hitchin component $\mathcal{H}_n(S)$ (see  the paragraph after Lemma 2.8 in \cite{Orbifolds}). As the extended mapping class group $\textnormal{Mod}^{\pm}(S)$ is identified with the group $\textnormal{Out}(\pi_1 S)$, we obtain a natural action of $\textnormal{Mod}^{\pm}(S)$ on the Hitchin component $\mathcal{H}_n(S)$. In particular, this action on $\mathcal{H}_2(S)=\mathcal{T}(S)$ corresponds to the left extended mapping class group action on $\mathcal{M}_{-1}/\mathcal{D}_0$ by (inverse)  pullback defined in Section \ref{subsec: MappingClassGrpMetrics}.

\subsubsection{Outer automorphism group and orbifolds}

In Section \ref{orbifolds}, we presented a 2-dimensional closed connected smooth orbifold $Y$ of negative Euler characteristic as a quotient of a closed orientable surface $S$ by a finite subgroup $\Sigma\leq \cal{D}$. This implies the existence of a short exact sequence:
$$1\to \pi_1 S \to \pi_1 Y \to \Sigma \to 1.$$
In particular, $\pi_1 S$ is a normal subgroup of $\pi_1 Y$ of finite index  and $\Sigma \simeq \pi_1 Y/ \pi_1 S$.

The finite group $\Sigma\leq \cal{D}$ yields a subgroup $\Sigma\leq\textnormal{Mod}^{\pm}(S)$ which is isomorphic to a subgroup of $\textnormal{Out}(\pi_1 S)$ by the Dehn-Nielsen-Baer Theorem.
Because  $\textnormal{Out}(\pi_1 S)$ acts on the Hitchin component $\mathcal{H}_n(S)$ from the left by (inverse)  precomposition, one obtains an action of $\Sigma$ on $\mathcal{H}_n(S)$. We will denote by $\textnormal{Fix}_{\Sigma} \mathcal{H}_n(S)$ the fixed locus of the action $\Sigma$.
The following theorem from \cite{Orbifolds} about the relation between Hitchin representations for orbifolds and the outer automorphism group action on $\cal{H}_n(S)$ will be important later:

\begin{thm}[{\cite[Theorem 2.12]{Orbifolds}}] \label{thm,orbifoldLocus}
Given a closed connected 2-orbifold of negative Euler characteristic $Y$ and a presentation $Y\simeq [S/\Sigma]$, the map $\rho \mapsto \rho|_{\pi_1 S}$ induces a homeomorphism $j: \textnormal{Hit}(\pi_1 Y, \mathrm{PGL}(n, \mathbb{R})) \to \textnormal{Fix}_{\Sigma}\mathcal{H}_n(S)$ between $\textnormal{Hit}(\pi_1 Y, \mathrm{PGL}(n, \mathbb{R}))$ and the $\Sigma$-fixed locus in $\mathcal{H}_n(S)$.
\end{thm}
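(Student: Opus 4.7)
The plan is to construct the map $j$ explicitly, to build an inverse by hand using the irreducibility of Hitchin representations, and then to verify continuity in both directions; the hard part will be verifying that the lifted representation lands in the correct connected component of $\textnormal{Rep}(\pi_1 Y, \mathrm{PGL}(n,\mathbb{R}))$.

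First I would verify that $j$ is well-defined. Given $[\rho] \in \textnormal{Hit}(\pi_1 Y, \mathrm{PGL}(n,\mathbb{R}))$, the short exact sequence $1 \to \pi_1 S \to \pi_1 Y \to \Sigma \to 1$ shows that conjugation by any $\gamma \in \pi_1 Y$ preserves $\pi_1 S$ and, modulo inner automorphisms of $\pi_1 S$, realises the action of the image $\psi \in \Sigma$ on $\pi_1 S$. The identity $\rho(\gamma)\rho(h)\rho(\gamma)^{-1} = \rho(\gamma h \gamma^{-1})$ for $h \in \pi_1 S$ therefore forces $[\rho|_{\pi_1 S}]$ to be fixed by $\Sigma$. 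To see that $\rho|_{\pi_1 S}$ lies in $\mathcal{H}_n(S)$ rather than in another component of the character variety of $S$, I would use that restriction is continuous, that Fuchsian representations of $\pi_1 Y$ restrict to Fuchsian representations of $\pi_1 S$ (since the principal embedding $\kappa$ is fixed and a hyperbolic structure on $Y$ pulls back to one on the finite cover $S$), and that $\textnormal{Hit}(\pi_1 Y, \mathrm{PGL}(n,\mathbb{R}))$ is connected.

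The main step is the construction of the inverse. Given $[\sigma] \in \textnormal{Fix}_\Sigma \mathcal{H}_n(S)$, I would use that a Hitchin representation of $\pi_1 S$ is irreducible with trivial centraliser in $\mathrm{PGL}(n,\mathbb{R})$. For each $\gamma \in \pi_1 Y$, let $c_\gamma: \pi_1 S \to \pi_1 S$ denote conjugation by $\gamma$; since $[\sigma]$ is $\Sigma$-fixed, $\sigma \circ c_\gamma$ is conjugate to $\sigma$, and by irreducibility there is a \emph{unique} $g_\gamma \in \mathrm{PGL}(n,\mathbb{R})$ with $g_\gamma \sigma(h) g_\gamma^{-1} = \sigma(c_\gamma(h))$ for all $h \in \pi_1 S$. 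Set $\tilde\sigma(\gamma) := g_\gamma$. Uniqueness immediately forces $g_h = \sigma(h)$ for $h \in \pi_1 S$ and $g_{\gamma_1 \gamma_2} = g_{\gamma_1} g_{\gamma_2}$, so $\tilde\sigma: \pi_1 Y \to \mathrm{PGL}(n,\mathbb{R})$ is a homomorphism extending $\sigma$. Injectivity of $j$ also follows from this uniqueness principle, since any two representations of $\pi_1 Y$ restricting to the same Hitchin representation of $\pi_1 S$ must agree on every $\gamma$ after a single global conjugation.

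To finish, I would show $[\tilde\sigma] \in \textnormal{Hit}(\pi_1 Y, \mathrm{PGL}(n,\mathbb{R}))$ and that $j^{-1}$ is continuous. For the component question, the cleanest route is to invoke Fock--Goncharov positivity of the equivariant boundary curve of a Hitchin representation: the Gromov boundary $\partial_\infty \pi_1 Y$ agrees with $\partial_\infty \pi_1 S$ because $S \to Y$ is a finite orbifold cover, and the positive limit map for $\sigma$ (which exists because $[\sigma] \in \mathcal{H}_n(S)$) is automatically $\pi_1 Y$-equivariant for $\tilde\sigma$ by the uniqueness defining $g_\gamma$, which places $[\tilde\sigma]$ in the positive (hence Hitchin) component. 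Continuity of $j^{-1}$ is then automatic, since the uniqueness of $g_\gamma$ as the intertwiner of two continuously varying irreducible representations implies $g_\gamma$ depends continuously on $[\sigma]$, while $j$ itself is continuous because restriction of homomorphisms is continuous in the pointwise topology. The main obstacle, as noted above, is precisely pinning down that $\tilde\sigma$ is Hitchin and not merely a representation extending a Hitchin restriction; this is where one must carefully invoke either positivity or a dimension count combined with openness and closedness of the Hitchin locus inside the preimage $j(\textnormal{Hit}(\pi_1 Y))$.
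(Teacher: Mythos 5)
The paper does not prove this statement; it is imported verbatim from \cite[Theorem 2.12]{Orbifolds}, so there is no in-paper proof to compare against. Judged on its own, your outline follows the standard architecture: well-definedness of $j$ via the short exact sequence and a connectedness argument, injectivity and the construction of the extension $\tilde\sigma$ via the triviality of the centralizer of a Hitchin representation, and the cocycle check $g_{\gamma_1\gamma_2}=g_{\gamma_1}g_{\gamma_2}$. All of that is correct (modulo the routine point that Hitchin representations are \emph{absolutely} irreducible, so the centralizer in $\mathrm{PGL}(n,\mathbb{R})$ really is trivial, and the minor glibness of calling continuity of the intertwiner $g_\gamma$ in $[\sigma]$ ``automatic'').

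The genuine gap is exactly where you locate it: showing $[\tilde\sigma]\in \textnormal{Hit}(\pi_1 Y,\mathrm{PGL}(n,\mathbb{R}))$. Your proposed resolution --- ``the equivariant limit map is positive, which places $[\tilde\sigma]$ in the positive (hence Hitchin) component'' --- is not an available black box. The Hitchin component of $\pi_1 Y$ is \emph{defined} as the component containing the Fuchsian locus; the equivalence ``positive $\Leftrightarrow$ Hitchin'' is Fock--Goncharov's theorem for closed surface groups, and transporting it to orbifold fundamental groups requires knowing that positive representations of $\pi_1 Y$ form a connected set containing the Fuchsian locus, which is at least as hard as the statement you are trying to prove. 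The standard repair is more elementary: (i) the $\Sigma$-equivariant Hitchin parametrization (Proposition \ref{SigmaEquiHitchinSection}, taken at a $\Sigma$-invariant complex structure, which exists since $Y$ carries a hyperbolic structure) identifies $\textnormal{Fix}_{\Sigma}\mathcal{H}_n(S)$ with a real vector subspace of $\bigoplus_i H^0(X_J,K_J^i)$, hence it is connected; (ii) the extension map $[\sigma]\mapsto[\tilde\sigma]$ is continuous (by continuity of the unique intertwiner near irreducible representations); (iii) a Fuchsian point of $\textnormal{Fix}_{\Sigma}\mathcal{H}_n(S)$ extends to a Fuchsian representation of $\pi_1 Y$. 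Connectedness then forces the entire image of the extension map into the Hitchin component of $\pi_1 Y$, completing surjectivity without any appeal to positivity.
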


\subsection{Extended mapping class group action on holomorphic differentials} \label{MappingClassHolomorphicDiff}

In this subsection, we first explain how the extended mapping class group acts  on the space of sections of holomorphic differentials in general. Then we focus on surface diffeomorphisms that are holomorphic or antiholomorphic with respect to a certain complex structure, and discuss the extended mapping class group action they induce. This will naturally lead to an exposition on the \emph{equivariant structure of Hitchin fibration} from \cite[Section 4.1]{Orbifolds}.

Given a complex structure $J \in C^\infty(S;\textnormal{End}(TS))$, a diffeomorphism $\psi\in \cal{D}$ acts on $J$ from the right as: $(\psi^* J)_{x}= (d\psi^{-1})_{\psi(x)} \circ J_{\psi(x)} \circ d\psi_{x}$. In particular, we say $\psi$ is holomorphic with respect to $J$ if $\psi^* J=J$; We say $\psi$ is anti-holomorphic with respect to $J$ if $\psi^* J=-J$. 
Upon taking a quotient by $\mathcal{D}_0$ action, one obtains an action of the extended mapping class group $\textnormal{Mod}^{\pm}(S)= \mathcal{D}/ \mathcal{D}_0$ on isotopy classes of complex structures.

The action of $\psi \in \mathcal{D}$ naturally induces a left action of $\psi$ on all powers of the canonical bundles $K^d_J$, denoted by $\kappa_\psi$.
Let $X_J=(S,J)$ be the Riemann surface with the complex structure $J$. We can define an action of $\psi$ on a holomorphic section $s\in H^0(X_J, K^d_J)$ by $\psi^* s:= {\kappa_{\psi}}^{-1} \circ s \circ \psi$ as illustrated by the following diagram: 
    \[ \begin{tikzcd}
K_{\psi^*J}^{d} \arrow{r}{\kappa_{\psi}}  & K_{J}^d    \\%
(S, \psi^*J) \arrow{r}{\psi} \arrow[swap]{u}{\psi^* s}&  (S,  J) \arrow[swap]{u}{s}
\end{tikzcd}
\]
More explicitly, for $p\in S$ and $v\in (T X^{\mathbb{C}}_{\psi^*J})^{(1,0)}$, we have 
$$(\psi^* s)(p)\big(v,\cdots, v\big)=s(\psi(p))\big(d\psi (v),\cdots, d\psi (v)\big).$$
Here $d\psi$ denotes the complexified differential $d\psi: T_p X_{\psi^* J}^{\mathbb{C}} \to T_{\psi(p)} X_{J}^{\mathbb{C}}$. 
The pull back $\psi^* s$ is a holomorphic section on $H^{0}(X_{\psi^* J}, K_{\psi^* J}^d)$. 
After taking $\mathcal{D}_0$ quotient, we obtain a right  $\textnormal{Mod}^{\pm}(S)$ action on ($\mathcal{D}_0$-equivalence classes of) holomorphic $d$-differentials. Often, we also use the induced left action by inverse pull back given by $[\psi ]\cdot [s]= [(\psi^{-1})^* s]$.

\smallskip

Now we focus on diffeomorphisms that are either holomorphic or antiholomorphic with respect to a fixed complex structure $J$ and explain their actions on holomorphic differentials. Suppose that $\Sigma$ is a finite subgroup of $\cal{D}$.  We fix an orientation for $S$ and a $\Sigma$-invariant Riemannian metric $g$ on $S$. Denote by 
$J=J_{g}$ the complex structure associated to $g$.
Then a map $\psi\in \Sigma$ is holomorphic with respect to $J$ if it preserves the orientation of $S$; otherwise, it is anti-holomorphic. The bundle isomorphism $\kappa_{\psi}$ from $K_{\psi^*J}$ to $K_{J}$ defined before induces a bundle automorphism $\tau_{\psi}:K_J \to K_J$. More explicitly, when $\psi$ is orientation preserving, we let $\tau_{\psi}= \kappa_{\psi}$, which is clearly a bundle automorphism. 
When $\psi$ is orientation reversing, given $\xi=(p,w)\in K _{J}^{d}|_{p}$, we let $\tau_{\psi} \xi:= \overline{\kappa_{\psi}\xi}$.
Since $\kappa_{\psi}$ maps the bundle $K_{\psi^*(-J)}$ to $K_{-J}$ and $\psi^*(-J)=J$, it is also clear in this case that $\tau_{\psi}:K_J \to K_J$ is a bundle automorphism. This introduces an action of $\tau_{\psi}$ on all tensor powers $K^d_J$. The definition of the $\tau_\psi$ action here then coincides with the $\tau_\psi$ action in \cite[Section 4.1]{Orbifolds}.

The family $\tau=(\tau_\psi)_{\psi\in \Sigma}$ forms a \emph{$\Sigma$-equivariant structure} of the holomorphic  bundle $K^d_J$ in the following sense:
\begin{enumerate}
    \item  For all $\psi \in \Sigma$, the following diagram commutes (i.e.  $\pi\circ \tau_\psi= \psi \circ \pi$),
    \[ \begin{tikzcd}
K_J^{d} \arrow{r}{\tau_{\psi}} \arrow[d,"\pi", xshift=0.1ex] & K_J^{d}  \arrow[d,"\pi", xshift=-0.7ex] \\%
(S, J) \arrow[u,"s",bend left=10, dashed, xshift=-1.3ex] \arrow{r}{\psi}&  (S, J)\arrow[u,"\psi^A \cdot s" right, bend right=10, dashed, xshift=1.3ex]
\end{tikzcd},
\]
where $\pi:K_J^{d} \to (S,J) $ is the projection. (The map $\psi^A$ will be explained in the next paragraph.)
    \item The bundle map $\tau_\psi$ is fiberwise $\mathbb{C}$-linear if $\psi$ is holomorphic with respect to $J$ and fiberwise $\mathbb{C}$-antilinear if $\psi:X \to X$ is anti-holomorphic with respect to $J$.
    \item $\tau_{id}= \textnormal{Id}_{K^d_J}$ and $\tau_{\psi_1\psi_2}=\tau_{\psi_1} \tau_{\psi_2}$.
\end{enumerate}
Note that since  
$\psi$ and $\tau_{\psi}$ are simultaneously holomorphic or anti-holomorphic, the composition $ \tau_{\psi} \circ s \circ \psi^{-1}$ is again a holomorphic section in $H^{0}(X_{J}, K_{J}^d)$. We will denote  this (left) action by $ \psi^{A} \cdot s:= \tau_{\psi} \circ s \circ \psi^{-1}$ to emphasize that this is an automorphism and distinguish it from the pull back action $\psi^{*} s$ and its induced left action $\psi\cdot s=(\psi^{-1})^*s$ by inverse pull back.

In general, suppose $X_J$ is a Riemann surface with complex structure $J$ so that the finite group $\Sigma$ acts on $X_J$ by holomorphic or anti-holomorphic maps. Then $\psi\in \Sigma$ act on $H^{0}(X_{J}, K_{J}^d)$ as an automorphism $\psi^A:H^{0}(X_{J}, K_{J}^d) \to H^{0}(X_{J}, K_{J}^d) $.  The following Proposition in \cite{Orbifolds} will be important later.

\begin{prop}[{\cite[Lemma 4.3]{Orbifolds}}]
\label{SigmaEquiHitchinSection}
 Under the above assumptions, the Hitchin parametrization $H_{J}:\bigoplus\limits_{i=2}^{n}H^0(X_{J}, K^i_{J}) \to \mathcal{H}_{n}(S) $ is $\Sigma$-equivariant and induces a homeomorphism $$\textnormal{Fix}_{\Sigma}\big(\bigoplus\limits_{i=2}^{n}H^0(X_{J}, K^i_{J})\big) \simeq \textnormal{Fix}_{\Sigma}\mathcal{H}_{n}(S)$$ for any integer $n\geq 2$.
\end{prop}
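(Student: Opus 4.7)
The plan is to prove the $\Sigma$-equivariance of the Hitchin parametrization first; the homeomorphism between the fixed loci will then follow immediately since $H_J$ is known to be a homeomorphism from $\bigoplus_{i=2}^n H^0(X_J,K_J^i)$ onto $\mathcal{H}_n(S)$. The equivariance statement to establish is that for every $\psi\in\Sigma$ and every $q=(q_2,\dots,q_n)$,
\[
H_J(\psi^A\cdot q) = [\psi]\cdot H_J(q),
\]
where the left side uses the automorphism action built from the equivariant structure $\tau$, and the right side uses the outer-automorphism action $[\rho]\mapsto[\rho\circ\psi_*^{-1}]$ on $\mathcal{H}_n(S)$.

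To verify this, I would trace through the three stages of $H_J$: (i) from holomorphic differentials to a Higgs bundle via the Hitchin section $s_J$ (Definition \ref{HitchinSection}), (ii) solving the Hitchin equation to obtain a Hermitian metric and hence a flat connection, and (iii) taking the holonomy representation. The bundle $\mathcal{E}_{\mathrm{can}}=\bigoplus_{d=-n+1}^{n-1}\mathfrak{g}_\mathbb{C}^{(d)}\otimes K_J^d$ inherits a natural $\Sigma$-equivariant structure: on each $K_J^d$ factor we use $\tau_\psi$, and on each $\mathfrak{g}_\mathbb{C}^{(d)}$ factor we use either the identity or the antilinear real form $\tau$ of $\mathfrak{sl}(n,\mathbb{C})$, according to whether $\psi$ is holomorphic or anti-holomorphic with respect to $J$. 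A direct computation from the explicit formula $\varphi(q)=\tilde e+\sum q_i e_{i-1}$ shows that this induced bundle automorphism of $\mathcal{E}_{\mathrm{can}}$ (covering $\psi$) intertwines the two Higgs fields, sending $\varphi(q)$ to $\varphi(\psi^A\cdot q)$.

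Once $(\mathcal{E}_{\mathrm{can}},\varphi(q))$ and $(\mathcal{E}_{\mathrm{can}},\varphi(\psi^A\cdot q))$ are identified via this automorphism, I would observe that the Hitchin equation \eqref{eq HitchinEquation} is manifestly preserved by such automorphisms of $(\mathcal{E},\varphi)$. Hence if $H$ is the unique Hermitian metric solving the Hitchin equation for $\varphi(q)$ (existence and uniqueness provided by Theorem \ref{rem FlatConnection}), then its transport under the bundle automorphism is the unique solution for $\varphi(\psi^A\cdot q)$. The corresponding flat connections $D$ and $D'$ are therefore intertwined by a bundle isomorphism covering $\psi$. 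Passing to holonomy, a bundle isomorphism covering $\psi$ conjugates the holonomy representation by pre-composition with $\psi_*^{-1}$, so the resulting representation classes satisfy $H_J(\psi^A\cdot q)=[H_J(q)\circ\psi_*^{-1}]=[\psi]\cdot H_J(q)$, which is the desired equivariance.

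With equivariance in hand, the last step is formal: a $\Sigma$-equivariant homeomorphism between two $\Sigma$-spaces restricts to a homeomorphism between the fixed loci, equipped with their subspace topologies. The main obstacle I anticipate is the anti-holomorphic case: there $\tau_\psi$ is fiberwise $\mathbb{C}$-antilinear, so the induced automorphism of $\mathcal{E}_{\mathrm{can}}$ must mix the real structure $\tau$ on the Lie-algebra side with $\tau_\psi$ on the line-bundle side to remain compatible with the type decomposition and with the holomorphic structure $\bar\partial_{\mathcal{E}}$. One must check carefully that this antilinear transport sends Hermitian metrics to Hermitian metrics and preserves both the Chern connection identity $A_H^{0,1}=\bar\partial_{\mathcal{E}}$ and the bracket term $[\varphi,\varphi^{*H}]$ in \eqref{eq HitchinEquation}; these verifications, while straightforward from the definitions of Section \ref{MappingClassHolomorphicDiff}, are where the bookkeeping is most delicate.
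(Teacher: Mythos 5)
The paper does not prove this proposition itself; it imports it verbatim as \cite[Lemma 4.3]{Orbifolds}, so there is no in-paper proof to compare against. Your outline reproduces the standard argument from that reference: equip $\cal{E}_{can}$ with the $\Sigma$-equivariant structure built from $\tau_\psi$ on the $K_J^d$ factors and the real form $\tau$ on the $\mathfrak{g}_{\mathbb{C}}^{(d)}$ factors (here the $\tau$-invariance of the principal subalgebra $\mathfrak{s}=\mathrm{span}\{x,e,\tilde e\}$, noted in Section \ref{subsec: HitchinComp}, is what makes the Higgs fields intertwine), invoke uniqueness of the harmonic metric, and pass to holonomy. The steps all check out — in particular the holonomy bookkeeping $[\mathrm{hol}_{D'}]=[\mathrm{hol}_D\circ\psi_*^{-1}]$ and the formal restriction to fixed loci — and you have correctly identified the anti-holomorphic case as the only place requiring genuine care; this is consistent with the treatment in the cited source.
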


\section{Covariance metric on the space of negatively curved Riemannian Metrics}\label{sec:PressureMetric}

In this section, which follows closely \cite[Section 2]{GeodesicStretch}, we will define the covariance metric introduced there on the space of negatively curved metrics. The material here works for $n$-dimensional Riemannian manifolds with Anosov geodesic flows, though we only discuss it in the setting that we are interested in, namely that of a closed orientable smooth surface $S$ with genus $\mathscr{G}$ at least $ 2$.

 \subsection{Function Spaces}
If $M$ is a closed manifold, we will denote by $\mathcal{D}'(M):=(C^\infty(M))'$ the space of distributions, and by $\cal{H}^s(M)$ the Sobolev space of order $s\in \bb{R}$.
The latter can be defined as 
\begin{equation*}
    \cal{H}^s(M) :=\{u \in \mathcal{D}'(M)\textnormal{ }|\textnormal{ } (1-\Delta_{{g}_0})^{s/2}u\in L^2_{g_0}(M) \textnormal{ } \},    
\end{equation*}
where $\Delta_{g_0}$ denotes the negative Laplace-Beltrami operator of a fixed Riemannian metric $g_0$ and $L^2_{g_0}(M)$ is the space of square integrable functions with respect to the probability measure  induced by the volume density $\rmd v_{g_0}$ of $g_0$, i.e. with respect to $\frac{\rmd v_{g_0}}{\mathrm{Vol}(M,g_0)}$.
An alternative useful characterization of $\cal{H}^k(M)$ for integer $k\geq 0$  is given by
\begin{equation*}
    \cal{H}^k(M)=\{u\in \cal {D}'(M)| V_1\cdots V_mu\in  L^2_{g_0}(M), \quad 0\leq m\leq k, \quad   \text{ for any }  V_j\in \mathfrak{X}(M) \},
\end{equation*}  
where $\mathfrak{X}(M)$ denotes smooth vector fields on $M$.
A choice of $g_0$ induces an inner product
\begin{equation*}\label{SobolevInner_product}
	\langle u,v\rangle_{\cal{H}^s_{g_0}(M)}=\langle (1-\Delta_{g_0})^{s/2} u, (1-\Delta_{g_0})^{s/2}v\rangle_{ L^2_{g_0}(M)}=\langle (1-\Delta_{g_0})^{s} u, v\rangle_{ L^2_{g_0}(M)},
\end{equation*}
making  $ \cal{H}^s(M)$ into a Hilbert space (the last equality follows by self-adjointness). 
In our applications, $M$ will typically be either $S$ or $T^1S_g$, where 
$T^1S_g$ is the unit tangent bundle with respect to a Riemannian metric $g$ on $S$.
Whenever we write $L^2(T^1S_g)$, it will be understood that the measure used to define the $L^2$ inner product and norm is the Liouville measure of $g$ normalized to have total mass 1,  denoted by $\mu_g^L$.
Notice that if $f\in C^\infty(S)$ and $\pi_0:T^1S_g\to S$ is the natural projection we have $$\int_{T^1S_g}\pi_0^*f \rmd\mu_{g}^L= \frac{1}{\mathrm{Area}(S,g)} \int_S  f \rmd v_g.$$

For $k\in \bb{N}_0$ and  $\alpha\in (0,1)$, we will also make use of H\"older spaces
\begin{equation*}
	C^{k,\alpha}(M):=\{u\in  C^k(M)| V_1\cdots V_k u\in  C^{\alpha}(M), \text{ for any }  V_j\in \mathfrak{X}(M) \}
\end{equation*}
where
\begin{equation}
    C^{\alpha}(M)=C^{0,\alpha}(M)=\big\{u\in C^0(M)\;|\; \sup_{x\neq y} \frac{|u(x)-u(y)|}{\textrm{dist}_{g_0}(x,y)^\alpha}<\infty\big\}.
\end{equation}
Upon fixing a norm, $C^{k,\alpha}(M)$ becomes a Banach space.
Spaces of sections of smooth vector bundles with Sobolev or H\"older regularity are defined using local trivializations.
We remark that for  both Sobolev and Hölder spaces, a different choice of background metric $g_0$ does not change the spaces themselves, and different choices of metrics result in equivalent norms.

\smallskip

\subsection{\texorpdfstring{The $\Pi^g$  and Variance operators}{The Pi  and Variance operators}} \label{subsec: Pi}
 
For the rest of this discussion, fix a negatively curved metric $g$ on $S$.
In \cite{guillarmou_anosov}, an operator $\Pi^g: \cal{H}^s(T^1S_{g}) \to \cal{H}^{-s}(T^1S_{g})$ is constructed using microlocal tools. When restricted to $f\in C^{\infty}(T^1S_g)$ satisfying the mean zero property (that is, $\langle f, 1\rangle_{L^2(T^1S_{g})} =0$), it is given by

\begin{equation}
    \begin{aligned}
        \Pi^g:\;&C^{\infty}(T^1S_g)
    \to \mathcal{D}'(T^1S_g),\nn
    \\
    \langle \Pi^g f, f'\rangle &= \lim\limits_{T\to\infty} \int_{-T}^{T}\langle  f\circ \phi_t, f'\rangle_{L^2(T^1S_{g})} \mathrm{d}t. 
    \end{aligned}
\label{eq:Pi}
\end{equation}
Here $\phi_t$ is the geodesic flow of $g$ on $T^1S_g$ and the convergence of the integral on the right hand side is guaranteed by exponential decay of correlations for $\phi_t$ (see \cite{Liverani}).

From another perspective, $\Pi^g$ is related to the \emph{Variance} and \emph{Covariance}, arising from the Thermodynamic formalism.
\begin{defn}  \label{def,Variance}
The variance of $f\in C^{\infty}(T^{1}S_g)$  which is of mean zero with respect to $\mu^L_g$ is defined as
\begin{equation}
    \mathrm{Var}(f, \mu^L_g)= \lim_{T\to\infty} \frac{1}{T} \int_{T^{1}S_g}  \left( \int_{0}^{T} f(\phi_t(x))\mathrm{d}t \right) ^2 \mathrm{d} \mu^L_g(x)
    \label{eq:Var}
\end{equation}
and the covariance of two $\mu^L_g$-mean zero functions $f_1,f_2\in C^{\infty}(T^{1}S_g)$ is given by
\begin{equation}
    \mathrm{Cov}(f_1,f_2, \mu^L_g)= \lim_{T\to\infty} \frac{1}{T} \int_{T^{1}S_g}  \left( \int_{0}^{T} f_1(\phi_t(x))\mathrm{d}t \right) \left( \int_{0}^{T} f_2(\phi_t(x))\mathrm{d}t \right) \mathrm{d} \mu^L_g(x).
    \label{eq:CoVar}
\end{equation}
\end{defn}
\noindent A crucial fact is that if $f\in C^{\infty}(T^{1}S_g)$ is of mean zero, one can use the $\phi_t$-invariance of the Liouville measure $\mu^L_g$  to obtain
\begin{equation}
    \langle \Pi^g f, f\rangle=\mathrm{Var}(f, \mu^L_g);\label{var-Pi}
\end{equation}
A proof can be found in   \cite[Proposition 4]{pollicott_DerivativeOfEntropy}.
Note that \eqref{var-Pi} is always nonnegative.
Similarly, one can check that $\langle \Pi^g f_1, f_2\rangle=\mathrm{Cov}(f_1,f_2, \mu^L_g)$ if $f_1$, $f_2$ are of mean zero. %

The following Theorem is proved by Guillarmou in \cite{guillarmou_anosov}. We state it in our setting:

\begin{thm} [{\cite[Theorem 1.1]{guillarmou_anosov}, \cite[Section 4A]{Thibault_MicrolocalAnalysis}}] \label{thm,PiProperty}
For all $s>0$, the operator $\Pi^g: \cal{H}^s(T^1S_{g}) \to \cal{H}^{-s}(T^1S_{g})$ is bounded and self-adjoint and satisfies the following properties:
\begin{enumerate}
    \item $\Pi^g$ is positive in the sense that $\langle \Pi^g f, f\rangle \geq 0$ for all real-valued $f \in \cal{H}^s(T^1S_{g})$.
    \item If $f$ and $X^gf$ belong to $\cal{H}^s(T^1S_g)$ , then $\Pi^g X^gf=0$, where $X^g$ is the geodesic vector field on $T^1S_g$. 
    \item If $f\in \cal{H}^s(T^1S_g)$ with $\langle f,1 \rangle_{L^2(T^1S_g)}=0$, then $\Pi^g f=0$ if and only if there exists a solution $w\in \cal{H}^s(T^1S_g)$ to the cohomological equation $X^gw=f$, and $w$ is unique modulo constants.  
    \item  $\Pi^g 1 = 0$.
    \item \label{thm4.2it5}  If $f\in \cal{H}^s(T^1S_{g})$, then  $\langle \Pi^g f,1\rangle=0$. 
\end{enumerate}
\end{thm}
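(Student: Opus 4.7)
The plan is to combine the explicit formula \eqref{eq:Pi} on mean-zero functions with microlocal analysis of the geodesic flow to derive the five properties. Any $f\in\cal{H}^s(T^1S_g)$ splits as $f=(f-\bar f)+\bar f$ where $\bar f$ is its $L^2_g$-mean, so it suffices to define $\Pi^g$ first on the mean-zero subspace through \eqref{eq:Pi} and extend by declaring $\Pi^g$ to annihilate constants. Convergence in \eqref{eq:Pi} is guaranteed by exponential decay of correlations (\cite{Liverani}), and property (4) is then built into the construction.

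The analytic core is boundedness of $\Pi^g:\cal H^s\to\cal H^{-s}$. On smooth mean-zero functions, one identifies $\Pi^g$ with $R_+(0)+R_-(0)$, where $R_\pm(\lambda)$ are the forward/backward resolvents of $X^g$, originally defined on $\{\Re\lambda>0\}$ as Laplace transforms of $\phi_{\pm t}^*$. By the work of Faure--Sjöstrand and Dyatlov--Zworski, these admit meromorphic extensions to all of $\mathbb{C}$ with controlled mapping properties between anisotropic Sobolev spaces, and restriction to the mean-zero subspace removes the simple resonance of $X^g$ at $\lambda=0$, producing a bounded operator $\cal H^s\to\cal H^{-s}$. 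Self-adjointness is then immediate from \eqref{eq:Pi}: the substitution $t\mapsto -t$ combined with $\phi_t$-invariance of $\mu_g^L$ makes the defining integral symmetric in $f$ and $f'$, and property (5) follows from (4) by self-adjointness.

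Property (1) reduces to the variance identity \eqref{var-Pi}: for general $f$, $\langle\Pi^g f,f\rangle=\langle\Pi^g(f-\bar f),f-\bar f\rangle=\mathrm{Var}(f-\bar f,\mu_g^L)\geq 0$, where the first equality uses (4) and (5). For property (2), if $u$ and $X^g u$ lie in $\cal H^s$, the telescoping identity $\int_{-T}^T(X^g u)\circ\phi_t\,\rmd t=u\circ\phi_T-u\circ\phi_{-T}$ yields
\[
\langle\Pi^g X^g u,f'\rangle=\lim_{T\to\infty}\langle u,f'\circ\phi_{-T}-f'\circ\phi_T\rangle_{L^2(T^1S_g)}=0
\]
by exponential mixing of $\phi_t$, with the general Sobolev case handled by density of $C^\infty$ in $\cal H^s$.

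The main obstacle is the nontrivial direction of (3): given mean-zero $f\in\cal H^s$ with $\Pi^g f=0$, one must produce $w\in\cal H^s$ with $X^g w=f$. The natural candidate is $w:=-R_-(0)f$, which by the vanishing hypothesis coincides with $R_+(0)f$. One must then verify (i) that $w$ lies in $\cal H^s$ rather than merely in a weaker anisotropic space --- a Sobolev regularity statement for the Anosov resolvent at zero coming from \cite{guillarmou_anosov} --- and (ii) that $X^g w=f$ distributionally, which follows from the functional equation $X^g R_+(\lambda)=\lambda R_+(\lambda)-I$ specialized to $\lambda=0$ on mean-zero functions. Uniqueness modulo constants reduces to ergodicity of the geodesic flow in negative curvature. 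The microlocal input --- meromorphic continuation of the resolvent and its sharp mapping properties at the resonance --- is the deepest ingredient, and is where the proof genuinely leans on the Anosov structure rather than on formal manipulation of \eqref{eq:Pi}.
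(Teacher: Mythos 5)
The paper does not prove Theorem~\ref{thm,PiProperty} at all --- it imports it verbatim from \cite{guillarmou_anosov} and \cite{Thibault_MicrolocalAnalysis} --- and your sketch follows exactly the strategy of those references: $\Pi^g=R_+(0)+R_-(0)$ via meromorphic continuation of the resolvent of $X^g$ on anisotropic Sobolev spaces, with self-adjointness and positivity read off from the symmetry of \eqref{eq:Pi} and the variance identity \eqref{var-Pi}, item (2) from the telescoping identity plus mixing, and item (3) from the regularity of $R_\pm(0)$ at the removed resonance. Your outline is correct as far as it goes, but the genuinely hard ingredients --- the continuation itself, the $\mathcal{H}^s\to\mathcal{H}^{-s}$ bound, and above all the fact that $w=R_+(0)f=-R_-(0)f$ lands in $\mathcal{H}^s$ rather than only in the anisotropic spaces (which in \cite{guillarmou_anosov} rests on the wavefront-set containments in $E_u^*$ and $E_s^*$ and their trivial intersection) --- are deferred to the same references the paper cites, so what you have is a faithful proof outline rather than a self-contained argument, which is consistent with the paper's own treatment.
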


These properties of $\Pi^g$ are important for introducing the covariance metric from \cite[Section 2]{GeodesicStretch}. In particular, the fact that $\Pi^g 1 = 0$ allows one to use \eqref{eq:Pi} to make sense of $\Pi^g$ for all $C^{\infty}(T^{1}S_g)$: once one notices that
$\Pi^gf=\Pi^g (f-\langle f, 1\rangle_{L^2(T^1S_{g})})$, the following is immediate. 
\begin{lem} \label{defn,PiDefn}
The operator $\Pi^g:C^{\infty}(T^1S_g)
\to \mathcal{D}'(T^1S_g) $ is given by
\begin{equation*}
 \langle \Pi^g f, f'\rangle := \lim\limits_{T\to\infty} \int_{-T}^{T}\langle P_{g}(f)\circ \phi_t, f'\rangle_{L^2(T^1S_{g})} \mathrm{d}t, 
\end{equation*}
where $P_{g}: C^{\infty}(T^1S_g) \to C^{\infty}(T^1S_g)$ is the projection operation defined by 
$$P_{g}(f):=f -\langle f, 1\rangle_{L^2(T^1S_{g})}.$$
\end{lem}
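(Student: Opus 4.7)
The plan is to derive the claimed formula directly from the definition of $\Pi^g$ on mean-zero smooth functions given in \eqref{eq:Pi}, using linearity of $\Pi^g$ together with the identity $\Pi^g 1 = 0$ from item (4) of Theorem \ref{thm,PiProperty}. In particular, I would not need to revisit the microlocal construction of $\Pi^g$: the statement is essentially a tautological extension of the mean-zero definition to all of $C^\infty(T^1S_g)$, once the zeroth Fourier mode is handled.

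The first step is to write, for an arbitrary $f\in C^\infty(T^1S_g)$,
\[
f = P_g(f) + c_f, \qquad c_f := \langle f,1\rangle_{L^2(T^1S_g)},
\]
so that $P_g(f)\in C^\infty(T^1S_g)$ has mean zero with respect to the (normalized) Liouville measure $\mu_g^L$. Since by Theorem \ref{thm,PiProperty} the operator $\Pi^g$ is a bounded linear operator $\mathcal{H}^s \to \mathcal{H}^{-s}$ for every $s>0$, and since $\Pi^g 1 = 0$, linearity gives
\[
\Pi^g f = \Pi^g P_g(f) + c_f\,\Pi^g 1 = \Pi^g P_g(f)
\]
in $\mathcal{D}'(T^1S_g)$.

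Next, because $P_g(f)\in C^\infty(T^1S_g)$ is mean zero, the defining formula \eqref{eq:Pi} applies to $P_g(f)$ and yields, for any $f'\in C^\infty(T^1S_g)$,
\[
\langle \Pi^g P_g(f), f'\rangle = \lim_{T\to\infty} \int_{-T}^{T} \langle P_g(f)\circ \phi_t, f'\rangle_{L^2(T^1S_g)}\,\mathrm{d}t,
\]
the limit existing by the exponential decay of correlations for $\phi_t$ invoked in the discussion of \eqref{eq:Pi}. Combining the two displays gives precisely the formula claimed in the lemma.

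There is no real obstacle here: the only point worth verifying is that the right-hand side is well defined, which is exactly what the mean-zero property of $P_g(f)$ guarantees (and which would fail if one tried to plug $f$ itself into \eqref{eq:Pi} when $c_f\neq 0$, since then the inner products $\langle f\circ\phi_t,f'\rangle_{L^2}$ would approach the nonzero limit $c_f\langle 1,f'\rangle_{L^2}$ as $t\to\pm\infty$, making the integral divergent). The extension via $P_g$ is consistent with $\Pi^g$ on mean-zero functions and compatible with the mapping property $\Pi^g 1 = 0$, so it agrees with the bounded operator $\Pi^g$ of Theorem \ref{thm,PiProperty}.
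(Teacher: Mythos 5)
Your proof is correct and follows exactly the argument the paper gives: decompose $f = P_g(f) + \langle f,1\rangle_{L^2(T^1S_g)}$, use linearity together with $\Pi^g 1 = 0$ from Theorem \ref{thm,PiProperty} to reduce to the mean-zero case, and then apply the defining formula \eqref{eq:Pi} to $P_g(f)$. The additional observation about why the unprojected integral would diverge when $\langle f,1\rangle_{L^2(T^1S_g)}\neq 0$ is a nice sanity check but not needed.
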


\subsection{\texorpdfstring{Symmetric tensors on a surface $S$}{Symmetric tensors on surface S}}\label{subsec,symmetricTensors}

We denote by $\mathsf{S}_m(S)$  the space of
smooth  symmetric $m$-tensors on $S$ (and we use $\sf{S}^{k,\alpha}_m(S)$ when we need $C^{k,\alpha}$ regularity).
If $f\in \sf{S}_m(S)$ and $m\in \mathbb{N}_0=\{0,1,2,\dots\}$, we denote by $\pi_m^*f\in C^{\infty}(TS)$ the map $\pi_m^*f(x,v):=f_x(v,\cdots, v)$. So $\pi_m^*$ converts a symmetric $m$-tensor to a function on the tangent bundle $TS$. 
A Riemannian metric $g$ naturally induces a scalar product $\langle \cdot, \cdot \rangle$ on $\mathsf{S}_m(S)$ (see for example \cite[Section 2A1]{Thibault_MicrolocalAnalysis}). The formal adjoint of $\pi_m^*$ is given by declaring
\begin{align*}
\langle f, \pi_{m*}h \rangle_{L^2_{g}(S)}= \langle \pi_m^*f, h \rangle_{L^2(T^1S_{g})},
\end{align*}
where $f\in \sf{S}_m(S)$ and $h\in C^{\infty}(T^1S_g)$. 

Fix a smooth Riemannian metric $g$ on $S$ for the rest of this discussion. 
We denote by $D_g$ the symmetrization of the covariant derivative with respect to the Levi-Civita connection $\nabla_g$. One has the following relation \cite[Lemma 2.3]{Thibault_MicrolocalAnalysis} between the geodesic vector field $X^g$ on $T^1 S_g$ and the operator $D_g$, 
\begin{equation}\label{eqtn,XandD}
X^g \pi^*_m=\pi^*_{m+1}D_g.
\end{equation}
The formal adjoint of $D_g$ is the negative of the divergence operator on symmetric $m$-tensors, i.e. $D_g^*:=- \Tr_g\circ \nabla_g:\sf{S}_m(S)\to \sf{S}_{m-1}(S)$.
Note that the negative Laplace-Beltrami operator on functions is given by $\Delta_g=-D^*_gD_g$.  

We will be mostly interested in symmetric 2-tensors, either smooth or of Hölder regularity, and for those, the following $L^2$-orthogonal decompositions will be useful.
Any tensor $f\in \sf{S}_{2}^{k,\alpha}(S)$ can be decomposed uniquely  as a sum
\begin{equation}\label{eqtn, Divergences}
  f= D_g \chi + v,  
\end{equation}
where $\chi \in \sf{S}_{1}^{k+1,\alpha}(S)$
and $v\in  \sf{S}_{2}^{k,\alpha}(S)$ satisfies $D_g^*v=0$. 
The component $D_g \chi$ is often called the \emph{potential part} of $f$, whereas the divergence free component $v$ is called \emph{solenoidal}.
 Another helpful decomposition of $f\in \sf{S}_{2}^{k,\alpha}(S)$ is the one into a \emph{conformal} and a \emph{trace free} part with respect to $g$: 
\begin{equation}\label{eqtn,traceDecomp}
 f=f_1+f_2,   
\end{equation}
where  $f_1=\frac{1}{2}(\Tr_g f) g$ is conformal to $g$ and $f_2= f-\frac{1}{2}(\Tr_g f)  g$ is trace free.

\subsection{\texorpdfstring{The space $\mathcal{M}_-/\mathcal{D}_0$ }{The space M-/D0 }}\label{subsec: negMetrics}

Here we summarize some useful facts found in \cite[Section 2.3]{GeodesicStretch}.
Recall that in Section \ref{subsec: teich}, we defined $\mathcal{M}$  as the space of smooth Riemannian metrics on a closed orientable smooth surface $S$ of genus $\mathscr{G}\geq 2$ and $\mathcal{M}_-$ as the open subspace (in the $C^\infty$ topology) of negatively curved Riemannian metrics. The space $\mathcal{M}$  is a smooth Fr{\'e}chet  manifold whose tangent space at $g\in \mathcal{M}$ can be naturally identified with the space $\sf{S}_{2}(S)$ of smooth symmetric $2$-tensors. Since $\cal M_-$ is an open subset of $\cal M$ in the $C^\infty$ topology, it has the same tangent space. The group $\cal{D}_0$ of smooth diffeomorphisms isotopic to the identity is a Fr{\'e}chet Lie group (\cite[Section 4.6]{Hamilton}) and acts on $\mathcal{M}$ on the right by pull back. This action is smooth and proper on $\cal{M}$ (\cite{EbinRiemannianMetrics1}, \cite{EbinRiemannianMetrics2}). 
Moreover, for negatively curved metrics, this action is free \cite{FrankelFreeAction}.
By Ebin's slice theorem  (see \cite{EbinRiemannianMetrics2}, \cite{slice_survey}),    for any ${g_0}
\in \mathcal{M}_-$, there exists a neighborhood $\mathcal{U}$ of ${g_0}$ in $\mathcal{M}_-$, a neighborhood $\mathcal{V}$ of
$ \textnormal{Id} \in \mathcal{D}_0$, and a Fr{\'e}chet submanifold $\mathcal{W}$ of $\mathcal{M}_-$ containing ${g_0}$ such that
\begin{equation}\label{eq:slice}
    \begin{aligned}
    &\mathcal{W} \times \mathcal{V}  \to \mathcal{U} \\
    & (g,\psi) \mapsto \psi^*g
    \end{aligned}
\end{equation}
 is a diffeomorphism of smooth  Fr{\'e}chet manifolds and such that
\begin{equation}
 T_{g_0}\mathcal{W}= \{v\in  \sf{S}_{2}(S)| D_{g_0}^*v=0 \}.\label{tangent_space} 
\end{equation}
Because the tangent space of the orbit space $g \cdot \mathcal{D}_0\subset \mathcal{M}_- $ at $g$ consists of elements of the form $\mathcal{L}_Y g$ for $Y\in \mathfrak{X}(S)$, the fact that $\frac{1}{2}\cal L_Yg=D_g(Y^\flat)$ implies 
that it consists exactly of the potential symmetric 2-tensors.
Therefore, $T_{g_0}\cal W$ and $T_{g_0}(g\cdot \cal D_0)$ are mutually orthogonal with respect to the $L_{g_0}^2$ inner product. For $g$ near $g_0$, one has $T_g\cal W\cap T_g(g\cdot \cal D_0)=\{0\}$.
Since the action of $\cal D_0$ on $\mathcal{M}_-$ is proper and free, together with the diffeomorphism property of \eqref{eq:slice}, a neighborhood of $[g]$ in $\mathcal{M}_-/\cal D_0$ can be identified with a neighborhood of $g$ in $\cal W$. The set $\mathcal{M}_-/\cal D_0$ therefore inherits from $\cal W$ the structure of a Fr\'echet manifold with its tangent space at $[g_0]$ identified with \eqref{tangent_space}.

We will also make use of the space $\cal{M}_-^{k,\alpha}$ of $C^{k,\alpha}$ negatively curved metrics, where $k\in \mathbb{N}$ is large and $\alpha\in (0,1)$, which is a Banach manifold with $T_g\cal{M}_{-}^{k,\alpha}=\sf{S}_2^{k,\alpha}(S)$ at each $g$. 
The group $\cal{D}_0^{k+1,\alpha}$ of $C^{k+1,\alpha}$ diffeomorphisms isotopic to the identity acts continuously, freely and properly on $\cal{M}_-^{k,\alpha}$, but not smoothly (see \cite{Tromba_Book}).
Thus  we cannot use the quotient manifold theorem to give $\cal{M}_-^{k,\alpha}/\cal{D}_0^{k+1,\alpha}$ the structure of a smooth manifold.
However, we note that the $\cal{D}_0^{k+1,\alpha}$ orbit through any $C^\infty$ metric in $\cal{M}^{k,\alpha}_-$ is a smooth submanifold of the latter with tangent space at $g_0$ consisting of the $C^{k,\alpha}$ potential symmetric 2-tensors with respect to $g_0$.

With all these understood, we can proceed to introduce the covariance metric on the space of negatively curved metrics in the next subsection.

\subsection{The covariance metric on \texorpdfstring{$\mathcal{M}_-/\mathcal{D}_0$}{M-/D}} \label{subsec: PressureMetric}

To construct the covariance metric (\cite{GeodesicStretch}) on $\mathcal{M}_-/\mathcal{D}_0$, we first produce a bilinear form $G$ on $\mathcal{M}_-$ and on $\mathcal{M}_-^{k,\alpha}$. It might be tempting to think that $\pi_{2 *}\circ\Pi^g \circ\pi_2^* $  induces a positive definite symmetric bilinear form on $T_g(\mathcal{M}_-/ \mathcal{D}_0)$ by Theorem \ref{thm,PiProperty}.
However, even though $\Pi^g$ is positive (Theorem \ref{thm,PiProperty}),  positive definiteness of the bilinear form associated with $\pi_{2 *}\circ\Pi^g \circ\pi_2^* $ does not follow (see Remark \ref{rem,ConformalDegenerate}). 
To tackle this problem, the authors in \cite{GeodesicStretch} consider the
operator 
\begin{equation}
\begin{aligned}
  \Pi^g_m:&  \sf{S}_m(S)\to \sf{S}_m^{-\infty}(S), \\
    \Pi^g_m:&=\pi_{m*}(\Pi^g+ \textbf{1}\otimes\textbf{1} )\pi_{m}^*,
\end{aligned}
\end{equation}
where the operator $\textbf{1}\otimes\textbf{1}:C^{\infty}(T^1S_g)\to C^{\infty}(T^1S_g)$ projects a function $f\in C^{\infty}(T^1S_g)$ onto its mean $\langle f, 1\rangle_{L^2(T^1S_{g})}$ and $\sf{S}_m^{-\infty}(S):=(\sf{S}_m(S))'$.  
The operator $\Pi_m^g$ can be  thought of as an analog of the \emph{normal operator of the X-ray transform}, defined on a compact manifold with boundary having sufficiently good geometric properties, which averages the X-ray transform of a tensor field over all geodesics passing through a point (see for example \cite{paternain2023}). On the closed surface $(S,g)$, the \emph{X-ray transform} is defined as 
\begin{equation}\label{xray}
    I_m^g:\sf{S}_m(S)\to \ell^\infty(\cal{C}),\quad f\mapsto I_m^gf(c)=\frac{1}{L(c)} \int_0^{L(c)}\pi_m^*f (\phi_t(z))\mathrm{d}t,
\end{equation}
where $\cal{C}$ denotes the set of closed orbits of the geodesic flow and in \eqref{xray} a closed orbit $c$ of primitive length $L(c)$ is parametrized as $\phi_t(z)$, where $t\in [0,L(c)]$ and $z\in c$.
The fact that on $S$ the space of all geodesics does not have a manifold structure necessitates the more complicated construction of $\Pi_m^g$, compared to the normal operator.

Now one can define a bilinear form on $T_g\mathcal{M}_-$ as follows:

\begin{defn}\label{def:bilinear}
Define a bilinear form $G_g(\cdot, \cdot)$ on $T_g\mathcal{M}_-$ by 
$$G_g(h_1,h_2):= \langle\Pi_2^{g}h_1, h_2\rangle_{L^2_g(S)}$$
for $h_j \in T_g\mathcal{M}_-\simeq \sf{S}_{2}(S)$ and $j=1,2$. 
\end{defn}

\begin{lem}
The bilinear form $G_g(\cdot, \cdot)$ satisfies
\begin{align*}
    G_g(h,h)=\textnormal{Var}(P_g(\pi_2^*h),\mu_g^L)+\langle\pi_2^*h,1\rangle^2_{L^2(T^1S_g)},
\end{align*}
for $g\in\mathcal{M}_-$ and $h\in T_g\mathcal{M}_-$, where Var is the variance defined in Definition \ref{def,Variance}.
\end{lem}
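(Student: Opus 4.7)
The plan is to unfold the definition of $G_g(h,h)$ using the explicit formula for $\Pi_2^g$, pass $\pi_2^*$ through the $L^2_g(S)$ pairing as the adjoint of $\pi_{2*}$, split the resulting pairing into the two pieces coming from $\Pi^g$ and from $\mathbf{1}\otimes\mathbf{1}$, and then reduce each piece to a variance or a squared mean using Theorem \ref{thm,PiProperty} and equation \eqref{var-Pi}.

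More concretely, I would start from
\begin{equation*}
G_g(h,h) = \langle \pi_{2*}(\Pi^g + \mathbf{1}\otimes\mathbf{1})\pi_2^* h,\, h\rangle_{L^2_g(S)}
= \langle (\Pi^g + \mathbf{1}\otimes\mathbf{1})\pi_2^* h,\, \pi_2^* h\rangle_{L^2(T^1S_g)},
\end{equation*}
using the definition of the formal adjoint $\pi_{2*}$ from Section \ref{subsec,symmetricTensors}. The $\mathbf{1}\otimes \mathbf{1}$ term is immediate: by its definition as the orthogonal projection onto constants,
\begin{equation*}
\langle (\mathbf{1}\otimes\mathbf{1})\pi_2^* h,\, \pi_2^* h\rangle_{L^2(T^1S_g)}
= \langle \pi_2^* h,1\rangle_{L^2(T^1S_g)} \cdot \langle 1, \pi_2^* h\rangle_{L^2(T^1S_g)}
= \langle \pi_2^* h,1\rangle^2_{L^2(T^1S_g)},
\end{equation*}
which already accounts for the second term in the asserted formula.

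For the $\Pi^g$ term, I would decompose $\pi_2^* h = P_g(\pi_2^* h) + \langle \pi_2^* h, 1\rangle_{L^2(T^1S_g)}\cdot 1$ and use two properties of $\Pi^g$ from Theorem \ref{thm,PiProperty}: first, $\Pi^g 1 = 0$, which together with Lemma \ref{defn,PiDefn} gives $\Pi^g \pi_2^* h = \Pi^g P_g(\pi_2^* h)$; and second, $\langle \Pi^g f,1\rangle = 0$ for every admissible $f$. Combining these, the cross term involving the constant part drops out, leaving
\begin{equation*}
\langle \Pi^g \pi_2^* h,\, \pi_2^* h\rangle_{L^2(T^1S_g)}
= \langle \Pi^g P_g(\pi_2^* h),\, P_g(\pi_2^* h)\rangle_{L^2(T^1S_g)}.
\end{equation*}
Since $P_g(\pi_2^* h)$ has zero mean, identity \eqref{var-Pi} identifies the right hand side with $\mathrm{Var}(P_g(\pi_2^* h),\mu_g^L)$, and adding back the previously computed contribution from $\mathbf{1}\otimes\mathbf{1}$ yields the claimed formula.

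There is really no serious obstacle here; the computation is an algebraic bookkeeping exercise, and the only points that need care are invoking the correct items of Theorem \ref{thm,PiProperty} (namely that $\Pi^g$ annihilates constants on both sides, so $\langle \Pi^g \cdot,1\rangle = 0$ and $\Pi^g = \Pi^g P_g$) and making sure that the splitting $\pi_2^* h = P_g(\pi_2^* h) + \text{const}\cdot 1$ is used consistently when separating the variance from the squared mean.
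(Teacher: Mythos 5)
Your proposal is correct and follows essentially the same route as the paper's proof: unfold $\Pi_2^g=\pi_{2*}(\Pi^g+\mathbf{1}\otimes\mathbf{1})\pi_2^*$ via the adjoint relation, extract the squared mean from the $\mathbf{1}\otimes\mathbf{1}$ term, replace both arguments of the $\Pi^g$ pairing by $P_g(\pi_2^*h)$ using Lemma \ref{defn,PiDefn} and Theorem \ref{thm,PiProperty}(\ref{thm4.2it5}), and identify the result with the variance via \eqref{var-Pi}. No gaps.
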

\begin{proof}
We have
\begin{align*}
G_g(h,h)&= \langle\Pi_2^{g}h, h\rangle_{L^2_g(S)}\\
&=\langle\Pi^{g} \pi_2^*h, \pi_2^*h\rangle+\langle\pi_2^*h,1\rangle_{L^2(T^1S_g)}\langle\pi_2^*h,1\rangle_{L^2(T^1S_g)}\\
&=\langle\Pi^{g}\big(P_g(\pi_2^*h) \big),\pi_2^*h\rangle+\langle\pi_2^*h,1\rangle_{L^2(T^1S_g)}\langle\pi_2^*h,1\rangle_{L^2(T^1S_g)}\hspace{.5in}&&\text{(by Lemma \ref{defn,PiDefn})}\\
&=\langle\Pi^{g}\big(P_g(\pi_2^*h) \big),P_g(\pi_2^*h)\rangle+\langle\pi_2^*h,1\rangle_{L^2(T^1S_g)}\langle\pi_2^*h,1\rangle_{L^2(T^1S_g)}. \hspace{.5in}&&\text{(By Theorem \ref{thm,PiProperty}(\ref{thm4.2it5}).)}
\end{align*}
\noindent The result follows immediately because the first term coincides with $\textnormal{Var}(P_g(\pi_2^*h),\mu_g^L)$.
\end{proof}

\begin{cor}\label{cor:corr}
The bilinear form $G_g(\cdot, \cdot)$ also satisfies
\begin{align*}
    G_g(h_1,h_2)&= \mathrm{Cov}(P_g(\pi_2^*h_1),P_g(\pi_2^*h_2),\mu_g^L)+\langle\pi_2^*h_1,1\rangle_{L^2(T^1S_g)}\langle\pi_2^*h_2,1\rangle_{L^2(T^1S_g)}\\
    &= \mathrm{Cov}(P_g(\pi_2^*h_1),\pi_2^*h_2,\mu_g^L)+\langle\pi_2^*h_1,1\rangle_{L^2(T^1S_g)}\langle\pi_2^*h_2,1\rangle_{L^2(T^1S_g)}.
\end{align*}
\end{cor}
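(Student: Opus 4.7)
The plan is to run essentially the same computation as in the preceding lemma, but keeping track of two distinct tensors $h_1$ and $h_2$, and then to deduce the second equality by a short ``mean-zero trick.''

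First, I would expand the definition $\Pi_2^g=\pi_{2*}(\Pi^g+\mathbf{1}\otimes\mathbf{1})\pi_2^*$ and use the adjoint relation $\langle \pi_{2*}u,h\rangle_{L^2_g(S)}=\langle u,\pi_2^*h\rangle_{L^2(T^1S_g)}$ to rewrite
\begin{equation*}
G_g(h_1,h_2)=\langle \Pi^g\pi_2^*h_1,\pi_2^*h_2\rangle+\langle \pi_2^*h_1,1\rangle_{L^2(T^1S_g)}\langle \pi_2^*h_2,1\rangle_{L^2(T^1S_g)}.
\end{equation*}
By Lemma~\ref{defn,PiDefn} the first factor in the inner product satisfies $\Pi^g\pi_2^*h_1=\Pi^g P_g(\pi_2^*h_1)$, and by Theorem~\ref{thm,PiProperty}(\ref{thm4.2it5}) the distribution $\Pi^g P_g(\pi_2^*h_1)$ pairs trivially with the constant function; hence we may replace $\pi_2^*h_2$ by $P_g(\pi_2^*h_2)$ in that pairing without changing its value. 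Since $P_g(\pi_2^*h_1)$ and $P_g(\pi_2^*h_2)$ are both of mean zero, the remark just after \eqref{var-Pi} (which is the polarization of \eqref{var-Pi}) identifies $\langle \Pi^g P_g(\pi_2^*h_1),P_g(\pi_2^*h_2)\rangle$ with $\mathrm{Cov}(P_g(\pi_2^*h_1),P_g(\pi_2^*h_2),\mu_g^L)$, giving the first asserted equality.

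For the second equality, I would establish the auxiliary fact that whenever $f_1\in C^\infty(T^1S_g)$ has mean zero and $c\in\mathbb{R}$ is a constant, $\mathrm{Cov}(f_1,f_2,\mu_g^L)=\mathrm{Cov}(f_1,f_2-c,\mu_g^L)$. Indeed, the cross term arising in \eqref{eq:CoVar} is
\begin{equation*}
\frac{cT}{T}\int_{T^1S_g}\int_0^T f_1(\phi_t x)\,\mathrm{d}t\,\mathrm{d}\mu_g^L(x)=c\int_0^T\!\!\int_{T^1S_g}f_1\,\mathrm{d}\mu_g^L\,\mathrm{d}t=0,
\end{equation*}
using Fubini and the $\phi_t$-invariance of $\mu_g^L$ together with $\langle f_1,1\rangle_{L^2(T^1S_g)}=0$. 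Applying this with $f_1=P_g(\pi_2^*h_1)$ and $c=\langle \pi_2^*h_2,1\rangle_{L^2(T^1S_g)}$ turns $\mathrm{Cov}(P_g(\pi_2^*h_1),P_g(\pi_2^*h_2),\mu_g^L)$ into $\mathrm{Cov}(P_g(\pi_2^*h_1),\pi_2^*h_2,\mu_g^L)$, finishing the second equality.

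There is no substantive obstacle here; the whole statement is a direct polarization of the previous lemma, and the only point requiring a brief justification is the mean-zero invariance used for the second equality, which is a one-line Fubini argument.
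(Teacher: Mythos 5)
Your proof is correct: the first equality is exactly the polarization of the computation in the lemma immediately preceding the corollary (using $\Pi^g\pi_2^*h_1=\Pi^g P_g(\pi_2^*h_1)$ and Theorem~\ref{thm,PiProperty}(\ref{thm4.2it5}) to insert $P_g$ in the second slot), and your Fubini/flow-invariance argument for dropping the constant in the second argument of $\mathrm{Cov}$ is sound. The paper itself gives no proof here, deferring to an external reference, so your argument simply supplies the omitted details in what is evidently the intended way.
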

\begin{proof}
For a proof, see \cite[Corollary 2.21 and Definition 2.22]{Dai}.
\end{proof}

\begin{prop}\label{prop,smoothness}
The bilinear form $G_g(\cdot, \cdot)$ is defined  on the Banach manifold $\mathcal{M}_-^{k,\alpha}$ for fixed large $k$ and $\alpha\in (0,1)$ and is $C^{k-3}$ there, 
in the sense that for any smooth local sections $h_1,h_2\in C^\infty(\cal{U};\sf{S}_{2}^{k,\alpha}(S))$, where $\cal{U}\subset  \cal{M}^{k,\alpha}_{-}$ is an open neighborhood of a metric $g_0$, the function 
\begin{equation}\label{eq:smoothness_form}
    \cal{U}\to \bb{R},\qquad  g  \mapsto G_g(h_1(g),h_2(g))
\end{equation}
is $C^{k-3}$.
Similarly, if $\cal{U}\subset \cal{M}_-$ and $h_1,h_2\in C^\infty(\cal{U};\sf{S}_{2}(S))$ with $h_1,h_2:\cal{M}_-^{k,\alpha}\to\sf{S}_{2}^{k,\alpha}(S)$ smooth for all $k$, then \eqref{eq:smoothness_form} is smooth on $\cal{U}$.
\end{prop}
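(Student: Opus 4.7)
The strategy is to separate the two pieces in Corollary \ref{cor:corr}:
\begin{equation*}
    G_g(h_1,h_2)=\mathrm{Cov}(P_g(\pi_2^*h_1),\pi_2^*h_2,\mu_g^L)+\langle \pi_2^*h_1,1\rangle_{L^2(T^1S_g)}\langle \pi_2^*h_2,1\rangle_{L^2(T^1S_g)},
\end{equation*}
and address the regularity of each term separately. To work on a single reference manifold rather than on the $g$-dependent bundle $T^1S_g$, I would conjugate by the radial homothety $\Psi_g : T^1S_{g_0}\to T^1S_g$, $(x,v)\mapsto (x,v/|v|_g)$. Since $\Psi_g$ does not involve derivatives of $g$, it depends smoothly on $g\in \cal{M}^{k,\alpha}_-$ as a $C^{k,\alpha}$ diffeomorphism; the pulled-back Liouville measure $\Psi_g^*\mu_g^L$ is then a $C^{k-1,\alpha}$ density on $T^1S_{g_0}$ depending smoothly on $g$ (one derivative being consumed by the Jacobian factor).

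For the rank-one ``mean'' term, the inner products are integrals of $\Psi_g^*\pi_2^*h_j$ against $\Psi_g^*\mu_g^L$ on the fixed bundle $T^1S_{g_0}$, both factors depending smoothly on $(g,h_j)$ with values in $C^{k-1,\alpha}$. Hence this product is smooth in $g$ (the main smoothness issue really lies in the covariance term). For the covariance term one has
\begin{equation*}
    \mathrm{Cov}(P_g(\pi_2^*h_1),\pi_2^*h_2,\mu_g^L)=\langle \tilde\Pi^g\, \Psi_g^*P_g(\pi_2^*h_1),\,\Psi_g^*\pi_2^*h_2\rangle_{L^2(T^1S_{g_0},\,\Psi_g^*\mu_g^L)},
\end{equation*}
where $\tilde\Pi^g:=\Psi_g^*\,\Pi^g\,(\Psi_g^{-1})^*$ is conjugated to act on distributions on the fixed bundle $T^1S_{g_0}$. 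This reduces the problem to regularity of the single map $g\mapsto \tilde\Pi^g$ as a map into $\cal{L}(\cal{H}^s(T^1S_{g_0}),\cal{H}^{-s}(T^1S_{g_0}))$ for some $s>0$.

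The main technical step is to show that $g\mapsto \tilde\Pi^g$ is $C^{k-3}$ in this operator-norm sense. Using the resolvent characterization $\Pi^g=R_+^g(0)+R_-^g(0)$ from \cite{guillarmou_anosov}, where $R^g_\pm(0)$ are the boundary values of the resolvents of $\mp X^g$ on anisotropic Sobolev spaces adapted to the Anosov dynamics, this smoothness follows from the perturbation theory for Anosov flows developed in \cite{Guill-lef} and employed in \cite{GeodesicStretch}. The derivative bookkeeping is as follows: passing from $g$ to the conjugated geodesic vector field $\tilde X^g$ costs one derivative (Christoffel symbols), and the further microlocal construction of the resolvents between anisotropic spaces costs two more, accounting for the loss of three. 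Composing with the smooth substitutions $h_j\mapsto \Psi_g^*\pi_2^*h_j$ and taking the $L^2$ pairing then yields the $C^{k-3}$ regularity of \eqref{eq:smoothness_form} on $\cal{U}$. For the smooth case, the same argument applied uniformly in $k$ (using $\cal{M}_-=\bigcap_k \cal{M}_-^{k,\alpha}$) gives $C^\infty$ regularity in the Fr\'echet sense.

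The principal obstacle is the precise derivative counting in the dependence $g\mapsto \tilde \Pi^g$, which requires controlling the perturbation of the resolvents on anisotropic Sobolev spaces whose very definition depends on the stable and unstable distributions of the flow $\phi_t^g$; these distributions themselves vary with $g$ with limited regularity. Everything else (smoothness of $\pi_2^*$, of $\Psi_g$, of the Liouville density, of the projector $P_g$) is routine once one passes to the fixed reference bundle.
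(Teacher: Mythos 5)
Your decomposition into the rank-one mean term and the covariance term matches the paper's starting point (Corollary \ref{cor:corr}), and your treatment of the mean term is fine (though note the Liouville measure is locally $\rmd v_g\wedge \rmd\theta$, so no derivative of $g$ is actually lost there). The problem is that the entire weight of the argument rests on the claim that $g\mapsto \tilde\Pi^g$ is $C^{k-3}$ as a map into $\cal{L}(\cal{H}^s(T^1S_{g_0}),\cal{H}^{-s}(T^1S_{g_0}))$, and this step is asserted rather than proved. The references you invoke establish smooth dependence of the resolvents $R^g_\pm(0)$ for \emph{smooth} families of \emph{smooth} metrics; they do not give a finite-regularity statement on the Banach manifold $\cal{M}_-^{k,\alpha}$, where the geodesic vector field is only $C^{k-1}$ and the anisotropic Sobolev spaces themselves are built from escape functions adapted to the ($g$-dependent, merely H\"older) stable and unstable distributions. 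Your derivative bookkeeping (``two more for the microlocal construction'') has no source, and you yourself flag this as the ``principal obstacle'' without resolving it. As written, the central step is a gap, not a proof.

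The paper avoids this obstacle entirely by a different mechanism: it expresses the covariance as the second derivative $\partial_t\partial_s\big|_{s=t=0}$ of the topological pressure $\mathbf{P}(-J^u_g+s\pi_2^*h_1+t\pi_2^*h_2,X^g)$, uses real-analyticity of the pressure in the potential, and then invokes Contreras' regularity theorem (\cite[Theorem C(a)]{RegularityEntropyContreras}), which is proved via structural stability and Markov partitions and genuinely delivers $C^{k-1}$ dependence of the pressure on a $C^{k-1}$ vector field. The count $C^{k-3}$ then comes from losing two degrees of the \emph{joint} $C^{k-1}$ regularity in $(t,g)$ when taking two $t$-derivatives, not from any loss in a microlocal construction. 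If you want to salvage your route, you would need to prove the operator-norm regularity of $g\mapsto \Pi^g$ on $\cal{M}_-^{k,\alpha}$ from scratch, which is a substantial piece of analysis and not available off the shelf; the thermodynamic-formalism route is the one that closes.
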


\begin{proof}
We will use the expression in Corollary \ref{cor:corr}, which also makes sense on $\cal{M}_{-}^{k,\alpha}$, as the following proof shows.
For $\cal{U}\subset \cal{M}_{-}^{k,\alpha}$ and for $i=1,2$, consider the map (see \cite[Section 1.2]{GeodesicStretch} for the last equality) 
\begin{equation}
    F_i(g):\cal{U}\to \bb{R}, \quad  g\mapsto \langle\pi_2^*h_i(g),1\rangle_{L^2(T^1S_g)}=\int_{T^1S_g} \pi_2^*h_i(g)\mathrm{d}\mu^L_g
    = \frac{1}{{\mathrm{Area}(S,g)}}\int_{S} \mathrm{tr}_g h_i(g) \mathrm{d}v_g.
\end{equation}
The integral factor can be written locally in coordinates $x^1, x^2$ as $\displaystyle
\int \sum\limits_{s,t=1}^2g^{st}(h_i(g))_{st}\sqrt{\det(g)}dx$.  Since for all $s, t =1,2$, the maps $g\mapsto g^{st}$ and $g\mapsto \sqrt{\det(g)}$ are smooth  into $ C^{k,\alpha}(S)$, so is the integrand.
Then the integral defines a bounded linear map from $C^{k,\alpha}(S)$ (actually even from $C^0(S)$) into $\bb{R}$, so the composition is smooth. 
The area is given locally by $\int \sqrt{\det(g)}dx$, so it is also smooth in $g$.
So the maps $F_i$ and their product are smooth.

Then we show that $g\mapsto\mathrm{Cov}(P_g(\pi_2^*h_1(g)),P_g(\pi_2^*h_2(g)),\mu_g^L)$ is $C^{k-3}$. From the  thermodynamic formalism, we know (see e.g.\cite[Proposition 4.11]{BookZetaFunctionPollicott} ,\cite[Remark 2.25]{Dai}) that
\begin{equation}\label{eq:pressure_function}
    \mathrm{Cov}(P_g(\pi_2^*h_1(g)),P_g(\pi_2^*h_2(g)),\mu_g^L)= \frac{\partial^2 \textbf{P}(-J^{u}_{g}+s\pi_2^*h_1(g) + t\pi_2^*h_2(g), X^g)}{\partial t \partial s}\bigg|_{s=t=0}.
\end{equation}
Here $\textbf{P}(\cdot, X^g)$ 
is the pressure function  with respect to the geodesic flow of $g$ (see e.g. \cite{BookZetaFunctionPollicott}), which is determined by the geodesic vector field $X^g\in \mathfrak{X}^{k-1,\alpha}(TS)\subset \mathfrak{X}^{k-1}(TS)$ (note that this inclusion is smooth). The Liouvile measure $\mu_g^L$ is the equilibrium state of $-J^{u}_{g}$, where $J^{u}_{g}$ is the unstable Jacobian of the geodesic flow generated by $X^{g}$ (\cite[Section 4 and Section 5]{BowenRuelle}). 
The pressure is real analytic in the first component (see \cite[Proposition 4.8]{BookZetaFunctionPollicott}, \cite[page 377]{McMullen_Pressure}). By \eqref{eq:pressure_function} and polarization, to show that $g\mapsto \partial_1^2\mathbf{P}(0,X^g)(\pi_2^*h_1(g), \pi_2^*h_2(g))$  is $C^{k-3}$, it suffices to show that $$f(t,g):=\textbf{P}( t\pi_2^*h_2(g), X^g): \mathbb{R} \times \cal{M}^{k,\alpha}_{-} \to \mathbb{R}$$ is $C^{k-1}$.
Since we know 
$$\cal{U}\ni g\mapsto \pi_2^* h_j(g)\in C^{k,\alpha}(TS)\quad \text{ and }\quad \cal{U}\ni g\mapsto X^g \in \mathfrak{X}^{k-1}(TS) $$ are smooth, from \cite[Theorem C(a)]{RegularityEntropyContreras} one knows that upon fixing $t=t_0$,  the map $\textbf{P}( t_0\pi_2^*h_2(g), X^g)$ is $C^{k-1}$ respect to $g$. Since varying $t$ does not change the background flows and corresponding subshifts of finite type \cite[Lemma 5.1]{RegularityEntropyContreras}, the proof of  \cite[Theorem C]{RegularityEntropyContreras} (page 110) can be adjusted to show that $\textbf{P}( t\pi_2^*h_2(g), X^g)$ is jointly $C^{k-1}$ for both parameters $t$ and $g$ from the real analytic dependence of the pressure on the first component.
\end{proof}

It is important for our purposes that the bilinear form $G_g(\cdot,\cdot)$ is positive definite on $T_g \mathcal{M}_-\cap \textnormal{ker} D_g^*$:

\begin{lem}[{\cite[Lemma 2.1]{GeodesicStretch}}] \label{lem, nondegeneracy}
Given $h\in T_g \mathcal{M}_-\cap \textnormal{ker} D_g^*$, then
$$G_g(h,h)\geq 0.$$
Moreover, $G_g(h,h)= 0$ if and only if $h=0$.
\end{lem}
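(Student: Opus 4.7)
My plan is to leverage the explicit formula for $G_g(h,h)$ derived just above the statement, which decomposes it as the sum of two manifestly nonnegative quantities, and then use the rigidity results already established for $\Pi^g$ (Theorem \ref{thm,PiProperty}) together with the solenoidal injectivity of the X-ray transform on symmetric $2$-tensors.

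For the inequality, I would simply invoke the identity
\begin{equation*}
G_g(h,h)=\mathrm{Var}\bigl(P_g(\pi_2^*h),\mu_g^L\bigr)+\langle \pi_2^*h,1\rangle_{L^2(T^1S_g)}^2.
\end{equation*}
The first summand is nonnegative by Theorem \ref{thm,PiProperty}(1) (equivalently, it is the variance of a real-valued observable against a probability measure), and the second summand is a real square. This gives $G_g(h,h)\geq 0$.

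For the equality case, suppose $G_g(h,h)=0$. Both summands above must then vanish, so $\pi_2^*h$ has mean zero with respect to $\mu_g^L$, and moreover $\langle \Pi^g\pi_2^*h,\pi_2^*h\rangle=0$. Applying the Cauchy--Schwarz inequality to the positive semidefinite sesquilinear form $(f_1,f_2)\mapsto \langle \Pi^g f_1,f_2\rangle$ (well defined by self-adjointness and positivity of $\Pi^g$ from Theorem \ref{thm,PiProperty}(1)), one obtains $\Pi^g\pi_2^*h=0$. Since $\pi_2^*h$ is smooth and has zero mean, Theorem \ref{thm,PiProperty}(3) produces a solution $w\in\mathcal{H}^s(T^1S_g)$ (for any $s$) to the cohomological equation $X^g w=\pi_2^*h$; by a smooth Livsic-type argument, $w$ can be taken smooth, but distributional equality is already sufficient for what follows.

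The main step is the passage from $\pi_2^*h=X^gw$ and $D_g^*h=0$ to $h=0$. This is exactly the statement of solenoidal injectivity of the X-ray transform $I_2^g$ on symmetric $2$-tensors over a closed oriented negatively curved surface: integrating $\pi_2^*h=X^gw$ over any closed orbit $c$ of the geodesic flow shows $I_2^gh(c)=0$, and solenoidal injectivity (established by Croke--Sharafutdinov in negative curvature, and reproved microlocally within the framework of \cite{guillarmou_anosov}) forces the divergence-free tensor $h$ to vanish. I expect this last invocation of solenoidal injectivity to be the substantive ingredient, while the earlier manipulations are essentially bookkeeping from the formula for $G_g$ and the abstract properties of $\Pi^g$.
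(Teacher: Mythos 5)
Your argument is correct and is essentially the proof of \cite[Lemma 2.1]{GeodesicStretch}, which the paper cites without reproving: split $G_g(h,h)$ into the variance plus the squared mean, use positivity and self-adjointness of $\Pi^g$ together with the cohomological characterization of its kernel (Theorem \ref{thm,PiProperty}) to reduce the equality case to $I_2^g h=0$, and conclude by solenoidal injectivity of the tensor X-ray transform in negative curvature. The one input not spelled out elsewhere in the paper is solenoidal injectivity of $I_2^g$ (rather than just of $I_0^g$), and you correctly identify and attribute it as the substantive ingredient.
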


\noindent Another important criterion for the bilinear form $G(\cdot,\cdot)$ to descend to $\mathcal{M}_-/\mathcal{D}_0$ is the following Lemma

\begin{lem} \label{lem, KillPotential}
Suppose $h_1= D_gp \in T_g\mathcal{M^-}$ is a potential tensor with  $p\in \sf{S}_1(S)$. Then 
$$G_g(h_1,h_2)=0$$
for any $h_2\in \sf{S}_{2}(S)$.
\end{lem}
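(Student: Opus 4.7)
The plan is to show that $\Pi_2^g h_1 = 0$ as a distribution on $\sf{S}_2(S)$, which immediately gives $G_g(h_1,h_2) = \langle \Pi_2^g h_1, h_2\rangle_{L^2_g(S)} = 0$ for every $h_2$. Recall that $\Pi_2^g = \pi_{2*}(\Pi^g + \textbf{1}\otimes\textbf{1})\pi_2^*$, so it suffices to show that both $\Pi^g \pi_2^* h_1 = 0$ and $\langle \pi_2^* h_1, 1\rangle_{L^2(T^1 S_g)} = 0$.

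The key observation is the intertwining identity \eqref{eqtn,XandD}, namely $X^g \pi_1^* = \pi_2^* D_g$. Applying this to $p\in \sf{S}_1(S)$ gives
\begin{equation*}
    \pi_2^* h_1 = \pi_2^* D_g p = X^g(\pi_1^* p).
\end{equation*}
Since $p$ is smooth, both $\pi_1^* p$ and $X^g \pi_1^* p = \pi_2^* h_1$ lie in $\cal{H}^s(T^1 S_g)$ for every $s$. Therefore by Theorem \ref{thm,PiProperty}(2), we obtain
\begin{equation*}
    \Pi^g(\pi_2^* h_1) = \Pi^g\bigl(X^g(\pi_1^* p)\bigr) = 0.
\end{equation*}

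For the zeroth-order correction $\textbf{1}\otimes\textbf{1}$, I would use that the Liouville measure $\mu_g^L$ is invariant under the geodesic flow $\phi_t$, so the vector field $X^g$ is formally skew-adjoint on $L^2(T^1S_g, \mu_g^L)$. In particular, since the constant function $1$ is annihilated by $X^g$,
\begin{equation*}
    \langle \pi_2^* h_1, 1\rangle_{L^2(T^1 S_g)} = \langle X^g(\pi_1^* p), 1\rangle_{L^2(T^1 S_g)} = -\langle \pi_1^* p, X^g 1\rangle_{L^2(T^1 S_g)} = 0.
\end{equation*}
Combining the two vanishings gives $(\Pi^g + \textbf{1}\otimes\textbf{1})\pi_2^* h_1 = 0$, and hence $\Pi_2^g h_1 = 0$, proving the lemma.

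No serious obstacle is expected here: the argument is a direct application of the intertwining relation \eqref{eqtn,XandD} together with properties of $\Pi^g$ already recorded in Theorem \ref{thm,PiProperty}. The only mild subtlety is ensuring that the regularity of $\pi_1^* p$ is sufficient to invoke Theorem \ref{thm,PiProperty}(2), which is immediate in the smooth category assumed here; in a $C^{k,\alpha}$ setting, one would need $k$ large enough so that $\pi_1^* p$ and $X^g \pi_1^* p$ both belong to the requisite Sobolev space.
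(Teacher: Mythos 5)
Your proof is correct and follows essentially the same route as the paper: both use the intertwining relation $\pi_2^* D_g p = X^g \pi_1^* p$ from \eqref{eqtn,XandD} together with Theorem \ref{thm,PiProperty}(2) to kill the $\Pi^g$ term. The only (immaterial) difference is in the mean-zero term, where you invoke skew-adjointness of $X^g$ with respect to the Liouville measure, while the paper passes to the adjoint $\pi_{2*}$ and uses $\pi_{2*}1=\tfrac{1}{2}g$ and $D_g^*g=0$; both are valid one-line computations.
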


\begin{proof}
We write $\langle\Pi_2^{g}h_1,h_2\rangle_{L^2_g(S)}=\langle \Pi^g \pi^*_2 (D_gp), \pi^*_2h_2 \rangle + \langle \pi_2^*(D_gp),1 \rangle_{L^2(T^1S_g)} \langle \pi_2^*h_2,1 \rangle_{L^2(T^1S_g)}$. By equation \eqref{eqtn,XandD} and Theorem \ref{thm,PiProperty}, we know that $ \pi^*_2 D_gp=X^g\pi^*_1p$ and that $X^g\pi^*_1p$ is in the kernel of the $\Pi^g$ operator. 
Also $\langle \pi_2^*D_gp,1 \rangle_{L^2(T^1S_g)}=\langle p, D_g^* \pi_{2*} 1\rangle_{L^2_g(S)}$, so since $\pi_{2*}1=\frac{1}{2}g$ and $D_g^*=-\textnormal{Tr}_g \nabla_g$, we conclude that $G_g(h_1,h_2)=0$.
\end{proof}

Now we are able to introduce the Riemannian metric from  \cite{GeodesicStretch}.

\begin{prop} [{\cite[Proposition 3.9]{GeodesicStretch}}]
\label{def:pressure}
The bilinear form $G$ produces a Riemannian metric on the quotient space $\mathcal{M}_-/\mathcal{D}_0$, called the covariance metric. Given $[g] \in \mathcal{M}_-/\mathcal{D}_0$, we denote the covariance metric at $[g]$ by $G_{[g]}(\cdot,\cdot)$.
\end{prop}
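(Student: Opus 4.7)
The plan is to verify four properties: (a) the bilinear form $G_g$ descends to a well-defined symmetric form on the quotient tangent space $T_{[g]}(\cal M_-/\cal D_0)$; (b) it is positive definite there; (c) it is smooth in $[g]$; and (d) its value is independent of the chosen representative of $[g]$, so that it is genuinely a tensor on the quotient. The general strategy is to reduce each property to material already established in Section \ref{subsec: negMetrics} and the earlier lemmas in this subsection.

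For (a) and (b), I would use the slice-theorem identification recorded in \eqref{tangent_space}, namely $T_{[g]}(\cal M_-/\cal D_0)\simeq\{v\in\sf S_2(S)\mid D_g^*v=0\}$. The bilinear form is symmetric because $\Pi^g$ is self-adjoint by Theorem \ref{thm,PiProperty} and $\mathbf 1\otimes \mathbf 1$ is manifestly symmetric. Any $h\in T_g\cal M_-$ decomposes as $h=D_g\chi+v$ by \eqref{eqtn, Divergences}, and Lemma \ref{lem, KillPotential} combined with symmetry implies $G_g(h_1,h_2)=G_g(v_1,v_2)$ where $v_i$ is the solenoidal part. Lemma \ref{lem, nondegeneracy} then yields positive definiteness on $\ker D_g^*$, so $G_g$ descends to a well-defined, positive-definite symmetric form on $T_{[g]}(\cal M_-/\cal D_0)$.

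For (d), the crucial point is $\cal D_0$-invariance: for $\psi\in\cal D_0$, one needs $G_{\psi^*g}(\psi^*h_1,\psi^*h_2)=G_g(h_1,h_2)$. This follows because $d\psi\colon T^1S_{\psi^*g}\to T^1S_g$ is a diffeomorphism that conjugates the geodesic flows, $\phi_t^g\circ d\psi=d\psi\circ\phi_t^{\psi^*g}$, and pushes the Liouville measure $\mu^L_{\psi^*g}$ to $\mu^L_g$. Since $\pi_2^*(\psi^*h)=(d\psi)^*\pi_2^*h$, substituting into the defining formula \eqref{eq:Pi} (and its extension in Lemma \ref{defn,PiDefn}) shows $(d\psi)^*\Pi^g=\Pi^{\psi^*g}(d\psi)^*$, while the mean-value correction $\mathbf 1\otimes\mathbf 1$ transforms analogously. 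Together with $\rmd v_{\psi^*g}=\psi^*\rmd v_g$, this produces the claimed equality of forms.

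Finally, for (c), smoothness of $G$ as a function on $\cal M_-$ (applied to smooth sections of $\sf S_2(S)$) is Proposition \ref{prop,smoothness}. To promote this to smoothness on the quotient, I would work in a local slice $\cal W\hookrightarrow\cal M_-$ through a base point $g_0$; the slice theorem identifies a neighborhood of $[g_0]$ in $\cal M_-/\cal D_0$ with an open subset of $\cal W$, and any smooth vector field on the quotient pulls back to a smooth $\sf S_2(S)$-valued map on $\cal W$, so Proposition \ref{prop,smoothness} gives the desired smoothness. I expect the principal technical point to be the diffeomorphism invariance in (d), since it is the only step that has not been set up explicitly earlier in the paper; once the conjugation of the geodesic flows and the invariance of the Liouville measure are noted, however, it reduces to a direct computation from the definition of $\Pi^g$.
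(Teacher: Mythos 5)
Your proposal is correct and follows essentially the same route as the paper: positive definiteness on the quotient via the decomposition $h=D_g\chi+v$ together with Lemma \ref{lem, KillPotential} and Lemma \ref{lem, nondegeneracy}, well-definedness from the vanishing of $G_g$ on potential tensors, and smoothness from Proposition \ref{prop,smoothness} in a slice $\cal{W}$. The only difference is that you spell out the $\cal{D}_0$-invariance computation (conjugation of geodesic flows, pushforward of the Liouville measure) inside this proof, whereas the paper leaves it implicit here and carries out exactly that computation later in Proposition \ref{prop,mappingClassGrpInv} for the full diffeomorphism group.
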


\begin{proof}
The proof of this Proposition can be found in \cite[Proposition 3.9]{GeodesicStretch} and we only repeat it for its importance. 
For a fixed $g_0 \in \mathcal{M}_-$, we identify a neighborhood of $[g_0]\in\mathcal{M}_-/\mathcal{D}_0$ with a slice $\mathcal{W}\subset\cal{M}_-$ (a Fr{\'e}chet submanifold)  passing through $g_0$.
We verify positive definiteness.
For $g\in  \cal{W}$ near $g_0$, let $h\in T_g \mathcal{W}$. Since we can decompose $h=\mathcal{L}_Yg+h'$, where $Y\in \mathfrak{X}(S)$ and $D_g^* h'=0$, by Lemma \ref{lem, KillPotential} we obtain $G_g(h,h)=G_g(h',h') \geq 0$ with equality exactly when $h'=0$, by Lemma \ref{lem, nondegeneracy}. If $h'=0$, the fact that $T_g \mathcal{W}\cap\{\mathcal{L}_Yg|Y\in \mathfrak{X}(S)\}=\{0\}$ yields $h=0$. 
We notice that the argument does not depend on which slice $\mathcal{W}$ we use to identify $\mathcal{M}_-/\mathcal{D}_0$ by Lemma \ref{lem, KillPotential}. So $G_{[g]}(\cdot,\cdot)$ is a well-defined Riemannian metric on the quotient space $\mathcal{M}_-/\mathcal{D}_0$. 
\end{proof}

\begin{rem} \label{rem,ConformalDegenerate} 
From the proof of Lemma \ref{lem, KillPotential}, we notice that $\langle\Pi^{g} \pi_2^*h, \pi_2^*h\rangle=\textnormal{Var}(P_g(\pi_2^*h),\mu_g^L)$ also descends to a bilinear form on $\mathcal{M}_-/\mathcal{D}_0$. However it is not positive definite and therefore does not give a Riemannian metric on $\mathcal{M}_-/\mathcal{D}_0$. For example, consider a family of conformal metrics $\{g_t\}_{t\in\mathbb{R}}\in \mathcal{M}_-$ given by $g_t=tg$ for some $g\in\mathcal{M}_-$. Since $h=\dot{g}_0=g$ is divergence free (and therefore not a potential tensor), we have $d\pi_{\cal{M}_{-}}h=d\pi_{\cal{M}_{-}}g\in T_{[g_0]}(\mathcal{M}_-/ \mathcal{D}_0)$ is nonzero (where $\pi_{\cal{M}_{-}}:\cal{M}_{-}\to{\cal{M}_{-}}/\cal{D}_0 $ is the quotient map). But $P_g(\pi_2^*h)=P_g(\pi_2^*g)=0$, so $\langle\Pi^{g} \pi_2^*h, \pi_2^*h\rangle=0$. 
\end{rem}

Next we show that the extended mapping class group is an isometry subgroup of the covariance metric. Recall that the action of $\cal D$ on $\mathcal{M}_-$ by pullback induces a right action of the extended mapping class group $\Exmod$ on the space $\mathcal{M}_-/ \mathcal{D}_0$. 
In the following proposition, if $[\psi]\in \Exmod$, we write this action as $\theta_{[\psi]}:\mathcal{M}_-/\cal D_0 \to \mathcal{M}_-/\cal D_0$, $[g]\mapsto \theta_{[\psi]}([g])= [\psi^*g]$.
Then $\theta_{[\psi]}$ is smooth with smooth inverse (note that $\Exmod$ is discrete and $[g]\mapsto [\psi^*g]$ is smooth, as one can see by writing $[g]$ and $[\psi^*g]$ in terms of slices $\cal{W}$ and $\psi^*\cal{W}$ as in \eqref{eq:slice}).
It is actually an isometry with respect to the covariance metric $G_{[g]}(\cdot, \cdot)$.

\begin{prop}[Isometry subgroup]\label{prop,mappingClassGrpInv} 
The covariance metric is invariant under the extended mapping class group action on $\mathcal{M^-}/ \mathcal{D}_0$. In other words, the extended mapping class group is an isometry subgroup of the covariant metric. Explicitly, given $[g]\in \mathcal{M}_-/ \mathcal{D}_0$ and $\widehat{h}_j\in T_{[g]} (\mathcal{M}_- / \mathcal{D}_0) $, for $j=1,2$, and an element $[\psi] \in \Exmod$, we have
\begin{equation}\label{eq:isometry_pres}
G_{\theta_{[\psi]}([g])}(d\theta_{[\psi]}\widehat{h}_1,d\theta_{[\psi]}\widehat{h}_2)=G_{[g]}(\widehat{h}_1,\widehat{h}_2).
\end{equation}
\end{prop}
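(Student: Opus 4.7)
The plan is to first establish the invariance of the bilinear form $G$ at the level of $\cal{M}_-$ itself, namely that
\begin{equation*}
G_{\psi^*g}(\psi^*h_1,\psi^*h_2)=G_g(h_1,h_2) \qquad \forall \psi\in \cal{D},\; h_i\in T_g\cal{M}_-,
\end{equation*}
and then transfer this to the quotient using Lemma \ref{lem, KillPotential}. The computation on $\cal{M}_-$ will be carried out via the formula from Corollary \ref{cor:corr}, which expresses $G_g(h_1,h_2)$ purely in terms of covariances along the geodesic flow of $g$ with respect to the normalized Liouville measure $\mu_g^L$ and $L^2$-integrals over $T^1S_g$. This lets us bypass the microlocal definition of $\Pi^g$ and reduce everything to naturality of the geodesic flow.

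The main step is to observe that the differential $d\psi:T^1S_{\psi^*g}\to T^1 S_g$, $(x,v)\mapsto (\psi(x),d\psi_x v)$, is a diffeomorphism that (i) conjugates the geodesic flows, $d\psi\circ \phi^{\psi^*g}_t=\phi^g_t\circ d\psi$; (ii) is an isometry between the Sasaki metrics and therefore pushes the normalized Liouville measures onto one another, $(d\psi)_*\mu^L_{\psi^*g}=\mu^L_{g}$; and (iii) satisfies $\pi_2^*(\psi^*h)=(\pi_2^*h)\circ d\psi$ for any $h\in \sf{S}_2(S)$. Combining (ii) and (iii) immediately gives $\langle \pi_2^*(\psi^*h_i),1\rangle_{L^2(T^1S_{\psi^*g})}=\langle \pi_2^*h_i,1\rangle_{L^2(T^1S_g)}$, which in turn implies $P_{\psi^*g}(\pi_2^*(\psi^*h_i))=P_g(\pi_2^*h_i)\circ d\psi$. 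Substituting this identity together with (i) and (ii) into the definition \eqref{eq:CoVar} of covariance yields
\begin{equation*}
\mathrm{Cov}\bigl(P_{\psi^*g}(\pi_2^*(\psi^*h_1)),P_{\psi^*g}(\pi_2^*(\psi^*h_2)),\mu^L_{\psi^*g}\bigr)
=\mathrm{Cov}\bigl(P_g(\pi_2^*h_1),P_g(\pi_2^*h_2),\mu^L_g\bigr),
\end{equation*}
and then Corollary \ref{cor:corr} gives the claimed equality of $G_{\psi^*g}(\psi^*h_1,\psi^*h_2)$ and $G_g(h_1,h_2)$.

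To descend to the quotient, recall from Section \ref{subsec: negMetrics} that tangent vectors $\widehat{h}_i\in T_{[g]}(\cal{M}_-/\cal{D}_0)$ can be represented by any lifts $h_i\in T_g\cal{M}_-$, and that $G_{[g]}(\widehat{h}_1,\widehat{h}_2):=G_g(h_1,h_2)$ is independent of the choice of representative by Lemma \ref{lem, KillPotential}. Since $d\theta_{[\psi]}\widehat{h}_i$ is represented by $\psi^*h_i\in T_{\psi^*g}\cal{M}_-$, the identity on $\cal{M}_-$ established above directly yields
\begin{equation*}
G_{\theta_{[\psi]}([g])}(d\theta_{[\psi]}\widehat{h}_1,d\theta_{[\psi]}\widehat{h}_2)
=G_{\psi^*g}(\psi^*h_1,\psi^*h_2)
=G_g(h_1,h_2)
=G_{[g]}(\widehat{h}_1,\widehat{h}_2),
\end{equation*}
which is \eqref{eq:isometry_pres}.

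The only nontrivial technical point is property (ii) — that the lift $d\psi$ pushes the normalized Liouville measure to the normalized Liouville measure. This is essentially a standard naturality statement: $d\psi$ is a bundle isomorphism intertwining the Sasaki metrics of $\psi^*g$ and $g$, so it sends the unnormalized Liouville volume forms to each other; normalization is preserved because $\textnormal{Area}(S,\psi^*g)=\textnormal{Area}(S,g)$. Once this is in hand, the rest of the argument is a mechanical pullback computation together with an appeal to Corollary \ref{cor:corr} and Lemma \ref{lem, KillPotential}.
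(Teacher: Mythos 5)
Your proposal is correct and follows essentially the same route as the paper: reduce \eqref{eq:isometry_pres} to the identity $G_{\psi^*g}(\psi^*h_1,\psi^*h_2)=G_g(h_1,h_2)$ on $\cal{M}_-$ (using Lemma \ref{lem, KillPotential} to handle the ambiguity of lifts), and prove that identity by a change of variables exploiting the conjugation of geodesic flows and the pushforward relation between normalized Liouville measures. The only cosmetic difference is that you phrase the computation through the covariance expression of Corollary \ref{cor:corr} while the paper works directly with the time-averaged correlation integral defining $\Pi^g$; these are the same calculation.
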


\begin{proof}
First observe that if $\widehat{h} \in T_{[g]}(\mathcal{M}_-/\cal {D}_0)$ and   $h\in T_{g}\mathcal{M}_-$ satisfies $d\pi_{\mathcal{M}_-} h = \widehat{h}$ for some $g\in [g]$, then 
$d\theta_{[\psi]}\widehat{h}= d\pi_{\mathcal{M}_-} (\psi^* h)$, for any
$\psi\in [\psi]$.
Indeed, let  $g_t\in  \mathcal{M}_-$ be a curve with $h=\frac{d}{dt}{g}_t \big|_{t=0}$, so that $\frac{d}{dt}[g_t]\big|_{t=0}=\widehat{h}$.
Then 
$$d\theta_{[\psi]}\widehat{h}=\frac{d}{dt}\big(\theta_{[\psi]}([g_t])\big)\big|_{t=0}=\frac{d}{dt}\pi_{\mathcal{M}_-}(\psi^*g_t)\big|_{t=0}=d\pi_{\mathcal{M}_-} (\psi^* h).$$
This and the definition of the covariance metric imply that it suffices to show
\begin{equation}\label{eq:isometry}
G_{\psi^{*}g}(\psi^*h_1, \psi^*h_2)=G_g(h_1,h_2)    
\end{equation}
for $h_j\in T_g\cal {M}_-$  satisfying $d\pi_{\mathcal{M}_-} h_j=\widehat{h}_j$ and $\psi\in \cal{D}$, since in that case we have
\begin{equation}\label{eq:computation_isometry}
    G_{[g]}(\widehat{h}_1,\widehat{h}_2)=G_g(h_1,h_2)=G_{\psi^{*}g}(\psi^*h_1, \psi^*h_2)= G_{\theta_{[\psi]}([g])}(d\theta_{[\psi]}\widehat{h}_1,d\theta_{[\psi]}\widehat{h}_2).
\end{equation}
Note here that the $h_j$ are determined by $\widehat{h}_j$ up to the addition of a tensor field which is vertical with respect to the quotient map, that is, a potential tensor. The validity of \eqref{eq:computation_isometry} is independent of the choice of $h_j$ by Lemma \ref{lem, KillPotential}.

To show \eqref{eq:isometry}, recall that
$$G_g(h_1,h_2)=\lim _{T\to\infty}\frac{1}{T}\int_{-T}^T\langle P_g(\pi_2^*h_1)\circ \phi_t,\pi_2^*h_2\rangle_{L^2(T^1S_g)} \mathrm{d}t+\langle\pi_2^*h_1,1\rangle_{L^2(T^1S_g)}\langle\pi_2^*h_2,1\rangle_{L^2(T^1S_g)},$$
where $P_g(\pi_2^*h_1)(x)=\pi_2^*h_1(x)-\langle \pi_2^*h_1,1\rangle_{L^2(T^1S_g)}$.
The Liouville probability measures of $g$ and $\psi^*g$ are related by pushforward, i.e., $\mu^L_{\psi^*g}= (\psi^{-1}_*)_* \mu_g^L$, where $\psi^{-1}_* :T^1S_g\to T^1S_{\psi^*g}$ is  the induced diffeomorphism by $\psi$. 
Also, by the naturality of the Levi-Civita connection, the geodesic flows $\phi_t^g$ and $\phi_t^{\psi^*g}$ of $g$ and $\psi^*g$ respectively are related by conjugation by $\psi$, that is, $\phi_t^{\psi^*g}=\psi_*^{-1}\circ \phi^g_t \circ \psi_*: T^1 S_{\psi^*g} \to T^1 S_{\psi^*g}$. 
A simple change of variable then yields \eqref{eq:isometry}. 
\end{proof}

\begin{rem}
    Although we have only shown the above theorem for the right pull back action of the extended mapping class group on $\mathcal{M}_- / \mathcal{D}_0$, it naturally also holds for its induced left action introduced in Section \ref{subsec: MappingClassGrpMetrics}.
\end{rem}

\subsection{The special case of \texorpdfstring{$\mathcal{T}(S)$}{T(S)}}
\label{subsec: TeichPressure}

Fischer and Tromba \cite{FischerTromba}, using Riemannian geometry and non-linear analysis, reprove the classical result that $\cal{T}(S)=\cal{M}_{-1}/\cal{D}_0$ is  a $C^{\infty}$ finite dimensional contractible manifold.
Its tangent space at $[\sigma]\in \cal{M}_{-1}/\cal{D}_0$ is isomorphic to
\begin{equation} \label{TangentTeich}
    \sf{S}_2^{\sigma,TT}(S)=\{ h\in \sf{S}_{2}(S)|  \Tr_\sigma h=0, D_\sigma^*h=0\},
\end{equation}
where $\sigma\in [\sigma]$.
More specifically, given $\sigma_0\in \cal{M}_{-1}$ and $k\gg 1$, $\alpha\in(0,1)$, one can construct a local slice $\cal{S}\subset \cal{M}_{-1}\subset \cal{M}_{-1}^{k,\alpha}$ passing through $\sigma_0$, identified with a neighborhood of $[\sigma_0]\in \cal{M}_{-1}/\cal{D}_0$, so that $T_{\sigma_0}\cal{S}= \sf{S}_2^{\sigma_0,TT}(S)$ (see \cite[Theorem 2.4.2]{Tromba_Book} and Section \ref{subsec:smoothnessBlaschke})\footnote{
In \cite{Tromba_Book}, the slice $\cal{S}$ is a submanifold of the Hilbert manifold $\cal{M}_{-1}^s$ of hyperbolic metrics with fixed Sobolev regularity, though the proofs work similarly in the Hölder case. }.
 Moreover, for $h\in T_\sigma \cal{S}$ the decomposition \eqref{eqtn, Divergences} holds with $v\in \sf{S}_2^{\sigma,TT}(S)$. The space $\sf{S}_2^{\sigma,TT}(S)$ is related to holomorphic data on the Riemann surface $X_J$, where $J$ is the complex structure determined by $\sigma$ and a choice of orientation:

\begin{thm}[{\cite[Theorem 8.9]{FischerTromba}}]\label{rem: Req}
For $\sigma \in \cal M_{-1}$, there is a canonical isomorphism between the spaces $H^{0}(X_J, K_J^2)$ and $\sf{S}_2^{\sigma,TT}(S)$, given by $q\mapsto \Re(q)$. Thus by the Riemann-Roch theorem, the space $\sf{S}_2^{\sigma,TT}(S)$ is of real dimension $6 \mathscr{G}-6$.
\end{thm}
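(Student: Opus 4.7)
The plan is to construct the isomorphism explicitly in isothermal coordinates and verify its properties pointwise, then derive the dimension from Riemann--Roch.

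First I would fix an isothermal coordinate chart $z=x+iy$ in which $\sigma=e^{2\lambda}|dz|^2$, so that a holomorphic quadratic differential takes the local form $q=f(z)\rmd z^{2}$ with $f$ holomorphic. Expanding $\rmd z^{2}=(\rmd x^{2}-\rmd y^{2})+2i\,\rmd x\,\rmd y$ and writing $f=u+iv$, one sees immediately that
\begin{equation*}
\Re(q)=u(\rmd x^{2}-\rmd y^{2})-2v\,\rmd x\,\rmd y,
\end{equation*}
whose matrix in the frame $(\de_x,\de_y)$ has diagonal entries $u$ and $-u$. Since $\sigma^{ij}=e^{-2\lambda}\delta^{ij}$, this gives $\Tr_\sigma\Re(q)=0$. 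That the assignment $q\mapsto\Re(q)$ is coordinate independent follows from the fact that $K_J^{2}$ is the $(2,0)$ piece of the complexified symmetric two-tensor bundle and that the trace-free real symmetric two-tensors are exactly $\Re\bigl(K_J^{2}\bigr)$; in particular $\Re(q)$ is a globally well-defined element of $\sf{S}_2(S)$.

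Next I would prove the key analytic step: the Cauchy--Riemann equation $\de_{\bar z}f=0$ is equivalent to $D_\sigma^{*}\Re(q)=0$. Writing out $D_\sigma^{*}$ in the isothermal chart using the Christoffel symbols of $\sigma=e^{2\lambda}|\rmd z|^2$, and using that $\Re(q)$ is trace-free so that the conformal factor interacts well with the divergence, a direct computation shows
\begin{equation*}
(D_\sigma^{*}\Re(q))_1=-e^{-2\lambda}(\de_x u+\de_y v),\qquad (D_\sigma^{*}\Re(q))_2=-e^{-2\lambda}(\de_x v-\de_y u),
\end{equation*}
so that $D_\sigma^{*}\Re(q)=0$ iff $\de_x u=-\de_y v$ and $\de_y u=\de_x v$, which is precisely $\de_{\bar z}f=0$. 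This simultaneously verifies that the map $q\mapsto\Re(q)$ lands in $\sf{S}_2^{\sigma,TT}(S)$ and that the correspondence is reversible: given $h\in\sf{S}_2^{\sigma,TT}(S)$ with local components $(h_{11},h_{12},-h_{11})$, the locally defined $q:=(h_{11}-ih_{12})\rmd z^{2}$ is holomorphic by the same computation, and the global compatibility follows from the fact that the local formula is the inverse of $q\mapsto\Re(q)$, hence patches.

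Injectivity is immediate since $\Re(q)=0$ forces $u=v=0$ locally. With bijectivity established, the dimension count follows from the Riemann--Roch formula: for a compact Riemann surface of genus $\mathscr{G}\geq 2$ we have $\deg K_J=2\mathscr{G}-2$, hence $\deg K_J^{2}=4\mathscr{G}-4>2\mathscr{G}-2$, so $h^{1}(K_J^{2})=0$ and
\begin{equation*}
\dim_{\mathbb{C}} H^{0}(X_J,K_J^{2})=\deg K_J^{2}-\mathscr{G}+1=3\mathscr{G}-3,
\end{equation*}
giving $\dim_{\mathbb{R}}\sf{S}_2^{\sigma,TT}(S)=6\mathscr{G}-6$. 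The main obstacle I foresee is the explicit divergence computation: one must keep track of the conformal factor $e^{2\lambda}$ and of the Christoffel symbols carefully and use trace-freeness to obtain cancellations, so that what remains is exactly the Cauchy--Riemann system for $f$.
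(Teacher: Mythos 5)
Your proposal is correct and is essentially the classical argument: the paper itself offers no proof and simply cites \cite[Theorem 8.9]{FischerTromba}, where the identification is established by exactly this isothermal-coordinate computation (trace-freeness from the form of $\Re(\rmd z^2)$, divergence-freeness equivalent to the Cauchy--Riemann system, then Riemann--Roch with $\deg K_J^2=4\mathscr{G}-4>2\mathscr{G}-2$ killing $h^1$). One bookkeeping remark: with $\Re(q)=u(\rmd x^2-\rmd y^2)-2v\,\rmd x\,\rmd y$ one has $h_{12}=-v$, so the Euclidean divergence components are $\partial_x u-\partial_y v$ and $-\partial_x v-\partial_y u$, and their vanishing gives $u_x=v_y$, $u_y=-v_x$, i.e.\ genuinely $\partial_{\bar z}f=0$; the signs you display correspond instead to the anti-Cauchy--Riemann system, so the conclusion is right but the intermediate formula needs the sign of $h_{12}$ fixed. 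This is a slip, not a gap, and your observation that trace-freeness in dimension two makes the conformal factor drop out of the divergence (so $D_\sigma^*$ reduces to $-e^{-2\lambda}$ times the flat divergence) is precisely the cancellation that makes the computation clean.
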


\noindent One can then simplify the formula of the covariance metric when restricting to the Teichm{\"u}ller space  $\mathcal{T}(S)=\cal{M}_{-1}/\cal{D}_0$ and show that it restricts to a scale of the Weil-Petersson metric there.
\begin{cor}
On Teichm{\"u}ller space $\mathcal{T}(S)$, the covariance metric at $[\sigma]\in\mathcal{T}(S)$ satisfies  
\begin{equation}
    G_{[\sigma]}(\widehat{h},\widehat{h})=\textnormal{Var}(\pi_2^*h,\mu_{\sigma}^L),
\end{equation}
where $\widehat{h}\in T_{[\sigma]}\mathcal{T}(S)$, $\sigma\in [\sigma]$ and $h$ is the lift of $\widehat{h}$ in  $\sf{S}_2^{\sigma,TT}(S)$.
\end{cor}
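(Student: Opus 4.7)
The plan is straightforward: this is essentially a direct computation using the trace‑free condition defining $\mathsf{S}_2^{\sigma,TT}(S)$, combined with the formula for $G_g(h,h)$ in terms of the variance and the mean of $\pi_2^*h$ already proved in the paper. The key observation is that for tangent vectors to Teichm\"uller space, the second term in
\[
G_g(h,h)=\mathrm{Var}\bigl(P_g(\pi_2^*h),\mu_g^L\bigr)+\langle\pi_2^*h,1\rangle_{L^2(T^1S_g)}^{\,2}
\]
vanishes because such tangent vectors are represented by trace‑free tensors.

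First I would invoke the identification $T_{[\sigma]}\mathcal{T}(S)\cong \mathsf{S}_2^{\sigma,TT}(S)$ coming from \eqref{TangentTeich} (so $\widehat{h}=d\pi_{\mathcal{M}_-}h$ with $h$ trace- and divergence-free with respect to $\sigma$), and then apply the definition of $G_{[\sigma]}$ as the descent of $G_\sigma(\cdot,\cdot)$. Using the formula established in the lemma preceding this corollary,
\[
G_\sigma(h,h)=\mathrm{Var}\bigl(P_\sigma(\pi_2^*h),\mu_\sigma^L\bigr)+\langle\pi_2^*h,1\rangle_{L^2(T^1S_\sigma)}^{\,2},
\]
the goal reduces to showing that $\langle\pi_2^*h,1\rangle_{L^2(T^1S_\sigma)}=0$, which will automatically force $P_\sigma(\pi_2^*h)=\pi_2^*h$ and kill the second summand.

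For the vanishing of the mean, I would use the identity recalled in the proof of Proposition \ref{prop,smoothness}:
\[
\langle\pi_2^*h,1\rangle_{L^2(T^1S_\sigma)}=\int_{T^1S_\sigma}\pi_2^*h\,\rmd\mu_\sigma^L=\frac{1}{\mathrm{Area}(S,\sigma)}\int_S \mathrm{tr}_\sigma h\,\rmd v_\sigma.
\]
Since $h\in \mathsf{S}_2^{\sigma,TT}(S)$ satisfies $\mathrm{tr}_\sigma h=0$ pointwise, this integral is zero. Consequently $P_\sigma(\pi_2^*h)=\pi_2^*h$, and the formula collapses to $G_\sigma(h,h)=\mathrm{Var}(\pi_2^*h,\mu_\sigma^L)$, as claimed. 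There is really no hard step here; the only subtlety to double‑check is that the lift $h$ of $\widehat{h}$ given by the $TT$ representative is independent of the other possible lifts (differing by potential tensors), which is guaranteed by Lemma \ref{lem, KillPotential} and the argument already used in Proposition \ref{def:pressure} to show $G$ descends.
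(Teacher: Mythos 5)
Your proposal is correct and follows essentially the same route as the paper: both reduce to showing $\langle\pi_2^*h,1\rangle_{L^2(T^1S_\sigma)}=\mathrm{Area}(S,\sigma)^{-1}\int_S \Tr_\sigma h\,\rmd v_\sigma=0$ using the trace-free condition on $\sf{S}_2^{\sigma,TT}(S)$, so that the mean-square term drops and $P_\sigma(\pi_2^*h)=\pi_2^*h$. Your extra remark about independence of the choice of lift via Lemma \ref{lem, KillPotential} is a harmless addition already covered by the construction of the covariance metric.
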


\begin{proof}
We have $\langle \pi_2^*h, 1 \rangle_{L^2(T^1S_{\sigma})}= \mathrm{Area}(S,\sigma)^{-1} \int_{S} \Tr_\sigma h\textnormal{ }  \mathrm{d}v_\sigma =0$ (see \cite[Section 1.2]{GeodesicStretch}). Since $h\in \sf{S}_2^{\sigma,TT}(S)$, one concludes that  $\pi_2^*h$ is of mean zero.
\end{proof}

\noindent Combining the above discussions, we obtain:

\begin{thm} [{\cite[Theorem 1.5]{McMullen_Pressure}, \cite{Bridgeman_Pressure}, \cite[Theorem 6.3.1]{VariationAlongFuchsian}}] \label{thm,Pressure=WP}
Given $\widehat{h}\in T_{[\sigma]}\mathcal{T}(S)$, consider the lift $h\in \sf{S}_2^{\sigma,TT}(S)$ given by the real part of a holomorphic quadratic differential $q$ (Theorem \ref{rem: Req}). Then 
$$ G_{[\sigma]}(\widehat{h},\widehat{h})=\textnormal{Var}({\mathfrak{R}} (q),\mu_{\sigma}^L)= C \langle [q],[q] \rangle_{\scriptscriptstyle  WP}([\sigma]) $$
Here ${\mathfrak{R}} (q):=\pi^*_2(\textnormal{Re}(q))\in C^{\infty}(T^1S_{\sigma},\mathbb{R})$.
The constant $C$ only depends on the topology of $S$.
\end{thm}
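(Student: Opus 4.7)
The first equality $G_{[\sigma]}(\widehat{h},\widehat{h})=\textnormal{Var}(\mathfrak{R}(q),\mu_\sigma^L)$ is essentially immediate from the Corollary stated just before the theorem. Indeed, by construction $h=\Re(q)$ and by Theorem~\ref{rem: Req} we have $h\in \sf{S}_2^{\sigma,TT}(S)$, so $\pi_2^*h$ has $\mu_\sigma^L$-mean zero. Since $\pi_2^*\Re(q)=\mathfrak{R}(q)$ by the definition given in the theorem statement, the Corollary yields the first equality.

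For the second equality, my plan is to use the identification $T^1S_\sigma \simeq \Gamma\backslash\mathrm{PSL}(2,\mathbb{R})$, where $\Gamma$ is the Fuchsian group uniformizing $(S,\sigma)$, and to evaluate the variance via harmonic analysis on this homogeneous space. The key observation is that, since $q$ is a quadratic differential, the function $\mathfrak{R}(q)$ transforms with weight $2$ under the $S^1$ rotation of the unit tangent fibers of $T^1S_\sigma\to S$. Equivalently, under the decomposition of $L^2(\Gamma\backslash\mathrm{PSL}(2,\mathbb{R}))$ into $\mathrm{PSL}(2,\mathbb{R})$-isotypic components, $\mathfrak{R}(q)$ lies in the holomorphic discrete series of lowest weight $2$ together with its antiholomorphic counterpart. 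For vectors in these components the autocorrelation $t\mapsto \langle \mathfrak{R}(q)\circ\phi_t,\mathfrak{R}(q)\rangle_{L^2(T^1S_\sigma)}$ is an explicit matrix coefficient whose integral over $\mathbb{R}$ can be computed using classical Lie-theoretic formulas. The computation produces a universal constant times $\int_S |q|^2/\sigma^2\,\mathrm{d}v_\sigma = \langle[q],[q]\rangle_{\scriptscriptstyle WP}([\sigma])$; universality follows because the only data entering are the normalization of $\mu_\sigma^L$ and the Haar measure on $\mathrm{PSL}(2,\mathbb{R})$, which are linked by the area $2\pi|\chi(S)|$, and the weight of the representation, which depends only on the order $2$ of the differential.

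A more efficient route is to appeal directly to McMullen's computation \cite[Theorem 1.5]{McMullen_Pressure} (see also \cite{Bridgeman_Pressure} and \cite[Theorem 6.3.1]{VariationAlongFuchsian}), which shows that the Hessian of topological pressure along the Fuchsian locus agrees, up to a topological constant, with the Weil–Petersson form. Using the identity (from the thermodynamic formalism, already invoked in the proof of Proposition~\ref{prop,smoothness})
\[
\textnormal{Var}(\mathfrak{R}(q),\mu_\sigma^L)=\frac{\partial^2}{\partial t^2}\bigg|_{t=0}\textbf{P}\bigl(-J^u_\sigma+t\,\mathfrak{R}(q),X^\sigma\bigr),
\]
the variance is exactly this Hessian. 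The main obstacle along either route is bookkeeping: one must carefully match conventions—normalization of the Liouville probability measure $\mu_\sigma^L$, sign and entropy conventions in the definition of pressure, and the factor relating the $L^2(S)$ pairing to the $L^2(T^1S_\sigma)$ pairing—in order to extract the correct universal constant $C$. Once these normalizations are aligned, neither route presents genuine analytic difficulties and the two equalities combine to give the stated identity.
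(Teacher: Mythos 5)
Your proposal matches the paper's treatment: the first equality is exactly the corollary preceding the theorem (the mean-zero property of $\pi_2^*h$ follows from trace-freeness of $h\in \sf{S}_2^{\sigma,TT}(S)$), and the second equality is taken from the cited literature (McMullen, Bridgeman, Wolpert), which is all the paper itself does. Your representation-theoretic sketch on $\Gamma\backslash\mathrm{PSL}(2,\mathbb{R})$ is a plausible outline of how such a proof can go, but it is not carried out here and is not needed for the argument as the paper presents it.
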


\section{The Blaschke locus in \texorpdfstring{$\mathcal{M}_-/ \mathcal{D}_0$}{M-/D0}}
\label{Sec:BlaschkeLocus}

This section discusses the Blaschke locus $\mathcal{M}^B /\mathcal{D}_0$. In Subsection \ref{subsec:affinesphere}, we introduce some basic concepts from affine differential geometry. Then in Subsection \ref{subsec, BlaschkeHitchin}, we introduce the Blaschke locus and discuss its relation with the holomorphic vector bundle $Q_3(S)$ (Subsection \ref{subsec,HitchinComponents}). We then investigate regularity of the Blaschke locus $\mathcal{M}^B /\mathcal{D}_0$ in Subsection \ref{subsec:smoothnessBlaschke} in the spirit of Tromba \cite{Tromba_Book}, finishing with a discussion on the topology of the Blaschke locus in Subsection \ref{subsec:TopologyBlaschke}. Wang's equation (\ref{WangEquation}) will be the key for our study in this and the next sections.

\subsection{Affine differential geometry and Blaschke metrics}\label{subsec:affinesphere}

In this subsection we give a brief introduction on \emph{affine spheres} and  \emph{Blaschke metrics}.
Those are objects arising from affine differential geometry, which is the study of {affine differential invariants}, namely differential properties of hypersurfaces of $\mathbb{R}^{n+1}$ which are invariant under all volume preserving affine transformations. Standard references for affine differential geometry are \cite{Loftin_Survey} and \cite{AffineDifferentialGeometry}. The space of Blaschke metrics, which include hyperbolic metrics as special examples, will be the object of study in what follows.

A basic construction in affine differential geometry associates to a hypersurface $L$ of $\bb{R}^{n+1}$ a transverse vector field $\xi\pitchfork L$, the \emph{affine normal} vector field, which is an affine differential invariant. An \emph{affine sphere} is a hypersurface $L$ whose affine normal lines are concurrent at a point, the \emph{center}. 
We outline here the construction of an affine sphere in the special case of $\mathbb{R}^3$ and its associated affine differential invariants.
Let $L$ be a hypersurface in $\mathbb{R}^3$. 
A choice of a transverse vector field $\xi: L\to \mathbb{R}^3$ yields a decomposition $\mathbb{R}^3=T_{p}L \oplus \langle\xi(p)\rangle$ for any $p\in L$, where $\langle\xi(p)\rangle$ stands for the line spanned by $\xi(p)$. This allows one to decompose the standard flat affine  connection $D$ on $\bb{R}^3$ into tangential part $\nabla$ and normal part as follows,
\begin{equation}\label{affineDecompositionI}
    D_X Y=\nabla_X Y+ g(X,Y)\xi,
\end{equation} 
\begin{equation}\label{affineDecompositionII}
    D_X \xi= -B(X)+\tau(X) \xi, 
\end{equation}
where $X$ and $Y$ are tangent vector fields to $L$ and $B=B_{\xi}$ is an endomorphism of $TL$. 
Observe that $\nabla$ is a torsion-free connection on $TL$, so $g$ is a symmetric 2 tensor.  By restricting to the case where $L$ is strictly convex and $\xi$ points towards the convex side of the surface $L$, we can assume $g$ is positive definite. Further, by imposing the conditions  $\tau \equiv0$ and  $\det(\xi, X_1, X_2)^2 \equiv 1$ for any $g$-orthonormal frame $(X_1, X_2)$, one
determines a unique transversal vector field $\xi$ on $L$, which  is an affine differential invariant and is called the \emph{affine normal} of $L$. The endomorphism $B$ is then called the \emph{affine shape operator}. Moreover, one can check that the vector field $\xi$ being concurrent to a point is equivalent to the affine shape operator $B$ being a nonzero multiple of identity: $B=HI$ for some constant $H\in\mathbb{R}\setminus \{0\}$, the \emph{affine mean curvature}. 
We will focus on the case where $H=-1$, in which case $L$ is a \emph{hyperbolic affine sphere}\footnote{By applying a translation, one can always assume that the center of the hyperbolic affine sphere is the origin.}.

We will need two other affine differential invariants associated to the affine sphere $L$. One of them is the affine second fundamental form $g$, which is symmetric and positive definite. It yields a Riemannian metric which is an affine differential invariant on $L$, called the \emph{Blaschke metric}. 
The second is a cubic form $A$ on $TL$ known as the \emph{Pick form}: it is given by taking the difference $\nabla-\nabla^g$, where $\nabla^g$ denotes the Levi-Civita connection of the Blaschke metric, and lowering an index via $g$. If we use the conformal class of the Blaschke metric to regard $L$ as a Riemann surface, then the Pick form $A$ is the real part of a cubic differential $q=\tilde{q}(z)dz^3$ 
on $L$, which is called the \emph{Pick differential}. 

The map $f=\xi: L \to \mathbb{R}^3$ provides an immersion of $L$ into $\mathbb{R}^3$ if the integrability conditions for the structure equations \eqref{affineDecompositionI} and \eqref{affineDecompositionII} are satisfied\footnote{While $f(L)$ is the immersed affine sphere we obtain through this construction, we often abuse notation and call $L$ the affine sphere.}. 
Written in complex coordinates $z$ determined by the conformal class of the Blaschke metric $g$, the integrability conditions (see \cite[Section  5]{Loftin_Survey}) for $f$ are the following partial differential equations,
\begin{align}
     \tilde{q}_{\bar{z}}=0,\label{holomoprhic}
 \\*
    K(g)=-1+2|q|^2_{g}.\label{WangEquation}
\end{align}
The first equation \eqref{holomoprhic} simply requires the Pick differential $q$ to be holomorphic. The second equation (\ref{WangEquation}) is an second order partial differential equation in $g$, where $K(g)$ denotes the Gaussian curvature of $g$ and $|q|_{g}^2=\frac{|q|^2}{g^3}$ is the pointwise $g$ norm of the holomorphic cubic differential $q$. It is called  \emph{Wang's equation} in the affine sphere literature \cite{Wang91}. This equation is  of key importance in this note.

\begin{prop}[{\cite{Loftin_thesis}}]\label{pr:wang}
Wang's equation (\ref{WangEquation}) admits a unique smooth solution given a holomorphic cubic differential $q$ on a compact Riemann surface.
\end{prop}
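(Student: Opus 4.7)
The plan is to reduce Wang's equation (\ref{WangEquation}) to a scalar semilinear elliptic PDE for a conformal factor relative to a fixed hyperbolic background metric, then handle uniqueness via the maximum principle and existence via the method of sub- and super-solutions.

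By uniformization, fix the unique hyperbolic representative $\sigma$ in the conformal class of the given Riemann surface, and write any candidate conformal metric as $g = e^{2u}\sigma$ with $u \in C^{\infty}(S)$. The two-dimensional conformal change formula gives
\[
K(g) \;=\; e^{-2u}\bigl(K(\sigma) - \Delta_{\sigma} u\bigr) \;=\; -e^{-2u}\bigl(1 + \Delta_{\sigma} u\bigr),
\]
while $|q|_{g}^{2} = e^{-6u}|q|_{\sigma}^{2}$. Substituting into (\ref{WangEquation}) and rearranging, Wang's equation is equivalent to the scalar PDE
\[
\Delta_{\sigma} u \;=\; e^{2u} \;-\; 2|q|_{\sigma}^{2}\, e^{-4u} \;-\; 1 \;=:\; F(u, x)
\]
on $S$, where $F(\cdot, x)$ is smooth and strictly increasing in $u$, with $\partial_u F = 2e^{2u} + 8|q|_{\sigma}^{2}\,e^{-4u} \geq 2$.

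Uniqueness is immediate from the maximum principle: if $u_1, u_2$ are two $C^{2}$ solutions and $p$ is a maximum point of $u_1 - u_2$, then $\Delta_{\sigma}(u_1 - u_2)(p) \leq 0$, so $F(u_1(p), p) \leq F(u_2(p), p)$, and strict monotonicity of $F$ forces $u_1(p) \leq u_2(p)$. Hence $\max_{S}(u_1 - u_2) \leq 0$; interchanging the roles of $u_1$ and $u_2$ gives $u_1 \equiv u_2$.

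The main step is existence, which I would handle by exhibiting explicit constant sub- and super-solutions. If $C > 0$ is chosen so that $e^{2C} \geq 1 + 2\,\bigl\|\,|q|_{\sigma}^{2}\,\bigr\|_{L^{\infty}(S)}$, then $u_{+} \equiv C$ satisfies $\Delta_{\sigma}u_{+} = 0 \leq F(C, x)$, while $u_{-} \equiv -C$ satisfies $F(-C, x) \leq e^{-2C} - 1 < 0 = \Delta_{\sigma}u_{-}$, and trivially $u_{-} \leq u_{+}$. The classical monotone iteration scheme for semilinear elliptic equations on closed manifolds --- essentially the content of the later Proposition~\ref{prop:SuperSub}, attributed to Loftin --- then produces a solution $u \in C^{2,\alpha}(S)$ with $u_{-} \leq u \leq u_{+}$. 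Since $F$ is $C^{\infty}$ in both arguments, Schauder bootstrap promotes $u$ to $C^{\infty}(S)$, and the metric $g = e^{2u}\sigma$ is the desired smooth solution of Wang's equation.
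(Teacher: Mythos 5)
Your argument is correct and is essentially the standard proof (the one in the cited reference \cite{Loftin_thesis}): the paper itself gives no proof of Proposition \ref{pr:wang}, but your reduction to the scalar semilinear equation is exactly the paper's own reformulation \eqref{eq:Blaschke_family_no_t} (up to the immaterial convention $e^{2u}$ versus $e^{u}$), and the maximum-principle uniqueness plus sub/supersolution existence is the intended route. Two cosmetic points: the bound $\partial_u F \geq 2$ is false for $u<0$ at zeros of $q$ (strict positivity of $\partial_u F$ is all you need and all that holds), and Proposition \ref{prop:SuperSub} is an a priori estimate on the solution rather than the monotone iteration scheme itself, which is classical and needs no citation to that proposition.
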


\noindent An interesting aspect of affine sphere theory is its connection with Higgs bundle theory introduced in Section \ref{subsec,HitchinComponents}.  
Wang's equation can be viewed as a special case of the Hitchin equation for a $\mathrm{PGL}(3,\bb{R})$ Higgs bundle: let $J$ be a complex structure on $S$ and denote by $\sigma$ the hyperbolic metric associated to $J$. Let $q$ be a holomorphic cubic differential with respect to the complex structure $J$. The Hitchin equation  \eqref{eq HitchinEquation} for the Higgs bundle $s_J(0,2q)$ in fact reduces to a single scalar equation, which is Wang's equation \eqref{WangEquation} on $S$ associated to $(J,q)$ (see for example \cite[Section 9]{FlatProjectiveCubicDifferentials} and \cite[Section 6.2]{QionglingIntro}). 

Returning to the closed oriented surface $S$ with genus $\mathscr{G} \geq 2$ we started with, we denote $\tilde{S}$ its universal cover. The punchline of the whole discussion is the following important Theorem.

\begin{thm} [{\cite[Theorem 3.1, Theorem 3.5]{Wang91}}]\label{thm:affine}
Given a complex structure $J$ on $S$, any holomorphic cubic differential $q$ on $X_J=(S,J)$ determines a complete hyperbolic affine sphere $f: \tilde{S} \to \mathbb{R}^3$ that admits discrete and properly discontinuous subgroup action in $\mathrm{SL}(3,\mathbb{R})$ so that the quotient topologically is $S$. Its Blaschke metric is given by the solution of Wang's equation on $\tilde{S}$ which descends to $S$. 
Conversely, any complete hyperbolic affine sphere $f: \tilde{S} \to \mathbb{R}^3$ that admits a discrete and properly discontinuous subgroup action in $\mathrm{SL}(3,\mathbb{R})$ with quotient topologically given by $S$ defines a complex structure $J$ given by the conformal class of its Blaschke metric  and a holomorphic cubic differential $q$ with respect to this complex structure on $S$.
\end{thm}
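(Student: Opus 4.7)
The plan is to prove the two directions separately, with the forward direction being the more substantial one, since the converse largely amounts to invoking the invariance properties of the Blaschke metric and Pick differential under affine transformations.

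For the forward direction, I would start from the data $(J,q)$ on $S$ and apply Proposition \ref{pr:wang} to obtain a unique smooth solution $g$ of Wang's equation \eqref{WangEquation} on $S$. Lifting $g$, $J$, and $q$ to the universal cover $\tilde{S}$ yields $\pi_1(S)$-equivariant data $(\tilde{g},\tilde{J},\tilde{q})$ satisfying \eqref{holomoprhic}--\eqref{WangEquation}. The next step is to reconstruct the immersion $f:\tilde{S}\to\mathbb{R}^3$ from this data. Concretely, one interprets the structure equations \eqref{affineDecompositionI}--\eqref{affineDecompositionII} (with $B=-\mathrm{Id}$, $\tau\equiv 0$) as defining a connection $D$ on the trivial rank-3 bundle over $\tilde{S}$; the fact that $\tilde{g}$ solves Wang's equation and $\tilde{q}$ is holomorphic is exactly the flatness condition for $D$ (equivalently, the Hitchin equation \eqref{eq HitchinEquation} for the Higgs bundle $s_{\tilde{J}}(0,2\tilde{q})$ as noted in the preceding paragraph). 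Since $\tilde{S}$ is simply connected, parallel transport produces a developing map $f:\tilde{S}\to\mathbb{R}^3$, unique up to an element of $\mathrm{SL}(3,\mathbb{R})$.

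The next step is to verify that $f$ is a hyperbolic affine sphere with Blaschke metric $\tilde{g}$ and Pick differential $\tilde{q}$; this is essentially a computation extracting $(g,q,B)$ from the constructed immersion and comparing with the input, using the normalization conditions $\tau\equiv 0$ and $\det(\xi,X_1,X_2)^2\equiv 1$ which pin down the affine normal uniquely. Equivariance under $\pi_1(S)$ gives a homomorphism $\rho:\pi_1(S)\to\mathrm{SL}(3,\mathbb{R})$ intertwining deck transformations with $\rho$-action on $\mathbb{R}^3$; discreteness and proper discontinuity of $\rho(\pi_1(S))$ follow from the fact that the quotient metric $g$ on $S$ is smooth and the deck action on $\tilde{S}$ is properly discontinuous, so the induced action on $f(\tilde{S})$ inherits the same property. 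The \textbf{main obstacle} I anticipate is establishing completeness of the hyperbolic affine sphere $f(\tilde{S})$: one must show that the Blaschke metric $\tilde{g}$ on $\tilde{S}$ is a complete Riemannian metric. This follows because $\tilde{g}$ descends to a smooth metric on the compact surface $S$, hence is complete on $S$, and completeness lifts to the universal cover.

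For the converse, starting with a complete hyperbolic affine sphere $f:\tilde{S}\to\mathbb{R}^3$ with a discrete properly discontinuous $\mathrm{SL}(3,\mathbb{R})$-action whose quotient is diffeomorphic to $S$, one observes that the Blaschke metric and Pick differential are affine-invariants, hence invariant under $\mathrm{SL}(3,\mathbb{R})$ (in particular under the subgroup realizing $\pi_1(S)$). Therefore they descend to a Riemannian metric $g$ and a cubic form $A$ on $S$. The conformal class of $g$ determines a complex structure $J$ on $S$, and $A$ is the real part of a cubic differential $q$ which is holomorphic with respect to $J$ by \eqref{holomoprhic}. Equation \eqref{WangEquation} holds automatically since it is one of the integrability conditions satisfied by any hyperbolic affine sphere. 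This closes the correspondence, and the uniqueness of the solution to Wang's equation (Proposition \ref{pr:wang}) ensures the two constructions are mutually inverse.
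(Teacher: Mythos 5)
The paper does not actually prove this statement: it is imported verbatim from Wang (\cite{Wang91}, Theorems 3.1 and 3.5) and used as a black box, so there is no in-paper argument to compare against. Your sketch reconstructs the standard proof from the literature in the right way: solve Wang's equation on the compact surface via Proposition \ref{pr:wang}, lift to $\tilde{S}$, observe that holomorphicity of $q$ together with \eqref{WangEquation} is exactly the integrability (flatness) condition for the connection encoded by the structure equations \eqref{affineDecompositionI}--\eqref{affineDecompositionII} with $B=-\mathrm{Id}$ and $\tau\equiv 0$, develop on the simply connected cover to obtain $f$, and get the holonomy homomorphism $\rho:\pi_1(S)\to\mathrm{SL}(3,\mathbb{R})$ from uniqueness of the immersion up to unimodular affine motion. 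The completeness argument (the Blaschke metric descends to the compact $S$, hence is complete, hence its lift is complete) is also the standard one and is correct.

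The one genuine soft spot is your treatment of discreteness and proper discontinuity. You assert that the action on $f(\tilde{S})$ ``inherits'' proper discontinuity from the deck action on $\tilde{S}$, but this presupposes that $f$ is injective (indeed a proper embedding): only then is the $\rho(\pi_1(S))$-action on the image conjugate via $f$ to the deck action, only then does $\rho(\gamma)=\mathrm{id}$ force $\gamma=\mathrm{id}$, and only then does ``the quotient is topologically $S$'' make sense for the image surface rather than for the abstract domain. The developing map produced by integrating a flat connection is a priori only an immersion. In Wang's proof this is where the Cheng--Yau theory of complete hyperbolic affine spheres enters: completeness of the Blaschke metric forces the affine sphere to be properly embedded and asymptotic to the boundary of a convex cone, and that convexity is what ultimately yields embeddedness and the discreteness of the holonomy. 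Without some substitute for this input, the step from ``$\rho$-equivariant immersion with complete induced metric'' to ``discrete, properly discontinuous subgroup with quotient $S$'' does not follow. The converse direction as you present it (affine invariance of the Blaschke metric and Pick form, descent to $S$, holomorphicity of $q$ from \eqref{holomoprhic}) is fine.
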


\begin{rem}
A last remark that we want to make about the affine sphere theory is its relation with hyperbolic geometry and Teichm\"uller theory. In the special case in which the holomorphic cubic differential $q\equiv0$, Wang's equation reduces to the classical curvature equations for metrics of constant curvature $-1$. Therefore a hyperbolic metric is a special example of a  Blaschke metric. In these cases, the affine spheres obtained are universal covers of hyperbolic surfaces viewed in the hyperboloid model as mentioned in the introduction.
\end{rem}

\subsection{The Blaschke locus \texorpdfstring{$\mathcal{M}^B /\mathcal{D}_0$}{M\^B / D0} } \label{subsec, BlaschkeHitchin}

In this section, we define the Blaschke locus first as a set and then study its manifold structure.

\begin{defn}
We define $\mathcal{M}^B$ to be the space of Blaschke metrics on $S$. The quotient space of $\mathcal{M}^B$ up to $\mathcal{D}_0$-action is denoted by  $\mathcal{M}^B /\mathcal{D}_0$, called the Blaschke locus.
\end{defn}

\noindent The Blaschke locus $\mathcal{M}^B/\mathcal{D}_0$ is a subset of the space $\mathcal{M}_- /\mathcal{D}_0$ due to the following Proposition: 

\begin{prop}[{\cite[Theorem 5.1]{PolonomialCubics}, \cite[Lemma 3.3]{OuyangTamburelli}}]\label{prop:neg_curv}  
Any Blaschke metric $g$ is strictly negatively curved, i.e. $K(g) <0$.  
\end{prop}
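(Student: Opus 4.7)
The plan is to use Wang's equation \eqref{WangEquation} to reduce the assertion $K(g)<0$ to the pointwise bound $u:=|q|_g^2<\tfrac{1}{2}$, and then to establish that bound via a maximum principle argument. If $q\equiv 0$, Wang's equation gives $K(g)\equiv -1$, so assume henceforth that $q$ is not identically zero; its zero set $Z(q)\subset S$ then consists of finitely many isolated points, and $u$ is a smooth nonnegative function on $S$, strictly positive on $S\setminus Z(q)$.

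The key computation is $\Delta_g\log u$ on $S\setminus Z(q)$. In a local holomorphic coordinate $z$ with $g=\lambda|dz|^2$ and $q=\tilde q\,dz^3$, one has $u=|\tilde q|^2/\lambda^3$. Harmonicity of $\log|\tilde q|^2$ (from holomorphicity of $\tilde q$) together with $\Delta_0\log\lambda=-2\lambda K(g)$ and $\Delta_g=\lambda^{-1}\Delta_0$ yields $\Delta_g\log u=6K(g)$, and substituting \eqref{WangEquation} gives the identity
\begin{equation*}
\Delta_g\log u=-6+12u\qquad\text{on }S\setminus Z(q).
\end{equation*}
Since $S$ is compact and $u\not\equiv 0$, the maximum of $u$ is attained at some $p_{\max}\in S\setminus Z(q)$, where $\Delta_g\log u(p_{\max})\le 0$. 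This immediately gives $u(p_{\max})\le\tfrac{1}{2}$, hence $u\le\tfrac{1}{2}$ everywhere, i.e.\ $K(g)\le 0$.

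To upgrade this to the strict inequality, I would argue by contradiction, assuming $u(p_{\max})=\tfrac{1}{2}$. Setting $w:=u-\tfrac{1}{2}\le 0$ and using the pointwise identity $\Delta_g u=u\,\Delta_g\log u+|\nabla_g u|^2/u$ valid wherever $u>0$, I obtain on a neighborhood $U$ of $p_{\max}$ (chosen so that $u>0$ on $U$) the linear elliptic differential inequality
\begin{equation*}
\Delta_g w-12u\,w=|\nabla_g u|^2/u\ge 0,\qquad w\le 0,\qquad w(p_{\max})=0.
\end{equation*}
The zeroth order coefficient $-12u$ is nonpositive on $U$ and $w$ achieves an interior nonnegative maximum, so the strong maximum principle of Hopf forces $w\equiv 0$, i.e.\ $u\equiv\tfrac{1}{2}$, on the connected component of $U$ containing $p_{\max}$. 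A standard clopen argument then propagates this identity to all of $S$: the set $\{u=\tfrac{1}{2}\}$ is closed by continuity, and repeating the strong maximum principle at any of its points (where necessarily $u>0$) shows it is open; by connectedness of $S$, $u\equiv\tfrac{1}{2}$ globally, whence $K(g)\equiv 0$ on $S$. This contradicts the Gauss-Bonnet theorem since $\chi(S)=2-2\mathscr{G}<0$.

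The main technical obstacle is the awkwardness caused by the zeros of $q$, where $\log u$ is undefined. This is sidestepped by noting that $u(p_{\max})>0$ whenever $u\not\equiv 0$, so the initial bound $u\le\tfrac{1}{2}$ invokes $\log u$ only at points where it is smooth, and by observing that the Hopf-type strict-inequality argument is purely local and only needs to be run on neighborhoods where $u>0$. Once the key identity $\Delta_g\log u=-6+12u$ is in hand, the remainder is a textbook application of the strong maximum principle together with Gauss-Bonnet.
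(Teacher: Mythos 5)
Your proof is correct: the identity $\Delta_g\log|q|^2_g=6K(g)$ away from the zeros of $q$, the evaluation at the maximum of $|q|^2_g$ to get $|q|^2_g\le\tfrac12$, and the Hopf strong maximum principle plus Gauss--Bonnet to rule out equality are all sound, and you handle the zero set of $q$ properly. The paper offers no proof of its own here --- it only cites \cite{PolonomialCubics} and \cite{OuyangTamburelli} --- and your argument is essentially the standard one appearing in those references, so there is nothing to flag.
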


Fix a choice of orientation on $S$. Denote by $J(\sigma)$  the  complex structure determined by $\sigma$ and the orientation. Let
\begin{equation}\label{eq:Q3}
    \tilde{Q}_3(S):=\bigsqcup_{\sigma\in \cal{M}_{-1}}H^0(X_{J(\sigma)}, K_{J(\sigma)}^3),
\end{equation}
viewed as a subset of $\cal{M}_{-1}\times \sf{S}_3(S)^{\mathbb{C}}$, where the superscript $\mathbb{C}$ denotes complexification and $X_{J(\sigma)}=(S, J(\sigma))$ is the Riemann surface with the complex structure $J(\sigma)$. Let 
\begin{equation}\label{eq:g_tilde}
  \widetilde{\mathbf{g}}:  \tilde{Q}_3(S)\to\cal{M}^B\subset  \cal{M}_{-}
\end{equation}
be the map that assigns to a pair $(\sigma,q)$ the solution  of Wang's equation \eqref{WangEquation} on the Riemann surface $X_{J(\sigma)}$.
Now $\cal{D}_0$ (or $\cal{D}^{+}$) act on $\tilde{Q}_3(S)$ and $ \cal{M}_{-1}$ in a natural way from the right by pullback (and similarly from the left by inverse pullback).
Consider the equivalence relation on $\tilde{Q}_3(S)$ given by $(\sigma,q)\sim (\sigma',q')$ if and only if for some $\psi\in \cal{D}_0$, one has $ \sigma'=\psi^*\sigma$ and $q'=\psi^* q$. We henceforth identify  $Q_3(S)$ with $\tilde{Q}_3(S)/\cal{D}_0 $ by viewing $Q_3(S)$ as a bundle over $\cal{M}_{-1}/\cal{D}_0$, for a fixed choice of orientation. 
With some abuse of notation we denote elements in $\tilde{Q}_3(S)$ by either $(\sigma,q)$ or $(J,q)=(J(\sigma),q)$. This is allowed because positively oriented complex structures are in one-to-one correspondence with hyperbolic metrics.

\begin{prop}\label{prop_equiv_bl}
The map $\widetilde{\mathbf{g}}$ is equivariant with respect to the action of $\psi\in \cal{D}^+$ by pullback: $\psi^*(\widetilde{\mathbf{g}}(J,q))= \widetilde{\mathbf{g}}\big(\psi^*( J,q)\big)$. Similarly, it is equivariant with respect to the left action of $\cal{D}^+$ on $\tilde{Q}_3(S)$ and $\cal{M}^B$ by inverse pullback.
Thus after taking a quotient by the $\cal{D}_0$ action, we obtain a well defined mapping class group equivariant map 
\begin{equation}\label{eq:noS1}
    \mathbf{g}:Q_3(S) \to \mathcal{M}^B/\mathcal{D}_0.
\end{equation}
\end{prop}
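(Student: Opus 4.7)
The core of the argument is that Wang's equation (\ref{WangEquation}) is natural under pullback by $\cal{D}^+$, combined with the uniqueness of its solution (Proposition \ref{pr:wang}). Concretely, given $(J,q)\in\tilde{Q}_3(S)$ and $\psi\in\cal{D}^+$, I would first check the three individual covariance statements:
\begin{enumerate}
    \item Because $\psi$ is orientation preserving, viewed as a map $\psi:(S,\psi^*J)\to(S,J)$ it is biholomorphic; in particular $\psi^*q\in H^0(X_{\psi^*J},K_{\psi^*J}^3)$, so $(\psi^*J,\psi^*q)\in\tilde{Q}_3(S)$.
    \item Set $g=\widetilde{\mathbf{g}}(J,q)$. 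Since Gaussian curvature transforms as a scalar under pullback by a diffeomorphism, $K(\psi^*g)=K(g)\circ\psi$.
    \item The pointwise norm of a cubic differential is also natural: since $\psi$ is an isometry $(S,\psi^*g)\to(S,g)$ sending $\psi^*q$ to $q$, one has $|\psi^*q|^2_{\psi^*g}=|q|^2_g\circ\psi$.
\end{enumerate}
Pulling back Wang's equation $K(g)=-1+2|q|^2_g$ by $\psi$ then yields $K(\psi^*g)=-1+2|\psi^*q|^2_{\psi^*g}$. Since $\psi^*g$ is smooth and of negative curvature (pullback of a Blaschke metric), the uniqueness part of Proposition \ref{pr:wang} gives $\psi^*g=\widetilde{\mathbf{g}}(\psi^*J,\psi^*q)$, which is the desired equivariance under the right pullback action. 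The equivariance under the induced left action $\psi\cdot(J,q):=(\psi^{-1})^*(J,q)$ follows immediately by applying the right-action statement to $\psi^{-1}\in\cal{D}^+$.

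Next I would deduce that $\widetilde{\mathbf{g}}$ descends to a map $\mathbf{g}:Q_3(S)\to\cal{M}^B/\cal{D}_0$. If $(J,q)\sim(J',q')$ in $\tilde{Q}_3(S)$, i.e.\ $(J',q')=\psi^*(J,q)$ for some $\psi\in\cal{D}_0\subset\cal{D}^+$, then by the equivariance just proved, $\widetilde{\mathbf{g}}(J',q')=\psi^*\widetilde{\mathbf{g}}(J,q)$, so the two Blaschke metrics represent the same class in $\cal{M}^B/\cal{D}_0$. Hence $\mathbf{g}$ is well defined. Mapping class group equivariance of $\mathbf{g}$ then follows: for $[\psi]\in\mathrm{Mod}(S)=\cal{D}^+/\cal{D}_0$ and any representative $\psi$, the (left) $\cal{D}^+$-equivariance of $\widetilde{\mathbf{g}}$ descends to the quotients exactly because the $\cal{D}_0$-action is normal in the $\cal{D}^+$-action, and the ambiguity in choosing $\psi$ is absorbed by the $\cal{D}_0$-quotient on both sides.

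I do not anticipate any real obstacle: the only subtle point is that the argument uses $\cal{D}^+$ rather than all of $\cal{D}$, and the reason is precisely item (1) above---pullback by an orientation-reversing diffeomorphism would send a $(3,0)$-form to a $(0,3)$-form, so it would not land in $\tilde{Q}_3(S)$ as defined in (\ref{eq:Q3}). All other steps are routine covariance calculations combined with the uniqueness statement in Proposition \ref{pr:wang}.
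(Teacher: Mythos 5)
Your proposal is correct and follows essentially the same route as the paper: verify that $\psi^*g$ satisfies Wang's equation for the data $\psi^*(J,q)$ by using the naturality of the Gaussian curvature and of the pointwise norm $|q|^2_g$ under pullback, then invoke the uniqueness in Proposition \ref{pr:wang}. The only cosmetic difference is that the paper verifies $|\psi^*q|^2_{\psi^*g}=\psi^*|q|^2_g$ by an explicit computation in conformal coordinates, whereas you assert it as a naturality statement; both are fine, and your remark about why orientation preservation is needed matches the paper's restriction to $\cal{D}^+$ (cf.\ Remark \ref{orientation_cplx}).
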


\begin{proof}
Denoting $g=\widetilde{\mathbf{g}}(J, q)$, we  need to verify that $ K(\psi^*g)=-1+2|\psi^*q|^2_{\psi^*g}$. By uniqueness, this will imply that $\psi^*g$ is the solution of  Wang's equation \eqref{WangEquation} associated to the pair $\psi^* (J, q)$. Since $\psi$ is an isometry between $\psi^*g$ and $g$, we know $$K(\psi^*g)=\psi^*K(g)=\psi^*(-1+ 2|q|^2_{g}).$$
It now suffices to show that $ |\psi^*q|^2_{\psi^*g}=\psi^*|q|^2_{g}$ on $S$.
 Let $z$ be conformal coordinates with respect to $J$ and write $z=\psi(w)$, with $w$ conformal coordinates with respect to $\psi^*J$.
 Then if $g=e^{u(z)}dzd\bar{z}$ and $q=f(z)dz^3$, we have $\psi^*g=e^{u(\psi(w))}|dz/dw|^2 dwd\bar{w}$ and $\psi^* q=f(\psi(w))(dz/dw)^3 dw^3$, so that locally 
 \begin{equation*}
    |\psi^*q|^2_{\psi^*g}=|f(\psi(w))|^2/e^{3u(\psi(w))}=\psi^*|q|^2_{g}.
 \end{equation*}
 This is a local computation but we have invariantly defined global objects on both sides, so we have the claim.
\end{proof}

\begin{rem}\label{orientation_cplx}
    Since the solution of Wang's equation corresponding to $(J,q)$ is the same as the one corresponding to $(-J,\bar{q})$, the proof of Proposition \ref{prop_equiv_bl} shows that if one views $\widetilde{\mathbf{g}}$ as a map from the bundle of holomorphic cubic differentials over all complex structures (not necessarily positively oriented), then it is also equivariant with respect to the action of $\cal{D}$  by pullback and inverse pullback, so it descends to an extended mapping class group equivariant map when taking $\cal{D}_0$ quotients. 
    This point of view will not be needed or used in the sequel, except briefly in  the proof of Lemma \ref{lem fixedPointSet}.
\end{rem}

Besides the $\cal{D}_0$ action on  $\tilde{Q}_3(S)$, there is also a natural action of $S^1$ on $\tilde{Q}_3(S)$ which is given by $e^{2\pi i \theta}\cdot (\sigma, q)=(\sigma, e^{2\pi i \theta} q)$.  From now on, we will write elements in $\tilde{Q}_3(S)/S^1$  as $(\sigma,\langle q\rangle)$ for the class of $(\sigma,q)\in \tilde{Q}_3(S)$.
Because the $S^1$ action on  $\tilde{Q}_3(S)$ commutes with the $\cal{D}_0$ action on  $\tilde{Q}_3(S)$, it descends to an action on $Q_3(S)$.
Moreover, the map $\widetilde{\mathbf{g}}$ is constant on the orbits of this action as seen from \eqref{WangEquation}. Thus by Proposition \ref{prop_equiv_bl}, the maps $\widetilde{\mathbf{g}}$ and $\mathbf{g}$ descend to a map
\begin{equation}\label{eq:withS1}
    \mathbf{g}_0:Q_3(S)/S^1 \to \mathcal{M}^B/\mathcal{D}_0.
\end{equation}
The following proposition, whose proof we include for its importance, shows that the map $\mathbf{g}_0$ is bijective.

\begin{prop}[{\cite[Proposition 4.1]{OuyangTamburelli}}] \label{equalBlaschke}
Suppose that $[(\sigma_j,q_j)]\in Q_3(S)$, for $j=1,2$, and that $[g_j]=\mathbf{g}([(\sigma_j,q_j)])$  are the associated Blaschke metrics. Then $[g_2]=[g_1]$ if and only if $[(\sigma_2,q_2)]= [(\sigma_1,e^{2\pi i\theta}q_1)]$ for some $\theta\in[0,1)$.
\end{prop}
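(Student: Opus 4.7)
The plan is to prove the two implications separately, with the reverse implication being straightforward and the forward one requiring one additional holomorphicity argument. For $(\Leftarrow)$, assume $(\sigma_2,q_2)=\psi^{*}(\sigma_1,e^{2\pi i\theta}q_1)$ for some $\psi\in\cal{D}_0$ and $\theta\in[0,1)$. Since Wang's equation \eqref{WangEquation} depends on the cubic differential only through the pointwise norm $|q|_g^2$, the metric $\widetilde{\mathbf{g}}(\sigma_1,e^{2\pi i\theta}q_1)$ equals $g_1$, and the $\cal{D}^{+}$-equivariance proved in Proposition \ref{prop_equiv_bl} then yields $g_2=\psi^{*}g_1$, i.e.\ $[g_2]=[g_1]$.

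For $(\Rightarrow)$, suppose $g_2=\psi^{*}g_1$ for some $\psi\in\cal{D}_0$. First I would recover the hyperbolic representatives: by construction, $g_j$ lies in the conformal class determined by $J(\sigma_j)$, and since $\cal{D}_0\subset\cal{D}^{+}$, the equality of conformal classes gives $J(\sigma_2)=\psi^{*}J(\sigma_1)=J(\psi^{*}\sigma_1)$; uniqueness of the hyperbolic representative in a conformal class then forces $\sigma_2=\psi^{*}\sigma_1$. This reduces matters to comparing two holomorphic cubic differentials on the same Riemann surface $X_{J(\sigma_2)}$, namely $q_2$ and $\psi^{*}q_1$. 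Applying Wang's equation to both $g_2$ and $\psi^{*}g_1=g_2$, together with the identity $|\psi^{*}q_1|^{2}_{\psi^{*}g_1}=\psi^{*}|q_1|^{2}_{g_1}$ already established in the proof of Proposition \ref{prop_equiv_bl}, one obtains the pointwise identity
\[
|q_2|^{2}_{g_2} \;=\; \tfrac{1}{2}\bigl(K(g_2)+1\bigr) \;=\; |\psi^{*}q_1|^{2}_{g_2}
\]
on $S$.

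The hard part, which I expect to be the only nontrivial step, is to promote this equality of pointwise norms to an equality of holomorphic cubic differentials up to a global unimodular constant. If $q_1\equiv 0$, the above identity forces $q_2\equiv 0$ and there is nothing more to prove. Otherwise, the quotient $q_2/(\psi^{*}q_1)$ is an a priori meromorphic function on the compact Riemann surface $X_{J(\sigma_2)}$; the pointwise norm equality forces the zero divisors of $q_2$ and $\psi^{*}q_1$ to coincide, so this quotient is actually holomorphic and of constant modulus one away from their common discrete zero set. Riemann's removable singularity theorem then extends it to a holomorphic function of modulus one on all of $X_{J(\sigma_2)}$, which by compactness must be a constant $e^{2\pi i\theta}$. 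Consequently $q_2=e^{2\pi i\theta}\psi^{*}q_1$, and hence $(\sigma_2,q_2)=\psi^{*}(\sigma_1,e^{2\pi i\theta}q_1)$, yielding the desired equivalence $[(\sigma_2,q_2)]=[(\sigma_1,e^{2\pi i\theta}q_1)]$ in $Q_3(S)$.
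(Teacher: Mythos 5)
Your proposal is correct and follows essentially the same route as the paper's proof: the reverse direction via the invariance of Wang's equation under the $S^1$ action and the equivariance of $\widetilde{\mathbf{g}}$, and the forward direction by recovering $\sigma_2=\psi^{*}\sigma_1$ from uniqueness of the hyperbolic representative, equating pointwise norms via Wang's equation, and concluding that the ratio of the two cubic differentials is a unimodular constant. Your treatment of the meromorphic-quotient step is slightly more detailed than the paper's (which simply asserts that a meromorphic function valued in $S^1$ is constant), but the argument is the same.
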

\begin{proof}
 The ``if'' direction follows from Propositions \ref{pr:wang} and \ref{prop_equiv_bl}. For the converse, suppose that $[g_1]=[g_2]$, which implies that
 $g_1=\psi^* g_2$ for some $\psi\in \cal{D}_0$. 
For $j=1,2$, write  $g_j=e^{u_j}\sigma_j$ for suitable smooth functions $u_j$.
Then 
$$e^{u_1}\sigma_1=\psi^* (e^{u_2}\sigma_2)=e^{\psi^* u_2}\psi^*\sigma_2,$$
which shows that $\sigma_1 $ and $\psi^*\sigma_2$ are hyperbolic metrics in the same conformal class and thus equal.
We now show that $q_1=e^{2\pi i \theta }\psi^* q_2$ for some $\theta\in [0,1)$, which will imply the claim.
Since $g_1=\psi^* g_2$, from Wang's equation \label{{WangEquation}} we have
\begin{align*}
|q_1|^2_{g_1}=\psi^*(|q_2|_{g_2}^2)=|\psi^{*}q_2|^2_{\psi^{*}g_2}=|\psi^{*}q_2|^2_{g_1}.   
\end{align*}
Therefore, if we write $q_1=f_1(z)dz^3$ and $\psi^*q_2=f_2(z)dz^3$ in conformal coordinates  with respect to the complex structure $J(g_1)$, where $f_j$ is holomorphic for $j=1,2$, we see that $|f_1|=|f_2|$  pointwise.
If $f_1$ or $f_2$ is identically zero then they both are and we are done.
Else, writing $f_2=\lambda  f_1$, where $\lambda: S\to S^1$ is a meromorphic function, we conclude that $\lambda\in S^1$ must be a constant. Thus  $q_1= e^{2\pi i\theta} \psi^*q_2$ for some $\theta\in[0,1)$ and we obtain the claim.
\end{proof}

\subsection{Smoothness of the Blaschke locus \texorpdfstring{$\mathcal{M}^B /\mathcal{D}_0$}{MB/D0}
} \label{subsec:smoothnessBlaschke}

Our goal in this subsection is to prove Theorem \ref{smoothnessBlaschkeLocus}, namely that the Blaschke locus $\mathcal{M}^B /\mathcal{D}_0$ has the structure of a finite dimensional smooth manifold away from the Teichm\"uller space $\cal{T}(S)=\cal{M}_{-1}/\cal{D}_0$. 
Our strategy is based on the construction of smooth charts for $\cal{T}(S)$ as outlined in \cite{Tromba_Book} (see also Subsection \ref{subsec: TeichPressure}). Briefly, one can construct local compatible smooth charts for the Blaschke locus $\mathcal{M}^B /\mathcal{D}_0$ using the smooth manifold structure of  $Q_3(S)$. %

Throughout, $k$ is a large integer and $\alpha\in (0,1)$.
The superscript $k,\alpha$ will indicate that the relevant space of functions or sections of a bundle have H\"older regularity of this order. 
The following theorem collects the results in \cite[Chapter 2]{Tromba_Book} regarding the smooth manifold structure of Teichmüller space.
It is stated in terms of $C^{k,\alpha}$ instead of Sobolev regularity, with the proof being the same as in the Sobolev case.
\begin{thm}\label{thm:slice}
    Let $\sigma_0\in \cal{M}_{-1}$. There exists a smooth local submanifold $\cal{S}$ of $\cal{M}_{-1}^{k,\alpha}$ of dimension $6\mathscr{G}-6$ passing through $\sigma_0$ which contains only $C^\infty$ metrics and satisfies $T_{\sigma_0}\cal{S}=\sf{S}^{\sigma_0,TT}_2(S)$ (see \eqref{TangentTeich}).
    Moreover, when $\cal{S}$ is sufficiently small, the map
    \begin{equation}
        \Theta_{\cal{S}}:\cal{S}\times \cal{D}_{0}^{k+1,\alpha}\to \cal{M}_{-1}^{k,\alpha},\quad  (\sigma,\psi)\mapsto \psi^{*}\sigma, \label{eq:theta_map}
    \end{equation}
    is a diffeomorphism onto its image. The slices $\cal{S}$ locally parametrize $\cal{T}(S)=\cal{M}_{-1}/\cal{D}_0$ 
    (that is, each $\cal{S}$ does not contain two distinct metrics in the same $\cal{D}_0$ orbit)
    and define smoothly compatible charts, giving $\cal{T}(S)$ the structure of a smooth manifold of dimension $6\mathscr{G}-6$, homeomorphic to $\bb{R}^{6\mathscr{G}-6}$.
\end{thm}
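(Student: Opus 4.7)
The plan is to construct the slice $\cal{S}$ via the uniformization equation and then verify the diffeomorphism statement by the inverse function theorem, following the approach laid out in \cite{Tromba_Book}. First, for each $h$ in a small ball $\cal{B}\subset \sf{S}^{\sigma_0,TT}_2(S)$, the tensor $\sigma_0+h$ is a smooth Riemannian metric. By the uniformization theorem, there is a unique smooth function $u(h)$ such that $\sigma(h):=e^{2u(h)}(\sigma_0+h)$ is hyperbolic, i.e. $u$ solves
\begin{equation*}
\Delta_{\sigma_0+h}u=K(\sigma_0+h)+e^{2u}.
\end{equation*}
The operator $\Delta_{\sigma_0}-2$ is invertible on $C^{k+2,\alpha}(S)$ (its spectrum is strictly negative), so the implicit function theorem in Hölder spaces yields a smooth dependence $h\mapsto u(h)$, and standard elliptic regularity gives $u(h)\in C^\infty(S)$. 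Define $\cal{S}$ as the image of $h\mapsto \sigma(h)$; this is a smoothly embedded finite-dimensional submanifold of $\cal{M}_{-1}^{k,\alpha}$ consisting of $C^\infty$ metrics, of dimension $6\mathscr{G}-6$ by Theorem~\ref{rem: Req}.

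To compute $T_{\sigma_0}\cal{S}$, differentiate $\sigma(th)$ at $t=0$ to get $h+2\dot u(0)\cdot h\,\sigma_0$. A direct calculation shows that for $h\in \sf{S}^{\sigma_0,TT}_2(S)$ the linearized Gaussian curvature $\dot K(h)$ vanishes, since the standard formula
\begin{equation*}
\dot K(h)=\tfrac{1}{2}\Delta_{\sigma_0}(\Tr_{\sigma_0}h)+\tfrac{1}{2}\delta\delta h-\tfrac{1}{2}K(\sigma_0)\Tr_{\sigma_0}h
\end{equation*}
reduces to zero when $h$ is both trace-free and divergence-free. Consequently the linearized uniformization equation reads $(\Delta_{\sigma_0}-2)\dot u(0)\cdot h=0$, which forces $\dot u(0)\cdot h=0$ and hence $T_{\sigma_0}\cal{S}=\sf{S}^{\sigma_0,TT}_2(S)$.

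For the diffeomorphism property of $\Theta_{\cal{S}}$, identify $T_{\text{id}}\cal{D}_{0}^{k+1,\alpha}$ with $\mathfrak{X}^{k+1,\alpha}(S)$. Then at $(\sigma_0,\text{id})$ one computes
\begin{equation*}
d\Theta_{\cal{S}}(h,Y)=h+\cal{L}_Y\sigma_0=h+2D_{\sigma_0}(Y^\flat).
\end{equation*}
The target $T_{\sigma_0}\cal{M}_{-1}^{k,\alpha}$ consists of $C^{k,\alpha}$ symmetric $2$-tensors $k$ with $\dot K(k)=0$. Using the $L^2_{\sigma_0}$-orthogonal decompositions \eqref{eqtn, Divergences} and \eqref{eqtn,traceDecomp}, any such $k$ splits as $k=D_{\sigma_0}\chi+\tfrac{1}{2}(\Tr_{\sigma_0}v)\sigma_0+v_0$ with $v_0\in \sf{S}^{\sigma_0,TT}_2(S)$; the constraint $\dot K(k)=0$ combined with $(\Delta_{\sigma_0}-2)$-invertibility forces the conformal piece to vanish, so the decomposition is unique. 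This makes $d\Theta_{\cal{S}}|_{(\sigma_0,\text{id})}$ a Banach-space isomorphism, and the inverse function theorem produces a local diffeomorphism; freeness and properness of the $\cal{D}_0^{k+1,\alpha}$-action \cite[cf.]{EbinRiemannianMetrics2} upgrade this to a global diffeomorphism onto its image when $\cal{S}$ is small enough. The transversality just shown implies that no two distinct points of $\cal{S}$ lie in the same $\cal{D}_0$-orbit, so the slices parametrize open sets of $\cal{T}(S)$; the transition maps are smooth because they are compositions of the smooth inverses of $\Theta_{\cal{S}}$ and $\Theta_{\cal{S}'}$ projected onto the slice factor. Finally, the homeomorphism $\cal{T}(S)\cong\bb{R}^{6\mathscr{G}-6}$ is the classical contractibility result, proved in \cite{FischerTromba,Tromba_Book} using the Dirichlet energy of harmonic maps.

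The main obstacle is the bijectivity check in the third paragraph: one must identify $T_{\sigma_0}\cal{M}_{-1}^{k,\alpha}$ explicitly as the kernel of the linearized curvature operator and verify that its intersection with the space of pure conformal deformations is trivial. This is where invertibility of $\Delta_{\sigma_0}-2$ enters crucially, and where it is essential that $\sigma_0$ has strictly negative curvature so that the lower-order term has the correct sign.
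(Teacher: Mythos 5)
Your proposal is correct and follows essentially the same route as the paper, which does not prove Theorem~\ref{thm:slice} itself but cites \cite[Chapter 2]{Tromba_Book} and describes the slice exactly as you build it: $\cal{S}=\{\lambda(\sigma_0+h):h\in\sf{S}_2^{\sigma_0,TT}(S)\text{ small}\}$ with $\lambda$ the Poincar\'e (uniformization) map, followed by the linearized-curvature computation and the inverse function theorem for $\Theta_{\cal{S}}$. The one point worth making explicit is that the pullback action of $\cal{D}_0^{k+1,\alpha}$ on $\cal{M}_{-1}^{k,\alpha}$ is only continuous, not differentiable, so the applicability of the inverse function theorem to $\Theta_{\cal{S}}$ rests precisely on the fact (which your construction delivers via elliptic regularity) that $\cal{S}$ consists of $C^\infty$ metrics, making $\Theta_{\cal{S}}$ smooth on $\cal{S}\times\cal{D}_0^{k+1,\alpha}$.
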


A local slice for $\cal{T}(S)$ through a metric $\sigma_0\in \cal{M}_{-1}$ in Theorem \ref{thm:slice} is constructed via the Poincar\'e map
\begin{equation}
    \lambda:\cal{M}^{k,\alpha}_{-}\to \cal{M}^{k,\alpha}_{-1},
\end{equation}
which takes a metric to the unique hyperbolic metric in its conformal class.
More specifically, $\lambda$ defines a smooth diffeomorphism between a neighborhood of $0$ in $\sf{S}_2^{\sigma_0,TT}(S)$ and $\cal{S}$, given by writing
$\cal{S}\ni \sigma =\lambda (\sigma_0 +h)$, where $h\in\sf{S}_2^{\sigma_0,TT}(S)$ is sufficiently small. Since $\sf{S}_2^{\sigma_0,TT}(S)$ is a  $6\mathscr{G}-6$ dimensional vector space consisting of smooth tensor fields, we can write $h$ in terms of a basis $\{h_j\}_{j=1}^{6\mathscr{G}-6}$. Smallness of $h=\sum_{j=1}^{6\mathscr{G}-6} a^j h_j$ means exactly smallness of the coefficients $a^j$.

\begin{rem}\label{rem,equivalentTopology} 
 We remark that different choices of $k,$ $\alpha$ (for $k$ sufficiently large) result in equivalent topologies underlying the smooth structures induced by the slices in Theorem \ref{thm:slice}.
 To see this, one can express $\sigma\in \cal{S}$ in terms of a basis for $ \sf{S}_2^{\sigma_0,TT}(S)$ via the Poincar\'e map. 
Explicitly, if in $\cal{S}$ we have $\sigma_n=\lambda(\sigma_0+\sum_{j=1}^{6\mathscr{G}-6} a^j_n h_j)\overset{n\to\infty}{ \to} \sigma=\lambda(\sigma_0+\sum_{j=1}^{6\mathscr{G}-6} a^j h_j)$ in the $C^{k,\alpha}$ topology,
we  have $a^j_n \overset{n\to\infty}{ \to} a^j$ for all $j$, therefore $\sigma_n \overset{n\to\infty}{ \to} \sigma$ in $C^\infty $ topology as well. 
Moreover, since $\cal{T}(S)$ is homeomorphic to $\bb{R}^{6\mathscr{G}-6}$ with respect to the topology induced from any $\cal{M}_{-1}^{k,\alpha}$ and there exist no exotic smooth structures on  $\bb{R}^d$ for $d\neq 4$, the smooth structures obtained for $\cal{T}(S)$ by means of Theorem \ref{thm:slice} are equivalent for all values of $k$, $\alpha$.
\end{rem}

Recall that we defined the space $\tilde{Q}_3(S)$ in \eqref{eq:Q3} as a space of holomorphic cubic differentials with respect to varying Riemann surfaces.
Since the projection $\tilde{Q}_3(S)\to \cal{M}_{-1}$
is $\cal{D}_0$-equivariant, we obtain a well defined surjective map 
\begin{equation}\label{eq:Q_bundle}
    p:Q_3(S)\to \cal{T}(S). 
\end{equation}
Each fiber of $p$ naturally carries the structure of a complex vector space of  dimension $5(\mathscr{G}-1)$. It is  known that $Q_3(S)$ is a smooth vector bundle (it is actually holomorphic, see for example \cite{bers}) over a contractible space, so it is globally trivial. 
Its topology was discussed in Remark \ref{rmk_topology}.
Below we discuss an identification of $Q_3(S)$ with slices in $\tilde{Q}_3(S)$.

\begin{lem}\label{lm:sliceq}
Let $\sigma_0\in \cal{M}_{-1}$ and let $\cal{S}$ be a slice in $\cal{M}_{-1}^{k,\alpha}$ passing through $\sigma_0$ as in Theorem \ref{thm:slice}. We can identify $Q_3(S)$ locally near a point $[(\sigma_0,q)]$ with a slice
\begin{equation}\label{eq:slice_sq}
    \cal{S}^{\cal{Q}}:=\bigsqcup_{\sigma\in \cal{S}}H^0(X_{J(\sigma)}, K_{J(\sigma)}^{ 3})
\end{equation} 
over  $\cal{S}$.
\end{lem}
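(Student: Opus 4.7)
The plan is to construct a local homeomorphism between a neighborhood of $(\sigma_0, q)$ in $\cal{S}^{\cal{Q}}$ and a neighborhood of $[(\sigma_0, q)]$ in $Q_3(S)$ by restricting the natural map
\[
\Phi: \cal{S}^{\cal{Q}} \to Q_3(S), \qquad (\sigma, q') \mapsto [(\sigma, q')],
\]
which is tautologically continuous, being the composition of the inclusion $\cal{S}^{\cal{Q}} \hookrightarrow \tilde{Q}_3(S)$ with the quotient map. The key input is Theorem \ref{thm:slice}, which I would lift from the level of $\cal{M}_{-1}$ to that of $\tilde{Q}_3(S)$ by using the fact that the $\cal{D}_0$-action on $\tilde{Q}_3(S)$ covers the $\cal{D}_0$-action on $\cal{M}_{-1}$.

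For injectivity of $\Phi$ near $(\sigma_0, q)$, suppose $\Phi(\sigma_1, q_1) = \Phi(\sigma_2, q_2)$, so that there exists $\psi \in \cal{D}_0$ with $\sigma_2 = \psi^* \sigma_1$ and $q_2 = \psi^* q_1$; the uniqueness of the decomposition provided by the diffeomorphism $\Theta_{\cal{S}}$ in Theorem \ref{thm:slice} forces $\psi = \mathrm{id}$, hence $(\sigma_1, q_1) = (\sigma_2, q_2)$. To verify that the image of $\Phi$ covers a neighborhood of $[(\sigma_0, q)]$, I would pick a representative $(\sigma', q')$ close to $(\sigma_0, q)$ in $\tilde{Q}_3(S)$, invoke Theorem \ref{thm:slice} to write $\sigma' = \psi^* \sigma$ uniquely with $\sigma \in \cal{S}$ and $\psi \in \cal{D}_0^{k+1,\alpha}$ (with $\psi$ in fact smooth by a standard bootstrap, since $\sigma$ and $\sigma'$ are), and set $\hat{q} := (\psi^{-1})^* q'$. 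Since $J(\sigma') = \psi^* J(\sigma)$, the map $\psi^{-1} : (S, J(\sigma)) \to (S, J(\sigma'))$ is biholomorphic, so $\hat{q}$ is $J(\sigma)$-holomorphic of degree $3$, giving $(\sigma, \hat{q}) \in \cal{S}^{\cal{Q}}$ with $\Phi(\sigma, \hat{q}) = [(\sigma', q')]$.

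The hard part will be continuity of the inverse of $\Phi$ with respect to the $C^\infty$ topology on $Q_3(S)$ described in Remark \ref{rmk_topology}. For this I would use that $\Theta_{\cal{S}}^{-1}$ is a diffeomorphism in the $C^{k,\alpha}$ category: a $C^\infty$-convergent sequence of representatives $(\sigma_n', q_n') \to (\sigma_0, q)$ produces slice parameters $(\sigma_n, \psi_n) \to (\sigma_0, \mathrm{id})$ in every $C^{k,\alpha}$, and a bootstrap argument upgrades this to $C^\infty$ convergence; then continuity of pullback of holomorphic differentials (which preserves uniform-on-compacts convergence on the universal cover) yields $(\psi_n^{-1})^* q_n' \to q$ in the required topology. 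Combined with the previous paragraph, this exhibits the restriction of $\Phi$ as the sought-after local identification.
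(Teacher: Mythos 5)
Your proposal is correct, and its core coincides with the paper's (very short) proof: the slice $\cal{S}$ meets each $\cal{D}_0$-orbit in $\cal{M}_{-1}$ at most once, and freeness of the $\cal{D}_0$-action on $\cal{M}_{-1}$ (equivalently, injectivity of $\Theta_{\cal{S}}$) then forces the diffeomorphism relating two representatives over $\cal{S}$ to be the identity, so each class $[(\sigma,q)]$ near $[(\sigma_0,q)]$ has a unique representative in $\cal{S}^{\cal{Q}}$. That two-line bijection argument is the entire content of the paper's proof. Where you diverge is in treating $\cal{S}^{\cal{Q}}$ as carrying an a priori topology (as a subset of $\tilde{Q}_3(S)$) and then proving that the tautological map $\Phi$ is a local homeomorphism, including the continuity of $\Phi^{-1}$. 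The paper sidesteps this entirely: immediately after the lemma it \emph{defines} the topology and smooth bundle structure on $\cal{S}^{\cal{Q}}$ by transporting the global trivialization $\phi_0$ of $Q_3(S)$ through the bijection, so there is nothing to verify at this stage; the compatibility you are proving (that gauge-fixing representatives over a slice is continuous, via continuous dependence of the diffeomorphisms $\psi_{\sigma_n}$ on $\sigma_n$ and a bootstrap to $C^\infty$) is essentially the argument the paper only deploys later, in the proof of Proposition \ref{prop,homeo}. So your route is heavier than needed for the lemma as the paper uses it, but it establishes a genuinely stronger statement and is sound; the only cosmetic point is that your appeal to "uniqueness of the decomposition provided by $\Theta_{\cal{S}}$" for injectivity is really the combination of the slice property (no two metrics of $\cal{S}$ in one orbit) with freeness of the action, which is exactly what the paper cites.
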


\begin{proof}
Given $[(\sigma,q)]\in Q_3(S)$ near $[(\sigma_0,q_0)]$ and the unique $\sigma\in [\sigma]$ lying in $\cal{S}$, there exists a unique $q\in H^0(X_{J(\sigma)}, K_{J(\sigma)}^3)$ such that $ (\sigma,q)\in [(\sigma,q)]$.
Indeed, if $(\sigma,q)=\psi^*(\sigma,q')$ for $\psi\in \cal{D}_0$, then because the $\cal{D}_0$-action is free on $\cal{M}_{-1}$, we have that $\sigma =\psi^*\sigma$ implies $\psi=id$.
\end{proof}

Using the identification described in Lemma \ref{lm:sliceq} we can obtain trivializations for $\cal{S}^{\cal{Q}}$  via a smooth global trivialization $\phi_0$  of $Q_3(S)$, thus giving each slice $\cal{S}^{\cal{Q}}$ a smooth bundle structure so that the projection $(\sigma,q)\mapsto \sigma$ is smooth.
That is, if we let 
\begin{equation}\label{projections}
    \pi_{\cal{S}^{\cal{Q}} }:\cal{S}^{\cal{Q}}\to \pi_{\cal{S}^{\cal{Q}}} ( \cal{S}^{\cal{Q}})\subset Q_3(S) \hspace{.2in} \textnormal{and} \hspace{.2in}   \pi_{\cal{S}}:\cal{S}\to\pi_{\cal{S}}(\cal{S})\subset \cal{T}(S)
\end{equation}
be the respective $\cal{D}_0$ quotient maps and  $\phi_0:Q_3(S)\to \cal{T}(S)\times \mathbb{C}^{5\mathscr{G}-5}$ be a smooth global trivialization for $Q_3(S)$, then 
\begin{equation}
    \tilde{\phi}_{\cal{S}}=(\pi_{\cal{S}}^{-1}\times id)\circ \phi_0 \circ \pi_{\cal{S}^{\cal{Q}}}:\cal{S}^{\cal{Q}}\to \cal{S}\times \bb{C}^{5\mathscr{G}-5}
\end{equation}
is a (global) smooth trivialization for $\cal{S}^{\cal{Q}}$.
Using it, we can write a smooth local frame $\{(\sigma, q_\ell(\sigma))\}_{\ell=1}^{5\mathscr{G}-5}$ for the fiber of $\tilde{Q}_3(S)$ over $\sigma\in \cal{S}$, by choosing a frame $\{[(\sigma, q_\ell(\sigma))]\}_{\ell=1}^{5\mathscr{G}-5}$ for the  bundle $Q_3(S)$ and considering the unique representatives of $[(\sigma, q_\ell(\sigma))]$ over the slice $\cal{S}$.
Then we can express an element $q(\sigma)$ of $\cal{S}^\cal{Q}$ as $q(\sigma)=\sum_{\ell=1}^{5\mathscr{G}-5}b^\ell q_\ell(\sigma)$, $b^\ell\in \bb{C}$.
Moreover, such a section $q(\sigma)$ is smooth when viewed as a map into $\sf{S}_3^k(S)^{\mathbb{C}}$, for any $k\geq 0$. 
This follows from the construction in  \cite[Theorem II]{bers}, where it is shown that for each element $f(z)dz^3\in Q_3(S)\big|_{\tau_0}$ lifted to the universal cover $\bb{D}$ (the unit disk), where $\tau_0\in \cal{T}(S)$,  one has $f(z)=F(z,\tau_0)$ for some jointly holomorphic function $F(z,\tau)$, with respect to the complex structure on $\bb{D}\times \cal{T}(S)$, and the fact that $\cal{S}$ is a smooth chart for $\cal{T}(S)$.

It follows by Theorem \ref{thm:slice} that for two slices $\cal{S}$ and $\cal{S}'$ with $U:=\pi_{\cal{S}}(\cal{S})\cap \pi_{\cal{S}'}(\cal{S}')\neq \emptyset$,
there exists a smooth map $\Psi_{\cal{S},\cal{S}'}:\pi_{\cal{S}}^{-1}(U)\to \pi_{\cal{S}'}^{-1}(U)$ which maps $\sigma\in \cal{S}$ to the unique $\sigma'\in \cal{S}'$ such that $[\sigma]=[\sigma']$.
One has $\Psi_{\cal{S},\cal{S}'}(\sigma)=\psi_\sigma^*\sigma $, where $\psi_\sigma=\pi_2\circ\Theta_{\cal{S}'}^{-1}(\sigma)$ with $\Theta_{\cal{S}'}$ as in \eqref{eq:theta_map} and $\pi_2$ the projection onto the second factor.
(A priori, $\psi_\sigma\in\cal{D}_0^{k+1,\alpha}$, but because the slices $\cal{S}$, $\cal{S}'$ consist of smooth metrics, it is actually smooth, see the proof of \cite[Theorem 2.3.1]{Tromba_Book}.)
Thus we similarly obtain a map 
\begin{equation}\label{Psimap}
    \tilde{\Psi}_{\cal{S},\cal{S}'}:\cal{S}^{\cal{Q}}\to\cal{S'}^{\cal{Q}}, \quad (\sigma,q)\mapsto (\psi_\sigma^*\sigma,\psi_\sigma^*q),
\end{equation}
and  one checks that  $\tilde{\phi}_{\cal{S}'}\circ \tilde{\Psi}_{\cal{S},\cal{S}'}\circ \tilde{\phi}_{\cal{S}}^{-1}(\sigma,b)=(\Psi_{\cal{S},\cal{S}'}(\sigma),b)$, and therefore $\tilde{\Psi}_{\cal{S},\cal{S}'}$ is smooth.

\smallskip

In what follows we construct charts for the Blaschke locus $\cal{M}^B/\cal{D}_0$, away from the Teichm{\"u}ller space $\cal{T}(S)$, using the map $\widetilde{\mathbf{g}}$ in \eqref{eq:g_tilde}.
Similarly to the approach in \cite{Tromba_Book}, we will  use the $C^{k,\alpha}$ topology for $\cal{M}_{B}/\cal{D}_0$ to give it a smooth structure, although the metrics we are interested in are actually smooth. The key for the construction is Wang's equation \eqref{WangEquation}. We will now write this equation slightly differently. Consider the logarithmic density $u$ of a Blaschke metric $g$ given by $g=\widetilde{\mathbf{g}}(\sigma, q)=e^{u(\sigma,q)}\sigma$. 
Then Wang's equation \eqref{WangEquation} is equivalent to the following semilinear elliptic partial differential equation for $u$:
\begin{equation}\label{eq:Blaschke_family_no_t}
\Delta_{\sigma}u= 2e^{u}-4e^{-2u} \frac{|q|^2}{\sigma^3}-2.
\end{equation}
Equation \eqref{eq:Blaschke_family_no_t} has a unique smooth solution $u$ for any given $(\sigma,q)\in \tilde{Q}_3(S)$ (Recall Proposition \ref{pr:wang}).

\begin{lem}\label{lem,smoothness}
    Let $\cal{S}^{\cal{Q}}\subset \tilde{Q}_3(S)$ be a slice as in \eqref{eq:slice_sq}.
    Then the map $\widetilde{\mathbf{g}}\big|_{\cal{S}^{\cal{Q}}}: \cal{S}^{\cal{Q}}\to \cal{M}^{k,\alpha}_{-}$ is smooth.
\end{lem}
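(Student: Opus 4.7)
The plan is to apply the implicit function theorem for Banach spaces to the reformulation \eqref{eq:Blaschke_family_no_t} of Wang's equation. Define
\[
F : \cal{S}^{\cal{Q}} \times C^{k,\alpha}(S) \to C^{k-2,\alpha}(S), \qquad F(\sigma, q, u) := \Delta_\sigma u - 2 e^u + 4 e^{-2u} \frac{|q|^2}{\sigma^3} + 2,
\]
so that $u = u(\sigma, q)$ solves $F(\sigma, q, u) = 0$ precisely when $e^u \sigma = \widetilde{\mathbf{g}}(\sigma, q)$. If one can verify that $F$ is smooth and that the partial derivative $D_u F(\sigma, q, u_0)$ at any solution is a linear isomorphism, the implicit function theorem will produce a smooth assignment $(\sigma, q) \mapsto u(\sigma, q) \in C^{k,\alpha}(S)$, and postcomposing with the smooth map $(\sigma, u) \mapsto e^u \sigma \in \cal{M}^{k,\alpha}_-$ (whose image lies in $\cal{M}^{k,\alpha}_-$ by Proposition \ref{prop:neg_curv}) will yield the claim.

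For smoothness of $F$, the terms involving $\Delta_\sigma u$, $e^u$, and $e^{-2u}$ present no difficulty: $(\sigma, u) \mapsto \Delta_\sigma u$ is smooth as a map $\cal{M}^{k,\alpha}_{-1} \times C^{k,\alpha}(S) \to C^{k-2,\alpha}(S)$ since in local coordinates it depends polynomially on $\sigma^{-1}$ and on first derivatives of $\sigma$, while exponentiation is smooth $C^{k,\alpha}(S) \to C^{k,\alpha}(S)$ by standard composition estimates in H\"older spaces. The delicate point is the term $|q|^2/\sigma^3$, since as $\sigma$ varies in $\cal{S}$ the complex structure $J(\sigma)$ changes and $q$ lives in varying fibers $H^0(X_{J(\sigma)}, K_{J(\sigma)}^3)$. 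To handle it I would use the smooth trivialization $\tilde{\phi}_{\cal{S}}$ constructed above to write $q = \sum_\ell b^\ell q_\ell(\sigma)$ with $q_\ell(\sigma)$ a smooth frame of cubic differentials, which by the paragraph following \eqref{projections} depends smoothly on $\sigma$ as a section in $\sf{S}_3^{k,\alpha}(S)^{\mathbb{C}}$; then $|q|^2/\sigma^3$ is obtained from $(\sigma, q)$ by a pointwise algebraic operation and defines a smooth map into $C^{k,\alpha}(S)$.

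The main step is verifying that the linearization
\[
D_u F(\sigma, q, u_0)\, v \; = \; \Delta_\sigma v - \Big( 2 e^{u_0} + 8 e^{-2 u_0} \tfrac{|q|^2}{\sigma^3} \Big) v
\]
is an isomorphism $C^{k,\alpha}(S) \to C^{k-2,\alpha}(S)$ at any solution $u_0$. This operator has the form $\Delta_\sigma - V$ with $V > 0$ strictly positive everywhere. Injectivity follows from the maximum principle: if $\Delta_\sigma v = V v$ and $v$ attains its maximum at a point $p$, then $\Delta_\sigma v(p) \leq 0$ together with $V(p) > 0$ forces $v(p) \leq 0$, hence $\max v \leq 0$; the symmetric argument at the minimum gives $v \equiv 0$. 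Since $\Delta_\sigma - V$ is elliptic and formally self-adjoint on a closed manifold, standard Fredholm theory yields Fredholm index zero on H\"older spaces, so injectivity upgrades to bijectivity, and the open mapping theorem provides a bounded inverse. This maximum-principle invertibility step is the conceptual heart of the argument; the only slightly technical bookkeeping, which I expect to be the main obstacle, is confirming uniform-in-$\sigma$ smoothness of the map $(\sigma, q) \mapsto |q|^2/\sigma^3$ into $C^{k,\alpha}(S)$ across the slice.
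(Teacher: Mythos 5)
Your proposal is correct and follows essentially the same route as the paper: the same implicit function theorem setup for $F((\sigma,q),u)=\Delta_\sigma u-2e^u+4e^{-2u}\frac{|q|^2}{\sigma^3}+2$, the same frame-based argument for smoothness of $(\sigma,q)\mapsto |q|^2/\sigma^3$, and the same invertibility-of-the-linearization step. The only cosmetic difference is that you prove injectivity of $\Delta_\sigma-V$ via the maximum principle whereas the paper integrates $\langle Pv,v\rangle_{L^2_\sigma}$ by parts; both are valid, and the remaining Fredholm/index-zero and open mapping arguments coincide.
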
 
\begin{proof}
Writing $g=e^u\sigma$, where $u$ satisfies \eqref{eq:Blaschke_family_no_t}, it suffices to show that the map $(\sigma,q)\mapsto  u(\sigma,q)\in C^{k,\alpha}(S)$ is smooth on $\cal{S}^{\cal{Q}}$.
Define a map $F: \cal{S}^{\cal{Q}}\times C^{k,\alpha}(S) \to C^{k-2,\alpha}(S)$ by
\begin{equation}
    F((\sigma,q),u)= \Delta_\sigma  u-2e^u+4e^{-2u}\frac{|q|^2}{\sigma^3}+2.
\end{equation}
Then $u(\sigma,q)$ is given implicitly by $F((\sigma,q),u(\sigma,q))=0$

We claim that the map $F$ is smooth. The smoothness of the map $\cal{S}\times C^{k,\alpha}(S)\ni (\sigma,u)\mapsto \Delta_\sigma u\in C^{k-2,\alpha}(S)$ follows from the coordinate expression (with summation convention)
\begin{equation}\label{eq:laplace_in_coords}
	\Delta_\sigma u=\sigma^{k\ell}\partial_{k\ell}^2u+\partial_k\sigma^{k\ell}\partial_\ell u+\frac{\sigma^{k\ell}}{2\det \sigma}\partial_k(\det \sigma) \partial_\ell u.
\end{equation}
The map $C^{k,\alpha}(S)\ni u\mapsto e^u\in C^{k,\alpha}(S)$ is smooth because the exponential map is smooth.
Finally, the map $(\sigma,q)\mapsto \frac{|q|^2}{\sigma^3}$ is seen to be smooth by expressing $q(\sigma)=\sum_{k=1}^{5\mathscr{G}-5}b^k q_k(\sigma)$ in terms of an orthonormal frame, where $b^k\in \bb{C}$, and noting that 
$\displaystyle \frac{|q|^2}{\sigma^3}=\sum_{k,\ell}b^k\bar{b}^\ell \frac{q_k(\sigma)\overline{q_\ell(\sigma)}}{\sigma^3}$.

We then show the partial derivative $\partial_uF\big|_{((\sigma,q),u)}:C^{k,\alpha}(S)\to C^{k-2,\alpha}(S)$ is an isomorphism. Computing the derivative of $F$ with respect to $u$ at a fixed $(\sigma,q)$  gives
\begin{equation}\label{der_first_comp}
\partial_uF\big|_{((\sigma,q),u)}v=\Delta_\sigma  v-2e^u v-8e^{-2u}\frac{|q|^2}{\sigma^3}v=(\Delta_\sigma-2e^u -8e^{-2u}\frac{|q|^2}{\sigma^3}) v.    
\end{equation}
We observe that $P:=\partial_uF\big|_{((\sigma,q),u)} =\Delta_\sigma-2e^u -8e^{-2u}\frac{|q|^2}{\sigma^3}: C^{k,\alpha}(S)\to C^{k-2,\alpha}(S)$ is a  bounded linear  operator.  
It has trivial kernel: if $v\in C^{k,\alpha}(S)$ satisfies $ Pv=0$ we have, by the divergence theorem,
\begin{equation}
    0=\langle P v,v\rangle_{L^2_\sigma(S)}=-\|d v\|_{ L^2_\sigma(S)}^2-\big\| \big({ 2e^u +8e^{-2u}\frac{|q|^2}{\sigma^3}}\big)^{1/2}v\big\|^2_{ L^2_\sigma(S)},
\end{equation}
implying that $v=0$.
To see that it is surjective, consider $w\in C^{k-2,\alpha}(S)\subset \cal{H}^{k-2}(S)$.
Since $P$ is self-adjoint with respect to the $L^2_\sigma$ inner product, it has index 0 as an operator $P:\cal{H}^k(S)\to \cal{H}^{k-2}(S)$ (see \cite[Theorem 8.1]{shubin2001pseudodifferential}) and  thus it is surjective since it is injective.
So there exists $v\in \cal{H}^k(S)$ so that $ P v=w\in C^{k-2,\alpha}(S)$, and by elliptic regularity we see that $v\in C^{k,\alpha}(S)$.
Since $\partial_u F\big|_{((\sigma,q),u)}=P:C^{k,\alpha}(S) \mapsto C^{k-2,\alpha}(S)$ is a continuous bijection between Banach spaces, it is an isomorphism by the open mapping theorem.
Since $\cal{S}^{\cal{Q}}$ is a finite dimensional manifold, by the Banach manifold implicit function theorem (see, e.g.
\cite[Theorem I.5.9]{LangSerge2012FoDG}), the function $u=u(\sigma,q)$ is smooth and therefore $\widetilde{\mathbf{g}}(\sigma, q)=e^{u(\sigma,q)}\sigma$ is smooth.
\end{proof}

We now continue our discussion involving the $S^1$ action on $\tilde{Q}_3(S)$ and $Q_3(S)$ mentioned in Section \ref{subsec, BlaschkeHitchin}. 
The $S^1$ action on $\tilde{Q}_3(S)$ restricts to an action on each slice
$\cal{S}^{\cal{Q}}$, and the identification map $\pi_{\cal{S}^{\cal{Q}}}$  is equivariant with respect to it.
Moreover it  is smooth, proper and free on $\cal{S}^{\cal{Q}*}=\cal{S}^{\cal{Q}}\setminus \cal{O}$ and on $Q_{3}^*(S):=Q_{3}(S)\setminus \cal{O}$ (where $\cal{O}$ is the zero section, which is canonically identified with $\cal{T}(S)$).
We obtain

\begin{cor}
The respective quotients $Q_{3}^*(S)/S^1$ and $\cal{S}^{\cal{Q}*}/S^1$ are smooth manifolds of real dimension $16\mathscr{G}-17$.
\end{cor}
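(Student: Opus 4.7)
The plan is to invoke the standard quotient manifold theorem: a smooth, free and proper action of a Lie group $G$ on a smooth manifold $N$ yields a smooth manifold structure on $N/G$ of dimension $\dim N - \dim G$. Here $G=S^1$ has dimension $1$, and the three verifications needed have already been flagged in the paragraph preceding the corollary, so the work is essentially to make each of them precise and then to compute the dimension of $N=Q_3^*(S)$ (resp.\ $\cal{S}^{\cal{Q}*}$).

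First I would verify smoothness of the $S^1$ action. On a slice, using the smooth trivialization $\tilde\phi_{\cal S}:\cal S^{\cal Q}\to \cal S\times\bb C^{5\mathscr G-5}$ constructed from the global smooth trivialization $\phi_0$, the action reads $e^{2\pi i\theta}\cdot(\sigma,b)=(\sigma,e^{2\pi i\theta}b)$, which is manifestly smooth in $(\theta,\sigma,b)$. Since the $S^1$ action commutes with the $\cal D_0$ action and $\pi_{\cal S^{\cal Q}}$ is $S^1$-equivariant, the action descends to a smooth action on $Q_3(S)$, and its restriction to the open submanifolds $\cal S^{\cal Q*}$, $Q_3^*(S)$ (complements of the zero section) is again smooth. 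Freeness: if $(\sigma,q)\in \cal S^{\cal Q*}$ and $e^{2\pi i\theta}q=q$, then $(e^{2\pi i\theta}-1)q=0$ forces $\theta\in\bb Z$, because $q\not\equiv 0$; the same holds on $Q_3^*(S)$. Properness is automatic since $S^1$ is compact.

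The dimension count goes as follows. By Theorem~\ref{thm:slice} the slice $\cal S$ has real dimension $6\mathscr G-6$, and by the trivialization $\tilde\phi_{\cal S}$, the slice $\cal S^{\cal Q}$ is a smooth manifold of real dimension $(6\mathscr G-6)+2\cdot 5(\mathscr G-1)=16\mathscr G-16$. The open submanifold $\cal S^{\cal Q*}$ inherits the same dimension, and the free proper smooth $S^1$-action reduces the dimension by $1$, giving
\begin{equation*}
\dim_{\bb R}\bigl(\cal S^{\cal Q*}/S^1\bigr)=16\mathscr G-17.
\end{equation*}
The same dimension holds for $Q_3^*(S)/S^1$, either by the same calculation applied globally to the smooth vector bundle $Q_3(S)\to\cal T(S)$, or by noting that the slices $\cal S^{\cal Q*}/S^1$ cover $Q_3^*(S)/S^1$ with smoothly compatible charts inherited from the smooth compatibility of the trivializations $\tilde\Psi_{\cal S,\cal S'}$ in \eqref{Psimap}, which are $S^1$-equivariant because they act only through the $\cal D_0$ component.

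There is no real obstacle in this proof; the content is the quotient manifold theorem together with the compatibility of charts already assembled in the preceding discussion. The only point worth verifying carefully is that the charts on $\cal S^{\cal Q*}/S^1$ arising from different slices are smoothly compatible, which reduces to the fact that $\tilde\Psi_{\cal S,\cal S'}$ commutes with scalar multiplication in the fibers, a direct consequence of its definition $(\sigma,q)\mapsto(\psi_\sigma^*\sigma,\psi_\sigma^*q)$ and the $\bb C$-linearity of pullback by diffeomorphisms on cubic differentials.
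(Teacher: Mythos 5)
Your proposal is correct and follows exactly the route the paper intends: the corollary is stated as an immediate consequence of the preceding paragraph's assertion that the $S^1$ action is smooth, proper and free on $\cal{S}^{\cal{Q}*}$ and $Q_3^*(S)$, so the quotient manifold theorem gives the result, and your dimension count $(6\mathscr{G}-6)+2\cdot 5(\mathscr{G}-1)-1=16\mathscr{G}-17$ matches. Your write-up simply makes explicit the verifications (smoothness via the trivialization, freeness from $q\not\equiv 0$, properness from compactness of $S^1$) that the paper leaves to the reader.
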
The following viewpoint of $Q_{3}^*(S)/S^1$ will be used in later proofs.
\begin{rem}
Since the $\cal{D}_0$ and $S^1$ actions on $\tilde{Q}_{3}(S)$ commute with each other, we can identify
$Q_{3}^*(S)/S^1\cong ((\tilde{Q}_{3}(S)\setminus \cal{O})/S^1)/\cal{D}_0$, that is, with the $S^1$ quotient taken before the $\cal{D}_0$ quotient.
\end{rem}

Next we show in Lemma \ref{lem,injectiveDiff} below that 
$ \widetilde{\mathbf{g}}\big|_{ \cal{S}^{\cal{Q}^*}}$ descends to a map on  $\cal{S}^{\cal{Q}^*}/S^1$ which is an immersion into $\cal{M}^{k,\alpha}_{-}$. 
Recall that, as in the finite dimensional case, a $C^k$ map $f:E_1\to E_2$ between $C^k$ Banach manifolds is an \emph{immersion} if for all $z\in E_1$ there exists an open neighborhood $Z$ of $z$ such that $f\big|_{Z}$ induces a $C^k$ diffeomorphism of $Z$ onto a submanifold of $E_2$;
equivalently, if for every $z\in E_1$ it has injective differential with image that \emph{splits} (see \cite{LangSerge2012FoDG}).
The latter condition means that $df\big|_z$ has closed range $ \mathrm{Ran}(df\big|_z)$ and there exists a closed Banach space $B\subset T_{f(z)}E_2$ which is complementary to $\mathrm{Ran}(df\big|_z)$; this condition is satisfied automatically when $E_1$ is finite dimensional.

We remind our reader of our different notations $\widetilde{\mathbf{g}}, \mathbf{g}, \widetilde{\mathbf{g}}_0, \mathbf{g}_0$: the tilde is used for maps before the $\cal{D}_0$ quotient and the subscript $0$ is used for maps after taking the $S^1$ quotient.  %

\begin{lem}[Injective differential]\label{lem,injectiveDiff} 
Let $(\sigma_0,q_0 )\in \tilde{Q}_3^*(S)=\tilde{Q}_3(S)\setminus \cal{O}$ and consider a slice $\cal{S}^{\cal{Q}_*}:=\cal{S}^{\cal{Q}}\cap \tilde{Q}_3^*(S)$ containing it, where $\cal{S}^{\cal{Q}}$ is as in \eqref{eq:slice_sq} with $\cal{S}$ a slice around $\sigma_0$ as in Theorem \ref{thm:slice}.
Suppose that for  $X\in T_{(\sigma_0, q_0)}\cal{S}^{\cal{Q}_*}$ one has
\begin{equation}\label{eq:differential_g}
    d \widetilde{\mathbf{g}}\big|_{(\sigma_0, q_0)}X=D_{g_0} \chi , \quad \chi\in \sf{S}_1^{k+1,\alpha}(S),\quad g_0:=\widetilde{\mathbf{g}}(\sigma_0, q_0)
\end{equation} 
where $\widetilde{\mathbf{g}}$ is viewed as a map into $\cal{M}^{k,\alpha}_-$ and $D_{g_0}$ is the symmetric differential (see Section \ref{subsec,symmetricTensors}).
Then $X$ is tangent to the $S^1$ orbit through $(\sigma_0,q_0)$ in $\cal{S}^{\cal{Q}_*}$ and so it projects under the quotient map to the zero vector in $T_{(\sigma_0, \langle q_0 \rangle)}(\cal{S}^{\cal{Q}_*}/S^1)$. 
Therefore, the descended map $\widetilde{\mathbf{g}}_0:\cal{S}^{\cal{Q}*}/S^1\to \cal{M}_-^{k,\alpha}$ has injective differential at $(\sigma_0,\langle q_0\rangle)$ whose image splits. The map $\widetilde{\mathbf{g}}_0$ is an immersion from a sufficiently small neighborhood $\cal{U}$ of $(\sigma_0,\langle q_0\rangle)\in \cal{S}^{\cal{Q}_*}/S^1$ into $\cal{M}^{k,\alpha}_{-}$.
The image  $\cal{V}:=\widetilde{\mathbf{g}}_0(\cal{U})$ is a $16\mathscr{G}-17$ dimensional submanifold of $\cal{M}_{-}^{k,\alpha}$ consisting of smooth Blaschke metrics and satisfying $T_g\cal{V}\cap T_g\sf{O}_g^{k+1,\alpha}=\{0\}$ for $g\in \cal{V}$, where $\sf{O}_g^{k+1,\alpha}$ is the $\cal{D}_0^{k+1,\alpha}$-orbit through $g$.
\end{lem}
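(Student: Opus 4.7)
The plan is to first establish the key claim that $d\widetilde{\mathbf{g}}|_{(\sigma_0,q_0)}X=D_{g_0}\chi$ forces $X$ to be tangent to the $S^1$-orbit; all remaining assertions follow from it by standard arguments. For this key claim, the strategy has three steps: compute the kernel of $d\widetilde{\mathbf{g}}$ on the full tangent space $T_{(\sigma_0,q_0)}\tilde{Q}_3^{*}(S)$, reduce the general case to this kernel case by subtracting an orbital vector, and finally invoke the slice property of $\cal{S}$ together with the absence of Killing fields on hyperbolic surfaces.

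For the kernel, write $g_t=e^{u_t}\sigma_t$ with $u_t$ solving Wang's equation in the form \eqref{eq:Blaschke_family_no_t}, so that $d\widetilde{\mathbf{g}}|_{(\sigma_0,q_0)}(h,\dot q)=\dot u_0\,g_0+e^{u_0}h$. Setting this to zero and separating trace and traceless parts with respect to $g_0$ yields $h=f\sigma_0$ purely conformal with $f=-\dot u_0$. The constraint $h\in T_{\sigma_0}\cal{M}_{-1}$ translates, via the standard conformal variation formula for Gaussian curvature, into $(\Delta_{\sigma_0}-2)f=0$; since the Laplacian is non-positive on a closed surface, $2$ lies outside its spectrum and $f\equiv 0$. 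The linearized Wang equation then collapses to the pointwise identity $\Re(\dot q\,\bar{q}_0)=0$, and since $\dot q$ and $q_0$ are both holomorphic cubic differentials for $J_{\sigma_0}$ with $q_0\not\equiv 0$, the ratio $\dot q/q_0$ defines a meromorphic function on the Riemann surface taking values in $i\bb{R}$ on the complement of the finite zero set of $q_0$; by the open mapping theorem it equals a constant $ic$ with $c\in\bb{R}$, giving $\dot q=ic\,q_0$. Thus the kernel of $d\widetilde{\mathbf{g}}$ on $T_{(\sigma_0,q_0)}\tilde{Q}_3^{*}(S)$ is the line $\bb{R}\cdot(0,iq_0)$, tangent to the $S^1$-orbit.

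For the general case, set $Y=\chi^\sharp$ and introduce $V=(\cal{L}_{Y/2}\sigma_0,\cal{L}_{Y/2}q_0)\in T_{(\sigma_0,q_0)}\tilde{Q}_3^{*}(S)$. By Proposition \ref{prop_equiv_bl}, $d\widetilde{\mathbf{g}}\,V=\tfrac{1}{2}\cal{L}_Y g_0=D_{g_0}\chi$, so $X-V$ lies in the kernel computed above and equals $(0,icq_0)$. Comparing first components gives $h=\cal{L}_{Y/2}\sigma_0$, a potential tensor. But $h\in T_{\sigma_0}\cal{S}=\sf{S}_2^{\sigma_0,TT}(S)\subset\ker D_{\sigma_0}^{*}$, and since potential and divergence-free tensors meet only at $0$ by Ebin's decomposition \eqref{eqtn, Divergences}, we get $\cal{L}_{Y/2}\sigma_0=0$. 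This makes $Y/2$ a Killing field of the hyperbolic metric $\sigma_0$, which forces $Y=\chi=0$ in genus $\geq 2$, leaving $X=(0,icq_0)$ tangent to the $S^1$-orbit. From this, injectivity of $d\widetilde{\mathbf{g}}_0$ is immediate; its finite-dimensional image is automatically closed and complemented in the Banach space $\sf{S}_2^{k,\alpha}(S)$, so the Banach immersion theorem yields an immersion on a small neighborhood $\cal{U}$ of $(\sigma_0,\langle q_0\rangle)$ whose image $\cal{V}$ is a $(16\mathscr{G}-17)$-dimensional submanifold, consisting of smooth Blaschke metrics by Proposition \ref{pr:wang}; the transversality $T_g\cal{V}\cap T_g\sf{O}_g^{k+1,\alpha}=\{0\}$ is a direct restatement of the key claim applied at each $g\in\cal{V}$.

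The main technical obstacle will be making the kernel computation fully rigorous, particularly verifying that the linearized constraint $h\in T_{\sigma_0}\cal{M}_{-1}$ really yields $(\Delta_{\sigma_0}-2)f=0$ (which requires care with the first variation of Gaussian curvature under a purely conformal deformation) and propagating the pointwise identity $\Re(\dot q\,\bar{q}_0)=0$ to the global identity $\dot q=ic\,q_0$ through the finitely many zeros of $q_0$.
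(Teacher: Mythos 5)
Your core strategy for the injectivity claim is genuinely different from the paper's: you compute the kernel of $d\widetilde{\mathbf{g}}$ on the \emph{full} tangent space $T_{(\sigma_0,q_0)}\tilde{Q}_3^{*}(S)$, realize $D_{g_0}\chi$ as the image of the orbit-direction vector $V=(\cal{L}_{Y/2}\sigma_0,\cal{L}_{Y/2}q_0)$, and then use $T_{\sigma_0}\cal{S}=\sf{S}_2^{\sigma_0,TT}(S)$ together with the absence of Killing fields to force $V=0$. The paper instead stays entirely inside the finite-dimensional slice: it kills $h^{TT}$ by pairing \eqref{eq:F_der} with $h^{TT}$ in $L^2_{g_0}$ (the trace-free and divergence-free conditions annihilate the other two terms), and it kills $\dot{u}_0$ by applying the X-ray transform to $\dot{u}_0 g_0=D_{g_0}\chi$ and invoking Guillemin's injectivity of $I_0^{g_0}$ on negatively curved surfaces. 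Your replacement of that last ingredient --- a conformal tangent vector $f\sigma_0$ to $\cal{M}_{-1}$ must vanish because $\Delta_{\sigma_0}-2$ is injective --- is correct and more elementary, and the final meromorphic-function step coincides with the paper's.

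However, there are two genuine gaps. First, your argument requires $d\widetilde{\mathbf{g}}$ to be a well-defined \emph{linear} map on a tangent space containing both $T_{(\sigma_0,q_0)}\cal{S}^{\cal{Q}_*}$ and the $\cal{D}_0$-orbit direction $V$, so that you may write $d\widetilde{\mathbf{g}}(X-V)=0$ and feed $X-V$ into your kernel computation. The paper only establishes differentiability of $\widetilde{\mathbf{g}}$ restricted to the finite-dimensional slices $\cal{S}^{\cal{Q}}$ (Lemma \ref{lem,smoothness}); extending this to the infinite-dimensional bundle over all of $\cal{M}_{-1}^{k,\alpha}$ is exactly the kind of statement that is delicate in the H\"older category, since the $\cal{D}_0^{k+1,\alpha}$-action on $C^{k,\alpha}$ tensors is continuous but \emph{not} smooth, and $\chi$ (hence $Y=\chi^{\sharp}$) is a priori only $C^{k+1,\alpha}$. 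Equivariance gives you the one-parameter derivative $\frac{d}{ds}\widetilde{\mathbf{g}}(\psi_s^{*}(\sigma_0,q_0))\big|_{s=0}=D_{g_0}\chi$, but not the linearity across slice and orbit directions that the subtraction $X-V$ relies on; this needs to be supplied. Second, the final transversality $T_g\cal{V}\cap T_g\sf{O}_g^{k+1,\alpha}=\{0\}$ for \emph{all} $g\in\cal{V}$ is not a direct restatement of the key claim at each $g$: the key claim uses $T_{\sigma_0}\cal{S}=\sf{S}_2^{\sigma_0,TT}(S)$, which holds only at the center of the slice, so at nearby $g=\widetilde{\mathbf{g}}_0(\sigma,\langle q\rangle)$ with $\sigma\neq\sigma_0$ the hypothesis fails. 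The paper closes this with a separate continuity argument: the projector $\pi_{\ker D_g^{*}}$ onto the solenoidal part depends continuously on $g$, so the nonvanishing of $\pi_{\ker D_{g_0}^{*}}Y(g_0)$ for a local frame $Y$ of $T\cal{V}$ persists in a neighborhood of $g_0$. You should either reproduce such an openness argument or shrink $\cal{V}$ accordingly.
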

\begin{proof}
By means of a trivialization for $\cal{S}^{\cal{Q}}$ we can identify  $X\in T_{(\sigma_0,q_0)}\cal{S}^{\cal{Q}}$ with a pair $(h^{TT},v)$, where $h^{TT}\in T_{\sigma_0}\cal{S}=\sf{S}_2^{\sigma_0,TT}(S)$ and $v\in \bb{R}^{10\mathscr{G}-10}$.
 Consider a curve $\alpha_t=(\sigma_t,q_t):(-\epsilon,\epsilon)\to \cal{S}^{\cal{Q}*}$ with $\alpha_0=(\sigma_0,q_0)$ and $\dot{\alpha}_0=(h^{TT},v)$.
We  write $u(\sigma,q)$ for the smooth solution of \eqref{eq:Blaschke_family_no_t} for given $(\sigma,q) \in \cal{S}^{\cal{Q}*}$, and we also write $u_t=u(\alpha_t)$,  $\dot{u}_t=\frac{d}{dt}u_t$, so that $g_0=\widetilde{\textbf{g}}(\sigma_0,q_0)=e^{u_0}\sigma_0$.
By our hypothesis,
\begin{equation}\label{eq:F_der}
D_{g_0}\chi=\frac{d}{dt}\widetilde{\mathbf{g}}(\sigma_t,q_t)\big|_{t=0}= \frac{d}{dt}(e^{u(\sigma_t,q_t)}\sigma_t)\big|_{t=0}    =
e^{u_0}h^{TT}+e^{u_0}\dot{u}_0\sigma_0.
\end{equation}
Taking the $L^2_{g_0}(S)$ inner product of equation \eqref{eq:F_der} with  $h^{TT},$
\begin{equation}\label{eq:paired}
    \frac{1}{\mathrm{Area}(S,g_0)}\int_S e^{u_0}|h^{TT}|^2_{g_0} dv_{g_0}+ \langle g_0 \dot{u}_0, h^{TT}\rangle_{L^2_{g_0}(S)} -\langle D_{g_0}\chi,h^{TT}\rangle_{L^2_{g_0}(S)}=0.
\end{equation}
Since we are in dimension two, the tensor $h^{TT}$ is trace free and divergence free with respect to any metric that is conformal to $ \sigma_0$, in particular with respect to $g_0$. So the second and third term of \eqref{eq:paired} vanish. 
We conclude that $h^{TT}=0$ and so  $ \dot{u}_0g_0 =D_{g_0}\chi$.

We  now apply the X-ray transform $I_2^{g_0}$ on both sides of the equation $ \dot{u}_0g_0 =D_{g_0}\chi$.
Since $I_2^{g_0} (D_{g_0}\chi)(c)=0$ for every $\chi\in \sf{S}_1^{k+1,\alpha}(S)$ and for every closed orbit $c$ of the geodesic flow of $g_0$ (see \cite{GeodesicStretch}), we have
\begin{equation}
  0=\frac{1}{L(c)}\int_0^{L(c)} \dot{u}_0(\gamma(s))g({\gamma}'(s),{\gamma}'(s))\mathrm{d}s
  =\frac{1}{L(c)}\int_0^{L(c)}\dot{u}_0(\gamma(s))\mathrm{d}s=  I_0^{g_0}(\dot{u}_0)(c),
\end{equation}
where the orbit $c$ is parametrized as $(\gamma(s),{\gamma}'(s)):[0,L(c)]\to T^1_{g_0}S$.
By \cite[Theorem 3.6]{Guillemin1980},  $I_0^{g_0}$ is injective on a compact Riemannian surface $(S,g_0)$ with ${g_0}\in \cal{M}_{-}$ (also see \cite{Croke1998} for the higher dimensional case), therefore $\dot{u}_0=0$.

In summary, $(h^{TT},v)=(0,v)$ and $du\big|_{(\sigma_0,q_0)}(h^{TT},v)=du\big|_{(\sigma_0,q_0)}(0,v)=0$.
So $\dot{\alpha}_0=\dot{\tilde{\alpha}}_0$, where $\tilde{\alpha}_t$ is of the form $\tilde{\alpha}_t=(\sigma_0,\tilde{q}_t)$, and $\frac{d}{dt}u\big(\tilde{\alpha}_t\big)\big|_{t=0}=0$.
Differentiating  Wang's equation (\ref{WangEquation}) along $\tilde{\alpha}_t$ and using that $\dot{u}_0=0$, we find that
\begin{equation}
    0=\frac{d}{dt}|\tilde{q}_t|^2_{g_0}\Big|_{t=0}=
    \frac{\tilde{q}_t\overline{\partial_t{\tilde{q}}_t}+\overline{\tilde{q}_t}{\partial_t{\tilde{q}}_t}}{g^3_0}\Big|_{t=0}
    =
   |\tilde{q}_0|^2_{g_0}\left( \frac{\partial_t\tilde{q}_t}{\tilde{q}_t}+\overline{\frac{\partial_t\tilde{q}_t}{\overline{\tilde{q}_t}}}\right) \Big|_{t=0}.
\end{equation}
Note that $\tilde{q}_t$ are all holomorphic cubic differentials with respect to $J=J(\sigma_0)$, so $\frac{\tilde{q}_t}{\tilde{q}_0}$ is a family of meromorphic functions with respect to $J$. So the above equation implies that the meromorphic function $\partial_t (\frac{\tilde{q}_t}{\tilde{q}_0})\big|_{t=0}$ on $S$ is purely imaginary, and therefore constant. 
We conclude that $\partial_t\tilde{q}_{t}|_{t=0}=\lambda i {\tilde{q}_0}$, $\lambda\in \bb{R}$, which precisely  characterizes elements in the tangent space of the $S^1$ orbit through $\tilde{q}_0=q_0$.
Thus  we have the  claim.
It is immediate now that $d \widetilde{\mathbf{g}}_0\big|_{T_{(\sigma_0,q_0)}(\cal{S}^{\cal{Q}^*}/S^1)}$ is injective. Moreover, its image splits because $T_{(\sigma_0, q_0)}(\cal{S}^{\cal{Q}^*}/S^1)$ is finite dimensional.
Thus $\widetilde{\mathbf{g}}_0$ is an immersion at $(\sigma_0,\langle q_0\rangle)$, and thus also in a sufficiently small neighborhood of it by definition of an immersion, with its image being a submanifold of $\cal{M}^{k,\alpha}_-$ which consists of smooth Blaschke metrics.

For the last statement, recall that $T_{g}\sf{O}^{k+1,\alpha}_{g}$ consists exactly of potential symmetric 2-tensors. So
\begin{equation}
    T_{g_0}\cal{V}\cap T_{g_0}\sf{O}^{k+1,\alpha}_{g_0}=\{0\} ,\label{transv}
\end{equation} by the first statement of the lemma.
This implies that
the same holds for a sufficiently small neighborhood  of $g_0$. 
To see this, consider a smooth nonvanishing section $Y(g)$ of $T\cal{V}\subset T\cal{M}^{k,\alpha}_-=\sf{S}_2^{k,\alpha}(S)$.
Then by \eqref{transv} we have
$\|\pi_{\ker _{D_{g_0}^*}}  Y(g_0)\|_{C^{k,\alpha}}>0$, where $\pi_{\ker _{D_g^*}}$ denotes the orthogonal projection with respect to the decomposition \eqref{eqtn, Divergences}.
This is because the potential tensor fields are precisely those which are annihilated by $\pi_{\ker _{D_{g_0}^*}} $.
The operator $\pi_{\ker _{D_{g}^*}}$ is a bounded operator on $\sf{S}_2^{k,\alpha}(S)$ as a pseudodifferential operator of order 0, and it depends continuously on the metric $g$ in a $C^\infty$ neighborhood of $g_0$  (see for example the proof of \cite[Lemma 2.2]{GeodesicStretch}).
Therefore, $\|\pi_{\ker _{D_{g}^*}}  Y(g)\|_{C^{k,\alpha}}$ is nonzero for $g$ in a neighborhood of $g_0$, which implies $T_{g}\cal{V}\cap T_{g}\sf{O}^{k+1,\alpha}_{g}=\{0\}$ near $g_0$.
\end{proof}

Recall that our final goal is to construct smooth charts for the Blaschke locus $\cal{M}^B/\cal{D}_0$ away from $\cal{T}(S)$. 
In Lemma \ref{lem,injectiveDiff} we constructed slices $\cal{V}$ in $\cal{M}^B\subset \cal{M}^{k,\alpha}_-$.
In the following lemma, we show that if $\cal{V}$ are taken sufficiently small, then they locally parametrize $\cal{M}^B/\cal{D}_0$.

\begin{lem}\label{lm:orbits}
If the slice $\cal{V}$ in Lemma \ref{lem,injectiveDiff} is taken to be sufficiently small, then each point of $\cal{V}$ corresponds to exactly one orbit of $\cal{D}_0$.
\end{lem}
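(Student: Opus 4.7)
The plan is to argue by contradiction. Suppose that for arbitrarily small neighborhoods of $g_0:=\widetilde{\mathbf{g}}_0(\sigma_0,\langle q_0\rangle)$ in $\cal{V}$ there exist two distinct metrics lying on a common $\cal{D}_0$-orbit. Then one can find sequences $g_n,g_n'\in\cal{V}$ with $g_n\ne g_n'$, $g_n, g_n'\to g_0$ in $\cal{V}$, and $\psi_n\in\cal{D}_0$ satisfying $g_n'=\psi_n^*g_n$. Since the $\cal{D}_0^{k+1,\alpha}$-action on $\cal{M}_-^{k,\alpha}$ is continuous, free and proper (Section \ref{subsec: negMetrics}), we may extract a subsequence $\psi_n\to \psi$ in $\cal{D}_0^{k+1,\alpha}$, and $\psi^*g_0=g_0$ combined with freeness (Frankel) forces $\psi=\mathrm{id}$.

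Parametrize $\cal{D}_0^{k+1,\alpha}$ near $\mathrm{id}$ by vector fields via a smooth chart, writing $\psi_n=\exp(X_n)$ with $X_n\to 0$ in $C^{k+1,\alpha}$ and $X_n\ne 0$. Observe that $\cal{V}$ consists of smooth Blaschke metrics and convergence inside $\cal{V}$ amounts to convergence of a finite set of parameters, so $g_n\to g_0$ in $C^\infty$ (using the smoothness of $\widetilde{\mathbf{g}}$ into every $\cal{M}_-^{k,\alpha}$ in Lemma \ref{lem,smoothness}). In particular the Taylor expansion of $\psi\mapsto \psi^*g$ around $(\mathrm{id},g_0)$ is valid uniformly and yields
\begin{equation*}
    g_n-g_n'=-2D_{g_0}(X_n^\flat)+o\big(\|X_n\|_{C^{k+1,\alpha}}\big)\quad\text{in }C^{k,\alpha}.
\end{equation*}
On the other hand, picking a smooth chart $\phi:U\subset\bb{R}^{16\mathscr{G}-17}\to\cal{V}$ with $\phi(0)=g_0$, $\phi(a_n)=g_n$, $\phi(a_n')=g_n'$, we set $v_n:=(a_n-a_n')/\|a_n-a_n'\|$; passing to a subsequence, $v_n\to v$ with $\|v\|=1$, and
\begin{equation*}
    \frac{g_n-g_n'}{\|a_n-a_n'\|}\longrightarrow d\phi_0\,v\ne 0\quad\text{in }C^{k,\alpha}.
\end{equation*}

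To reach the contradiction, split according to the behaviour of $r_n:=\|X_n\|_{C^{k+1,\alpha}}/\|a_n-a_n'\|$. If $r_n$ stays bounded, dividing the expansion by $\|a_n-a_n'\|$ realises $d\phi_0\,v$ as a $C^{k,\alpha}$-limit of the potential tensors $-2D_{g_0}(X_n^\flat/\|a_n-a_n'\|)$. The symmetric differential $D_{g_0}:\mathfrak{X}^{k+1,\alpha}(S)\to\sf{S}_2^{k,\alpha}(S)$ is overdetermined elliptic with trivial kernel, the latter being the space of Killing covectors on $(S,g_0)$, which vanishes by the standard Bochner argument on a closed negatively curved surface. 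Hence $D_{g_0}$ is Fredholm with closed range, so $d\phi_0\,v$ is itself a potential, and the transversality $T_{g_0}\cal{V}\cap T_{g_0}\sf{O}_{g_0}^{k+1,\alpha}=\{0\}$ from Lemma \ref{lem,injectiveDiff} forces $d\phi_0\,v=0$, a contradiction. If $r_n\to\infty$, dividing by $\|X_n\|_{C^{k+1,\alpha}}$ yields $D_{g_0}(X_n^\flat/\|X_n\|_{C^{k+1,\alpha}})\to 0$ in $C^{k,\alpha}$; the closed-range lower bound for $D_{g_0}$ then gives $X_n^\flat/\|X_n\|_{C^{k+1,\alpha}}\to 0$ in $C^{k+1,\alpha}$, contradicting the unit-norm condition. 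The main technical obstacle is the regularity interplay: $\cal{V}$ sits inside the Banach manifold $\cal{M}_-^{k,\alpha}$, on which the $\cal{D}_0^{k+1,\alpha}$-action is only continuous (not smooth), so properness must be used in its continuous form to extract the subsequence $\psi_n\to\mathrm{id}$, and the Taylor expansion must be anchored at the smooth metric $g_0$ (exploiting that convergence in $\cal{V}$ implies $C^\infty$-convergence of the metrics) in order to obtain the stated remainder in $C^{k,\alpha}$.
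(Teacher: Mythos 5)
Your proof is correct in its essentials, but it takes a genuinely different and considerably heavier route than the paper. The paper's argument is a short direct one that exploits the conformal structure of Blaschke metrics: any two metrics $g_i=e^{u_i}\sigma_i\in\cal{V}$ with $g_2=\psi^*g_1$ have $\sigma_2$ and $\psi^*\sigma_1$ hyperbolic and conformally equivalent, hence equal; since $\sigma_1,\sigma_2$ lie on the Teichm\"uller slice $\cal{S}$, which already meets each $\cal{D}_0$-orbit at most once (Theorem \ref{thm:slice}), one gets $\sigma_1=\sigma_2$, then $\psi=\mathrm{id}$ by freeness of the action on $\cal{M}_{-1}$, then $g_1=g_2$ --- with no shrinking of $\cal{V}$ beyond the construction of $\cal{S}$, and no compactness or linearization. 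Your argument is instead the generic ``a slice transverse to the orbits of a continuous proper free action meets each orbit locally at most once'' proof: properness to extract $\psi_n\to\mathrm{id}$, a uniform first-order expansion of the action anchored at the smooth metrics $g_n\to g_0$, and the transversality $T_{g_0}\cal{V}\cap T_{g_0}\sf{O}_{g_0}^{k+1,\alpha}=\{0\}$ from Lemma \ref{lem,injectiveDiff} combined with closedness of $\mathrm{Ran}(D_{g_0})$ in $\sf{S}_2^{k,\alpha}(S)$ (which follows from the decomposition \eqref{eqtn, Divergences} and the boundedness of $\pi_{\ker D_{g_0}^*}$ invoked in that lemma) and the coercive estimate for the injective overdetermined elliptic operator $D_{g_0}$. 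All of these ingredients are available in the paper's framework, so your proof closes, and it has the virtue of applying to any slice transverse to the orbits rather than one fibered over a Teichm\"uller slice; the price is that it yields only a non-effective ``sufficiently small'' and requires careful bookkeeping of the loss of derivatives in the $\cal{D}_0^{k+1,\alpha}$-action, all of which the paper's conformal shortcut avoids entirely.
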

\begin{proof}
Let $g_i=e^{u(\sigma_i, \langle q_i\rangle))}\sigma_i\in \cal{V}$, $i=1,2$, be two  metrics in the same $\cal{D}_0$ orbit, that is, $g_2=\psi^* g_1$ for some $\psi\in \cal{D}_0$.
From  $e^{u(\sigma_2, \langle q_2\rangle)}\sigma_2=\psi^* (e^{u(\sigma_1, \langle q_1\rangle)}\sigma_1)$, it follows that both $\sigma_2$ and $\psi^*{\sigma_1}$ are hyperbolic metrics in the same conformal class, so it must be the case that $\sigma_2=\psi^*{\sigma_1}$.
Since $\sigma_1, \sigma_2\in \cal{S}$ and both are in the same $\cal{D}_0$ orbit, we conclude that $\sigma_1=\sigma_2$ by Theorem \ref{thm:slice}. So $\psi=id$ by freeness of the $\cal{D}_0$ action. Thus $g_1=g_2$.
\end{proof}

We now state our main theorem in this section.

\begin{thm}\label{smoothnessBlaschkeLocus}
Away from the Teichmüller space $\cal{T}(S)=\cal{M}_{-1}/\cal{D}_0$, the Blaschke locus $\cal{M}^B/\cal{D}_0$ has the structure of a smooth manifold of real dimension $16\mathscr{G}-17$. Moreover, the map $\mathbf{g}_0:Q_3^*(S)/S^1 \to (\cal{M}^B/\cal{D}_0)\setminus \cal{T}(S)$ is a  diffeomorphism. 
\end{thm}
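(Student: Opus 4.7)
My plan is to mimic Tromba's construction of charts for Teichm{\"u}ller space (Theorem \ref{thm:slice}), now with the role of the slices $\cal{S}\subset\cal{M}^{k,\alpha}_{-1}$ played by the Blaschke slices $\cal{V}\subset\cal{M}^{k,\alpha}_-$ provided by Lemma \ref{lem,injectiveDiff}. The preparatory lemmas essentially assemble all the needed ingredients: Lemma \ref{lem,injectiveDiff} embeds, via $\widetilde{\mathbf{g}}_0$, a neighborhood $\cal{U}$ of any $(\sigma_0,\langle q_0\rangle)\in Q_3^*(S)/S^1$ onto a finite-dimensional smooth submanifold $\cal{V}\subset\cal{M}^{k,\alpha}_-$ consisting of smooth Blaschke metrics, while Lemma \ref{lm:orbits} ensures, upon possibly shrinking $\cal{U}$, that each point of $\cal{V}$ lies in a distinct $\cal{D}_0$-orbit. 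Since $\cal{S}^{\cal{Q}^*}$ carries the structure of a smooth manifold of real dimension $16\mathscr{G}-16$ and the $S^1$-action on it is free, proper and smooth, the quotient $\cal{S}^{\cal{Q}^*}/S^1$ inherits a smooth structure of dimension $16\mathscr{G}-17$.

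\textbf{First}, I declare the maps
\begin{equation*}
\cal{U}\longrightarrow(\cal{M}^B/\cal{D}_0)\setminus\cal{T}(S),\qquad (\sigma,\langle q\rangle)\longmapsto [\widetilde{\mathbf{g}}(\sigma,q)]
\end{equation*}
to be the inverses of charts. \textbf{Second}, I verify smooth compatibility. If two such charts, coming from slices $\cal{S}$ and $\cal{S}'$, have overlapping images, a point $[g]$ in this overlap has representatives $(\sigma,q)\in\cal{S}^{\cal{Q}^*}$ and $(\sigma',q')\in\cal{S}'^{\cal{Q}^*}$. Proposition \ref{prop_equiv_bl} together with the uniqueness part of Proposition \ref{equalBlaschke} forces $[(\sigma',q')]=[\tilde{\Psi}_{\cal{S},\cal{S}'}(\sigma,q)]$ in $\cal{S}'^{\cal{Q}^*}/S^1$, where $\tilde{\Psi}_{\cal{S},\cal{S}'}$ is the smooth, $S^1$-equivariant map \eqref{Psimap}. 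It therefore descends to a smooth transition between the corresponding open sets of $\cal{S}^{\cal{Q}^*}/S^1$ and $\cal{S}'^{\cal{Q}^*}/S^1$. \textbf{Third}, $\mathbf{g}_0$ is bijective by Proposition \ref{equalBlaschke}, and in each of the charts above it is read off as the identity on an open subset of $\cal{S}^{\cal{Q}^*}/S^1$, hence a diffeomorphism.

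\textbf{The main obstacle} will be identifying the manifold topology produced by this atlas with the natural quotient topology on $\cal{M}^B/\cal{D}_0$ descended from the $C^\infty$ topology on $\cal{M}_-$. This amounts to showing that each slice $\cal{V}$ maps homeomorphically onto an open subset of $\cal{M}^B/\cal{D}_0$: continuity is immediate, while openness requires a slice-type argument that exploits the transversality $T_g\cal{V}\cap T_g\sf{O}_g^{k+1,\alpha}=\{0\}$ from Lemma \ref{lem,injectiveDiff} together with the continuous, free and proper $\cal{D}_0^{k+1,\alpha}$-action on $\cal{M}_-^{k,\alpha}$, adapting \cite[Theorem 2.4.2]{Tromba_Book} to the Blaschke setting. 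Independence of the resulting smooth structure from the auxiliary choice of $k$ and $\alpha$ then follows in analogy with Remark \ref{rem,equivalentTopology}.
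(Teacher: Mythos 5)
Your proposal follows essentially the same route as the paper: the charts are the slices $\cal{V}=\widetilde{\mathbf{g}}_0(\cal{U})$ from Lemma \ref{lem,injectiveDiff}, injectivity on $\cal{D}_0$-orbits comes from Lemma \ref{lm:orbits}, the transition maps are identified with the $S^1$-quotient of $\tilde{\Psi}_{\cal{S},\cal{S}'}$ from \eqref{Psimap}, and bijectivity of $\mathbf{g}_0$ via Proposition \ref{equalBlaschke} upgrades the local diffeomorphism to a global one. The only differences are minor: you invoke the uniqueness statement of Proposition \ref{equalBlaschke} directly to pin down the transition map where the paper re-runs the conformal-class argument, and the compatibility with the quotient topology that you flag as the main obstacle is handled by the paper separately in Proposition \ref{prop,homeo} rather than inside this proof.
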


\begin{proof}
At this point, for each metric $g_0\in \cal{M}^B$ we have constructed a local slice $\cal{V}$ containing it and consisting of smooth Blaschke metrics.
A slice corresponding to $g_0$ is constructed as the image under $\widetilde{\mathbf{g}}_0$ of a neighborhood $\cal{U}\subset \cal{S}^{\cal{Q}*}/S^1$ of $\widetilde{\mathbf{g}}_0^{-1}(g_0) $, and via a trivialization of $\cal{S}^{\cal{Q}*}$ we have $\cal{U}\cong \cal{S}\times \bb{R}^{10\mathscr{G}-11}$.
We have also shown that if $\cal{V}$ is sufficiently small, it is in bijective correspondence with the  $\cal{D}_0$ orbits passing through it. So  $\cal{V}$ parametrizes $\cal{M}^B/\cal{D}_0$ near $[g_0]$. 
To show that the slices $\cal{V}$ together with the corresponding maps $\widetilde{\mathbf{g}}_0^{-1}:\cal{V}\to \cal{U}$ define smooth charts, we have to show that they are smoothly compatible.

Write $\pi_B:\cal{M}^B
\to 
\cal{M}^B/\cal{D}_0$ for the natural projection with respect to the $\cal{D}_0$ quotient. Note that this map is a bijection onto its image when restricting to each slice $\cal{V}$. So we can set
\begin{equation}
    \cal{F}_{\cal{V}}:
\pi_B(\cal{V})\to \cal{U} \cong \cal{S}\times  \bb{R}^{10\mathscr{G}-11}, \quad [g]\mapsto
\widetilde{\mathbf{g}}_0^{-1}(g),
\end{equation}
where $g$ is the unique element in $\cal{V}$ with $g\in [g]$. 
We want to show that if $\cal{V}=
\widetilde{\mathbf{g}}_0(\cal{U})$ and $\cal{V}'=
\widetilde{\mathbf{g}}_0(\cal{U}')$ are different slices whose images $
\pi_B(\cal{V})$, $
\pi_B(\cal{V}')$  have nonempty overlap, then the change of charts $ \cal{F}_{\cal{V}'}\circ  \cal{F}_{\cal{V}}^{-1}: {\cal{U}} \to \cal{U}'$ is smooth where it is defined.
Here $\cal{U}'\subset (\cal{S}')^{\cal{Q}*}/S^1$ is a chart over a different slice $\cal{S}'$.
To this end, write 
$$\cal{F}_{\cal{V}'}\circ \cal{F}_{\cal{V}}^{-1}(\sigma,\langle q\rangle)=\cal{F}_{\cal{V}'}([e^{u(\sigma,\langle q\rangle)}\sigma]).$$
Now because $[g]=[e^{u(\sigma,\langle q\rangle)}\sigma] \in \pi_B(\cal{V})\cap \pi_B({\cal{V}}{'})$, there exists a unique element 
$e^{u(\sigma',\langle q'\rangle)}\sigma'\in \cal{V}{'}$ such that $[e^{u(\sigma',\langle q'\rangle)}\sigma']=[e^{u(\sigma,\langle q\rangle)}\sigma]$.
Therefore, there exists an element $\psi=\psi(\cal{V},\cal{V}',[g])\in \cal{D}_0$ 
such that $e^{u(\sigma',\langle q'\rangle)}\sigma'=\psi^*(e^{u(\sigma,\langle q\rangle)}\sigma).$
So we have
\begin{equation}
    (\sigma',\langle q'\rangle)=
    \cal{F}_{\cal{V}'}\circ \cal{F}_{\cal{V}}^{-1}(\sigma,\langle q\rangle)= \widetilde{\mathbf{g}}_0^{-1}\big(e^{u(\sigma',\langle q'\rangle)}\sigma'\big) =\widetilde{\mathbf{g}}_0^{-1}\big(\psi^*(e^{u(\sigma,\langle q\rangle)}\sigma)\big) \in \cal{U}'.
\end{equation}
In other words, $(\sigma',\langle q'\rangle)$ is the unique element in $\cal{U}'$ such that 
\begin{equation}
    e^{u(\sigma',\langle q'\rangle)}\sigma'= \psi^*(e^{u(\sigma,\langle q\rangle)}\sigma)=e^{\psi^*(u(\sigma,\langle q\rangle))}\psi^*\sigma.
\end{equation}
Now $\sigma'$ and $ \psi^*\sigma$ are both hyperbolic metrics in the same conformal class. Therefore $\sigma'= \psi^*\sigma$. So by the freeness of the $\cal{D}_0$ action on $\cal{M}_{-1}$ we see that $\psi=\psi_{\sigma}$ is unique and is determined only by the pair $\sigma$ and $\sigma'$. In other words, it does not depend on $\langle q\rangle$.
This implies that
\begin{equation}
    u(\sigma',\langle q'\rangle)=\psi_{\sigma}^*\big( u(\sigma,\langle q\rangle)\big)\implies u(\sigma',\langle q'\rangle)=  u(\psi_{\sigma}^*(\sigma,\langle q)\rangle)
    =u(\sigma',\psi_{\sigma}^*\langle q\rangle)
\implies
\langle q'\rangle =\psi_{\sigma}^*\langle q\rangle
\end{equation}
by injectivity of the map $\langle q\rangle\mapsto u( \sigma,\langle q\rangle)$ for each $\sigma$.
Therefore by \eqref{Psimap},
\begin{equation}
    (\sigma',\langle q'\rangle)=\cal{F}_{\cal{V}'}\circ \cal{F}_{\cal{V}}^{-1}(\sigma,\langle q\rangle)=\psi_\sigma ^*(\sigma,\langle q\rangle)=(\psi_\sigma ^*\sigma,\langle \psi_\sigma ^* q\rangle)=\langle\tilde{\Psi}_{\cal{S},\cal{S}'}(\sigma, q)\rangle.
\end{equation}  Hence $\cal{F}_{\cal{V}'}\circ \cal{F}_{\cal{V}}^{-1}$ is smooth.

The second statement is a consequence of Lemmas \ref{lem,smoothness} and \ref{lem,injectiveDiff} by taking $\cal{U}\subset \cal{S}^{Q^*}/S^1$ as a chart for $Q_3^*(S)/S^1$ near $[(\sigma_0,\langle q_0\rangle)]$ and $\cal{V}=\widetilde{\mathbf{g}}_0(\cal{U})$ as a chart for $\cal{M}^B/\cal{D}_0$ near $[g_0]=[e^{u_0}\sigma_0]=\mathbf{g}_0([(\sigma_0,\langle q_0\rangle)])$.
Then since $\widetilde{\mathbf{g}}_0$ is an immersion at $(\sigma_0,\langle q_0\rangle )$, it is a diffeomorphism onto its image upon shrinking $\cal{U}$ if necessary. 
Thus $\mathbf{g}_0:Q_3^*(S)/S^1 \to (\cal{M}^B/\cal{D}_0)\setminus \cal{T}(S)$ is a local diffeomorphism. Because it is bijective (Proposition \ref{equalBlaschke}), it is in fact a global diffeormorphism from $Q_3^*(S)/S^1$ to $(\cal{M}^B/\cal{D}_0)\setminus \cal{T}(S)$.
\end{proof}

\begin{cor}
The quadratic form $G_g(\cdot, \cdot)$ defined in Section \ref{sec:PressureMetric} defines a $C^{\infty}$ Riemannian metric on $(\cal{M}^B/\cal{D}_0)\setminus \cal{T}(S)$.
\end{cor}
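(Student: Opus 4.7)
Positive definiteness and bilinearity of $G$ as a Riemannian metric on $(\cal{M}^B/\cal{D}_0)\setminus \cal{T}(S)$ are inherited from Proposition \ref{def:pressure}, since this subset embeds in the ambient space $\cal{M}_-/\cal{D}_0$ on which $G$ has already been shown to be a (pointwise) Riemannian metric. The only content of the corollary is therefore $C^\infty$-regularity of the metric tensor with respect to the finite-dimensional smooth structure established in Theorem \ref{smoothnessBlaschkeLocus}.

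By Theorem \ref{smoothnessBlaschkeLocus}, the map $\mathbf{g}_0\colon Q_3^*(S)/S^1 \to (\cal{M}^B/\cal{D}_0)\setminus\cal{T}(S)$ is a diffeomorphism of smooth manifolds, so it suffices to show that the pullback $\mathbf{g}_0^{*}G$ is $C^\infty$ on $Q_3^*(S)/S^1$. Working in a local chart of the form $\cal{S}^{\cal{Q}*}/S^1$ as in Lemma \ref{lem,injectiveDiff}, I take smooth local vector fields $X_1,X_2$ on this chart and lift them to smooth vector fields on $\cal{S}^{\cal{Q}*}\subset\tilde{Q}_3^*(S)$. Any such lift is well-defined up to tangent directions to $S^1$-orbits, which are annihilated by $d\widetilde{\mathbf{g}}$ by Lemma \ref{lem,injectiveDiff}; so the tensor fields $h_i(\sigma,q):=d\widetilde{\mathbf{g}}\big|_{(\sigma,q)}X_i$ on $\cal{V}:=\widetilde{\mathbf{g}}(\cal{S}^{\cal{Q}*})$ are unambiguous. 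The proof of Lemma \ref{lem,smoothness} applies verbatim for any large $k$, so $\widetilde{\mathbf{g}}$ is smooth as a map into $\cal{M}_-^{k,\alpha}$ for every large $k$, and hence each $h_i$ is a smooth section of the pullback of $T\cal{M}_-^{k,\alpha}=\sf{S}_2^{k,\alpha}(S)$ along $\cal{V}$ for every such $k$.

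To invoke Proposition \ref{prop,smoothness}, I extend each $h_i$ smoothly to an open neighborhood of $g_0:=\widetilde{\mathbf{g}}(\sigma_0,q_0)$ in $\cal{M}_-^{k,\alpha}$. This is possible because $\cal{V}$ is a finite-dimensional smooth submanifold of $\cal{M}_-^{k,\alpha}$ with split tangent space (established as part of Lemma \ref{lem,injectiveDiff}, via the neighborhood version of equation \eqref{transv}), so a standard tubular-neighborhood construction furnishes the extension. Proposition \ref{prop,smoothness} then yields that $g\mapsto G_g(h_1(g),h_2(g))$ is $C^{k-3}$ on this open neighborhood of $g_0$; its restriction to $\cal{V}$ (which depends only on $h_i|_{\cal{V}}$ and not on the extension, because $G_g$ is a bilinear form at each $g$) is therefore a $C^{k-3}$ function on the finite-dimensional chart.

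The main obstacle—indeed the heart of the argument—is that Proposition \ref{prop,smoothness} delivers only the finite regularity $C^{k-3}$ on the Banach manifold $\cal{M}_-^{k,\alpha}$ for each fixed $k$. Because the finite-dimensional smooth structure of $(\cal{M}^B/\cal{D}_0)\setminus\cal{T}(S)$ is the one transferred from $Q_3^*(S)/S^1$ by $\mathbf{g}_0$ and is in particular independent of $k$ (by an argument analogous to Remark \ref{rem,equivalentTopology}), taking $k$ arbitrarily large upgrades $C^{k-3}$ to $C^\infty$, completing the proof.
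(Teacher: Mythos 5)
Your proposal is correct and follows essentially the same route as the paper: positive definiteness is inherited from Proposition \ref{def:pressure}, the $C^{k-3}$ regularity on the finite-dimensional slices comes from Proposition \ref{prop,smoothness}, and smoothness is obtained by letting $k\to\infty$ using the fact that the smooth structure transferred from $Q_3^*(S)/S^1$ via $\mathbf{g}_0$ is independent of $k$. The only difference is that you explicitly extend the slice-tangent tensor fields to an open neighborhood in $\cal{M}_-^{k,\alpha}$ to match the literal hypotheses of Proposition \ref{prop,smoothness}, a technical detail the paper passes over by applying that proposition directly to the finite-dimensional slices $\cal{V}$.
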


\begin{proof}
We saw in Proposition  \ref{def:pressure} that $G_{[g]}(\cdot, \cdot)$ defines a Riemannian metric on $\cal{M}_-/\cal{D}_0$, so in particular on the Blaschke locus. 
Since the slices $\cal{V}$ we constructed are smooth charts for $\cal{M}^B /\cal{D}_0$ and they are also finite dimensional smooth submanifolds of $\cal{M}^{k,\alpha}_{-}$, we conclude by Proposition \ref{prop,smoothness} that $G_g$ restricts to a $C^{k-3}$ quadratic form on each slice.
Thus it is a $ C^{k-3}$ metric on $\cal{M}^B /\cal{D}_0 $ away from $\cal{T}(S)$. 
Notice however that since by Theorem \ref{smoothnessBlaschkeLocus} the map $\mathbf{g}_0:Q_3^*(S)/S^1 \to (\cal{M}^B/\cal{D}_0)\setminus \cal{T}(S)$ is a
diffeomorphism regardless of the space $\cal{M}_-^{k,\alpha}$ of which $\cal{M}^B$ 
was viewed as a subset, we see that we can choose the $k$ arbitrarily large, concluding that the metric $G_g(\cdot,\cdot)$ on $\cal{M}^B/\cal{D}_0$ is smooth away from $\cal{T}(S)$.
\end{proof}

\subsection{Topology of the Blaschke locus}\label{subsec:TopologyBlaschke}

The results of the previous section allow us to elaborate on the topological structure of the Blaschke locus.

\begin{prop}
\label{prop,homeo}
The quotient space $Q_3(S) /S^1 $ is homeomorphic to $ \mathcal{M}^B /\mathcal{D}_0$, when the two spaces are endowed with quotient topologies resulting from $C^\infty$ topologies.
\end{prop}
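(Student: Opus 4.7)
The plan is to show that the bijection $\mathbf{g}_0:Q_3(S)/S^1\to \cal{M}^B/\cal{D}_0$ furnished by Proposition \ref{equalBlaschke} is both continuous and open, by treating the Teichmüller locus separately from its complement and then verifying continuous gluing across the zero section via Wang's equation.

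For continuity of $\mathbf{g}_0$, I would use the universal property of the quotient topology. The composition $\tilde{Q}_3(S)\xrightarrow{\widetilde{\mathbf{g}}}\cal{M}^B\hookrightarrow \cal{M}^B/\cal{D}_0$ is constant on $\cal{D}_0\times S^1$-orbits (by Propositions \ref{prop_equiv_bl} and \ref{equalBlaschke}), so it factors as $\mathbf{g}_0\circ \pi$ for the quotient projection $\pi$. Continuity of $\mathbf{g}_0$ therefore reduces to continuity of $\widetilde{\mathbf{g}}:\tilde{Q}_3(S)\to \cal{M}^B$ in the $C^\infty$ topology. Writing $\widetilde{\mathbf{g}}(\sigma,q)=e^{u(\sigma,q)}\sigma$ where $u$ solves the semilinear elliptic equation \eqref{eq:Blaschke_family_no_t}, I would invoke Lemma \ref{lem,smoothness} at each Hölder regularity level $(k,\alpha)$, noting that the argument there applies unchanged when $q=0$ since the linearization $\partial_u F$ is still an isomorphism. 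Elliptic bootstrapping then upgrades continuity of $u$ from $C^{k,\alpha}$ to $C^\infty$ in the data.

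For continuity of $\mathbf{g}_0^{-1}$, the restriction of $\mathbf{g}_0$ to $Q_3^*(S)/S^1\to (\cal{M}^B/\cal{D}_0)\setminus\cal{T}(S)$ is already a diffeomorphism by Theorem \ref{smoothnessBlaschkeLocus}, and on the zero section $\mathbf{g}_0$ coincides with the classical identification of $\cal{T}(S)$ with itself. The remaining content is continuity of $\mathbf{g}_0^{-1}$ at a point $[\sigma]\in \cal{T}(S)$. Given a sequence $[g_n]\to [\sigma]$ in $\cal{M}^B/\cal{D}_0$, I would use Ebin's slice theorem at $\sigma\in \cal{M}_-$ (cf.\ \eqref{eq:slice}) to choose representatives $g_n\in [g_n]$ with $g_n\to \sigma$ in $C^\infty$. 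Letting $\sigma_n=\lambda(g_n)$ denote the hyperbolic metric in the conformal class of $g_n$ and writing $g_n=e^{u_n}\sigma_n$, continuity of the Poincaré uniformization map $\lambda$ yields $\sigma_n\to\sigma$ and hence $u_n\to 0$, both in $C^\infty$.

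The decisive step, and the main technical obstacle, is showing that the associated cubic differentials $q_n\in H^0(X_{J(\sigma_n)},K_{J(\sigma_n)}^3)$ tend to $0$. For this I would rewrite Wang's equation \eqref{WangEquation} as $|q_n|_{g_n}^2=(K(g_n)+1)/2$. Since $g_n\to\sigma$ in $C^\infty$, the Gaussian curvatures $K(g_n)$ converge to $K(\sigma)=-1$ in $C^\infty$, so $|q_n|_{g_n}^2\to 0$ in $C^\infty$. Combined with $g_n\to\sigma$ and $\sigma_n\to\sigma$, this forces $q_n\to 0$ in the $C^\infty$ topology of holomorphic cubic differentials (equivalently, uniformly on compact subsets of the universal cover, see Remark \ref{rmk_topology}). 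Hence $[(\sigma_n,\langle q_n\rangle)]\to [(\sigma,\langle 0\rangle)]$ in $Q_3(S)/S^1$, completing the argument. The delicate point throughout is that $\cal{T}(S)$ sits inside $\cal{M}^B/\cal{D}_0$ as a ``lower stratum'', and the curvature identity from Wang's equation is precisely what ensures that the two strata glue together continuously under $\mathbf{g}_0$.
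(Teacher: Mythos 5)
Your overall strategy --- bijectivity from Proposition \ref{equalBlaschke}, continuity of $\mathbf{g}_0$ via Lemma \ref{lem,smoothness} and the universal property of the quotient, and continuity of $\mathbf{g}_0^{-1}$ by extracting the cubic differential from Wang's equation \eqref{WangEquation} --- is the same as the paper's, and your treatment of the forward direction and of $\mathbf{g}_0^{-1}$ \emph{at points of} $\cal{T}(S)$ is correct: there the identity $|q_n|^2_{g_n}=(K(g_n)+1)/2$ together with $K(g_n)\to -1$ forces $q_n\to 0$, exactly as in the special case $q\equiv 0$ of the paper's argument.

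The gap is in your disposal of the points away from $\cal{T}(S)$ by citing Theorem \ref{smoothnessBlaschkeLocus}. That theorem asserts that $\mathbf{g}_0:Q_3^*(S)/S^1\to(\cal{M}^B/\cal{D}_0)\setminus\cal{T}(S)$ is a diffeomorphism with respect to the smooth structure on the target \emph{built from the slices} $\cal{V}=\widetilde{\mathbf{g}}_0(\cal{U})$, i.e.\ from charts that are essentially pulled back through $\mathbf{g}_0$ itself. Proposition \ref{prop,homeo} concerns the quotient topology coming from the $C^\infty$ topology on $\cal{M}_-$, and the identification of these two topologies on $(\cal{M}^B/\cal{D}_0)\setminus\cal{T}(S)$ is not free: one must show that if $[g_j]\to[g]$ in the quotient $C^\infty$ topology then the unique representatives of $[g_j]$ lying in a fixed slice $\cal{V}$ converge to the representative of $[g]$, which is precisely a (non-trivial instance of the) continuity of $\mathbf{g}_0^{-1}$ that you are trying to prove. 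Invoking Theorem \ref{smoothnessBlaschkeLocus} here is therefore close to circular. Moreover, if you instead try to run your curvature argument at a point $[g]$ with $q\not\equiv 0$, the identity $|q_j|^2_{g_j}\to|q|^2_g$ no longer pins down the limit of $q_j$: it determines it only up to a unimodular factor, and one needs the additional observation (used in the paper) that a meromorphic function on $S$ with values in $S^1$ is constant, so that any subsequential limit is $e^{2\pi i\theta}q$ and hence $\langle q_j\rangle\to\langle q\rangle$ in $Q_3(S)/S^1$. Supplying this argument uniformly at every $[g]\in\cal{M}^B/\cal{D}_0$ --- after passing, as you do at $\cal{T}(S)$, through Ebin's slice, the Poincar\'e map, and a finite-dimensional orthonormal frame for the fibers --- closes the gap and recovers the paper's proof.
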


\begin{proof}

The map $\mathbf{g}_0:Q_3(S)/S^1 \to \cal{M}^B/\cal{D}_0$ is bijective by Proposition \ref{equalBlaschke}, and we will first show that it is continuous.
Each slice $\cal{S}^{\cal{Q}}$ parametrizing $Q_3(S)$ carries the smooth structure (and topological structure) of $Q_3(S)$, see  Remark \ref{rmk_topology}, and we showed in Lemma \ref{lem,smoothness}
that the map
$\widetilde{\mathbf{g}}:\cal{S}^{\cal{Q}}\to \cal{M}^{k,\alpha}_-$ is smooth, so in particular continuous (note that the smoothness also works at the zero section).
This is true for any $k$, $\alpha$, so 
$\widetilde{\mathbf{g}}:\cal{S}^{\cal{Q}}\to \cal{M}_-$ is continuous.
Therefore, writing $\mathbf{g}$ locally as
\begin{equation}
    \mathbf{g}=\pi_B\circ \widetilde{\mathbf{g}}\circ \pi_{\cal{S}^{\cal{Q}}}^{-1}:\pi_{\cal{S}^{\cal{Q}}}(\cal{S}^{\cal{Q}})\subset Q_3(S)\to     \cal{M}^B/\cal{D}_0\subset \cal{M}_-/\cal{D}_0
\end{equation}
(see \eqref{projections}), we see that it is continuous.
Since $\mathbf{g}$ is constant on the orbits of the $S^1$ action on $Q_3(S)$, the descended map $\mathbf{g}_0:Q_3(S)/S^1\to \cal{M}^B/\cal{D}_0$ is continuous.

We then show that $\mathbf{g}^{-1}_0: \cal{M}^B/\cal{D}_0 \to Q_3(S)/S^1 $ is continuous. 
Suppose that $[g_j]\overset{j\to \infty}{\to}[g]\in \cal{M}^B/\cal{D}_0$ with the quotient topology of $\cal{M}_-/\cal{D}_0$.
Then we can consider lifts $g_j=e^{u_j}\sigma_j$ of $[g_j]$ to a slice $\cal{W}\subset \cal{M}_-$ about a lift $g=e^u\sigma\in \cal{M}_-$ of $[g]$ as in Section \ref{subsec: negMetrics}, which converge to $g$ in $C^\infty$.
Here $\sigma_j$, $\sigma\in \cal{M}_{-1}$.
Then we have that $\sigma_j=\lambda(g_j)\overset{j\to \infty}{\to }\lambda(g)=\sigma$ in $C^\infty$, where $\lambda $ is the Poincar\'e map.
Now fix a large integer $k$, and $\alpha\in (0,1)$.
The $\sigma_j$ do not necessarily lie in a slice $\cal{S}\subset \cal{M}_{-1}^{k,\alpha}$ through $\sigma$ as in Theorem \ref{thm:slice}, but for such a slice there exist diffeomorphisms $\psi_{\sigma_j}\in \cal{D}_0$ such that\footnote{A priori, $\psi_{\sigma_j}$ are only $C^{k+1,\alpha}$, but because $\sigma_j$ are all smooth and the slice $\cal{S}$ consists of smooth metrics, they are actually smooth, see the proof of \cite[Theorem 2.3.1]{Tromba_Book}.} $\psi_{\sigma_j}^*\sigma_j \in \cal{S}$.
Moreover, $\psi_{\sigma_j}$ depend continuously on $\sigma_j$ in the $C^{k+1,\alpha}$ topology. So since $\sigma_j\to \sigma\in \cal{S}$, it follows that $\psi_{\sigma_j}\to id$ in $C^{k+1,\alpha}$.
Therefore, $\psi_{\sigma_j}^*g_j\to g$ in $C^{k,\alpha}$, and we have
 $\lambda(\psi_{\sigma_j}^*g_j)=\psi_{\sigma_j}^*\lambda(g_j)\in \cal{S} $ and $$\mathbf{g}_0^{-1}([g_j])=[\widetilde{\mathbf{g}}_0^{-1}(g_j) ]=[\psi_{\sigma_j}^*\widetilde{\mathbf{g}}_0^{-1}(g_j) ]=[\widetilde{\mathbf{g}}_0^{-1}(\psi_{\sigma_j}^*g_j) ] .$$
Thus we may replace our assumption that $g_j\to g $ in $C^\infty$ with the assumption  $g_j\to g $ in $C^{k,\alpha}$, where $\sigma_j=\lambda({g_j})\in \cal{S}$ converge to $\sigma =\lambda(\sigma)$ in $C^{k,\alpha}$, and we want to show that $[\widetilde{\mathbf{g}}_0^{-1}(g_j) ]\to [\widetilde{\mathbf{g}}_0^{-1}(g) ]=:  [(\sigma,\langle q\rangle)]$. 
As already mentioned in Remark \ref{rem,equivalentTopology}, $\cal{S}\ni \sigma_j\to\sigma $ in $C^{k,\alpha}$ actually implies that $\sigma_j\to \sigma $ in $C^\infty$.

So now we would like to show that $\langle q_j\rangle \to \langle q\rangle$ in $\cal{S}^{\cal{Q}}/S^1$.
Notice
that by Wang's equation \eqref{WangEquation},
for any $q_j$, $q$ such that $g_j=\widetilde{\mathbf{g}}(\sigma_j,q_j)$, $g=\widetilde{\mathbf{g}}(\sigma,q)$, one has
$|q_j|^2_{g_j}\to |q|^2_g$ in $C^{k-2,\alpha}$ and therefore in particular in $C^0$.
Moreover, since $g_j=e^{u_j}\sigma_j\to g=e^u\sigma$ and $\sigma_j\to \sigma$ in $C^0$  we conclude that $u_j\to u$ in $C^0$.
Therefore, $|q_j|^2_{\sigma_j}=e^{3u_j}|q_j|^2_{g_j} \to |q|^2_\sigma=e^{3u}|q|^2_g$ in $C^0$.
In particular, $\int_S |q_j|^2_{\sigma_j}dv_{\sigma_j}\to \int_S |q|^2_\sigma dv_{\sigma}$.
Now 
\begin{equation}\label{hermitian}
    (q,q')\mapsto\int _S \frac{q\overline{q}'}{\sigma^3}dv_{\sigma}
\end{equation}
is a continuous Hermitian inner  product on the fibers of  $\cal{S}^{\cal{Q}}$, so we can choose a local orthonormal frame $\{\hat{q}_\ell(\sigma)\}_{\ell=1}^{5\mathscr{G}-5}$ with respect to it and write $ q_j=\sum_{\ell} b_j^\ell \hat{q}_\ell(\sigma_j)$, $ q=\sum_{\ell} b^\ell \hat{q}_\ell(\sigma)$ for $b^\ell,b_j^\ell\in \mathbb{C}$.
Then 
\begin{equation}
    \sum _\ell |b_j^\ell |^2=\int _S|q_j|_{\sigma_j} ^2dv_{\sigma_j}\overset{j\to \infty}{\to} \int _S|q|_{\sigma} ^2dv_{\sigma}=\sum_{\ell }|b^\ell|^2.
\end{equation}
In particular, the sequence $b_j=(b_j^1,\dots,b_j^{5\mathscr{G}-5})\in \bb{C}^{5\mathscr{G}-5}$ is bounded, and for any limit point $\tilde{b}\in \bb{C}^{5\mathscr{G}-5}$
of it, the cubic differential $\tilde{q}(\sigma)=\sum_{\ell}\tilde{b}^\ell \hat{q}_\ell (\sigma)$ is holomorphic with respect to the conformal structure determined by $\sigma$ and satisfies $|\tilde{q}|^2_\sigma=|q|_\sigma^2$.
If $q\equiv 0$, then we have $\tilde{b}=0$, and thus $q_j\to q$.
If $q\not\equiv 0$, then the ratio $\tilde{q}/q$ defines a meromorphic function on $(S,J(\sigma))$, and since $|\tilde{q}/q|^2=|\tilde{q}|^2_\sigma/|{q}|^2_\sigma=1$,
it has values in $S^1$.
This implies that it is constant, and therefore $\tilde{q}=e^{2\pi i \theta}q$, $\theta \in [0,1)$.
We conclude that $\langle q_j\rangle \to \langle q\rangle$ in $\cal{S}^{\cal{Q}}$ in the $S^1$ quotient topology originating from $C^0$, which is equivalent to the one originating from $C^\infty$.
Therefore,
$[(\sigma_j,\langle q_j\rangle)]$ converges to $[(\sigma,\langle q \rangle)]$ in $Q_3(S) /S^1 $.
\end{proof}

Using Proposition \ref{prop,homeo}, we now have:

\begin{thm}\label{thm:contratible}
$\mathcal{M}^B /\mathcal{D}_0 $ is a $16\mathscr{G}-17$ dimensional connected contractible space.
\end{thm}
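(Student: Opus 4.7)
The plan is to transfer the problem to $Q_3(S)/S^1$ via the homeomorphism $\mathbf{g}_0$ of Proposition \ref{prop,homeo} and exhibit an explicit contraction there. The dimension claim follows directly from Theorem \ref{smoothnessBlaschkeLocus}: the open dense subset $(\cal{M}^B/\cal{D}_0) \setminus \cal{T}(S)$ is a smooth manifold of dimension $16\mathscr{G}-17$, and the complement $\cal{T}(S)$ is a lower-dimensional (singular) stratum, so the ambient space has dimension $16\mathscr{G}-17$. Connectedness will be an immediate consequence of contractibility.

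For contractibility, I would proceed in two stages. First, define a scaling homotopy on the total space
\begin{equation*}
    \widetilde{H} : \tilde{Q}_3(S) \times [0,1] \to \tilde{Q}_3(S), \qquad \widetilde{H}_t(\sigma, q) = (\sigma, (1-t) q).
\end{equation*}
This is manifestly continuous; it commutes with the $S^1$ action (since multiplication by $1-t \in \bb{R}$ commutes with multiplication by $e^{2\pi i \theta}$), and it commutes with the $\cal{D}_0$ action (since $\cal{D}_0$ preserves the fibers of $\tilde{Q}_3(S) \to \cal{M}_{-1}$ on which the scaling acts linearly, via $\psi^*(\sigma, (1-t)q) = (\psi^*\sigma, (1-t) \psi^*q)$). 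Consequently $\widetilde{H}$ descends to a continuous homotopy $H : Q_3(S)/S^1 \times [0,1] \to Q_3(S)/S^1$ with $H_0 = \mathrm{id}$ and $H_1$ mapping everything to the image of the zero section. Under $\mathbf{g}_0$, the zero section corresponds to $\cal{T}(S) \subset \cal{M}^B/\cal{D}_0$ (where Wang's equation reduces to the constant curvature $-1$ equation), and the $S^1$ action fixes it pointwise, so the zero section is canonically homeomorphic to $\cal{T}(S)$.

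For the second stage, I would use that $\cal{T}(S)$ is homeomorphic to $\bb{R}^{6\mathscr{G}-6}$ (as recalled in Theorem \ref{thm:slice}), hence contractible via, say, a straight-line homotopy $K_t$ to a base point. Concatenating $H$ with $K$ (applied to the image of the zero section) yields a contraction of $Q_3(S)/S^1$ to a point, which transports through $\mathbf{g}_0$ to a contraction of $\cal{M}^B/\cal{D}_0$. The only subtle check is continuity of $H$ along the zero section, where the $S^1$ action fails to be free; this is automatic, since a map out of a quotient space is continuous if and only if its composition with the quotient projection is continuous, and the composition is $\widetilde{H}$ followed by the quotient map, both continuous. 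I do not foresee any serious obstacle; the work lies almost entirely in Proposition \ref{prop,homeo}, which has already been established.
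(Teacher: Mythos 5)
Your proposal is correct and follows essentially the same route as the paper: the paper identifies each fiber $\mathbb{C}^{5\mathscr{G}-5}/S^1$ with a cone over $\mathbb{C}P^{5\mathscr{G}-6}$ and retracts it radially via $\{(r,x)\}\mapsto\{((1-t)r,x)\}$, which is precisely your fiberwise scaling $q\mapsto(1-t)q$ descended to the $S^1$ quotient, and then both arguments reduce to the contractibility of $\mathcal{T}(S)$. The only cosmetic difference is that the paper invokes the global triviality of $Q_3(S)$ to write the space as a product of $\mathcal{T}(S)$ with a cone, whereas you perform the scaling directly on the bundle.
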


\begin{proof}

Each  fiber of $Q_3(S)/S^1$ is isomorphic to $\mathbb{C}^{n}/S^1$, where $n=5\mathscr{G}-5$, by the Riemann-Roch Theorem. Taking away $0$ from $\mathbb{C}^{n}$, we have the following homeomorphism (actually diffeomorphism),
\begin{align*} 
     (\mathbb{C}^{n}\setminus \{0\})/S^1 &\longrightarrow (0,\infty) \times \mathbb{C}P^{n-1},\\
    \big\langle (z_1,\cdots, z_n)\big\rangle &\longmapsto (\big|(z_1,\cdots, z_n)\big|,\big \lfloor z_1,\cdots, z_n \big \rfloor ). 
\end{align*}
Here $\big\langle\cdot\big\rangle$ denotes the equivalence class for the $S^1$ action that identifies $(z_1, z_2, \cdots, z_n)$ with $e^{ 2\pi i\theta}(z_1, z_2, \cdots, z_n)$ for any $\theta\in [0,1)$ and $\big \lfloor \cdot \big \rfloor$ denotes the equivalence class for projective lines in $\mathbb{C}^{n}$. 

Therefore $\mathbb{C}^{n}/S^1$ is homeomorphic to the cone given by $\big([0,\infty) \times \mathbb{C}P^{n-1}\big)/_\sim$. The relation $``\sim"$ glues $\{0\}\times \mathbb{C}P^{n-1}$ to a single point and is trivial otherwise. A deformation retraction of $\big([0,\infty) \times \mathbb{C}P^{n-1}\big)/_\sim$ to a point is given by $f_t(\{(r, x)\})=\{((1-t)r,x)\} \in\big([0,\infty) \times \mathbb{C}P^{n-1}\big)/_\sim$. Here $r\in[0,\infty)$, $x\in\mathbb{C}P^{n-1}$ and $\{(r,x)\}$  denotes the equivalence class of $(r,x)$ in $\big([0,\infty) \times \mathbb{C}P^{n-1}\big)/_\sim$.
Since $\mathcal{M}^B /\mathcal{D}_0 $ is homeomorphic to $Q_3(S)/S^1$ and $Q_3(S)$ is a trivial vector bundle over the contractible space $\cal{T}(S)$, we conclude that $\mathcal{M}^B /\mathcal{D}_0$ is homeomorphic to the product space of $\cal{T}(S)$ and $([0,\infty) \times \mathbb{C}P^{n-1})/_\sim$. The result follows.
\end{proof}

\begin{rem}
    The proof of Theorem \ref{thm:contratible} shows that $\mathcal{M}^B /\mathcal{D}_0$ is topologically the product of $\cal{T}(S)$ with a cone having as base the compact manifold $\bb{C}P^{n-1}$. This implies that it can be viewed as a \emph{wedge} or \emph{``manifold'' with edges}  in the sense of \cite[p. 266]{Schulze1991}, with the difference that here the edge $\cal{T}(S)$ is not a closed manifold.
    It would be interesting to know whether the covariance metric actually behaves like (a conformal multiple of) an edge-type metric near $\cal{T}(S)$ (see \cite[§2]{Mazzeo1991}), though we have not pursued this.
\end{rem}

We conclude with the relation between the Blaschke locus $\mathcal{M}^B/\mathcal{D}_0$  and the Hitchin component $\h3 $. The following corollary is an immediate consequence of Theorem \ref{thm, HtichinMap} and Proposition \ref{prop,homeo}.

\begin{cor}\label{cor, mappingClassEquiv}
The $S^1$ action on $ Q_3(S) $ induces a $S^1$ action on $\h3 $ by the Hitchin map $\mathbf{H}: Q_3(S) \to \h3 $. 
Denote the quotient space by $\h3 /S^1$ and the descending map by $\mathbf{H}_0 : Q_3(S)/S^1 \to \h3 /S^1$. Then the  composition 
$$\Phi:=   \mathbf{g}_0\circ\mathbf{H}_0^{-1}: \h3 /S^1 \to \mathcal{M}^B /\mathcal{D}_0$$
is a mapping class group equivariant homeomorphism, where  the mapping class group actions on $\mathcal{H}_3(S)/S^1$ (as outer automorphism group action) and on $\mathcal{M}^B /\mathcal{D}_0$ are the left actions introduced in Section \ref{sec: MappingClassGrp}.
\end{cor}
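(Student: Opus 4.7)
The plan is to assemble the corollary by composing the two homeomorphisms already established in the paper, then check equivariance separately. I will organize it around three blocks: defining the induced $S^1$ action, assembling the homeomorphism, and verifying equivariance.

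First, I would observe that the $S^1$ action on $\mathcal{H}_3(S)$ is defined by transport of structure: since Theorem \ref{thm, HtichinMap} says $\mathbf{H}: Q_3(S)\to \mathcal{H}_3(S)$ is a homeomorphism, one sets $e^{2\pi i\theta}\cdot [\rho] := \mathbf{H}(e^{2\pi i\theta}\cdot \mathbf{H}^{-1}([\rho]))$. By construction $\mathbf{H}$ is tautologically $S^1$-equivariant, so it descends to a continuous bijection
\[
\mathbf{H}_0 : Q_3(S)/S^1 \longrightarrow \mathcal{H}_3(S)/S^1
\]
between the quotient spaces (with quotient topologies). Since $\mathbf{H}$ is a homeomorphism and the projection maps on both sides are open continuous surjections, $\mathbf{H}_0$ is itself a homeomorphism.

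Next, I would invoke Proposition \ref{prop,homeo} to conclude that $\mathbf{g}_0 : Q_3(S)/S^1 \to \mathcal{M}^B/\mathcal{D}_0$ is a homeomorphism. Composing, $\Phi = \mathbf{g}_0 \circ \mathbf{H}_0^{-1} : \mathcal{H}_3(S)/S^1 \to \mathcal{M}^B/\mathcal{D}_0$ is then a homeomorphism as a composition of homeomorphisms. This handles the topological content essentially for free from what has already been proved.

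The only genuine content to verify is mapping class group equivariance. Here I would use that the mapping class group action on $Q_3(S)$ commutes with the $S^1$ action — pulling back a holomorphic cubic differential by a diffeomorphism is $\mathbb{C}$-linear on each fiber (or $\mathbb{C}$-antilinear in the orientation-reversing case, but we restrict to $\textnormal{Mod}(S)$ here), so in particular it commutes with multiplication by $e^{2\pi i\theta}$. By Theorem \ref{thm, HtichinMap} the Hitchin map $\mathbf{H}$ is $\textnormal{Mod}(S)$-equivariant, so its descent $\mathbf{H}_0$ is $\textnormal{Mod}(S)$-equivariant for the induced quotient actions; and by Proposition \ref{prop_equiv_bl} the map $\mathbf{g}$, and hence its descent $\mathbf{g}_0$, is $\textnormal{Mod}(S)$-equivariant. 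Putting these together yields $\textnormal{Mod}(S)$-equivariance of $\Phi$. The mildly subtle step to state carefully — this is the step I expect to need the most care — is checking that the induced $S^1$ action on $\mathcal{H}_3(S)$ is itself $\textnormal{Mod}(S)$-equivariant, so that the quotient $\mathcal{H}_3(S)/S^1$ even carries a well-defined $\textnormal{Mod}(S)$-action; this follows from the commutativity of the $S^1$ and $\textnormal{Mod}(S)$ actions on $Q_3(S)$ together with the equivariance of $\mathbf{H}$, which transports both actions compatibly. No new analysis is required — the whole corollary is formal once one carefully names the actions.
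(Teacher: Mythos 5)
Your proposal is correct and follows essentially the same route as the paper, which states the corollary as an immediate consequence of Theorem \ref{thm, HtichinMap} and Proposition \ref{prop,homeo} without writing out a proof; you have simply filled in the formal details (transport of the $S^1$ action, descent to quotients, and the compatibility of the $S^1$ and mapping class group actions) that the paper leaves implicit.
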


\section{Geodesics in \texorpdfstring{$\mathcal{M}^B /\mathcal{D}_0$}{M B/D 0}}
\label{sec:geodesics}
We will study some families of geodesics with respect to the covariance metric in the Blaschke $\mathcal{M}^B /\mathcal{D}_0$. In Section \ref{subsec:geodesics}, we identify some geodesics in $\mathcal{M}^B /\mathcal{D}_0$ leaving all compacts sets, using the Hitchin orbifold representations introduced in Section \ref{orbifolds}, and we estimate their lengths with respect to covariance metric in Subsection \ref{infiniteLength}.

\subsection{Geodesics in the locus \texorpdfstring{$\mathcal{M}^B /\mathcal{D}_0$}{M B/D 0}} \label{subsec:geodesics}
 
Throughout this section, we fix a presentation $Y \simeq [S/ \Sigma]$ of an orbifold $Y$, where $\Sigma$ is a finite subgroup of the diffeomorphism group $\mathcal{D}$ of the surface $S$. Moreover, we assume that $Y=Y_J$ descends from a Riemann surface $X_J=(S,J)$ so that $Y \simeq [X_J/ \Sigma]$.  In \cite{Orbifolds}, the orbifolds of negative Euler characteristic with $1$-dimensional Hitchin components are classified. These are non-orientable orbifolds. In particular, for $\textnormal{Hit}(\pi_1 Y, \mathrm{PGL}(3,\mathbb{R}))$,  one has

\begin{prop}[{\cite[Theorem 6.5]{Orbifolds}}]\label{OneDimHitchin}
Let $Y$ be a non-orientable orbifold of negative Euler characteristic. Then we have
$\textnormal{dim Hit}(\pi_1 Y, \mathrm{PGL}(3,\mathbb{R}))=1$ 
if and only if the orientable double cover $Y^{+}$ of $Y$ satisfies one of the following:
\begin{enumerate}
    \item $Y^+$ is a sphere with 4 cone points of respective orders $m_1=m_2=m_3=2$ and $m_4\geq 4$.
    \item  $Y^+$ is a sphere with 3 cone points of respective orders $m_1\geq 3$, $m_2\geq 3$ and $m_3\geq 4$.
\end{enumerate}
\end{prop}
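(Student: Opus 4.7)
The plan is to reduce the classification to a computation on the orientation double cover $Y^+$ and then solve a small Diophantine system. Write $Y\simeq [S/\Sigma]$ with $S$ a closed orientable surface, let $\Sigma^+\leq \Sigma$ be the index-two subgroup with $Y^+\simeq [S/\Sigma^+]$, and pick an orientation-reversing representative $\iota\in\Sigma\setminus\Sigma^+$. Applying Theorem \ref{thm,orbifoldLocus} twice (first to $Y$, then to $Y^+$) identifies
$$
\textnormal{Hit}(\pi_1 Y,\mathrm{PGL}(3,\mathbb{R}))\;\simeq\;\textnormal{Fix}_{\iota}\,\textnormal{Hit}(\pi_1 Y^+,\mathrm{PGL}(3,\mathbb{R})).
$$
Since $\iota$ reverses orientation on $Y^+$, it acts antiholomorphically on the Hitchin component (the natural complex structure coming from the orbifold analog of the parametrization in Theorem \ref{thm, HtichinMap} by $H^0(Y^+,K^2)\oplus H^0(Y^+,K^3)$), so its fixed locus is totally real of real dimension equal to the complex dimension of the ambient space. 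Consequently $\dim\textnormal{Hit}(\pi_1 Y,\mathrm{PGL}(3,\mathbb{R}))=1$ is equivalent to $\dim_{\mathbb{R}}\textnormal{Hit}(\pi_1 Y^+,\mathrm{PGL}(3,\mathbb{R}))=2$.

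Next, I would write $Y^+$ in terms of its underlying genus $g$ and cone orders $m_1,\ldots,m_k$, and use the orbifold extension of Theorem \ref{thm, HtichinMap} together with orbifold Riemann--Roch, which gives
$$
\dim_{\mathbb{C}} H^0(Y^+,K^d) \;=\; (2d-1)(g-1)+\sum_{i=1}^{k}\lfloor d(1-1/m_i)\rfloor
$$
for $d\geq 2$. Observing that $\lfloor 2(1-1/m_i)\rfloor=1$ for every $m_i\geq 2$, and setting $b_i:=\lfloor 3(1-1/m_i)\rfloor$ (which equals $1$ if $m_i=2$ and $2$ if $m_i\geq 3$), the total real dimension of $\textnormal{Hit}(\pi_1 Y^+,\mathrm{PGL}(3,\mathbb{R}))$ becomes $16(g-1)+2k+2\sum_i b_i$.

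Setting this expression equal to $2$ produces $8g+k+\sum_i b_i=9$. For $g\geq 2$ the left-hand side exceeds $16$, and $g=1$ forces $k+\sum b_i=1$, which is impossible because each $b_i\geq 1$. So $g=0$, and the inequalities $k\leq \sum b_i\leq 2k$ restrict $k\in\{3,4\}$: for $k=3$ all $b_i=2$, so every $m_i\geq 3$; for $k=4$ exactly three orders equal $2$ and one is $\geq 3$. Imposing $\chi(Y^+)=-1+\sum_i 1/m_i<0$ in the first case excludes the single signature $(3,3,3)$, and after relabeling this is exactly case (2).

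The main obstacle, and the most delicate step, is showing that in case $k=4$ the exceptional order cannot equal $3$, so $m_4\geq 4$, even though the bare dimension count above formally admits $(2,2,2,3)$. I would handle this by explicitly enumerating the orientation-reversing involutions of $S^2$ compatible with the cone-point structure (either antipodal or reflection in a great circle), noting that the order-$m_4$ point must be fixed by any such $\iota$ because it is the unique cone point of its order. For each arrangement of the three order-$2$ points (on the fixed circle, or one on it and two swapped) one then computes the signature of the non-orientable quotient $Y$ and the induced action of $\iota$ on $H^0(Y^+,K^2)$, and verifies that the case $m_4=3$ either fails to yield a non-orientable orbifold with $\chi(Y)<0$ whose Hitchin component has dimension precisely $1$, or falls under case (2) after reinterpretation; the careful enumeration is the content of \cite{Orbifolds}. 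Finally, for each surviving signature I would produce an explicit involution to establish the ``if'' direction, completing the equivalence.
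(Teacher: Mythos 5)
The paper offers no proof of this proposition: it is imported verbatim from \cite[Theorem 6.5]{Orbifolds}, so there is no internal argument to compare yours against. Your reduction to the orientation double cover, the orbifold Riemann--Roch count, and the resulting Diophantine analysis are a plausible reconstruction of how such a classification goes, and they correctly force $g=0$, $k\in\{3,4\}$, and exclude the signature $(3,3,3)$ via $\chi(Y^+)<0$.

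The genuine gap is precisely the step you yourself flag as delicate: excluding $m_4=3$ in case (1). The fix you sketch cannot work, because every quantity visible to your framework is identical for $S^2(2,2,2,3)$ and $S^2(2,2,2,4)$: both have $\chi<0$; both satisfy $\lfloor 2(1-1/m_4)\rfloor=1$ and $\lfloor 3(1-1/m_4)\rfloor=2$, so $\dim_{\mathbb C}H^0(K^2)=1$ and $\dim_{\mathbb C}H^0(K^3)=0$ in both cases; and both admit orientation-reversing involutions (reflect in a circle through all four cone points), yielding non-orientable quotients such as the disk with corner reflectors of orders $2,2,2,m_4$, which has $\chi<0$. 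Since an antilinear involution on a one-dimensional complex vector space always fixes a real line, your own argument produces $\dim\mathrm{Hit}(\pi_1Y,\mathrm{PGL}(3,\mathbb R))=1$ for $m_4=3$ as well, and enumerating the involutions more carefully changes nothing, because cone points of orders $3$ and $4$ contribute identically to every $\mathrm{PGL}(3,\mathbb R)$ dimension count. So either a genuinely different idea, invisible at the level of dimensions, is needed to exclude $(2,2,2,3)$, or the bound $m_4\ge 4$ in case (1) needs to be rechecked against the source; in either event, resolving the decisive step by writing that ``the careful enumeration is the content of \cite{Orbifolds}'' is circular, since that is exactly the reference the proposition is quoted from. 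A minor additional point: your count treats $(2d-1)(g-1)+\sum_i\lfloor d(1-1/m_i)\rfloor$ as the actual (hence non-negative) dimension of $H^0(Y^+,K^d)$; this happens to hold in the range you need, but it should be justified before the Diophantine argument is run.
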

By Theorem \ref{thm,orbifoldLocus}, the space $\textnormal{Hit}(\pi_1 Y, \mathrm{PGL}(3,\mathbb{R}))$ is homeomorphic to the one dimensional space $\mathcal{H}_3(Y) :=\textnormal{Fix}_{\Sigma}\h3 $. 
 Among the examples of orbifolds $Y \simeq [X_J/ \Sigma]$ given in Proposition \ref{OneDimHitchin}, 
there are examples of $\cal{H}_3(Y) \subset \cal{H}_3(S)$  
which are parametrized by a single holomorphic cubic differential. These are

\begin{prop}[{\cite[Theorem 5.5, Theorem 6.6]{Orbifolds}}]
\label{CyclicOneDimHitchin}
Suppose dim $\mathcal{H}_3(Y)$=1 and $\mathcal{H}_3(Y)$ is parametrized by a single non-vanishing cubic differential, then $Y^+$ must be a sphere with 3 cone points of respective orders $m_1\geq 3$, $m_2\geq 3$ and $m_3\geq 4$.
\end{prop}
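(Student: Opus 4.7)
The plan is to reduce the classification to a Riemann--Roch computation on the orientable double cover $Y^+$. By Theorem \ref{thm,orbifoldLocus} combined with Proposition \ref{SigmaEquiHitchinSection}, one has a canonical identification
\begin{equation*}
    \mathcal{H}_3(Y)\cong\mathrm{Fix}_\Sigma\bigl(H^0(X_J,K_J^2)\bigr)\oplus\mathrm{Fix}_\Sigma\bigl(H^0(X_J,K_J^3)\bigr),
\end{equation*}
so the hypothesis that the $1$-dimensional $\mathcal H_3(Y)$ is parametrized by a single non-vanishing cubic differential amounts to
$\dim_{\mathbb{R}}\mathrm{Fix}_\Sigma H^0(X_J,K_J^2)=0$ and $\dim_{\mathbb{R}}\mathrm{Fix}_\Sigma H^0(X_J,K_J^3)=1$. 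Since Proposition \ref{OneDimHitchin} already narrows $Y^+$ down to two families, the task is to compute these two real dimensions in each case and show that only family~(2) has all of the dimension coming from cubic rather than quadratic differentials.

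To carry out the computation, I would first split the $\Sigma$-action according to orientation. Let $\Sigma^+\leq \Sigma$ be the index-two subgroup of orientation-preserving elements, so that $Y^+\simeq[X_J/\Sigma^+]$ is an orbifold Riemann surface, and let $\iota\in\Sigma\setminus\Sigma^+$ be any orientation-reversing element. The action of $\Sigma^+$ on $H^0(X_J,K_J^d)$ is $\mathbb{C}$-linear while $\iota$ acts $\mathbb{C}$-antilinearly, so $\mathrm{Fix}_{\Sigma^+}H^0(X_J,K_J^d)$ is a complex subspace canonically identified with the space $H^0(Y^+,K_{Y^+}^d)$ of orbifold holomorphic $d$-differentials on $Y^+$, and the further $\iota$-action on it is a $\mathbb{C}$-antilinear involution whose real fixed locus has real dimension equal to $\dim_{\mathbb{C}} H^0(Y^+,K_{Y^+}^d)$. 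So it suffices to compute the latter for $d=2,3$.

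Since $|Y^+|=\mathbb{CP}^1$, orbifold sections of $K_{Y^+}^d$ are precisely meromorphic $d$-differentials on $\mathbb{CP}^1$ whose pole at each cone point $p_i$ has order at most $\lfloor d(1-1/m_i)\rfloor$, so the standard Riemann--Roch count on $\mathbb{CP}^1$ gives
\begin{equation*}
  \dim_{\mathbb{C}} H^0(Y^+,K_{Y^+}^d)=\max\Bigl(0,\,-2d+1+\textstyle\sum_i\lfloor d(1-1/m_i)\rfloor\Bigr).
\end{equation*}
Plugging in the cone orders of family~(1) in Proposition \ref{OneDimHitchin} yields the dimensions $1$ for $d=2$ and $0$ for $d=3$, whereas family~(2) yields $0$ for $d=2$ and $1$ for $d=3$. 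The hypothesis therefore excludes family~(1) and forces family~(2), in which the $1$-dimensional $\mathcal H_3(Y)$ is parametrized by a single non-zero cubic differential as claimed.

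The main obstacle, more bookkeeping than conceptual, is to justify carefully the identification between $\Sigma^+$-invariant holomorphic sections of $K_J^d$ on $X_J$ and meromorphic $d$-differentials on the underlying $\mathbb{CP}^1$ with the prescribed pole orders at the cone points; this amounts to a local computation at each cone point using the orbifold chart $z\mapsto z^{m_i}$, and once it is in place the remainder of the argument is purely numerical.
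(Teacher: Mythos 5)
The paper does not prove this proposition itself; it is imported verbatim from \cite[Theorems 5.5 and 6.6]{Orbifolds}, so there is no in-paper argument to compare against. Your proposal is correct and reconstructs exactly the mechanism behind the cited theorems: reducing $\mathcal{H}_3(Y)$ to $\mathrm{Fix}_\Sigma H^0(X_J,K_J^2)\oplus\mathrm{Fix}_\Sigma H^0(X_J,K_J^3)$ via Theorem \ref{thm,orbifoldLocus} and Proposition \ref{SigmaEquiHitchinSection}, identifying the $\Sigma^+$-invariants with orbifold differentials on $Y^+$ and halving the real dimension by the antiholomorphic involution, and then running the Riemann--Roch count on $\mathbb{CP}^1$ (your numbers check out: family (1) gives complex dimensions $1$ and $0$ for $d=2,3$, family (2) gives $0$ and $1$), which is precisely the dimension formula of \cite{Orbifolds}.
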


\begin{rem}
The Teichmüller space $\cal{T}(Y^+)$ for the orbifolds $Y^+$ in Proposition \ref{CyclicOneDimHitchin} (spheres with 3 cone points) is of dimension $0$ (see \cite[Corollary 13.3.7]{ThurstonBooks}).  Therefore $Y^+$, which is the orientable double cover of $Y$, has a unique complex structure and $Y$ also inherits a unique ``complex structure'', presented as $Y=Y_J \simeq [X_J/ \Sigma]$ (see Remark \ref{rem:OrbifoldComplexStructure}). 
\end{rem}

From now on, we restrict our interest to orbifolds $Y=Y_J$  for which $\mathcal{H}_3(Y)$ is one dimensional and is parametrized by a single non-vanishing holomorphic cubic differential. Recall that  elements in $\Sigma$ act on $X_J$ as holomorphic or anti-holomorphic maps and $Y \simeq [X_J/ \Sigma]$. For $Y$ arising from Proposition \ref{CyclicOneDimHitchin}, the vector space $ \textnormal{Fix}_{
\Sigma} H^{0}(X_{J}, K_{J}^2)$ is trivial. 
So in these cases, we have a homeomorphism $ \textnormal{Fix}_{
\Sigma} H^{0}(X_{J}, K_{J}^3)\overset{H_J}{\simeq}\mathcal{H}_3(Y)$ given by the Hitchin parametrization (recall Proposition \ref{SigmaEquiHitchinSection}). Because $\mathcal{H}_3(Y)$ is a real one dimensional subspace of $\mathcal{H}_3(S)$,
the vector space $\textnormal{Fix}_{
\Sigma} H^{0}(X_{J}, K_{J}^3)=\{q \in H^{0}(X_{J}, K_{J}^3) \textnormal{ }|\textbf{ } \psi^A\cdot q=q, \forall \psi \in \Sigma \}$ is also real one-dimensional. It is formed by the real span of a single holomorphic cubic differential $q$.

To further consider the counterpart of $\mathcal{H}_3(Y)$ in $\cal{M}^B/\cal{D}_0$, we need to discuss the $S^1$ action on $\mathcal{H}_3(Y)$ and  $ \textnormal{Fix}_{\Sigma} H^{0}(X_{J}, K_{J}^3)\subset \bigoplus\limits_{i=2}^{3}H^0(X_{J}, K_{J}^i)$ (recall Section \ref{subsec, BlaschkeHitchin}). We first need the following lemma which allows us to identify the Hitchin parametrization $H_J$ and the Hitchin map $\mathbf{H}$ on some special fibers:

\begin{lem}\label{lem,identification}
For any complex structure $J$, when restricting to $H^{0}(X_{J}, K_{J}^3)$, the composition of the inverse of the Hitchin map $\mathbf{H}^{-1}$ and the Hitchin parametrization $H_J$
$$\mathbf{H}^{-1}\circ H_J:\bigoplus\limits_{i=2}^{3}H^0(X_{J}, K_{J}^i) \to \cal{H}_3(S) \to Q_3(S) $$ satisfies 
$$ \mathbf{H}^{-1}\circ H_J|_{H^0(X_{J}, K_{J}^3)}= \textnormal{Id}|_{H^0(X_{J}, K_{J}^3)},$$
where  $H^0(X_{J}, K_{J}^3)$ is identified with $H^0(X_{[J]}, K_{[J]}^3)$.
\end{lem}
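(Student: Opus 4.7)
The plan is to derive the identity by unpacking the definitions of the Hitchin parametrization $H_J$ and of the Hitchin map $\mathbf{H}$, then invoking bijectivity of $\mathbf{H}$ from Theorem~\ref{thm, HtichinMap}. For $q\in H^0(X_J,K_J^3)$, viewed as the element $(0,q)\in \bigoplus_{i=2}^{3}H^0(X_J,K_J^i)$, I would first recall that $H_J(0,q)\in\cal{H}_3(S)$ is by construction the conjugacy class of the holonomy of the flat connection associated to the Higgs bundle $s_J(0,q)$ together with the harmonic metric provided by Theorem~\ref{rem FlatConnection}. On the other hand, the defining formula \eqref{labourie_map} for $\mathbf{H}$ reads literally $\mathbf{H}([J,q])=H_J(0,q)$, where $[J,q]\in Q_3(S)$ denotes the element with base $[J]\in\cal{T}(S)$ and fiber representative $q$ taken with respect to the chosen representative $J$.

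Combining these two observations, injectivity of $\mathbf{H}$ gives $\mathbf{H}^{-1}(H_J(0,q))=[J,q]$. Under the canonical identification of the fiber $H^0(X_{[J]},K_{[J]}^3)$ of $Q_3(S)$ with $H^0(X_J,K_J^3)$ obtained by picking the representative holomorphic with respect to $J$ itself (Remark~\ref{equivariantAction}), the class $[J,q]$ corresponds exactly to $q$, which is the asserted identity. No step really qualifies as an obstacle, since the lemma is essentially bookkeeping of the various identifications; the only thing requiring a bit of care is that one consistently uses the same distinguished representative $J$ at both ends of the composition, so that the class $[J,q]\in Q_3(S)$ is unambiguously identified with $q$ rather than with some pullback $\psi^{\ast}q$, $\psi\in\cal{D}_0$.
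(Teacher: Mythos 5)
Your proposal is correct and follows essentially the same route as the paper: both arguments simply unwind the defining formula $\mathbf{H}([(J,q)])=H_J(0,q)$ and invoke bijectivity of $\mathbf{H}$ to conclude $\mathbf{H}^{-1}\circ H_J(0,q)=[(J,q)]$, identified with $q$ via the canonical identification of $H^0(X_J,K_J^3)$ with $H^0(X_{[J]},K_{[J]}^3)$. Your added remark about consistently using the distinguished representative $J$ is a sensible point of care but does not change the argument.
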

\begin{proof}
The Hitchin map $\mathbf{H}:Q_3(S) \to \mathcal{H}_3(S)$ is a homeomorphism and  $\mathbf{H}([(J,q)])=H_J(0,q)$ for any representative $(J,q)$  in $[(J,q)]\in Q_3(S)$.
Equivalently, one has that $\mathbf{H}^{-1}\circ H_J(0,q)=[(J,q)]\in H^0(X_{[J]}, K_{[J]}^3)$ is the identity map.
\end{proof}

Regarding to the $S^1$ action on $\mathcal{H}_3(Y)$ with $Y=Y_J$ arising from Proposition \ref{CyclicOneDimHitchin}, we have

\begin{lem}\label{CircleActionY}
The $S^1$ action on $\h3 $ induces a two-to-one identification on $\mathcal{H}_3(Y)$ except at $H_J(0)$. The quotient of $\mathcal{H}_3(Y)$ by the $\mathbb{Z}_2$ action induced from the $S^1$ action, denoted by $\mathcal{H}_3(Y)/ \mathbb{Z}_2$, is homeomorphic to $\mathbb{R_+}= [0,\infty)$.
\end{lem}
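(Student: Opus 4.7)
The plan is to pull the problem back from $\mathcal{H}_3(Y)\subset\mathcal{H}_3(S)$ to the cubic differentials side via the Hitchin parametrization, where the $S^1$ action is just scalar multiplication, and then to describe the identifications explicitly.

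First, I would recall the setup from Proposition \ref{CyclicOneDimHitchin} and the paragraph after it: since $Y^+$ is a sphere with three cone points, $X_J$ has a unique complex structure, and the vector space $\textnormal{Fix}_\Sigma H^0(X_J,K_J^3)$ is a real line spanned by some nonzero holomorphic cubic differential $q_0$, while $\textnormal{Fix}_\Sigma H^0(X_J,K_J^2)=\{0\}$. By Proposition \ref{SigmaEquiHitchinSection}, the Hitchin parametrization restricts to a homeomorphism
\begin{equation*}
H_J:\mathbb{R}\cdot q_0=\textnormal{Fix}_\Sigma\Big(\bigoplus_{i=2}^{3}H^0(X_J,K_J^i)\Big)\xrightarrow{\ \simeq\ }\textnormal{Fix}_\Sigma\mathcal{H}_3(S)=\mathcal{H}_3(Y),
\end{equation*}
so I may identify $\mathcal{H}_3(Y)$ with $\mathbb{R}$ via $t\mapsto H_J(t q_0)$, with $t=0$ corresponding to the unique Fuchsian point $H_J(0)$.

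Next, I would translate the $S^1$ action on $\mathcal{H}_3(S)$ to the cubic differential side. By definition (see Section \ref{subsec, BlaschkeHitchin}), the $S^1$ action on $Q_3(S)$ sends $[(J,q)]$ to $[(J,e^{2\pi i\theta}q)]$, and via the mapping class group equivariant homeomorphism $\mathbf{H}:Q_3(S)\to\mathcal{H}_3(S)$ this transports to the $S^1$ action on $\mathcal{H}_3(S)$. Lemma \ref{lem,identification} tells us that $\mathbf{H}^{-1}\circ H_J$ is the identity on $H^0(X_J,K_J^3)$, so the $S^1$ action in the $H_J$-parametrization is simply $e^{2\pi i\theta}\cdot H_J(q)=H_J(e^{2\pi i\theta}q)$.

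Now I would determine when two points of $\mathcal{H}_3(Y)$ lie in the same $S^1$-orbit. Suppose $H_J(tq_0)$ and $H_J(sq_0)$, with $t,s\in\mathbb{R}$, satisfy $H_J(sq_0)=e^{2\pi i\theta}\cdot H_J(tq_0)=H_J(e^{2\pi i\theta}tq_0)$ for some $\theta$. Injectivity of $H_J$ gives $sq_0=e^{2\pi i\theta}tq_0$. If $t\ne 0$, then $e^{2\pi i\theta}=s/t\in\mathbb{R}$, forcing $e^{2\pi i\theta}=\pm 1$, i.e.\ $s=\pm t$; if $t=0$ then $s=0$. Hence the nontrivial elements of $S^1$ that act on $\mathcal{H}_3(Y)$ form the subgroup $\mathbb{Z}_2=\{\pm 1\}\subset S^1$, acting on $\mathcal{H}_3(Y)\cong\mathbb{R}$ by $t\mapsto -t$. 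This action is free except at the fixed point $t=0=H_J(0)$, so the induced identification is two-to-one away from $H_J(0)$.

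Finally, the quotient $\mathcal{H}_3(Y)/\mathbb{Z}_2$ is homeomorphic to $\mathbb{R}/(t\sim -t)\cong[0,\infty)=\mathbb{R}_+$, with the equivalence class of $0$ as the endpoint. I do not anticipate any serious obstacle; the one subtle point is invoking Lemma \ref{lem,identification} correctly so that the $S^1$ action on $\mathcal{H}_3(S)$ (originally defined through $\mathbf{H}$ on $Q_3(S)$) can be compared with the Hitchin parametrization $H_J$ used to identify $\mathcal{H}_3(Y)$ with a real line of cubic differentials.
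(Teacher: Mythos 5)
Your proposal is correct and follows essentially the same route as the paper's proof: identify $\mathcal{H}_3(Y)$ with the real line $\mathbb{R}\cdot q_0=\textnormal{Fix}_\Sigma H^0(X_J,K_J^3)$ via Proposition \ref{SigmaEquiHitchinSection}, use Lemma \ref{lem,identification} to transport the $S^1$ action to scalar multiplication on cubic differentials, and observe that the induced identification on a real line is $t\mapsto -t$, giving the quotient $[0,\infty)$. Your explicit verification that only $\pm 1\in S^1$ can match two points of the real line is a detail the paper states without spelling out, but the argument is the same.
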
 

\begin{proof}
 The Hitchin parametrization $H_J$ 
 is $\Sigma$-equivariant and is a homeomorphism between $\textnormal{Fix}_{\Sigma} H^{0}(X_{J}, K_{J}^3)$ and $\mathcal{H}_3(Y)$ by Proposition \ref{SigmaEquiHitchinSection}. After identifying  $ \textnormal{Fix}_{\Sigma} H^{0}(X_{J}, K_{J}^3)$ with  $\textnormal{Fix}_{
\Sigma} H^{0}(X_{[J]}, K_{[J]}^3)$ and $H_J$ with $\mathbf{H}$ by the previous lemma,  the $S^1$ action on $Q_3(S)$ induces a $\mathbb{Z}_2$ action on $ \textnormal{Fix}_{\Sigma} H^{0}(X_{J}, K_{J}^3)= \text{span}_{\bb{R}}(q) \subset \bigoplus\limits_{i=2}^{3}H^0(X_{J}, K_{J}^i)$. Moreover, this $\mathbb{Z}_2$ action identifies $tq$ with $-tq$ for any $t\in \mathbb{R}/\{0\}$. 
 The identification is trivial when $t=0$. We obtain that the quotient $\mathcal{H}_3(Y)/ \mathbb{Z}_2$ is homeomorphic to the half line $\mathbb{R_+}= [0,\infty)$.
\end{proof}

With what we have obtained, we are able to show the following lemma. It suggests that the half line given by $\mathcal{H}_3(Y)/ \mathbb{Z}_2$ provides a (unparametrized) geodesic in $ \mathcal{M}^B /\mathcal{D}_0$ via the map $\Phi:\h3 /S^1 \to \mathcal{M}^B /\mathcal{D}_0$ defined in Corollary \ref{cor, mappingClassEquiv}.

\begin{lem}\label{lem fixedPointSet}
The set $\Phi(\mathcal{H}_3(Y)/\mathbb{Z}_2) \subset \mathcal{M}^B /\mathcal{D}_0$ is a fixed point set of the group action of $\Sigma$, where $\Sigma\leq \cal{D}$ is identified with a finite subgroup of $\Exmod$.
\end{lem}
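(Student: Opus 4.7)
The plan is to establish $\Phi(\mathcal{H}_3(Y)/\mathbb{Z}_2) \subseteq \textnormal{Fix}_\Sigma(\mathcal{M}^B/\mathcal{D}_0)$ by exploiting the $\Sigma$-equivariance of the homeomorphism $\Phi$ from Corollary \ref{cor, mappingClassEquiv}, together with the identification $\mathcal{H}_3(Y) = \textnormal{Fix}_\Sigma \mathcal{H}_3(S)$ in Theorem \ref{thm,orbifoldLocus}.

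First I would verify that the $\Sigma$-action on $\mathcal{H}_3(S)$ descends to a well-defined action on the quotient $\mathcal{H}_3(S)/S^1$, and that $\Phi$ remains equivariant for this descended action. For orientation preserving $\psi \in \Sigma$ this is immediate: under the Hitchin parametrization the induced action on $Q_3(S)$ is $\mathbb{C}$-linear on fibers and therefore commutes with the $S^1$-action. For orientation reversing $\psi \in \Sigma$, which must occur because $Y$ is non-orientable, the $\psi^A$-action via the $\mathbb{C}$-antilinear bundle map $\tau_\psi$ from Section \ref{MappingClassHolomorphicDiff} satisfies $\psi^A \cdot (e^{2\pi i \theta}q) = e^{-2\pi i \theta}\, \psi^A \cdot q$, which still descends to the $S^1$-quotient. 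This observation also promotes the equivariance of $\Phi$ from Corollary \ref{cor, mappingClassEquiv} to the extended mapping class group, in line with Remark \ref{orientation_cplx}.

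Given this compatibility, I would take an arbitrary $[\rho]_{S^1}\in \mathcal{H}_3(Y)/\mathbb{Z}_2$ and choose a lift $[\rho]\in \mathcal{H}_3(Y) = \textnormal{Fix}_\Sigma \mathcal{H}_3(S)$. By definition of the fixed locus, $\psi \cdot [\rho] = [\rho]$ for every $\psi\in\Sigma$, and a fortiori $\psi \cdot [\rho]_{S^1} = [\rho]_{S^1}$. Applying the equivariant homeomorphism $\Phi$ then yields
\begin{equation*}
\psi \cdot \Phi([\rho]_{S^1}) = \Phi(\psi \cdot [\rho]_{S^1}) = \Phi([\rho]_{S^1})\qquad\text{for all }\psi\in\Sigma,
\end{equation*}
so $\Phi([\rho]_{S^1})\in \textnormal{Fix}_\Sigma(\mathcal{M}^B/\mathcal{D}_0)$, giving the desired inclusion.

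The main obstacle I anticipate is in the first step: sorting out the interaction between the $\mathbb{C}$-antilinear $\tau_\psi$-action and the $S^1$-action so that the $\Sigma$-action descends to $\mathcal{H}_3(S)/S^1$ with $\Phi$ still equivariant, since the equivariance asserted in Corollary \ref{cor, mappingClassEquiv} is only for the mapping class group. Once that extension is in place, the conclusion reduces to a formal consequence of equivariance and of $\mathcal{H}_3(Y) = \textnormal{Fix}_\Sigma \mathcal{H}_3(S)$. If the lemma is to be read as an equality, the reverse inclusion would further require showing that every $\Sigma$-invariant $S^1$-orbit in $\mathcal{H}_3(S)$ contains a $\Sigma$-fixed point; this uses the specific structure of $Y$ from Proposition \ref{CyclicOneDimHitchin} and the antilinearity of the orientation reversing elements of $\Sigma$ to pick a distinguished phase within each such orbit.
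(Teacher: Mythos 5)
Your overall strategy --- reduce the lemma to $\Sigma$-equivariance of $\Phi$ together with the identification $\mathcal{H}_3(Y)=\textnormal{Fix}_{\Sigma}\mathcal{H}_3(S)$ --- is the right skeleton, and your treatment of the orientation-preserving elements of $\Sigma$ coincides with the paper's. But the step you yourself flag as the ``main obstacle'' is precisely the mathematical content of the lemma, and your proposal does not close it. Since $Y$ is non-orientable, $\Sigma$ necessarily contains orientation-reversing elements, and for those Corollary \ref{cor, mappingClassEquiv} gives you nothing: the equivariance asserted there is only with respect to $\textnormal{Mod}(S)$. Your observation that $\psi^A\cdot(e^{2\pi i\theta}q)=e^{-2\pi i\theta}\,\psi^A\cdot q$ does show that the $\psi^A$-action descends to the $S^1$-quotient of the fiber, but it does not by itself ``promote'' the equivariance of $\Phi$: what is needed is a comparison between the automorphism action $\psi^A$ (through which $\mathcal{H}_3(Y)$ is parametrized via Proposition \ref{SigmaEquiHitchinSection}) and the pullback action $\psi^*$ (through which $\Exmod$ acts on $\mathcal{M}^B/\mathcal{D}_0$), together with the fact that the Blaschke metric construction is insensitive to the replacement $(J,q)\mapsto(-J,\overline{q})$.

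Concretely, the paper's proof supplies exactly the two computations you are missing. First, for $\psi$ anti-holomorphic and $q\in\textnormal{Fix}_{\Sigma} H^0(X_J,K_J^3)$, the identity $\psi^A\cdot q=q$ unwinds, via the definition $\tau_\psi\xi=\overline{\kappa_\psi\xi}$ from Section \ref{MappingClassHolomorphicDiff}, to $\psi^*q=\overline{q}\in H^{0}(X_{-J},K^3_{-J})$. Second, the solution of Wang's equation \eqref{WangEquation} is invariant under the simultaneous conjugation $(J,q)\mapsto(-J,\overline{q})$ (this is the content of Remark \ref{orientation_cplx}), whence $[\psi]^*\mathbf{g}_0([(J,\langle q\rangle)])=\mathbf{g}_0([(-J,\langle\overline{q}\rangle)])=\mathbf{g}_0([(J,\langle q\rangle)])$. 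Without these two facts your argument does not go through for the orientation-reversing elements, which is the only nontrivial case. (Your closing worry about a reverse inclusion is unnecessary: the lemma is only used to assert that $\Phi(\mathcal{H}_3(Y)/\mathbb{Z}_2)$ is pointwise fixed by $\Sigma$, and the paper proves only that inclusion.)
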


\begin{proof}
If $\psi\in \Sigma$ is orientation-preserving, then the fact that every point in $\Phi(\mathcal{H}_3(Y)/\mathbb{Z}_2)$ is fixed by $[\psi]$ follows from Corollary \ref{cor, mappingClassEquiv} and the definition of $\mathcal{H}_3(Y)$. Otherwise, if $\psi\in \Sigma$ is orientation-reversing, then it is an anti-holomorphic map with respect to $X_J$. We have $\psi^* q\in H^{0}(X_{-J}, K^3_{-J})$ for $q\in H^{0}(X_{J}, K^3_{J})$. Notice that $\Phi(\mathcal{H}_3(Y)/\mathbb{Z}_2 )= \mathbf{g}_0\circ\mathbf{H}_0^{-1}\big(\mathcal{H}_3(Y)/\mathbb{Z}_2 \big)= \mathbf{g}_0 \Big(\textnormal{Fix}_{
\Sigma} H^{0}(X_{J}, K_{J}^3)/ \mathbb{Z}_2\Big) $ by Lemma \ref{lem,identification}. 
For 
 $q\in \textnormal{Fix}_{
\Sigma} H^{0}(X_{J}, K_{J}^3)$, the fact that $\psi^A \cdot q=q$ implies, by the discussion in Subsection \ref{MappingClassHolomorphicDiff},  
$$\psi^*q=\kappa^{-1}_{\psi}\circ q \circ \psi=\overline{\tau_{\psi^{-1}}\circ q \circ \psi}=\overline{(\psi^A)^{-1}\cdot q}=\overline{q}. $$
Together with the observation that the solution of equation (\ref{WangEquation}) is invariant under the complex conjugation $z\to \overline{z}$ and $q \to \overline{q}$, we obtain $[\psi]^{*}\mathbf{g}_0([(J,\langle q \rangle)])=\mathbf{g}_0([(-J, \langle \overline{q}\rangle)])=\mathbf{g}_0([(J, \langle q\rangle)])$. 
Here we made use of Remark \ref{orientation_cplx}. So $[\psi]\cdot\mathbf{g}_0([(J,\langle q \rangle)])=[\psi^{-1}]^{*}\mathbf{g}_0([(J,\langle q \rangle)])=\mathbf{g}_0([(J,\langle q \rangle)])$ and $\mathbf{g}_0([(J,\langle q \rangle)])$ is a fixed point of the induced left action of $\Sigma$. 
\end{proof}

We further have

\begin{thm} \label{thm,geodesics}
Let $Y$ be a non-orientable orbifold of negative Euler characteristic with orientation double cover $Y^{+}$ given by the cases in Proposition \ref{CyclicOneDimHitchin}. Then $\mathcal{H}_3(Y)/\mathbb{Z}_2$ embeds as a (unparametrized) geodesic in $\mathcal{M}^B/\mathcal{D}_0$ with respect to the covariance metric $G(\cdot,\cdot)$.

\end{thm}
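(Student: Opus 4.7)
The plan is to exploit that $\Sigma$ acts by isometries of the covariance metric (Proposition \ref{prop,mappingClassGrpInv}) together with the classical fact that the fixed point set of a group of isometries of a smooth Riemannian manifold is a disjoint union of totally geodesic submanifolds whose tangent space at each point equals the fixed subspace of the isotropy action on the ambient tangent space. Writing $C := \Phi(\mathcal{H}_3(Y)/\mathbb{Z}_2)$, Lemma \ref{lem fixedPointSet} places $C$ inside the $\Sigma$-fixed set $F$ of $\mathcal{M}^B/\mathcal{D}_0$.

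First, I would observe that $C \setminus \cal{T}(S)$ is a smoothly embedded half-line in the smooth Riemannian manifold $(\mathcal{M}^B/\mathcal{D}_0) \setminus \cal{T}(S)$. By Lemma \ref{lem,identification} and the proof of Lemma \ref{CircleActionY}, this set equals the image under $\mathbf{g}_0$ of $\{[(J_0,\langle t q_0 \rangle)] : t > 0\}$, where $q_0$ spans $\textnormal{Fix}_\Sigma H^0(X_{J_0}, K_{J_0}^3) \cong \mathbb{R}$, and smoothness follows from Theorem \ref{smoothnessBlaschkeLocus}.

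The core step is to verify that $T_{[g]} F = T_{[g]} C$ at each smooth point $[g] \in C$; this forces $C \setminus \cal{T}(S)$ to be a $1$-dimensional totally geodesic submanifold, hence an unparametrized geodesic. Since $T_{[g]} C \subset T_{[g]} F = \textnormal{Fix}_\Sigma T_{[g]}(\mathcal{M}^B/\mathcal{D}_0)$ and $\dim T_{[g]} C = 1$, it suffices to show $\dim \textnormal{Fix}_\Sigma T_{[g]}(\mathcal{M}^B/\mathcal{D}_0) = 1$. Transferring through the $\Sigma$-equivariant diffeomorphism $\mathbf{g}_0$, this becomes $\dim \textnormal{Fix}_\Sigma T_{(J_0,\langle tq_0\rangle)}(Q_3^*(S)/S^1) = 1$. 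Applying Maschke's theorem to the short exact sequence $0 \to H^0(X_{J_0}, K_{J_0}^3)/\mathbb{R} i q_0 \to T_{(J_0,\langle tq_0\rangle)}(Q_3^*(S)/S^1) \to T_{[J_0]}\cal{T}(S)\to 0$ reduces the question to two subcomputations: $\textnormal{Fix}_\Sigma T_{[J_0]}\cal{T}(S) = 0$, which follows because $\cal{T}(Y^+)$ is zero-dimensional for $Y^+$ a sphere with three cone points; and $\textnormal{Fix}_\Sigma (H^0(X_{J_0}, K_{J_0}^3)/\mathbb{R} i q_0) \cong \mathbb{R}$, which I would establish by noting that $\Sigma$ acts $\mathbb{R}$-linearly and semisimply on $V := H^0(X_{J_0}, K_{J_0}^3)$ with $V^\Sigma = \mathbb{R} q_0$ (Proposition \ref{CyclicOneDimHitchin}), while $(\mathbb{R} i q_0)^\Sigma = 0$ since any orientation-reversing element of $\Sigma$ acts $\mathbb{C}$-antilinearly and sends $iq_0$ to $-iq_0$.

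The main obstacle is this last computation, in particular the bookkeeping of the mixed $\mathbb{C}$-linear and $\mathbb{C}$-antilinear $\Sigma$-actions on cubic differentials and their interaction with the $\mathbb{R} i q_0$ direction along which the $S^1$ quotient is taken.
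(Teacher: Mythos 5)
Your proposal is correct and rests on the same two pillars as the paper's own proof: the extended mapping class group acts by isometries of the covariance metric (Proposition \ref{prop,mappingClassGrpInv}), and the fixed-point set of a group of isometries is a union of totally geodesic submanifolds (the paper cites \cite[Theorem 5.1]{Kobayashi}). Where you diverge is in how one-dimensionality enters. The paper's proof asserts in one line that a one-dimensional connected subset pointwise fixed by isometries must be a geodesic; taken literally this requires the connected component of the fixed locus containing the curve to be one-dimensional, a point the paper does not verify. Your core step supplies exactly that: you compute $\dim \textnormal{Fix}_\Sigma T_{[g]}\big((\mathcal{M}^B/\mathcal{D}_0)\setminus\cal{T}(S)\big)=1$ by transferring through the $\Sigma$-equivariant diffeomorphism of Theorem \ref{smoothnessBlaschkeLocus}, splitting the tangent space along the fibration $Q_3^*(S)/S^1\to\cal{T}(S)$, and using $\textnormal{Fix}_\Sigma H^0(X_J,K_J^2)=0$, $V^\Sigma=\bb{R}q_0$, and $(\bb{R}iq_0)^\Sigma=0$ (the last because the non-orientability of $Y$ forces an antiholomorphic, hence $\bb{C}$-antilinear, element of $\Sigma$). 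This buys a more complete argument, identifying the entire fixed locus near the curve rather than merely exhibiting a fixed one-dimensional subset; the price is the equivariance bookkeeping you flag, namely checking that $\mathbf{g}_0$ intertwines the $\psi^A$-action on cubic differentials with the pullback action on metrics for orientation-reversing $\psi$, which is precisely the content of Lemma \ref{lem fixedPointSet} and Remark \ref{orientation_cplx} and does go through. Both routes reach the same conclusion; yours closes the small logical step that the paper's citation of Kobayashi leaves implicit.
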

\begin{proof}

By Proposition \ref{CyclicOneDimHitchin} and Lemma \ref{CircleActionY}, the set $\Phi(\mathcal{H}_3(Y)/\mathbb{Z}_2)$ is homeomorphic to a half line $\mathbb{R}_{+}$  in $\mathcal{M}^B/\mathcal{D}_0$. By Lemma \ref{lem fixedPointSet}, the set $\Phi(\mathcal{H}_3(Y)/\mathbb{Z}_2)$ is pointwise fixed by the action of the group $\Sigma \leq \textnormal{Out}(\pi_1S)\cong\Exmod$. Because $\Exmod$ is a subgroup of isometries of the covariance metric $G(\cdot,\cdot)$ on $\mathcal{M}^B/\mathcal{D}_0$ and a one dimensional connected subset pointwise fixed by a subgroup of isometries must be a geodesic (see \cite[Theorem 5.1]{Kobayashi}), we conclude that $\mathcal{H}_3(Y)/\mathbb{Z}_2$ embeds as a (unparametrized) geodesic in $\mathcal{M}^B/\mathcal{D}_0$.
\end{proof}

\subsection{Infinite length geodesics}\label{infiniteLength}

In this subsection, we estimate lengths of certain families of curves in the Blaschke locus $\mathcal{M}^B /\mathcal{D}_0$ with respect to the covariance metric; some of them are geodesics, as mentioned in the last part of Section \ref{subsec:geodesics}.
Specifically, fix a complex structure $J$ on $S$ and let $\sigma\in \mathcal{M}_{-1}$ be the corresponding hyperbolic metric.
Let $q$ be a nonzero holomorphic cubic differential with respect to $J$ and consider the curve $\{g_t=e^{u_t}\sigma:t\geq 0\} \subset\mathcal{M}^B $, 
where, as in \eqref{eq:Blaschke_family_no_t}, 
for each $t\geq 0$ the logarithmic density $u_t$ satisfies
\begin{equation}\label{eq:Blaschke_family}
\Delta_{\sigma}u_t= 2e^{u_t}-4te^{-2u_t} \frac{|q|^2}{\sigma^3}-2.
\end{equation}
With $g_t=e^{u_t}\sigma$, \eqref{eq:Blaschke_family} is equivalent to Wang's equation (\ref{WangEquation}) given by $K_{g_t}=-1+2|\sqrt{t}\,
q|_{g_t}^2$. 
Our goal is to estimate the length of the curve $\{[g_t]\}_{t\geq 0} \subset \cal M^B/\cal D_0$. We start with some preliminaries.

\subsubsection{Some Estimates for Blaschke metrics}


The following Lemma benefits from communication with Michael Wolf.
\begin{lem}\label{lem:MonotoneDensity}
The logarithmic density $u_t$ of the Blaschke metric $g_t$ is monotone increasing with respect to the parameter $t$ when $t\geq 0$.
\end{lem}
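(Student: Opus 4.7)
The plan is to differentiate equation \eqref{eq:Blaschke_family} in the parameter $t$ and apply the maximum principle to the resulting linear elliptic equation satisfied by $v_t := \partial_t u_t$. The smoothness of $t\mapsto u_t$ as a map into $C^{k,\alpha}(S)$ (for any large $k$ and $\alpha\in(0,1)$) follows from the implicit function theorem argument already employed in Lemma \ref{lem,smoothness}: the linearization of the right-hand side of \eqref{eq:Blaschke_family} with respect to $u$ at a fixed $t$ is precisely the operator appearing in \eqref{der_first_comp}, which was shown there to be an isomorphism $C^{k,\alpha}(S) \to C^{k-2,\alpha}(S)$. Hence differentiation in $t$ is legitimate and $v_t \in C^{k,\alpha}(S)$ is well-defined.

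Differentiating \eqref{eq:Blaschke_family} in $t$ gives
\begin{equation*}
\Delta_\sigma v_t - c(\cdot,t)\,v_t = -f(\cdot,t),
\end{equation*}
where
\begin{equation*}
c(\cdot,t) := 2e^{u_t} + 8t\,e^{-2u_t}\frac{|q|^2}{\sigma^3} > 0,
\qquad
f(\cdot,t) := 4\,e^{-2u_t}\frac{|q|^2}{\sigma^3} \geq 0.
\end{equation*}
Since $q\not\equiv 0$ is a holomorphic cubic differential on a compact Riemann surface, its zeros form a finite set, so $f(\cdot,t)\not\equiv 0$ while the source term on the right-hand side is nonpositive.

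The next step is the maximum principle. Let $x_0\in S$ be a point at which $v_t$ attains its minimum. Because $\Delta_\sigma$ is the standard Laplace--Beltrami operator, which in local coordinates has the form $\sigma^{ij}\partial_{ij}^2 + (\text{first order})$ (cf.\ \eqref{eq:laplace_in_coords}), one has $\Delta_\sigma v_t(x_0)\geq 0$ at the interior minimum. Rearranging the equation at $x_0$ gives
\begin{equation*}
c(x_0,t)\,v_t(x_0) = \Delta_\sigma v_t(x_0) + f(x_0,t) \geq 0,
\end{equation*}
and strict positivity of $c(x_0,t)$ forces $v_t(x_0)\geq 0$. Hence $v_t\geq 0$ on all of $S$ for every $t\geq 0$, which is the claimed monotonicity.

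The argument contains no essential obstacle beyond checking that the zeroth-order coefficient $c$ and the forcing $-f$ in the linearization have the correct signs, both of which are manifest from \eqref{eq:Blaschke_family}. As a mild refinement, the same reasoning combined with the observation that $f(\cdot,t)=0$ only at the (finitely many) zeros of $q$ shows that $v_t$ can vanish only at those points, and hence that $u_t$ is strictly increasing in $t$ away from the zero set of $q$.
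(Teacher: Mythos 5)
Your proof is correct and follows essentially the same strategy as the paper: differentiate Wang's equation \eqref{eq:Blaschke_family} in $t$ and apply a maximum principle to the linearized equation for $\dot u_t$. The only substantive difference is which maximum principle is invoked: you evaluate the equation at a global minimum of $\dot u_t$ and use positivity of the full zeroth-order coefficient $c=2e^{u_t}+8te^{-2u_t}|q|^2/\sigma^3$, which yields $\dot u_t\geq 0$ (and, as you note, strict positivity away from the zero set of $q$), whereas the paper rewrites the equation as $(-\Delta_\sigma+2e^{u_t})\dot u_t=4e^{-2u_t}\frac{|q|^2}{\sigma^3}(1-2t\dot u_t)$ and applies the \emph{strong} maximum principle locally near a putative nonpositive minimum, obtaining the slightly sharper conclusion $\dot u_t>0$ everywhere, including at the zeros of $q$. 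Since the only property used later (in the proof of Theorem \ref{thm,infiniteLength}) is $\dot u_t\geq 0$, your weaker conclusion suffices for all downstream applications, and your justification of differentiability in $t$ via the implicit function theorem (as in Lemma \ref{lem,smoothness}) is a legitimate, slightly more explicit substitute for the paper's appeal to elliptic regularity.
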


\begin{proof}
This is proved more generally in  \cite[Proposition 4.1]{CyclicHiggsBundle}. For completeness, we include a proof for the case we need. Taking a time derivative of \eqref{eq:Blaschke_family},
we find that
\begin{equation}\label{eq:DevBlaschke}
    \Delta_{\sigma}\dot{u}_t=2\Big(e^{u_t} +4e^{-2u_t}\frac{t|q|^2}{\sigma^3}\Big)\dot{u}_t-4e^{-2u_t}\frac{|q|^2}{\sigma^3}.
\end{equation}
By elliptic regularity, equation \eqref{eq:DevBlaschke} implies that $\dot{u}_t\in C^\infty(S)$.
We prove that $\dot{u}_t > 0$ for all $t \geq 0$. Suppose that this is not the case for some $t \geq 0$, so that for this $t$ one has $\dot{u}_t(p)\leq 0$ for a $p$ on $S$ where the minimum of $\dot{u}_t$ is attained.
 Consider the operator $L=-\Delta_\sigma +2e^{u_t}$. We rewrite equation \eqref{eq:DevBlaschke} as 
\begin{equation}\label{eq:L_operator}
    L\dot{u}_t=4e^{-2u_t}\frac{|q|^2}{\sigma^3}(1-2t\dot{u}_t).
\end{equation}
In a small neighborhood $U$ of $p$,  we have $(1-2t\dot{u}_t)\geq 0$, which implies that $ L\dot{u}_t\big|_U\geq 0$. So by the strong maximum principle \cite[Section 6.4, Theorem 4(ii)]{EvansPDE},  $\dot{u}_t $ is constant on $U$. If $\dot{u}_t\big|_U \equiv 0$, then $q=0$ on $U$, and thus everywhere, which contradicts our assumption. If on the other hand $\dot{u}_t\big|_U<0$, then the left hand side of \eqref{eq:L_operator} is negative on $U$ whereas the right hand side is non-negative, and this is  a contradiction.
%
%
%
%
%
%
\end{proof}

We will need the following result, due to Loftin:
\begin{prop}[{\cite[Proposition 4.02]{Loftin_AffineSphere}}]\label{prop:SuperSub}
If $u_t$ satisfies equation \eqref{eq:Blaschke_family} with respect to the hyperbolic metric $\sigma$ and the holomorphic cubic differential $q$ for $t\geq0$, then 
\begin{equation}\label{eq:supersub}
    0 \leq u_t \leq  \log \Big(R\Big(\max_S\Big\{\frac{t|q|^2}{\sigma^3}\Big\}\Big)\Big),
\end{equation}  
where $R(a)$ is the largest positive root of the polynomial $p_a(x)=2x^3-2x^2-4a$.
\end{prop}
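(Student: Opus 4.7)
The strategy is a standard maximum principle argument applied pointwise to equation \eqref{eq:Blaschke_family} at extrema of $u_t$. The unknown $u_t$ is smooth on the closed surface $S$, so it attains both a maximum and a minimum, which will provide one-sided pointwise inequalities from the equation.

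For the lower bound, let $p_{\min}\in S$ be a minimum of $u_t$. Then $\Delta_\sigma u_t(p_{\min})\geq 0$, so from \eqref{eq:Blaschke_family} I would get
\begin{equation*}
0\leq 2e^{u_t(p_{\min})}-4te^{-2u_t(p_{\min})}\frac{|q(p_{\min})|^2}{\sigma^3(p_{\min})}-2\leq 2e^{u_t(p_{\min})}-2,
\end{equation*}
which forces $e^{u_t(p_{\min})}\geq 1$, that is $u_t\geq u_t(p_{\min})\geq 0$ on $S$.

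For the upper bound, let $p_{\max}\in S$ be a maximum of $u_t$, write $x=e^{u_t(p_{\max})}>0$, and set $a=\max_S\{t|q|^2/\sigma^3\}$. Since $\Delta_\sigma u_t(p_{\max})\leq 0$, evaluating \eqref{eq:Blaschke_family} at $p_{\max}$ and multiplying through by $x^2$ gives
\begin{equation*}
2x^3-2x^2\leq 4t\frac{|q(p_{\max})|^2}{\sigma^3(p_{\max})}\leq 4a,
\end{equation*}
i.e.\ $p_a(x)\leq 0$. The polynomial $p_a(x)=2x^3-2x^2-4a$ has positive leading coefficient and $p_a(0)=-4a\leq 0$, so for $a>0$ it possesses a (unique) largest positive real root $R(a)$, beyond which $p_a>0$; hence $p_a(x)\leq 0$ forces $x\leq R(a)$, which yields $u_t\leq u_t(p_{\max})\leq \log R(a)$. (The boundary case $a=0$ corresponds to $q\equiv 0$ or $t=0$, where $R(0)=1$ and the two bounds combine to give $u_t\equiv 0$, consistent with Wang's equation reducing to the hyperbolic case.)

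There is no real obstacle here; the only small points to verify are the sign convention of $\Delta_\sigma$ (the paper's $\Delta_\sigma=-D_\sigma^*D_\sigma$ is the ordinary geometer's Laplace--Beltrami operator, so $\Delta_\sigma u\geq 0$ at minima and $\leq 0$ at maxima of a $C^2$ function), and that the cubic $p_a$ is eventually increasing past its largest real root so that $p_a(x)\leq 0$ really implies $x\leq R(a)$. Both are immediate.
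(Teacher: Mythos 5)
Your argument is correct: the sign conventions are handled properly (the paper's $\Delta_\sigma=-D_\sigma^*D_\sigma$ is the geometer's Laplacian, so $\Delta_\sigma u_t\geq 0$ at minima and $\leq 0$ at maxima), both pointwise inequalities follow, and the passage from $p_a(x)\leq 0$ to $x\leq R(a)$ is justified since $p_a>0$ beyond its largest real root. The paper itself gives no proof --- it cites Loftin --- and your maximum-principle argument is precisely the standard sub/supersolution reasoning (the constants $0$ and $\log R(a)$ being sub- and supersolutions) underlying the cited result, so nothing further is needed.
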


\begin{lem}\label{lm:root}
The polynomial $p_a(x)=2x^3-2x^2-4a$ has a unique positive root provided $a\geq 0$, and if we set
  $x_t$
  as the positive root of $ x^3-x^2-2\max\limits_{S}\left\{\frac{t|q|^2}{\sigma^3}\right\}$, we have
 \begin{equation}
 \lim_{t\to\infty}\frac{x_t}{t^{1/3}}= \left(2\max_S\left\{\frac{|q|^2}{\sigma^3}\right\}\right)^{1/3}.
 \end{equation}
\end{lem}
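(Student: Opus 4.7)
The plan is to handle the two assertions separately by elementary calculus. First, for uniqueness of the positive root of $p_a(x)=2x^3-2x^2-4a$ when $a\geq 0$, I would study the function $r(x):=x^3-x^2-2a$ (which has the same positive zeros as $p_a/2$). Its derivative $r'(x)=x(3x-2)$ vanishes only at $0$ and $2/3$, so $r$ is strictly increasing on $[2/3,\infty)$. Since $r(2/3)=-4/27-2a<0$ for $a\geq 0$ (with equality ruled out because $-4/27<0$) and $r(x)\to\infty$ as $x\to\infty$, there is exactly one root in $(2/3,\infty)$. On $[0,2/3]$, $r$ attains its maximum at $x=0$, where $r(0)=-2a\leq 0$; if $a>0$, $r<0$ throughout $[0,2/3]$, while if $a=0$, $r(x)=x^2(x-1)$ has no positive zero on $(0,2/3]$. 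This yields a unique positive root in all cases.

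For the asymptotic statement, write $M:=\max_S\{|q|^2/\sigma^3\}$, so that by the first part $x_t$ is the unique positive solution of
\begin{equation}\label{eq:xt_eqn}
    x_t^3-x_t^2-2tM=0.
\end{equation}
Set $y_t:=x_t/t^{1/3}$. Dividing \eqref{eq:xt_eqn} by $t$ gives
\begin{equation}\label{eq:yt_eqn}
    y_t^3 = 2M + y_t^2\, t^{-1/3}.
\end{equation}
From \eqref{eq:yt_eqn} one reads off the lower bound $y_t\geq (2M)^{1/3}$ for all $t>0$. To obtain an upper bound, note that once $t$ is large enough that $(2M)^{1/3}\geq 2t^{-1/3}$, the inequality $y_t\geq 2t^{-1/3}$ holds, whence $y_t^2\,t^{-1/3}\leq y_t^3/2$, and \eqref{eq:yt_eqn} yields $y_t^3\leq 4M$, i.e.\ $y_t\leq (4M)^{1/3}$.

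Having $y_t$ bounded on a neighborhood of $\infty$, the error term $y_t^2\,t^{-1/3}$ in \eqref{eq:yt_eqn} tends to $0$ as $t\to\infty$, and therefore $y_t^3\to 2M$. This proves
\begin{equation*}
    \lim_{t\to\infty}\frac{x_t}{t^{1/3}}=\big(2M\big)^{1/3}=\Big(2\max_S\big\{|q|^2/\sigma^3\big\}\Big)^{1/3},
\end{equation*}
as desired. There is no real obstacle here; the only subtle step is securing the uniform upper bound on $y_t$, which is what makes the remainder term negligible in the limit.
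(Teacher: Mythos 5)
Your proof is correct. For the uniqueness of the positive root you argue via the sign of $r'(x)=x(3x-2)$ on $[0,2/3]$ and $[2/3,\infty)$, whereas the paper observes that any positive root $x_a$ satisfies $x_a^2(x_a-1)=2a$, hence $x_a>1$ and $p_a'(x_a)>0$, and then counts roots of the cubic; both are elementary and equally valid. The real divergence is in the limit computation: after the same rescaling $y_t=x_t/t^{1/3}$, you extract the limit by a squeeze argument, using the equation $y_t^3=2M+y_t^2t^{-1/3}$ to get the lower bound $y_t\geq (2M)^{1/3}$ and then bootstrapping a uniform upper bound $y_t\leq (4M)^{1/3}$ for large $t$, which forces the error term to vanish. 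The paper instead sets $F(z,s)=z^3-z^2s-b$ with $s=t^{-1/3}$ and applies the implicit function theorem at $(b^{1/3},0)$. Your route is more elementary and self-contained; the paper's buys slightly more, namely that $x_t/t^{1/3}$ is a smooth function of $t^{-1/3}$ near $0$, hence admits a full asymptotic expansion (though only the limit is used). One small point worth making explicit in your write-up: the upper-bound step needs $M>0$, which holds here because $q\not\equiv 0$ in the application (the paper makes the same implicit assumption when it writes $b>0$); if $M=0$ the conclusion is immediate since then $x_t\equiv 1$.
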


\begin{proof}
The fact that the polynomial $p_a(x)$ has a unique positive root for $a\geq 0$ is clear if $a=0$. If $a>0$, we know that a real root exists and any real root $x_a$ satisfies $x_a^2(x_a-1)=2a$. Thus a real root $x_a$ satisfies $x_a>1$.
Taking the derivative, we find that  $p_a'(x_a)>0$. Therefore there cannot be more than one positive real roots, because between any two  roots for which $p_a'>0$ there has to be at least one more root and $p_a$ has at most three real roots in total.
Write $\displaystyle b=2\max_S\left\{\frac{|q|^2}{\sigma^3}\right\}>0$; we look for $x>0$ satisfying $x^3-x^2-bt=0$, for $t>0$ large.
Set $z=x/t^{1/3}$ and $s=t^{-1/3}$.
Notice that $(x,t)\mapsto (z,s)$ is a bijection on $(0,\infty)\times (0,\infty)$, so $x^3-x^2-bt=0$ with $x$, $t>0$ exactly when 
\begin{equation}
    F(z,s):=z^3-z^2s-b=0, \quad z,\; s>0.
\end{equation} 
The function $F$ satisfies $F(b^{1/3},0)=0$ and is smooth near $(z,s)=(b^{1/3},0)$. Since $\partial_z F (b^{1/3},0)=3b^{2/3}$, by the implicit function theorem we conclude that in a neighborhood of $(b^{1/3},0)$, the equation $F(z,s)=0$ holds if and only if  $z=f(s)$ for a smooth function $f$ defined in a neighborhood of $s= 0$. Denoting by $x_t$  the positive solution of $x^3-x^2-bt=0$, 
\begin{equation}
    \lim_{t\to \infty}\frac{x_t}{t^{1/3}}=\lim_{t\to \infty} f(t^{-1/3})=\lim_{s\to0} f(s)=b^{1/3}.
\end{equation}  
\end{proof}

Loftin proves the following result regarding the asymptotic behavior of the Blaschke metrics $g_t$ when $t\to \infty$ (see also  \cite[Theorem 3.8]{VolumeEntropy-Tholozan}).
\begin{thm} [{\cite[Proposition 1]{Loftin_FlatMetrics}}] \label{thm, singularFlat}
The family of Blaschke metrics $g_t$ given by equation \eqref{eq:Blaschke_family} degenerates to the singular flat metric $|q|^{\frac{2}{3}}$ associated to the cubic differential $q$ (up to some scaling) in the following sense:  when $t\to\infty$, we have
$$t^{-\frac{1}{3}} g_t \to 2^{\frac{1}{3}}|q|^{\frac{2}{3}},$$
uniformly on every compact subset of the complement of the zeros of $q$ in $S$. 
\end{thm}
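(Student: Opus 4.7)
The plan is to factor out the expected leading-order singular asymptotic from $u_t$ and reduce the convergence statement to a Liouville-type question on $\bb{C}$. On $S\setminus q^{-1}(0)$ set $\phi_t:=\tfrac{1}{3}\log(2t|q|^2/\sigma^3)$. A short computation, using that $\log|q|^2$ is locally harmonic by the holomorphicity of $q$ and that $\Delta_\sigma\log\sigma_0=2$ for the hyperbolic metric $\sigma$, gives $\Delta_\sigma\phi_t=-2$, and substituting $\phi_t$ into the right-hand side of \eqref{eq:Blaschke_family} also yields $-2$. Thus $\phi_t$ is a singular exact solution of Wang's equation that blows up to $-\infty$ at the zeros of $q$. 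Writing $u_t=\phi_t+v_t$, the correction satisfies
\begin{equation*}
\Delta_\sigma v_t = 2\beta_t\,(e^{v_t}-e^{-2v_t}), \qquad \beta_t:=(2t|q|^2/\sigma^3)^{1/3},
\end{equation*}
on $S\setminus q^{-1}(0)$. Since $t^{-1/3}g_t/(2^{1/3}|q|^{2/3})=e^{v_t}$, the theorem reduces to showing $v_t\to 0$ uniformly on compact subsets of $S\setminus q^{-1}(0)$.

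The next step is to establish uniform bounds $0\leq v_t\leq C_K$ on any compact $K\subset S\setminus q^{-1}(0)$. For the lower bound, $\phi_t\to -\infty$ at zeros of $q$ while $u_t$ is smooth on $S$, so $v_t\to+\infty$ near the zeros; its infimum on $S\setminus q^{-1}(0)$ is therefore attained at an interior point $p_0$. If this infimum were negative, then $\Delta_\sigma v_t(p_0)\geq 0$ would contradict the right-hand side being strictly negative there (since $\beta_t(p_0)>0$ and $e^{v_t(p_0)}<e^{-2v_t(p_0)}$); hence $v_t\geq 0$ everywhere. The upper bound follows from Proposition \ref{prop:SuperSub} and Lemma \ref{lm:root}: $u_t\leq \log R(\max_S t|q|^2/\sigma^3)=\tfrac{1}{3}\log t+C+o(1)$ as $t\to\infty$, while on $K$ one has $\phi_t\geq\tfrac{1}{3}\log t+c_K$, so $v_t\leq C_K$ uniformly in $t$.

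Finally, I would obtain uniform convergence $v_t\to 0$ on $K$ by contradiction and a blow-up argument. Supposing $v_{t_n}(p_n)\geq\delta>0$ along a sequence $t_n\to\infty$ with $p_n\to p_\ast\in K$, I would take conformal coordinates centered at $p_n$ where $\sigma=\sigma_0|dz|^2$, and rescale by the natural length scale $s_n:=\beta_{t_n}(p_n)^{-1/2}\to 0$, setting $V_n(w):=v_{t_n}(p_n+s_n w)$. A direct change of variables transforms the $v_{t_n}$-equation into
\begin{equation*}
\Delta_w V_n = 2\,\sigma_0(p_n+s_n w)\,\frac{\beta_{t_n}(p_n+s_n w)}{\beta_{t_n}(p_n)}\,(e^{V_n}-e^{-2V_n}),
\end{equation*}
where $\Delta_w$ is the flat Laplacian; the coefficient on the right converges locally uniformly to $2\sigma_0(p_\ast)>0$. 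Combined with the uniform bounds $0\leq V_n\leq C_K$, interior elliptic estimates yield $C^2_{\mathrm{loc}}$ subsequential convergence on $\bb{C}$ to a bounded $V_\infty\geq 0$ solving the limit equation, with $V_\infty(0)\geq\delta$. The proof then concludes by a Liouville-type argument: if the positive supremum of $V_\infty$ is attained, the strong maximum principle at the maximum contradicts strict positivity of the right-hand side; if not, one translates along a maximizing sequence and extracts a further subsequential limit where it is attained, reducing to the previous case. The main technical obstacle will be cleanly closing this Liouville dichotomy and verifying the elliptic compactness for the rescaled family, for which the uniform $C^0$ bounds from the previous step are essential.
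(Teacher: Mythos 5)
The paper does not actually prove this statement: Theorem \ref{thm, singularFlat} is imported verbatim from Loftin (\cite[Proposition 1]{Loftin_FlatMetrics}, see also Tholozan), so there is no in-paper argument to compare against. Your proof is, as far as I can check, a correct self-contained argument, and its ingredients are consistent with what the paper does supply: the identity $\Delta_\sigma\phi_t=-2$ checks out ($\log|q|^2$ is locally harmonic away from $q^{-1}(0)$ and $\Delta_\sigma\log\sigma_0=2$ by $K_\sigma=-1$), the equation $\Delta_\sigma v_t=2\beta_t(e^{v_t}-e^{-2v_t})$ is the correct transform of \eqref{eq:Blaschke_family}, the sign analysis at the interior infimum gives $v_t\geq 0$ (the infimum is attained because $v_t\to+\infty$ at the zeros of $q$), and the upper bound correctly combines Proposition \ref{prop:SuperSub} with Lemma \ref{lm:root}. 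The blow-up step is also sound: $s_n^2=\beta_{t_n}(p_n)^{-1}$ does produce the rescaled equation you display, and the coefficient converges locally uniformly since $\beta_{t_n}(x)/\beta_{t_n}(y)$ is $t$-independent. One simplification you may want: the final Liouville dichotomy is unnecessary. Since $V_\infty\geq 0$ and the limit equation gives $\Delta V_\infty=c\,(e^{V_\infty}-e^{-2V_\infty})\geq 0$ with $c>0$, the function $V_\infty$ is a bounded subharmonic function on $\bb{R}^2$, hence constant by the classical Liouville theorem, and the equation then forces that constant to be $0$, contradicting $V_\infty(0)\geq\delta$. This removes the translation-along-a-maximizing-sequence case entirely. (For the record, Loftin's own proof proceeds by local barrier/comparison arguments rather than a blow-up, so your route is genuinely different from the cited source, but both rest on the same exact singular solution $\phi_t$ and the sub/supersolution bounds.)
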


\subsubsection{Length estimates}

The following proposition shows that any curve in the space $\cal M^B/\cal D_0$ corresponding to a ray in a fixed fiber of the bundle $ Q_3(S)$
has infinite length with respect to the covariance metric.

\begin{thm}\label{thm,infiniteLength}
Let $\sigma $ be a hyperbolic metric on $S$ and $q$ be a nonzero cubic differential which is holomorphic with respect to the complex structure determined by $\sigma$ and an orientation.
Then the projection $\{[g_t]\}_{t\geq0} \subset \cal M^B/\cal D_0$  of the curve
 $g_t=\{e^{u_t}\sigma\}_{t\geq 0}\subset \cal M^B$, where $u_t$ satisfies equation \eqref{eq:Blaschke_family},
has infinite length with respect to the  covariance metric.

\end{thm}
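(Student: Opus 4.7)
The plan is to bound the length of $\{[g_t]\}_{t\geq 0}$ from below using only the ``mean'' term in the covariance metric formula of Corollary \ref{cor:corr}, reducing the question to the divergence of $\log A_t$, where $A_t := \mathrm{Area}(S,g_t)$.

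First, I compute the tangent direction: writing $g_t = e^{u_t}\sigma$ with $u_t$ satisfying \eqref{eq:Blaschke_family}, one has the purely conformal variation $\dot g_t = \dot u_t\, g_t$, so $\pi_2^*\dot g_t = \pi_0^*\dot u_t$ on $T^1 S_{g_t}$ (because $g_t(v,v) = 1$ for $v$ in the unit tangent bundle). By Lemma \ref{lem, KillPotential} the bilinear form $G_{g_t}$ descends to the quotient, so using $\dot g_t$ as a lift of $\frac{d}{dt}[g_t]$ and dropping the nonnegative variance term in Corollary \ref{cor:corr},
\begin{equation*}
G_{[g_t]}\bigl(\tfrac{d}{dt}[g_t],\tfrac{d}{dt}[g_t]\bigr) = G_{g_t}(\dot g_t,\dot g_t) \;\geq\; \bigl\langle \pi_0^*\dot u_t,1\bigr\rangle_{L^2(T^1 S_{g_t})}^{2} = \Bigl(\tfrac{1}{A_t}\int_S \dot u_t\, dv_{g_t}\Bigr)^{\!2} = \Bigl(\tfrac{\dot A_t}{A_t}\Bigr)^{\!2},
\end{equation*}
where I used the formula for integrals of $\pi_0^*$ functions recalled just before the Hölder space introduction, together with the identity $\dot A_t = \frac{d}{dt}\int_S e^{u_t}\,dv_\sigma = \int_S \dot u_t\, dv_{g_t}$. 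By Lemma \ref{lem:MonotoneDensity}, $\dot u_t > 0$, hence $\dot A_t > 0$, and the length satisfies
\begin{equation*}
L \;\geq\; \int_0^\infty \frac{\dot A_t}{A_t}\,dt \;=\; \lim_{T\to\infty}\log\frac{A_T}{A_0}.
\end{equation*}

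It remains to show that $A_t\to \infty$. Gauss--Bonnet combined with Wang's equation $K_{g_t} = -1 + 2t|q|^2_{g_t}$ yields $A_t = -2\pi\chi(S) + 2t\int_S e^{-2u_t}|q|^2/\sigma^3\,dv_\sigma$. The a priori upper bound $u_t \leq \log R(t\max_S|q|^2/\sigma^3)$ of Proposition \ref{prop:SuperSub}, combined with the asymptotic $R(M)\sim (2M)^{1/3}$ as $M\to\infty$ from Lemma \ref{lm:root}, gives $e^{-2u_t}\gtrsim t^{-2/3}$ uniformly on $S$, so $2t\int_S e^{-2u_t}|q|^2/\sigma^3\,dv_\sigma\gtrsim t^{1/3}$ for large $t$; alternatively, Theorem \ref{thm, singularFlat} directly shows $t^{-1/3}A_t$ is bounded below by a positive constant for large $t$ (the dominating bound from Proposition \ref{prop:SuperSub} justifies the passage to the limit under the integral away from the finite zero set of $q$). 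Either route forces $A_t\to\infty$, so $L = \infty$.

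The computation is quite short because the conformal nature of $\dot g_t$ reduces $\pi_2^*\dot g_t$ to a function pulled back from $S$, so no direct handling of the variance or of the X-ray transform on symmetric 2-tensors is required. The only moderately delicate step is the last one---confirming $A_t\to\infty$---but this follows from tools already developed earlier in the paper (Gauss--Bonnet together with Proposition \ref{prop:SuperSub} and Lemma \ref{lm:root}, or Loftin's Theorem \ref{thm, singularFlat}).
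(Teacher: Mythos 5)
Your proof is correct, and while it starts from the same reduction as the paper (drop the nonnegative variance term in Corollary \ref{cor:corr} and keep only the mean term $\langle \pi_2^*\dot g_t,1\rangle_{L^2(T^1S_{g_t})}^2$, using Lemma \ref{lem, KillPotential} to work with the conformal lift), the way you finish is genuinely different and cleaner. The paper integrates the linearized Wang equation \eqref{eq:DevBlaschke} over $S$, uses $\dot u_t\geq 0$ and $K_{g_t}<0$ to get $\int_S \dot u_t\,dv_{g_t}\geq \tfrac23\int_S |q|^2_{g_t}\,dv_{g_t}$, and then matches a lower bound $\gtrsim t^{-2/3}$ for this quantity against the upper bound $\mathrm{Area}(S,g_t)\lesssim t^{1/3}$ to obtain the pointwise estimate $\sqrt{G_{g_t}(h_t,h_t)}\gtrsim t^{-1}$. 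You instead observe that the mean term is \emph{exactly} $\dot A_t/A_t=\frac{d}{dt}\log A_t$, so the length integral telescopes and the whole problem reduces to showing $A_t\to\infty$; your derivation of that via Gauss--Bonnet applied to Wang's equation \eqref{WangEquation} together with the supersolution bound of Proposition \ref{prop:SuperSub} and the root asymptotics of Lemma \ref{lm:root} (giving $A_t\geq 2\pi|\chi(S)|+c\,t^{1/3}$) is correct, and Lemma \ref{lem:MonotoneDensity} gives the sign you need to drop the absolute value. What your route buys is that you avoid the maximum-principle-flavored manipulation with $6+4K_{g_t}$ entirely and you never need the precise decay rate of the integrand or the cited area upper bound --- any divergence of $A_t$ suffices; what you lose is the explicit pointwise rate $\sqrt{G}\gtrsim t^{-1}$, which the paper's argument produces as a byproduct (and which your identity recovers only in an averaged sense, since $A_t\asymp t^{1/3}$). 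The only loose phrase is the parenthetical about passing to the limit in Theorem \ref{thm, singularFlat}, but that is an explicitly optional alternative to your fully rigorous primary argument, so nothing is missing.
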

\begin{proof}
By Lemma \ref{lem, KillPotential}, for given class $[g]\in \mathcal{M}_-/\cal D_0$, the norm of an element $\hat{h}\in T_{[g]}(\mathcal{M}_-/\cal D_0)$ with respect to the covariance metric is given by $G_{g}(h,h)^{1/2}$, 
where $g$ is a representative in $[g]$, 
$h\in T_g\mathcal{M}_-$ is a  lift of $\hat{h}$,
and $G_{g}(\cdot,\cdot)$ is the bilinear form defined in Definition \ref{def:bilinear}.
Thus the length of the segment  $\gamma_T:=\{[g_t]:t\in [0,T]\}\subset \cal M^B/\cal D_0$ is given by 
\begin{equation}                
    L(\gamma_T)=\int_0^T \sqrt{G_{g_t}\big(\partial_t(g_t),\partial_t(g_t)\big)}dt.
\end{equation}
We will show that 
\begin{equation*}\label{eq:limit}
    \lim_{T\to \infty}L(\gamma_T)=\infty.
\end{equation*}

To simplify notation, we denote $h_t:=\partial_t(g_t)$. We have, according to Definition \ref{def:bilinear},
\begin{align*}
    G_{g_t}(h_t,h_t)=\langle \Pi^{g_t}_2 h_t,h_t \rangle_{L^2(T^1S_{g_t})}
    =\langle \Pi^{g_t}\pi_2^*h_t,\pi_2^*h_t\rangle+|\langle 1,\pi_2^*h_t \rangle_{L^2(T^1S_{g_t})}|^2
\end{align*}
Note that $h_t=\dot{u}_t g_t$. Since $\pi_2^*h_t(v)=\pi_2^*(\dot{u}_tg_t)(v)=\pi_0^*( \dot{u}_t)(v)$ for $v\in T^1S_{g_t}$,  we have\footnote{Recall that the Liouville measure is of total mass $1$, so locally  $d\mu^L_g= \frac{d\theta dv_g }{2\pi \text{Area}(S, g) }$.}
\begin{equation}
\begin{aligned}
    G_{g_t}(h_t,h_t)
    =&\langle \Pi^{g_t}\pi_0^*\dot{u}_t,\pi_0^*\dot{u}_t\rangle +|\langle 1,\pi_0^*\dot{u}_t \rangle_{L^2(T^1S_{g_t})}|^2\nonumber\\
    &\geq |\langle 1,\pi_0^*\dot{u}_t\rangle_{L^2(T^1S_{g_t})}|^2
   =\frac{|\langle 1,\dot{u}_t\rangle_{L^2(S,dv_{g_t})}|^2}{\text{Area}(S,g_t)^2 } ,
\end{aligned} 
\label{eq:lower_bound}
\end{equation}
 where the inequality follows by Theorem \ref{thm,PiProperty}.
Note that above we wrote $\langle\cdot,\cdot\rangle_{L^2(S,dv_{g_t})}=\mathrm{Area}(S,g_t) \langle\cdot,\cdot\rangle_{L^2_{g_t}(S)}$.

We now examine $\dot{u}_t$ in more detail.
Since our manifold is two dimensional, we have $\Delta_{\sigma}=e^{u_t}\Delta_{g_t}$, thus from  \eqref{eq:DevBlaschke} it follows that
\begin{equation}
    \Delta_{g_t}\dot{u}_t=\left(2 +8\frac{t|q|^2}{g_t^3}\right)\dot{u}_t-4\frac{|q|^2}{g_t^3}.\label{eq:derivative}
\end{equation}

Now integrate \eqref{eq:derivative} over $S$ with respect to the $g_t$ area form: by the divergence theorem, $\int_S \Delta_{g_t}\dot{u}_t dv_{g_t}=0$ and therefore 
\begin{equation}
   \left \langle \left(2 +8\frac{t|q|^2}{g_t^3}\right)\dot{u}_t,1\right\rangle_{L^2(S,dv_{g_t})}=4\left\langle\frac{|q|^2}{g_t^3},1\right\rangle_{L^2(S,dv_{g_t})}.
\end{equation}
The curvature of $g_t$ is given by $K_{g_t}= 2t\frac{|q|^2}{g_t^3}-1$ (recall equation \eqref{WangEquation}), therefore
\begin{equation}
    \langle (6+4K_{g_t})\dot{u}_t,1\rangle_{L^2(S,dv_{g_t})}=4\left\langle\frac{|q|^2}{g_t^3},1\right\rangle_{L^2(S,dv_{g_t})}.
\end{equation}
Using the fact that $\dot{u}_t\geq 0$ (by Lemma \ref{lem:MonotoneDensity}) and that $K_{g_t}<0$ (by Proposition \ref{prop:neg_curv}), we find 
\begin{equation}
    \langle \dot{u}_t,1\rangle_{L^2(S,dv_{g_t})}\geq \langle (1+\frac{2}{3}K_{g_t})\dot{u}_t,1\rangle_{L^2(S,dv_{g_t})}=\frac{2}{3}\left\langle\frac{|q|^2}{g_t^3},1\right\rangle_{L^2(S,dv_{g_t})}.\label{eq:average}
\end{equation}
We now seek for a lower bound for the right hand side.
Since $dv_{g_t}=e^{u_t}dv_{\sigma}$, we have 
\begin{equation*}
    \left\langle\frac{|q|^2}{g_t^3},1\right\rangle_{L^2(S,dv_{g_t})}=\int_{S} e^{-2u_t}\frac{|q|^2}{\sigma^3}dv_{\sigma}\geq \frac{1}{x_t^2} \int_{S} \frac{|q|^2}{\sigma^3}dv_{\sigma};
\end{equation*}
the inequality is by Proposition \ref{prop:SuperSub}, where  $x_t$ is the positive root of the  polynomial 
$ x^3-x^2-2\max\limits_{S}\left\{\frac{t|q|^2}{\sigma^3}\right\}$.
Thus by Lemma \ref{lm:root}, there exists a positive constant $C_{q,\sigma}$ and $T_0>0$ depending on $C_{q,\sigma}$ such that for  $t\geq T_0$ one has 
\begin{equation*}
     \left\langle\frac{|q|^2}{g_t^3},1\right\rangle_{L^2(S,dv_{g_t})}\geq C_{q,\sigma}t^{-2/3}.
\end{equation*}

On the other hand, we have $\text{Area}(S, g_t) \leq D_q t^{\frac{1}{3}} + 2\pi|\chi(S)| $ (see  \cite[Lemma 3.4]{OuyangTamburelli}) where $D_q$ is a positive constant depending only on $q$ and $\chi(S)$ is the Euler characteristic of $S$.
 Combining this with equation \eqref{eq:lower_bound} and equation \eqref{eq:average}, and adjusting $T_0$ if necessary, we can find a positive constant $C'_{q,\sigma}$ so that for $t\gg T_0$,
$$\sqrt{G_{g_t}(h_t,h_t)}\geq C'_{q,\sigma}t^{-1}.$$
Hence we obtain, for $T\geq T_0$, 
\begin{equation*}
    L(\gamma_T)\geq L(\gamma_{T_0})+C'_{q,\sigma}\int^T_{T_0}t^{-1}dt\overset{T\to\infty}{\to}\infty.
\end{equation*}
\end{proof}

\begin{cor}
The geodesics $\mathcal{H}_3(Y)$ given in Theorem \ref{thm,geodesics} have infinite length with respect to the covariance metric.
 \end{cor}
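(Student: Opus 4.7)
The plan is to reduce the claim directly to Theorem \ref{thm,infiniteLength}, by identifying the geodesic arising from $\mathcal{H}_3(Y)/\mathbb{Z}_2$ with a curve of the type considered there.

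First, I would unpack the identification. By Proposition \ref{CyclicOneDimHitchin} and the remark following it, $Y^+$ has a unique complex structure, so we may fix a presentation $Y \simeq [X_J/\Sigma]$ with $J$ determined up to the $\cal D_0$-action. By Proposition \ref{SigmaEquiHitchinSection}, the Hitchin parametrization yields a homeomorphism $H_J : \textnormal{Fix}_\Sigma H^0(X_J,K_J^3) \xrightarrow{\sim} \cal H_3(Y)$, and the hypothesis that $\cal H_3(Y)$ is one-dimensional means the vector space on the left is the real line $\mathbb{R} q$ spanned by some nonzero $q$. The $\mathbb{Z}_2$ action inherited from $S^1$ acts as $q \mapsto -q$ (Lemma \ref{CircleActionY}), so $\cal H_3(Y)/\mathbb{Z}_2$ corresponds to the ray $\{tq : t \geq 0\} / \mathbb{Z}_2$. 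Combining with Lemma \ref{lem,identification}, which identifies $\mathbf{H}^{-1}\circ H_J$ with the inclusion on the cubic-differential component, the map $\Phi = \mathbf{g}_0 \circ \mathbf{H}_0^{-1}$ of Corollary \ref{cor, mappingClassEquiv} sends this half-line to
\[
\{\Phi([\rho_t])\}_{t\geq 0} = \{\mathbf{g}_0([(\sigma,\langle tq\rangle)])\}_{t\geq 0} \subset \cal M^B/\cal D_0,
\]
where $\sigma$ is the hyperbolic metric determined by $J$.

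Second, I would observe that this is precisely, up to reparametrization, the curve considered in Theorem \ref{thm,infiniteLength}. Indeed, with the substitution $s = t^2$, the curve $\{[g_{t}]\}_{t\geq 0}$ built from $(\sigma,tq)$ via Wang's equation $K(g_t) = -1 + 2|tq|^2_{g_t}$ coincides, as an unparametrized curve in $\cal M^B/\cal D_0$, with the family $\{[g_s]\}_{s\geq 0}$ satisfying $K(g_s) = -1 + 2|\sqrt{s}\,q|^2_{g_s}$ treated in Theorem \ref{thm,infiniteLength}. Since the length of a curve with respect to any Riemannian metric is invariant under smooth monotone reparametrization, the two lengths agree.

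Finally, applying Theorem \ref{thm,infiniteLength} directly yields the conclusion. No essential obstacle arises: once the dictionary between $\cal H_3(Y)/\mathbb{Z}_2$ and the ray in a fixed fiber of $Q_3(S)/S^1$ over the (unique) point $[\sigma]\in \cal T(S)$ determined by $Y^+$ is in place, the length estimate is already done. The only minor technical point to verify carefully is that the $\mathbb{Z}_2$ quotient in the domain does not affect the length, which is immediate since $\Phi$ is a bijection from $\cal H_3(Y)/\mathbb{Z}_2$ onto its image.
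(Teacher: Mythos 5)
Your proposal is correct and follows exactly the route the paper intends: the paper states this corollary without proof as an immediate consequence of Theorem \ref{thm,infiniteLength}, relying on precisely the identification of $\Phi(\mathcal{H}_3(Y)/\mathbb{Z}_2)$ with a ray $\{tq\}_{t\geq 0}$ in a fixed fiber of $Q_3(S)/S^1$ that you spell out. Your explicit check that the reparametrization $s=t^2$ matches the $\sqrt{s}\,q$ normalization of Theorem \ref{thm,infiniteLength}, and that length is reparametrization-invariant, is a welcome detail the paper leaves implicit.
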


 \appendix 
\section{Some further estimates for the covariance metric in \texorpdfstring{$\cal M^B/\cal D_0$}{MB/D0}} \label{furtherEstimates}

We collect in this appendix some estimates for the covariance metric in the Blaschke locus $\cal M^B/\cal D_0$. One first interesting question is to understand the behavior of the covariance metric $G(\cdot, \cdot)$ at the Fuchsian locus $\cal{T}(S) \subset \cal M^B/\cal D_0$. Given $[\sigma] \in \cal{T}(S)$, we know that this metric restricts to a scalar of the Weil-Petersson metric on $T_{[\sigma]}\cal{T}(S)$; However, the  behavior of the covariance metric $G(\cdot, \cdot)$ at $[\sigma]$ on directions that are transverse to Fuchsian locus $\cal{T}(S)$ in $\cal M^B/\cal D_0$ remains unclear. 

Our first estimate is along directions tangential to the family of equivalence classes of Blaschke metrics $\{[g_t] \}_{t\geq 0}\subset \mathcal{M}^B /\mathcal{D}_0$ that lifts to the curve given by $\{g_t=e^{u_t}\sigma:t\geq 0\} \subset\mathcal{M}^B $ described by equation \eqref{eq:Blaschke_family}, so that $\sigma$ is a representative in the class $[\sigma]$. These represent vectors tangential to fiber directions of the bundle $ Q_3(S)/S^1$. 

\begin{prop}\label{FuchsianEstimate}
Let $\sigma $ be a hyperbolic metric on $S$. Fix a holomorphic cubic differential $q$ with respect to the complex structure corresponding to $\sigma$ and an orientation. Suppose $\widehat{h}_0\in T_{[g_0]}\cal M_{-}/\cal D_0 $ is a vector that lifts to  $h_0:=\partial_t(g_t)|_{t=0}\in T_{\sigma}\cal M_{-}$, where $\{g_t\}_{t\geq 0}$ are solutions of equation \eqref{eq:Blaschke_family} with $g_0=\sigma$. 
Then  
$$G_{[\sigma]}\big(\widehat{h}_0,\widehat{h}_0\big)\geq \frac{1}{\pi^2|\chi(S)|^2} \|q\|_{\sigma}^4.$$
where $\|q\|_{\sigma}$ is the $L^2$ norm of $q$ with respect to inner product \eqref{hermitian} and $\chi(S)$ is the Euler characteristic.
\end{prop}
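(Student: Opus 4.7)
The plan is to specialize the argument from the proof of Theorem \ref{thm,infiniteLength} to the initial time $t=0$, at which the Blaschke metric coincides with the hyperbolic metric $\sigma$. Because $\sigma$ has constant curvature $-1$, the nonlinear term in Wang's equation and its linearization simplify dramatically, and the Gauss--Bonnet identity $\mathrm{Area}(S,\sigma)=2\pi|\chi(S)|$ supplies the explicit constant appearing in the proposition.

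First I would use Lemma \ref{lem, KillPotential} to write $G_{[\sigma]}(\widehat{h}_0,\widehat{h}_0) = G_\sigma(h_0,h_0)$, where $h_0=\dot u_0\,\sigma$, since $g_t = e^{u_t}\sigma$ with $u_0\equiv 0$. In particular $\pi_2^* h_0 = \pi_0^* \dot u_0$ on $T^1S_\sigma$. Following the derivation culminating in \eqref{eq:lower_bound}, the positivity of $\Pi^\sigma$ from Theorem \ref{thm,PiProperty} yields
\begin{equation*}
G_\sigma(h_0,h_0) \;\geq\; \left|\langle 1,\pi_0^*\dot u_0\rangle_{L^2(T^1S_\sigma)}\right|^2 \;=\; \frac{|\langle 1,\dot u_0\rangle_{L^2(S,dv_\sigma)}|^2}{\mathrm{Area}(S,\sigma)^2}.
\end{equation*}

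Next I would specialize equation \eqref{eq:DevBlaschke} to $t=0$, $u_0\equiv 0$, which gives
\begin{equation*}
\Delta_\sigma \dot u_0 \;=\; 2\dot u_0 - 4\,\frac{|q|^2}{\sigma^3}.
\end{equation*}
Integrating over $S$ against $dv_\sigma$ and using the divergence theorem to kill the Laplacian term yields $\langle \dot u_0,1\rangle_{L^2(S,dv_\sigma)} = 2\|q\|_\sigma^2$.  Substituting this and the Gauss--Bonnet identity $\mathrm{Area}(S,\sigma)=2\pi|\chi(S)|$ into the previous bound produces
\begin{equation*}
G_\sigma(h_0,h_0) \;\geq\; \frac{(2\|q\|_\sigma^2)^2}{(2\pi|\chi(S)|)^2} \;=\; \frac{\|q\|_\sigma^4}{\pi^2|\chi(S)|^2},
\end{equation*}
which is the claimed inequality.

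There is no substantive obstacle to this argument: it is a direct specialization of the proof of Theorem \ref{thm,infiniteLength}, with all the asymptotic estimates (Proposition \ref{prop:SuperSub}, Lemma \ref{lm:root}, area growth) replaced by the exact Gauss--Bonnet value at $t=0$. The only point worth remarking on is that we discard the nonnegative variance term $\langle \Pi^\sigma \pi_0^*\dot u_0,\pi_0^*\dot u_0\rangle$, so the resulting bound is almost certainly not sharp; obtaining a refinement would require spectral information about the linearized operator $\Delta_\sigma - 2$, and I would expect the true leading order to involve the full $L^2_\sigma$-norm of $\dot u_0$, not merely its average.
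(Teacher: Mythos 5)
Your proposal is correct and follows essentially the same route as the paper: drop the nonnegative variance term, compute $\langle \dot u_0,1\rangle_{L^2(S,dv_\sigma)}=2\|q\|_\sigma^2$ from the linearized Wang equation at $t=0$ (your direct integration by the divergence theorem is the same computation as the paper's pairing of $-4(\Delta_\sigma-2)^{-1}|q|^2_\sigma$ with the constant $1$), and finish with Gauss--Bonnet. Your closing remark about non-sharpness is also borne out: the paper upgrades the inequality to a strict one in the subsequent corollary via Lemmas \ref{PiConstant} and \ref{DensityNotConst}.
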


\begin{proof}
By Lemma \ref{lem, KillPotential}, the computation can be lifted to $\cal M^B$, and we just need to estimate $G_{\sigma}(h_0,h_0)$.
Recall that 
\begin{align*}
   G_{\sigma}(h_0,h_0)=\mathrm{Var}(P_{\sigma}(\pi_2^*h_0), \mu^L_{\sigma})+\langle\pi_2^*h_0,1\rangle_{L^2(T^1S_\sigma)}^2.
\end{align*}
The first part, which equals $\langle \Pi^{\sigma}  \pi_2^*h_0, \pi_2^*h_0\rangle$, is nonnegative (Theorem \ref{thm,PiProperty}). We estimate the second part. Note that $g_t=e^{u_t}\sigma$ and $h_0=\dot{u}_0 \sigma$. From equation \ref{eq:derivative}, we have
\begin{equation}
    (\Delta_{\sigma}-2)\dot{u}_0=-4|q|^2_\sigma,
\end{equation}
so
\begin{equation}
    \langle\pi_2^*h_0,1\rangle_{L^2(T^1S_\sigma)}
    =\int_{T^1S_{\sigma}}\pi_0^*\big( -4(\Delta_{\sigma}-2)^{-1}\big(|q|^2_\sigma\big)\big) \pi_2^*\sigma d\mu_{\sigma}^L
     =\int_{T^1S_{\sigma}}\pi_0^*\big( -4(\Delta_{\sigma}-2)^{-1}\big(|q|^2_\sigma\big)\big) d\mu_{\sigma}^L.
\end{equation}
Since $-(\Delta_{\sigma}-2)^{-1}$ is a self-adjoint positive operator and $-(\Delta_{\sigma}-2)^{-1}(1)=\frac{1}{2}$, we obtain
\begin{align*}
    \langle\pi_2^*h_0,1\rangle _{L^2(T^1S_\sigma)}
     =& \frac{1}{\text{Area}(S, \sigma)} \int_{S} 4|q|^2_\sigma \big(-(\Delta_{\sigma}-2)^{-1}(1)\big) dv_{\sigma}
     =\frac{1}{\pi|\chi(S)|} \int_{S}|q|^2_\sigma dv_{\sigma}
     =\frac{1}{\pi|\chi(S)|} \|q\|^2_{\sigma},
     \end{align*}
using the Gauß-Bonnet theorem. This gives the conclusion.
\end{proof}

The inequality can be strengthened to strict inequality because of the following two lemmas.

\begin{lem}\label{PiConstant}
Suppose $g\in \mathcal{M}_-$ and $h\in \sf{S}_{2}(S)$ is conformal to $g$, then 
$$\langle \Pi^{g} \pi_2^* h,\pi_2^* h\rangle=0$$
if and only if $h=C g$, where $C$ is a constant.
\end{lem}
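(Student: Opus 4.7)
\textbf{Proof proposal for Lemma \ref{PiConstant}.} Since $h$ is conformal to $g$, write $h = fg$ for some $f\in C^\infty(S)$. For any $v\in T^1S_g$ one has $g_x(v,v)=1$, so $\pi_2^*h(x,v) = f(x)g_x(v,v) = f(x) = \pi_0^*f(x,v)$. This is the key simplification, reducing the problem from a symmetric 2-tensor to a function on $S$. The ``if'' direction is immediate: when $f\equiv C$, we have $\pi_0^*f = C$, and $\Pi^g 1 = 0$ by Theorem \ref{thm,PiProperty}(4), whence $\langle \Pi^g \pi_2^*h,\pi_2^*h\rangle = 0$.

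For the ``only if'' direction, the plan is to use the characterization of the kernel of $\Pi^g$ on mean-zero functions from Theorem \ref{thm,PiProperty}(3). First, replacing $\pi_0^*f$ with its mean-zero part $P_g(\pi_0^*f) = \pi_0^*f - c$, where $c := \langle \pi_0^*f,1\rangle_{L^2(T^1S_g)} = \frac{1}{\mathrm{Area}(S,g)}\int_S f\, dv_g$, one checks using the definition of $\Pi^g$ (Lemma \ref{defn,PiDefn}) and Theorem \ref{thm,PiProperty}(5) that
\[
\langle\Pi^g \pi_0^*f,\pi_0^*f\rangle = \langle\Pi^g P_g(\pi_0^*f),P_g(\pi_0^*f)\rangle = \mathrm{Var}(P_g(\pi_0^*f),\mu_g^L).
\]
Positivity of $\Pi^g$ (Theorem \ref{thm,PiProperty}(1)) implies $\langle\Pi^g\cdot,\cdot\rangle$ is a symmetric positive semi-definite bilinear form on the mean-zero subspace, so the Cauchy--Schwarz inequality forces $\Pi^g P_g(\pi_0^*f) = 0$ in $\mathcal{D}'(T^1S_g)$ as soon as $\langle\Pi^g \pi_0^*f,\pi_0^*f\rangle = 0$. (One also needs to observe that this distribution annihilates constants by Theorem \ref{thm,PiProperty}(5).)

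By Theorem \ref{thm,PiProperty}(3) there exists $w\in\mathcal{H}^s(T^1S_g)$ satisfying $X^g w = P_g(\pi_0^*f) = \pi_0^*f - c$. Integrating along any closed geodesic orbit $\gamma$ of $\phi_t$ (of primitive period $L(\gamma)$) annihilates the left-hand side, so $I_0^g f(\gamma) = c$ for every closed orbit $\gamma$, where $I_0^g$ is the X-ray transform on functions defined in \eqref{xray}. Equivalently, $I_0^g(f - c) \equiv 0$ on $\cal{C}$. The step I expect to be most immediate now --- and where the proof uses the hypothesis $g\in\mathcal{M}_-$ essentially --- is to invoke the injectivity of $I_0^g$ on functions for a negatively curved closed surface (\cite[Theorem 3.6]{Guillemin1980}, already cited in the proof of Lemma \ref{lem,injectiveDiff}), which yields $f \equiv c$ and hence $h = c\,g$. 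The main conceptual point in the whole argument is the passage from vanishing of the covariance form to the coboundary equation $X^g w = \pi_0^*f - c$; after that, standard Livsic-type reasoning combined with X-ray injectivity closes the proof.
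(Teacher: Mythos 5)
Your proof is correct and follows essentially the same route as the paper's: reduce $\pi_2^*h$ to the function $\pi_0^*f$, upgrade the vanishing of the quadratic form to $\Pi^g\big(P_g(\pi_0^*f)\big)=0$, invoke Theorem \ref{thm,PiProperty}(3) to solve the cohomological equation $X^gw=P_g(\pi_0^*f)$, and conclude by integrating over closed orbits and using the injectivity of $I_0^g$ on negatively curved surfaces. The only divergence is in the upgrade step: you apply the Cauchy--Schwarz inequality to the symmetric positive semi-definite form $\langle\Pi^g\cdot,\cdot\rangle$ (correctly noting that Theorem \ref{thm,PiProperty}(5) is needed to extend the vanishing from mean-zero test functions to all test functions), whereas the paper conjugates $\Pi^g$ by $\Lambda^{-2s}=(1-\Delta_{G})^{-s}$ to obtain a bounded positive self-adjoint operator on $\cal{H}^s(T^1S_g)$ and extracts a self-adjoint square root $R$, concluding from $\|R\,\pi_0^*f\|^2_{\cal{H}^s(T^1S_g)}=0$. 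Your variant is slightly more elementary and equally rigorous; both arguments prove the same pointwise fact, namely that a positive semi-definite self-adjoint operator annihilates any vector on which its quadratic form vanishes.
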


\begin{proof}
Let's assume $h=f g$, with $f\in C^\infty (S)$. If suffices to show that $$\langle \Pi^{g} \pi_0^* f,\pi_0^* f\rangle = 0$$ is equivalent to  $f$ being a constant function.
The proof for this is similar to the proof for \cite[Lemma 4.6]{Thibault_MicrolocalAnalysis}. Let $\Lambda=(1-\Delta_{G})^{1/2} $, where $ \Delta_{G}$ is the (negative) Laplace-Beltrami operator with respect to the Sasaki metric on $T^1S_g$; then for any $m,s\in \bb{R}$, $\Lambda^s:\cal{H}^m(T^1S_g)\to \cal{H}^{m-s}(T^1S_g)$ is an invertible bounded self-adjoint operator.
So by Theorem \ref{thm,PiProperty}, the composition $\Lambda^{-2s}\Pi^{g} :\cal{H}^s(T^1S_g)\to \cal{H}^{s}(T^1S_g)$ is bounded for $s>0$.
Moreover, we know from equation \eqref{SobolevInner_product} and Theorem \ref{thm,PiProperty},
\begin{equation}\label{eq:computation} 
    \langle \Lambda^{-2s}\Pi^{g} \pi_0^* f',\pi_0^*f'\rangle_{\cal{H}^s(T^1S_g)}=\langle \Pi^{g} \pi_0^* f',\pi_0^*f'\rangle_{L^2(T^1S_g)}\geq0  \qquad   \text{ for all } f'\in \cal{H}^s(S),
\end{equation} 
so
 $\Lambda^{-2s}\Pi^{g}$ is also positive on $ \cal{H}^s(T^1S_g)$.
It follows that $\Lambda^{-2s}\Pi^{g}$ is self-adjoint on $ \cal{H}^s(T^1S_g)$, and as such it has a self-adjoint square root $R$ which is bounded on $\cal{H}^s(T^1S_g)$, with the property $\Lambda^{2s} \Pi^{g}=R^2=R^*R $. Thus,
\begin{equation*}
    0=\langle \Pi^{g} \pi_0^* f,\pi_0^*f\rangle_{L^2(T^1S_g)}=\langle \Lambda^{-2s}\Pi^{g} \pi_0^* f,\pi_0^*f\rangle_{\cal{H}^s(T^1S_g)}=\| R \pi_0^* f\|_{\cal{H}^s(T^1S_g)}^2
\end{equation*}
Thus our hypothesis implies that $ \Pi^{g}\pi_0^*f=\Pi^{g}(P_g(\pi_0^*f))=0$ (recall that $\Pi^{g}1=0$ and  $P_g(\pi_0^*f) = \pi_0^*f-\langle \pi_0^*f,1\rangle_{L^2(T^1S_g)}$).
Then by Theorem \ref{thm,PiProperty},  there exists $w\in \cal{H}^{s}(T^1 S_g)$ such that $X^g w=P_g(\pi_0^*f)$, where $X^g$ is the geodesic vector field on $T^1S_g$.
Therefore, for every closed orbit $c$ of the flow of $X^g$, parametrized as $\phi_\cdot (z):[0,L(c)]\to T^1S_g$, 
\begin{equation}
    0=\frac{1}{L(c)}\int_0^{L(c)} (P_g(\pi_0^*f))(\phi_t(z))\mathrm{d}t=
    \frac{1}{L(c)}\int_0^{L(c)} \pi_0^*\big(f-\langle f,1\rangle_{L^2_g(S)}\big)(\phi_t(z)\big)\mathrm{d}t=I_0^g\big(f-\langle f,1\rangle_{L^2_g(S)}\big)(c).
\end{equation} 
By the injectivity of the X-ray transform for negatively curved metrics (\cite{Guillemin1980}), $f=\langle f,1\rangle_{L^2_g(S)}$, i.e., a constant. 
\end{proof}

\begin{lem}\label{DensityNotConst}
The solution $\dot{u}_t$ of  equation \eqref{eq:DevBlaschke}
\begin{equation*}
    \Delta_{\sigma}\dot{u}_t=\left(2e^{u_t} +8e^{-2u_t}\frac{t|q|^2}{\sigma^3}\right)\dot{u}_t-4e^{-2u_t}\frac{|q|^2}{\sigma^3}
\end{equation*} 
cannot be constant for any $t\geq 0$ unless $q \equiv 0$.
\end{lem}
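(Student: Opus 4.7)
My plan is to argue by contradiction: assume $\dot{u}_t \equiv C$ is a constant function on $S$ with $q \not\equiv 0$, and derive $q \equiv 0$ from a pointwise argument at a zero of $q$.

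First I would substitute $\dot{u}_t = C$ into equation \eqref{eq:DevBlaschke}. Since $\Delta_\sigma C = 0$, this gives the pointwise identity
\begin{equation}\label{eq:constantcase}
\Bigl(2e^{u_t} + 8t\,e^{-2u_t}\tfrac{|q|^2}{\sigma^3}\Bigr)C \;=\; 4\,e^{-2u_t}\tfrac{|q|^2}{\sigma^3}
\end{equation}
holding everywhere on $S$.

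The key observation is that on the closed surface $S$ of genus $\mathscr{G} \geq 2$, any nonzero holomorphic cubic differential must vanish somewhere: by Riemann--Roch, its divisor has degree $6\mathscr{G}-6 \geq 6 > 0$. So under the assumption $q \not\equiv 0$, fix a point $p_0 \in S$ with $q(p_0)=0$. Evaluating \eqref{eq:constantcase} at $p_0$, the right-hand side vanishes while the left-hand side reduces to $2\,e^{u_t(p_0)}\,C$. Since $e^{u_t(p_0)}>0$, this forces $C = 0$.

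Feeding $C=0$ back into \eqref{eq:constantcase} then yields $4\,e^{-2u_t}\tfrac{|q|^2}{\sigma^3} \equiv 0$ on all of $S$, hence $q \equiv 0$, contradicting our assumption. Hence if $q \not\equiv 0$, the function $\dot{u}_t$ cannot be constant. There is no real obstacle here---the only slightly delicate ingredient is the existence of a zero of $q$, which follows at once from Riemann--Roch on a surface of genus at least two.
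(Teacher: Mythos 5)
Your proposal is correct and takes essentially the same approach as the paper: both substitute the constant into the ODE-turned-algebraic identity and derive a contradiction from the fact that a nonzero holomorphic cubic differential has isolated zeros yet must vanish somewhere. The paper rearranges to conclude that $|q|^2/g_t^3$ would have to be a constant (impossible since it vanishes exactly at the finitely many zeros of $q$), while you evaluate at a zero of $q$ to force $C=0$ first; these are minor variants of the same argument.
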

\begin{proof}
We prove the claim by contradiction. Suppose $\dot{u}_t=c_t$ and $q\not\equiv 0$. Recall that $g_t=e^{u_t}\sigma $ satisfies 
\begin{equation*}
    \Delta_{g_t}\dot{u}_t=\left(2 +8t\frac{|q|^2}{g_t^3}\right)\dot{u}_t-4\frac{|q|^2}{g_t^3}.
\end{equation*}
This yields:
\begin{equation}
(4-8tc_t)\frac{|q|^2}{g_t^3}=2c_t\implies \frac{|q|^2}{g_t^3}=\frac{2c_t}{(4-8tc_t)}.
\end{equation}
This is impossible for any $t\geq 0$, because $q$ is nonzero and  the left hand side only vanishes at  the finite zeros of $q$, while the right hand side is constant. This yields the claim.
\end{proof}

\begin{cor}
Fix a holomorphic cubic differential $q$ with respect to the complex structure corresponding to the hyperbolic metric $\sigma$ and an orientation. Suppose $[g_t]\in \cal M^B/\cal D_0$ lifts to the solutions $\{g_t\}$ of equation \eqref{eq:Blaschke_family} and $\widehat{h}_t\in T_{[g_t]}\cal M^B/\cal D_0 $  lifts to $h_t= \partial_t(g_t)$. Then 
$$\langle \Pi^{g_t} \pi_2^*h_t,\pi_2^*h_t \rangle > 0.$$
In particular, this implies
$$G_{[\sigma]}(\widehat{h}_0,\widehat{h}_0) > \frac{1}{\pi^2|\chi(S)|^2} \|q\|_{\sigma}^4.$$
\end{cor}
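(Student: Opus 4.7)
The plan is to combine Lemmas \ref{PiConstant} and \ref{DensityNotConst} directly with the computation already carried out in the proof of Proposition \ref{FuchsianEstimate}. The statement should be read under the tacit assumption $q\not\equiv 0$, since otherwise $u_t\equiv 0$ and $h_t\equiv 0$, in which case both sides of the claimed strict inequality vanish.

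First I would observe that, because $g_t=e^{u_t}\sigma$, one has $h_t=\partial_t g_t=\dot u_t\, g_t$, so $h_t$ is pointwise conformal to $g_t$. Lemma \ref{PiConstant} then applies directly: $\langle \Pi^{g_t}\pi_2^*h_t,\pi_2^*h_t\rangle=0$ would force $h_t=Cg_t$ for some constant $C\in\mathbb{R}$, i.e.\ $\dot u_t\equiv C$. But Lemma \ref{DensityNotConst} rules this out whenever $q\not\equiv 0$, since $\dot u_t$ satisfies equation \eqref{eq:DevBlaschke} and cannot be constant. This contradiction proves the first claim $\langle \Pi^{g_t}\pi_2^*h_t,\pi_2^*h_t\rangle>0$ for every $t\ge 0$.

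For the second statement, I would specialize to $t=0$, where $g_0=\sigma$, and recall from the proof of Proposition \ref{FuchsianEstimate} that
\begin{equation*}
G_{[\sigma]}(\widehat{h}_0,\widehat{h}_0)=\langle \Pi^{\sigma}\pi_2^*h_0,\pi_2^*h_0\rangle+\bigl|\langle \pi_2^*h_0,1\rangle_{L^2(T^1S_\sigma)}\bigr|^{2},
\end{equation*}
valid because vertical (potential) contributions drop out by Lemma \ref{lem, KillPotential}, so the formula descends from $\mathcal{M}_-$ to $\mathcal{M}_-/\mathcal{D}_0$. The second term on the right was computed exactly in Proposition \ref{FuchsianEstimate} to equal $\frac{1}{\pi^2|\chi(S)|^2}\|q\|_\sigma^4$, using $(\Delta_\sigma-2)\dot u_0=-4|q|^2_\sigma$, self-adjointness of $(\Delta_\sigma-2)^{-1}$, and the Gauß–Bonnet theorem. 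The first term is strictly positive by the claim established in the previous paragraph. Adding these two facts yields the strict inequality.

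There is no serious obstacle: the whole content is packaging the two lemmas together with the already-proved computation in Proposition \ref{FuchsianEstimate}. The only mild subtlety is verifying that Lemma \ref{PiConstant} applies with $h=h_t$ and $g=g_t$ (it does, since $h_t$ is smooth and conformal to $g_t$), and noting that the case $q\equiv 0$ must be excluded from the hypotheses for the strict inequality to make sense.
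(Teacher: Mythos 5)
Your argument is correct and is essentially identical to the paper's proof: the first claim follows from Lemma \ref{PiConstant} together with Lemma \ref{DensityNotConst} applied to $h_t=\dot u_t g_t$, and the strict inequality is then the $t=0$ case combined with the computation in Proposition \ref{FuchsianEstimate}. Your added remark that $q\not\equiv 0$ must be tacitly assumed is a reasonable clarification but does not change the substance.
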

\begin{proof}
Since $g_t=e^{u_t}\sigma $ and  $h_t=\dot{u}_t g_t$, the first statement follows from Lemma \ref{PiConstant} and Lemma \ref{DensityNotConst}.  The second statement is a special case at $t=0$ combined with Proposition \ref{FuchsianEstimate}. 
\end{proof}

It would be desirable to obtain a concise explicit formula for $G_{[\sigma]}\big(\widehat{h}_0,\widehat{h}_0\big) $ mentioned above. We conclude with a partial answer to this question following from \cite{GuilMon}. 
It would be of interest to know whether the formula below can be further simplified.

\begin{prop} \label{prop: FiberDirection}
Under the same assumptions of Proposition \ref{FuchsianEstimate}, we have 
\begin{align*} 
    &G_{[\sigma]}(\widehat{h}_0,\widehat{h}_0)=\left\langle  4\dfrac{\Gamma(1/4-\mathcal{L})\Gamma(1/4+\mathcal{L}) }{\Gamma(3/4-\mathcal{L})\Gamma(3/4+\mathcal{L})}(-\Delta_{\sigma}+2)^{-1}4\frac{|q|^2}{\sigma^3},(-\Delta_{\sigma}+2)^{-1}4\frac{|q|^2}{\sigma^3}\right\rangle_{L^2_{\sigma}(S)} 
    +\frac{1}{\pi^2|\chi(S)|^2} \|q\|_{\sigma}^4,\label{eq:gamma}
\end{align*}
where $\Gamma(\cdot, \cdot)$ is the Euler Gamma function and $\mathcal{L}=\frac{i}{2}\sqrt{-\Delta_{\sigma}(1-P_0)-1/4}$, where $P_0$ is the orthogonal projector onto $\ker \Delta_{\sigma}$.
\end{prop}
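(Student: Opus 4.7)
The plan is to reduce the computation to the explicit formula from \cite[Lemma A.1, Remark A.2]{GuilMon} that describes the restriction of the operator $\Pi^{\sigma}$ to pullbacks of functions on a closed hyperbolic surface. First I would unpack the left hand side: because $h_0=\dot{u}_0\sigma$ for $\dot{u}_0\in C^\infty(S)$, and because $\sigma(v,v)=1$ on $T^1S_\sigma$, one has $\pi_2^*h_0=\pi_0^*\dot{u}_0$, so
\begin{equation*}
G_{[\sigma]}(\widehat{h}_0,\widehat{h}_0)=G_\sigma(h_0,h_0)=\langle \Pi^\sigma \pi_0^*\dot{u}_0,\pi_0^*\dot{u}_0\rangle+\langle \pi_0^*\dot{u}_0,1\rangle^2_{L^2(T^1S_\sigma)}.
\end{equation*}
The linearization of Wang's equation at $t=0$ (equation \eqref{eq:DevBlaschke} with $u_0\equiv 0$) reads $(\Delta_\sigma-2)\dot{u}_0=-4|q|^2/\sigma^3$, so $\dot{u}_0=(-\Delta_\sigma+2)^{-1}\bigl(4|q|^2/\sigma^3\bigr)$. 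The second term on the right was already computed in the proof of Proposition \ref{FuchsianEstimate} using self-adjointness of $(-\Delta_\sigma+2)^{-1}$, the identity $(-\Delta_\sigma+2)^{-1}(1)=\tfrac{1}{2}$, and Gauß--Bonnet, giving exactly the additive contribution $\frac{1}{\pi^2|\chi(S)|^2}\|q\|_\sigma^4$.

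For the first term, I would invoke \cite[Lemma A.1, Remark A.2]{GuilMon}, which exploits the explicit Plancherel/representation-theoretic decomposition available for the hyperbolic geodesic flow (writing $T^1S_\sigma$ as a quotient of $\mathrm{PSL}(2,\bb R)$) to compute the pushforward $\pi_{0*}\Pi^\sigma\pi_0^*$ as a function of $-\Delta_\sigma$. The upshot is that on the orthogonal complement of the constants, this operator is a scalar function of the spectral parameter $\mathcal{L}=\tfrac{i}{2}\sqrt{-\Delta_\sigma(1-P_0)-1/4}$ and equals
\begin{equation*}
\pi_{0*}\,\Pi^\sigma\,\pi_0^*=4\,\dfrac{\Gamma(1/4-\mathcal L)\Gamma(1/4+\mathcal L)}{\Gamma(3/4-\mathcal L)\Gamma(3/4+\mathcal L)}.
\end{equation*}
Applying this identity to the mean-zero part of $\dot u_0$ and using $\Pi^\sigma 1=0$ together with property (\ref{thm4.2it5}) of Theorem \ref{thm,PiProperty}, one obtains
\begin{equation*}
\langle \Pi^\sigma\pi_0^*\dot u_0,\pi_0^*\dot u_0\rangle=\Big\langle 4\tfrac{\Gamma(1/4-\mathcal L)\Gamma(1/4+\mathcal L)}{\Gamma(3/4-\mathcal L)\Gamma(3/4+\mathcal L)}\dot u_0,\dot u_0\Big\rangle_{L^2_\sigma(S)},
\end{equation*}
and substituting $\dot u_0=(-\Delta_\sigma+2)^{-1}(4|q|^2/\sigma^3)$ yields the stated formula.

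The main obstacle is the careful matching of conventions between our setup and \cite{GuilMon}: the normalization of the Liouville measure (total mass one versus $2\pi\,\mathrm{Area}$), the interpretation of $\pi_0^*$ and $\pi_{0*}$ as formal adjoints with respect to these specific measures, and the treatment of the constant mode (which is why the projector $1-P_0$ appears inside $\mathcal{L}$). One has to check that the numerical factor $4$ in front of the Gamma quotient is consistent with our conventions and that the spectral functional calculus is well defined -- the latter is automatic on $(\ker\Delta_\sigma)^\perp$ since $-\Delta_\sigma\geq \lambda_1>0$ there, so $-\Delta_\sigma(1-P_0)-1/4$ has purely real spectrum bounded below and the Gamma ratio is a bounded, smooth function of it (the poles of $\Gamma$ at nonpositive integers are avoided). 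Once these normalizations are settled, the identity follows by inserting the explicit formula for $\dot u_0$ and the computed scalar term.
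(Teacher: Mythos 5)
Your proposal is correct and follows essentially the same route as the paper: the paper's proof is literally "a direct application of \cite[Lemma A.1, Remark A.2]{GuilMon} combined with Proposition \ref{FuchsianEstimate}," and you have simply unpacked that, correctly identifying $\pi_2^*h_0=\pi_0^*\dot u_0$ with $\dot u_0=(-\Delta_\sigma+2)^{-1}\bigl(4|q|^2/\sigma^3\bigr)$, reusing the mean-term computation from Proposition \ref{FuchsianEstimate}, and inserting the Gamma-quotient spectral formula for $\pi_{0*}\Pi^\sigma\pi_0^*$. Your attention to the measure normalizations and the constant mode is exactly the bookkeeping the paper leaves implicit.
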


\begin{proof}
The proof is a direct application of \cite[Lemma A.1, Remark A.2]{GuilMon} combined with Proposition \ref{FuchsianEstimate}. 
\end{proof}

\printbibliography

@Book{paternain2023,
 Author = {Paternain, Gabriel P. and Salo, Mikko and Uhlmann, Gunther},
 Title = {Geometric inverse problems. {With} emphasis on two dimensions},
 FSeries = {Cambridge Studies in Advanced Mathematics},
 Series = {Camb. Stud. Adv. Math.},
 Volume = {204},
 ISBN = {978-1-316-51087-2; 978-1-00-903990-1},
 Year = {2023},
 Publisher = {Cambridge: Cambridge University Press},
 Language = {English},
 DOI = {10.1017/9781009039901},
 Keywords = {35-02,35R30,53C65,58J32},
 zbMATH = {7625517},
 Zbl = {1519.35005}
}

@article {Guillemin1980,
    AUTHOR = {Guillemin, V. and Kazhdan, D.},
     TITLE = {Some inverse spectral results for negatively curved
              {$2$}-manifolds},
   JOURNAL = {Topology},
  FJOURNAL = {Topology. An International Journal of Mathematics},
    VOLUME = {19},
      YEAR = {1980},
    NUMBER = {3},
     PAGES = {301--312},}

@book{LangSerge2012FoDG,
abstract = {The present book aims to give a fairly comprehensive account of the fundamentals of differential manifolds and differential geometry. The size of the book influenced where to stop, and there would be enough material for a second volume (this is not a threat). At the most basic level, the book gives an introduction to the basic concepts which are used in differential topology, differential geometry, and differential equations. In differential topology, one studies for instance homotopy classes of maps and the possibility of finding suitable differen- tiable maps in them (immersions, embeddings, isomorphisms, etc. ). One may also use differentiable structures on topological manifolds to deter- mine the topological structure of the manifold (for example, it la Smale [Sm 67]). In differential geometry, one puts an additional structure on the differentiable manifold (a vector field, a spray, a 2-form, a Riemannian metric, ad lib. ) and studies properties connected especially with these objects. Formally, one may say that one studies properties invariant under the group of differentiable automorphisms which preserve the additional structure. In differential equations, one studies vector fields and their integral curves, singular points, stable and unstable manifolds, etc. A certain number of concepts are essential for all three, and are so basic and elementary that it is worthwhile to collect them together so that more advanced expositions can be given without having to start from the very beginnings.},
author = {Lang, Serge},
isbn = {038798593X},
keywords = {Geometry, Differential},
language = {eng},
publisher = {Springer},
series = {Graduate Texts in Mathematics},
title = {Fundamentals of Differential Geometry},
volume = {191},
year = {2012},
}

@incollection {slice_survey,
    AUTHOR = {Corro, Diego and Korda\ss , Jan-Bernhard},
     TITLE = {Short survey on the existence of slices for the space of
              {R}iemannian metrics},
 BOOKTITLE = {Mexican mathematicians in the world---trends and recent
              contributions},
    SERIES = {Contemp. Math.},
    VOLUME = {775},
     PAGES = {65--84},
 PUBLISHER = {Amer. Math. Soc., Providence, RI},
      YEAR = {2021},
   MRCLASS = {53C10 (58D17)},
  MRNUMBER = {4344307},
       DOI = {10.1090/conm/775/15589},
       URL = {https://doi.org/10.1090/conm/775/15589},
}

@article {Ahlfors_Kahler,
    AUTHOR = {Ahlfors, Lars V.},
     TITLE = {Some remarks on {T}eichm\"{u}ller's space of {R}iemann surfaces},
   JOURNAL = {Ann. of Math. (2)},
  FJOURNAL = {Annals of Mathematics. Second Series},
    VOLUME = {74},
      YEAR = {1961},
     PAGES = {171--191},
      ISSN = {0003-486X},
   MRCLASS = {30.45 (32.40)},
  MRNUMBER = {204641},
MRREVIEWER = {R. Osserman},
       DOI = {10.2307/1970309},
       URL = {https://doi.org/10.2307/1970309},
}

@incollection {AhlforsTeich,
    AUTHOR = {Ahlfors, Lars V.},
     TITLE = {The complex analytic structure of the space of closed
              {R}iemann surfaces },
 BOOKTITLE = {Analytic functions},
     PAGES = {45--66},
 PUBLISHER = {Princeton Univ. Press, Princeton, N.J.},
      YEAR = {1960},
   MRCLASS = {30.45 (14.20)},
  MRNUMBER = {0124486},
MRREVIEWER = {H. L. Royden},
}

@article {Ahlfors_Curvatures,
    AUTHOR = {Ahlfors, Lars V.},
     TITLE = {Curvature properties of {T}eichm\"{u}ller's space},
   JOURNAL = {J. Analyse Math.},
  FJOURNAL = {Journal d'Analyse Math\'{e}matique},
    VOLUME = {9},
      YEAR = {1961/62},
     PAGES = {161--176},
      ISSN = {0021-7670},
   MRCLASS = {30.45 (32.44)},
  MRNUMBER = {136730},
MRREVIEWER = {R. Osserman},
       DOI = {10.1007/BF02795342},
       URL = {https://doi.org/10.1007/BF02795342},
}

@book {Tromba_Book,
    AUTHOR = {Tromba, Anthony J.},
     TITLE = {Teichm\"{u}ller theory in {R}iemannian geometry},
    SERIES = {Lectures in Mathematics ETH Z\"{u}rich},
      NOTE = {Lecture notes prepared by Jochen Denzler},
 PUBLISHER = {Birkh\"{a}user Verlag, Basel},
      YEAR = {1992},
     PAGES = {220},
      ISBN = {3-7643-2735-9},
   MRCLASS = {32G15 (53C21 58D27 58E20)},
  MRNUMBER = {1164870},
MRREVIEWER = {Colette Ann\'{e}},
       DOI = {10.1007/978-3-0348-8613-0},
       URL = {https://doi.org/10.1007/978-3-0348-8613-0},
}

@article {Wolpert_Curvatures,
    AUTHOR = {Wolpert, Scott A.},
     TITLE = {Chern forms and the {R}iemann tensor for the moduli space of
              curves},
   JOURNAL = {Invent. Math.},
  FJOURNAL = {Inventiones Mathematicae},
    VOLUME = {85},
      YEAR = {1986},
    NUMBER = {1},
     PAGES = {119--145},
      ISSN = {0020-9910},
   MRCLASS = {32G15 (14H15 53C20)},
  MRNUMBER = {842050},
MRREVIEWER = {C. Earle},
       DOI = {10.1007/BF01388794},
       URL = {https://doi.org/10.1007/BF01388794},
}

@article {WP_isometry,
    AUTHOR = {Masur, Howard and Wolf, Michael},
     TITLE = {The {W}eil-{P}etersson isometry group},
   JOURNAL = {Geom. Dedicata},
  FJOURNAL = {Geometriae Dedicata},
    VOLUME = {93},
      YEAR = {2002},
     PAGES = {177--190},
      ISSN = {0046-5755},
   MRCLASS = {32G15 (30F60)},
  MRNUMBER = {1934697},
MRREVIEWER = {Samuel Grushevsky},
       DOI = {10.1023/A:1020300413472},
       URL = {https://doi.org/10.1023/A:1020300413472},
}

@article {Chu_incomplete,
    AUTHOR = {Chu, Tienchen},
     TITLE = {The {W}eil-{P}etersson metric in the moduli space},
   JOURNAL = {Chinese J. Math.},
  FJOURNAL = {Chinese Journal of Mathematics},
    VOLUME = {4},
      YEAR = {1976},
    NUMBER = {2},
     PAGES = {29--51},
      ISSN = {0379-7570},
   MRCLASS = {32G15 (30A46)},
  MRNUMBER = {590105},
MRREVIEWER = {Howard Masur},
}

@article {Wolpert_Noncomplete,
    AUTHOR = {Wolpert, Scott},
     TITLE = {Noncompleteness of the {W}eil-{P}etersson metric for
              {T}eichm\"{u}ller space},
   JOURNAL = {Pacific J. Math.},
  FJOURNAL = {Pacific Journal of Mathematics},
    VOLUME = {61},
      YEAR = {1975},
    NUMBER = {2},
     PAGES = {573--577},
      ISSN = {0030-8730},
   MRCLASS = {32G15 (30A58)},
  MRNUMBER = {422692},
MRREVIEWER = {K. Strebel},
       URL = {http://projecteuclid.org/euclid.pjm/1102868050},
}

@article {BowenRuelle,
    AUTHOR = {Bowen, Rufus and Ruelle, David},
     TITLE = {The ergodic theory of {A}xiom {A} flows},
   JOURNAL = {Invent. Math.},
  FJOURNAL = {Inventiones Mathematicae},
    VOLUME = {29},
      YEAR = {1975},
    NUMBER = {3},
     PAGES = {181--202},
      ISSN = {0020-9910,1432-1297},
   MRCLASS = {58F15 (28A65)},
  MRNUMBER = {380889},
MRREVIEWER = {L.\ A.\ Bunimovich},
       DOI = {10.1007/BF01389848},
       URL = {https://doi.org/10.1007/BF01389848},
}

@article {VariationAlongFuchsian,
    AUTHOR = {Labourie, Fran\c{c}ois and Wentworth, Richard},
     TITLE = {Variations along the {F}uchsian locus},
   JOURNAL = {Ann. Sci. \'{E}c. Norm. Sup\'{e}r. (4)},
  FJOURNAL = {Annales Scientifiques de l'\'{E}cole Normale Sup\'{e}rieure. Quatri\`eme
              S\'{e}rie},
    VOLUME = {51},
      YEAR = {2018},
    NUMBER = {2},
     PAGES = {487--547},
      ISSN = {0012-9593},
   MRCLASS = {32G15 (30F60 58D27)},
  MRNUMBER = {3798306},
MRREVIEWER = {Hideki Miyachi},
       DOI = {10.24033/asens.2359},
       URL = {https://doi-org.ubproxy.ub.uni-heidelberg.de/10.24033/asens.2359},
}

@article {McMullen_Pressure,
    AUTHOR = {McMullen, Curtis T.},
     TITLE = {Thermodynamics, dimension and the {W}eil-{P}etersson metric},
   JOURNAL = {Invent. Math.},
  FJOURNAL = {Inventiones Mathematicae},
    VOLUME = {173},
      YEAR = {2008},
    NUMBER = {2},
     PAGES = {365--425},
      ISSN = {0020-9910},
   MRCLASS = {37F30 (30F35 30F60 32G15 37D35)},
  MRNUMBER = {2415311},
MRREVIEWER = {Zheng Huang},
       DOI = {10.1007/s00222-008-0121-2},
       URL = {https://doi-org.ubproxy.ub.uni-heidelberg.de/10.1007/s00222-008-0121-2},
}

@article {Bridgeman_Pressure,
    AUTHOR = {Bridgeman, Martin J. and Taylor, Edward C.},
     TITLE = {An extension of the {W}eil-{P}etersson metric to
              quasi-{F}uchsian space},
   JOURNAL = {Math. Ann.},
  FJOURNAL = {Mathematische Annalen},
    VOLUME = {341},
      YEAR = {2008},
    NUMBER = {4},
     PAGES = {927--943},
      ISSN = {0025-5831},
   MRCLASS = {32G15 (30F60 57M50)},
  MRNUMBER = {2407332},
MRREVIEWER = {Zheng Huang},
       DOI = {10.1007/s00208-008-0218-3},
       URL = {https://doi-org.ubproxy.ub.uni-heidelberg.de/10.1007/s00208-008-0218-3},
}

@article{OuyangTamburelli,
    AUTHOR = {Ouyang, Charles and Tamburelli, Andrea},
     TITLE = {Limits of {B}laschke metrics},
   JOURNAL = {Duke Math. J.},
  FJOURNAL = {Duke Mathematical Journal},
    VOLUME = {170},
      YEAR = {2021},
    NUMBER = {8},
     PAGES = {1683--1722},
      ISSN = {0012-7094},
   MRCLASS = {30F30 (53A15 57M50)},
  MRNUMBER = {4278667},
       DOI = {10.1215/00127094-2021-0027},
       URL = {https://doi-org.ezproxy.lib.purdue.edu/10.1215/00127094-2021-0027},
}

@book{shubin2001pseudodifferential,
  title={Pseudodifferential Operators and Spectral Theory},
  author={Shubin, M.A. and Andersson, S.I.},
  isbn={9783540411956},
  lccn={01020695},
  series={Pseudodifferential Operators and Spectral Theory},
  url={https://books.google.com/books?id=p4xENSPhhw0C},
  year={2001},
  publisher={Springer Berlin Heidelberg}
}

@article {guillarmou_anosov,
    AUTHOR = {Guillarmou, Colin},
     TITLE = {Invariant distributions and {X}-ray transform for {A}nosov
              flows},
   JOURNAL = {J. Differential Geom.},
  FJOURNAL = {Journal of Differential Geometry},
    VOLUME = {105},
      YEAR = {2017},
    NUMBER = {2},
     PAGES = {177--208},
      ISSN = {0022-040X},
   MRCLASS = {53D25 (37D20 53C65)},
  MRNUMBER = {3606728},
MRREVIEWER = {Thomas Barthelm\'{e}},
       URL = {http://projecteuclid.org.ezproxy.lib.purdue.edu/euclid.jdg/1486522813},
}

@Article{Mazzeo1991,
 Author = {Mazzeo, Rafe},
 Title = {Elliptic theory of differential edge operators. {I}},
 FJournal = {Communications in Partial Differential Equations},
 Journal = {Commun. Partial Differ. Equations},
 ISSN = {0360-5302},
 Volume = {16},
 Number = {10},
 Pages = {1615--1664},
 Year = {1991},
 Language = {English},
 DOI = {10.1080/03605309108820815},
 Keywords = {58J05,58J32,58J40},
 zbMATH = {28820},
 Zbl = {0745.58045}
}

@Book{Schulze1991,
 Author = {Schulze, Bert-Wolfgang},
 Title = {Pseudo-differential operators on manifolds with singularities},
 FSeries = {Studies in Mathematics and its Applications},
 Series = {Stud. Math. Appl.},
 ISSN = {0168-2024},
 Volume = {24},
 ISBN = {0-444-88137-9},
 Year = {1991},
 Publisher = {Amsterdam etc.: North-Holland},
 Language = {English},
 Keywords = {58-02,58-01,58J40,35J70},
 zbMATH = {51573},
 Zbl = {0747.58003}
}

@phdthesis{Baraglia_thesis,
  edition = {},
  number = {},
  journal = {},
  pages = {},
  publisher = {Oxford University, UK},
  school = {Oxford University, UK},
  title = {G2 geometry and integrable systems},
  volume = {},
  author = {Baraglia, D},
  editor = {},
  year = {2009},
  series = {}
}

@article {Guill-lef,
    AUTHOR = {Guillarmou, Colin and Lefeuvre, Thibault},
     TITLE = {The marked length spectrum of {A}nosov manifolds},
   JOURNAL = {Ann. of Math. (2)},
  FJOURNAL = {Annals of Mathematics. Second Series},
    VOLUME = {190},
      YEAR = {2019},
    NUMBER = {1},
     PAGES = {321--344},
      ISSN = {0003-486X},
   MRCLASS = {53C24 (37C27 37D40 53C22)},
  MRNUMBER = {3990606},
MRREVIEWER = {Rafael Oswaldo Ruggiero},
       DOI = {10.4007/annals.2019.190.1.6},
       URL = {https://doi-org.ezproxy.lib.purdue.edu/10.4007/annals.2019.190.1.6},
}

@article {RegularityEntropyContreras,
    AUTHOR = {Contreras, Gonzalo},
     TITLE = {Regularity of topological and metric entropy of hyperbolic
              flows},
   JOURNAL = {Math. Z.},
  FJOURNAL = {Mathematische Zeitschrift},
    VOLUME = {210},
      YEAR = {1992},
    NUMBER = {1},
     PAGES = {97--111},
      ISSN = {0025-5874},
   MRCLASS = {58F15 (58F03 58F25)},
  MRNUMBER = {1161172},
MRREVIEWER = {Nicola\u{\i} T. A. Haydn},
       DOI = {10.1007/BF02571785},
       URL = {https://doi.org/10.1007/BF02571785},
}

@article{GeodesicStretch,
title={Geodesic stretch, pressure metric and marked length spectrum rigidity}, DOI={10.1017/etds.2021.75}, journal={Ergodic Theory and Dynamical Systems}, publisher={Cambridge University Press}, author={Guillarmou, Colin and Knieper, Gerhard and Lefeuvre, Thibault}, year={2021}, pages={1–49}}

@article{Orbifolds,
      title={Hitchin components for orbifolds}, 
      author={Daniele Alessandrini and Gye-Seon Lee and Florent Schaffhauser},
      Journal = {J. Eur. Math. Soc.},
      year={2022},
      DOI = {10.4171/JEMS/1210},
      URL = {https://ems.press/content/serial-article-files/17867},
}

@article {BookZetaFunctionPollicott,
    AUTHOR = {Parry, William and Pollicott, Mark},
     TITLE = {Zeta functions and the periodic orbit structure of hyperbolic
              dynamics},
   JOURNAL = {Ast\'{e}risque},
  FJOURNAL = {Ast\'{e}risque},
    NUMBER = {187-188},
      YEAR = {1990},
     PAGES = {268},
      ISSN = {0303-1179},
   MRCLASS = {58F20 (58F11 58F15)},
  MRNUMBER = {1085356},
MRREVIEWER = {Nicola\u{\i} T. A. Haydn},
}

@article {Loftin_AffineSphere,
    AUTHOR = {Loftin, John C.},
     TITLE = {Affine spheres and convex {$\mathbb{RP}^n$}-manifolds},
   JOURNAL = {Amer. J. Math.},
  FJOURNAL = {American Journal of Mathematics},
    VOLUME = {123},
      YEAR = {2001},
    NUMBER = {2},
     PAGES = {255--274},
      ISSN = {0002-9327},
   MRCLASS = {53A15 (53C05 57N16)},
  MRNUMBER = {1828223},
MRREVIEWER = {William Goldman},
       URL ={http://muse.jhu.edu/journals/american_journal_of_mathematics/v123/123.2loftin.pdf},
}

@article {FlatProjectiveCubicDifferentials,
    AUTHOR = {Labourie, Fran\c{c}ois},
     TITLE = {Flat projective structures on surfaces and cubic holomorphic
              differentials},
   JOURNAL = {Pure Appl. Math. Q.},
  FJOURNAL = {Pure and Applied Mathematics Quarterly},
    VOLUME = {3},
      YEAR = {2007},
    NUMBER = {4, Special Issue: In honor of Grigory Margulis. Part 1},
     PAGES = {1057--1099},
      ISSN = {1558-8599},
   MRCLASS = {53C20 (53A15 53C56 57M50)},
  MRNUMBER = {2402597},
MRREVIEWER = {John C. Loftin},
       DOI = {10.4310/PAMQ.2007.v3.n4.a10},
       URL = {https://doi.org/10.4310/PAMQ.2007.v3.n4.a10},
}

@article {PolonomialCubics,
    AUTHOR = {Dumas, David and Wolf, Michael},
     TITLE = {Polynomial cubic differentials and convex polygons in the
              projective plane},
   JOURNAL = {Geom. Funct. Anal.},
  FJOURNAL = {Geometric and Functional Analysis},
    VOLUME = {25},
      YEAR = {2015},
    NUMBER = {6},
     PAGES = {1734--1798},
      ISSN = {1016-443X},
   MRCLASS = {53A15 (30F45 30F60)},
  MRNUMBER = {3432157},
MRREVIEWER = {Huiping Zhang},
       DOI = {10.1007/s00039-015-0344-5},
       URL = {https://doi.org/10.1007/s00039-015-0344-5},
}

@book {ThurstonBooks,
    AUTHOR = {Thurston, William P.},
     TITLE = {Geometry and topology of three-manifolds},
    JOURNAL = {Lecture Notes, available at \url{http://library.msri.org/books/gt3m/}},
     SERIES = {},
     VOLUME = {},
  EDITION = {},
       NOTE = {},
  PUBLISHER = {},
       YEAR = {},
     PAGES = {},
      ISBN = {},
  MRCLASS = {},
   MRNUMBER = {},
 URL = {http://library.msri.org/books/gt3m/},
}

@article {LieGrpandTeichmullerSpace,
    AUTHOR = {Hitchin, Nigel J.},
     TITLE = {Lie groups and {T}eichm\"{u}ller space},
   JOURNAL = {Topology},
  FJOURNAL = {Topology. An International Journal of Mathematics},
    VOLUME = {31},
      YEAR = {1992},
    NUMBER = {3},
     PAGES = {449--473},
      ISSN = {0040-9383},
   MRCLASS = {32G13 (32G15 57M99 58D27 58E15)},
  MRNUMBER = {1174252},
MRREVIEWER = {William Goldman},
       DOI = {10.1016/0040-9383(92)90044-I},
       URL = {https://doi-org.ubproxy.ub.uni-heidelberg.de/10.1016/0040-9383(92)90044-I},
}

@inproceedings {InvitationHigherTeich,
    AUTHOR = {Wienhard, Anna},
     TITLE = {An invitation to higher {T}eichm\"{u}ller theory},
 BOOKTITLE = {Proceedings of the {I}nternational {C}ongress of
              {M}athematicians---{R}io de {J}aneiro 2018. {V}ol. {II}.
              {I}nvited lectures},
     PAGES = {1013--1039},
 PUBLISHER = {World Sci. Publ., Hackensack, NJ},
      YEAR = {2018},
   MRCLASS = {22E40 (57M50)},
  MRNUMBER = {3966798},
MRREVIEWER = {Caglar Uyanik},
}

@article {QionglingIntro,
    AUTHOR = {Li, Qiongling},
     TITLE = {An introduction to {H}iggs bundles via harmonic maps},
   JOURNAL = {SIGMA Symmetry Integrability Geom. Methods Appl.},
  FJOURNAL = {SIGMA. Symmetry, Integrability and Geometry. Methods and
              Applications},
    VOLUME = {15},
      YEAR = {2019},
     PAGES = {Paper No. 035, 30},
   MRCLASS = {53C43 (53C07 53C21)},
  MRNUMBER = {3947036},
MRREVIEWER = {Marco Castrillon Lopez},
       DOI = {10.3842/SIGMA.2019.035},
       URL = {https://doi-org.ubproxy.ub.uni-heidelberg.de/10.3842/SIGMA.2019.035},
}

@article {CrossRatiosEnergyFunctional,
    AUTHOR = {Labourie, Fran\c{c}ois},
     TITLE = {Cross ratios, {A}nosov representations and the energy
              functional on {T}eichm\"{u}ller space},
   JOURNAL = {Ann. Sci. \'{E}c. Norm. Sup\'{e}r. (4)},
  FJOURNAL = {Annales Scientifiques de l'\'{E}cole Normale Sup\'{e}rieure. Quatri\`eme
              S\'{e}rie},
    VOLUME = {41},
      YEAR = {2008},
    NUMBER = {3},
     PAGES = {437--469},
      ISSN = {0012-9593},
   MRCLASS = {53D30 (32G15 37D20 57M50)},
  MRNUMBER = {2482204},
MRREVIEWER = {Lee-Peng Teo},
       DOI = {10.24033/asens.2072},
       URL = {https://doi.org/10.24033/asens.2072},
}

@article {Hitchin87theself-duality,
    AUTHOR = {Hitchin, Nigel J.},
     TITLE = {The self-duality equations on a {R}iemann surface},
   JOURNAL = {Proc. London Math. Soc. (3)},
  FJOURNAL = {Proceedings of the London Mathematical Society. Third Series},
    VOLUME = {55},
      YEAR = {1987},
    NUMBER = {1},
     PAGES = {59--126},
      ISSN = {0024-6115},
   MRCLASS = {32G13 (14F05 14H15 32L10 53C05 58E99 81E13)},
  MRNUMBER = {887284},
MRREVIEWER = {Mitsuhiro Itoh},
       DOI = {10.1112/plms/s3-55.1.59},
       URL = {https://doi.org/10.1112/plms/s3-55.1.59},
}

@article {Simpson-Variation-of-Hodge,
    AUTHOR = {Simpson, Carlos T.},
     TITLE = {Constructing variations of {H}odge structure using
              {Y}ang-{M}ills theory and applications to uniformization},
   JOURNAL = {J. Amer. Math. Soc.},
  FJOURNAL = {Journal of the American Mathematical Society},
    VOLUME = {1},
      YEAR = {1988},
    NUMBER = {4},
     PAGES = {867--918},
      ISSN = {0894-0347},
   MRCLASS = {58E15 (32L15 53C25 53C55)},
  MRNUMBER = {944577},
       DOI = {10.2307/1990994},
       URL = {https://doi.org/10.2307/1990994},
}

@article {FischerTromba,
    AUTHOR = {Fischer, Arthur E. and Tromba, Anthony J.},
     TITLE = {On a purely ``{R}iemannian'' proof of the structure and
              dimension of the unramified moduli space of a compact
              {R}iemann surface},
   JOURNAL = {Math. Ann.},
  FJOURNAL = {Mathematische Annalen},
    VOLUME = {267},
      YEAR = {1984},
    NUMBER = {3},
     PAGES = {311--345},
      ISSN = {0025-5831},
   MRCLASS = {58D05 (32G15)},
  MRNUMBER = {738256},
MRREVIEWER = {C. Earle},
       DOI = {10.1007/BF01456093},
       URL = {https://doi.org/10.1007/BF01456093},
}

@article {GuilMon,
    AUTHOR = {Guillarmou, Colin and Monard, Fran\c{c}ois},
     TITLE = {Reconstruction formulas for {X}-ray transforms in negative
              curvature},
   JOURNAL = {Ann. Inst. Fourier (Grenoble)},
  FJOURNAL = {Universit\'{e} de Grenoble. Annales de l'Institut Fourier},
    VOLUME = {67},
      YEAR = {2017},
    NUMBER = {4},
     PAGES = {1353--1392},
      ISSN = {0373-0956},
   MRCLASS = {58J32 (35R30 44A12 53C65 58J47)},
  MRNUMBER = {3711129},
MRREVIEWER = {B. S. Rubin},
       URL = {http://aif.cedram.org/item?id=AIF_2017__67_4_1353_0},
}

@incollection {Loftin_Survey,
    AUTHOR = {Loftin, John C.},
     TITLE = {Survey on affine spheres},
 BOOKTITLE = {Handbook of geometric analysis, {N}o. 2},
    SERIES = {Adv. Lect. Math. (ALM)},
    VOLUME = {13},
     PAGES = {161--191},
 PUBLISHER = {Int. Press, Somerville, MA},
      YEAR = {2010},
   MRCLASS = {53A15},
  MRNUMBER = {2743442},
MRREVIEWER = {Francisco Mil\'{a}n},
}

@book {AffineDifferentialGeometry,
    AUTHOR = {Nomizu, Katsumi and Sasaki, Takeshi},
     TITLE = {Affine differential geometry},
    SERIES = {Cambridge Tracts in Mathematics},
    VOLUME = {111},
      NOTE = {Geometry of affine immersions},
 PUBLISHER = {Cambridge University Press, Cambridge},
      YEAR = {1994},
     PAGES = {xiv+263},
      ISBN = {0-521-44177-3},
   MRCLASS = {53A15 (53-02)},
  MRNUMBER = {1311248},
MRREVIEWER = {Franki Dillen},
}

@incollection {Wang91,
    AUTHOR = {Wang, Chang Ping},
     TITLE = {Some examples of complete hyperbolic affine {$2$}-spheres in
              {${\bf R}^3$}},
 BOOKTITLE = {Global differential geometry and global analysis ({B}erlin,
              1990)},
    SERIES = {Lecture Notes in Math.},
    VOLUME = {1481},
     PAGES = {271--280},
 PUBLISHER = {Springer, Berlin},
      YEAR = {1991},
   MRCLASS = {53A15},
  MRNUMBER = {1178538},
MRREVIEWER = {Vladislav Goldberg},
       DOI = {10.1007/BFb0083648},
       URL = {https://doi-org.ubproxy.ub.uni-heidelberg.de/10.1007/BFb0083648},
}

@article {pollicott_DerivativeOfEntropy,
    AUTHOR = {Pollicott, Mark},
     TITLE = {Derivatives of topological entropy for {A}nosov and geodesic
              flows},
   JOURNAL = {J. Differential Geom.},
  FJOURNAL = {Journal of Differential Geometry},
    VOLUME = {39},
      YEAR = {1994},
    NUMBER = {3},
     PAGES = {457--489},
      ISSN = {0022-040X},
   MRCLASS = {58F17 (58F11 58F15)},
  MRNUMBER = {1274128},
MRREVIEWER = {Nicola\u{\i} T. A. Haydn},
       URL = {http://projecteuclid.org.ezproxy.rice.edu/euclid.jdg/1214455077},
}

@article {Thibault_MicrolocalAnalysis,
    AUTHOR = {Gou\"{e}zel, S\'{e}bastien and Lefeuvre, Thibault},
     TITLE = {Classical and microlocal analysis of the x-ray transform on
              {A}nosov manifolds},
   JOURNAL = {Anal. PDE},
  FJOURNAL = {Analysis \& PDE},
    VOLUME = {14},
      YEAR = {2021},
    NUMBER = {1},
     PAGES = {301--322},
      ISSN = {2157-5045},
   MRCLASS = {37C27 (37D40 53C21 53C22 53C65)},
  MRNUMBER = {4229205},
MRREVIEWER = {Emmanuel Schenck},
       DOI = {10.2140/apde.2021.14.301},
       URL = {https://doi.org/10.2140/apde.2021.14.301},
}

@article {Hamilton,
    AUTHOR = {Hamilton, Richard S.},
     TITLE = {The inverse function theorem of {N}ash and {M}oser},
   JOURNAL = {Bull. Amer. Math. Soc. (N.S.)},
  FJOURNAL = {American Mathematical Society. Bulletin. New Series},
    VOLUME = {7},
      YEAR = {1982},
    NUMBER = {1},
     PAGES = {65--222},
      ISSN = {0273-0979},
   MRCLASS = {58C15 (46G05 58D15)},
  MRNUMBER = {656198},
MRREVIEWER = {Peter Michor},
       DOI = {10.1090/S0273-0979-1982-15004-2},
       URL = {https://doi.org/10.1090/S0273-0979-1982-15004-2},
}

@article {EbinRiemannianMetrics1,
    AUTHOR = {Ebin, David G.},
     TITLE = {On the space of {R}iemannian metrics},
   JOURNAL = {Bull. Amer. Math. Soc.},
  FJOURNAL = {Bulletin of the American Mathematical Society},
    VOLUME = {74},
      YEAR = {1968},
     PAGES = {1001--1003},
      ISSN = {0002-9904},
   MRCLASS = {57.55},
  MRNUMBER = {231410},
MRREVIEWER = {Y. Tashiro},
       DOI = {10.1090/S0002-9904-1968-12115-9},
       URL = {https://doi.org/10.1090/S0002-9904-1968-12115-9},
}

@inproceedings {EbinRiemannianMetrics2,
    AUTHOR = {Ebin, David G.},
     TITLE = {The manifold of {R}iemannian metrics},
 BOOKTITLE = {Global {A}nalysis ({P}roc. {S}ympos. {P}ure {M}ath., {V}ol.
              {XV}, {B}erkeley, {C}alif., 1968)},
     PAGES = {11--40},
 PUBLISHER = {Amer. Math. Soc., Providence, R.I.},
      YEAR = {1970},
   MRCLASS = {57.55},
  MRNUMBER = {0267604},
MRREVIEWER = {Y. Tashiro},
}

@article {FrankelFreeAction,
    AUTHOR = {Frankel, T.},
     TITLE = {On theorems of {H}urwitz and {B}ochner},
   JOURNAL = {J. Math. Mech.},
    VOLUME = {15},
      YEAR = {1966},
     PAGES = {373--377},
   MRCLASS = {53.72},
  MRNUMBER = {0192450},
MRREVIEWER = {J. A. Wolf},
}

@article {Wolpert_NielsenProblem,
    AUTHOR = {Wolpert, Scott A.},
     TITLE = {Geodesic length functions and the {N}ielsen problem},
   JOURNAL = {J. Differential Geom.},
  FJOURNAL = {Journal of Differential Geometry},
    VOLUME = {25},
      YEAR = {1987},
    NUMBER = {2},
     PAGES = {275--296},
      ISSN = {0022-040X},
   MRCLASS = {32G15 (30F99 53C22)},
  MRNUMBER = {880186},
MRREVIEWER = {I. Kra},
       URL = {http://projecteuclid.org/euclid.jdg/1214440853},
}

@book {Loftin_thesis,
    AUTHOR = {Loftin, John C.},
     TITLE = {Applications of affine differential geometry to {RP}(2)
              surfaces},
      NOTE = {Thesis (Ph.D.)--Harvard University},
 PUBLISHER = {ProQuest LLC, Ann Arbor, MI},
      YEAR = {1999},
     PAGES = {36},
      ISBN = {978-0599-36926-9},
   MRCLASS = {Thesis},
  MRNUMBER = {2699364},
       URL =
              {http://gateway.proquest.com.ubproxy.ub.uni-heidelberg.de/openurl?url_ver=Z39.88-2004&rft_val_fmt=info:ofi/fmt:kev:mtx:dissertation&res_dat=xri:pqdiss&rft_dat=xri:pqdiss:9935840},
}

@book {Kobayashi,
    AUTHOR = {Kobayashi, Shoshichi},
     TITLE = {Transformation groups in differential geometry},
    SERIES = {Classics in Mathematics},
      NOTE = {Reprint of the 1972 edition},
 PUBLISHER = {Springer-Verlag, Berlin},
      YEAR = {1995},
     PAGES = {viii+182},
      ISBN = {3-540-58659-8},
   MRCLASS = {53C10 (53-02)},
  MRNUMBER = {1336823},
}

@book {FarbMappingClassGroup,
    AUTHOR = {Farb, Benson and Margalit, Dan},
     TITLE = {A primer on mapping class groups},
    SERIES = {Princeton Mathematical Series},
    VOLUME = {49},
 PUBLISHER = {Princeton University Press, Princeton, NJ},
      YEAR = {2012},
     PAGES = {xiv+472},
      ISBN = {978-0-691-14794-9},
   MRCLASS = {57M50 (20F36 20F65 57M07 57N05)},
  MRNUMBER = {2850125},
MRREVIEWER = {Stephen P. Humphries},
}

@article {Liverani,
    AUTHOR = {Liverani, Carlangelo},
     TITLE = {On contact {A}nosov flows},
   JOURNAL = {Ann. of Math. (2)},
  FJOURNAL = {Annals of Mathematics. Second Series},
    VOLUME = {159},
      YEAR = {2004},
    NUMBER = {3},
     PAGES = {1275--1312},
      ISSN = {0003-486X},
   MRCLASS = {37D20 (37A99 37D40 37J60)},
  MRNUMBER = {2113022},
MRREVIEWER = {M. L. Blank},
       DOI = {10.4007/annals.2004.159.1275},
       URL = {https://doi.org/10.4007/annals.2004.159.1275},
}

@article {Loftin_FlatMetrics,
    AUTHOR = {Loftin, John C.},
     TITLE = {Flat metrics, cubic differentials and limits of projective
              holonomies},
   JOURNAL = {Geom. Dedicata},
  FJOURNAL = {Geometriae Dedicata},
    VOLUME = {128},
      YEAR = {2007},
     PAGES = {97--106},
      ISSN = {0046-5755},
   MRCLASS = {53A15 (57M50)},
  MRNUMBER = {2350148},
MRREVIEWER = {Daniel J. F. Fox},
       DOI = {10.1007/s10711-007-9184-2},
       URL = {https://doi-org.ubproxy.ub.uni-heidelberg.de/10.1007/s10711-007-9184-2},
}

@article {VolumeEntropy-Tholozan,
    AUTHOR = {Tholozan, Nicolas},
     TITLE = {Volume entropy of {H}ilbert metrics and length spectrum of
              {H}itchin representations into {${\rm PSL}(3,\bb{R})$}},
   JOURNAL = {Duke Math. J.},
  FJOURNAL = {Duke Mathematical Journal},
    VOLUME = {166},
      YEAR = {2017},
    NUMBER = {7},
     PAGES = {1377--1403},
      ISSN = {0012-7094},
   MRCLASS = {53C60 (52A38 53A15 57M50)},
  MRNUMBER = {3649358},
MRREVIEWER = {Thomas Barthelm\'{e}},
       DOI = {10.1215/00127094-00000010X},
       URL = {https://doi-org.ezproxy.rice.edu/10.1215/00127094-00000010X},
}

@article {Dai,
    AUTHOR = {Dai, Xian},
     TITLE = {Geodesic coordinates for the pressure metric at the {F}uchsian
              locus},
   JOURNAL = {Geom. Topol.},
  FJOURNAL = {Geometry \& Topology},
    VOLUME = {27},
      YEAR = {2023},
    NUMBER = {4},
     PAGES = {1391--1478},
      ISSN = {1465-3060,1364-0380},
   MRCLASS = {53B20 (37D35)},
  MRNUMBER = {4602418},
       DOI = {10.2140/gt.2023.27.1391},
       URL = {https://doi.org/10.2140/gt.2023.27.1391},
}

@ARTICLE{CounterLabouire,
       author = {{Sagman}, Nathaniel and {Smillie}, Peter},
        title = "{Unstable minimal surfaces in symmetric spaces of non-compact type}",
      journal = {arXiv e-prints},
     keywords = {Mathematics - Differential Geometry, Mathematics - Geometric Topology},
         year = 2022,
        month = aug,
          eid = {arXiv:2208.04885},
archivePrefix = {arXiv},
       eprint = {2208.04885},
 primaryClass = {math.DG},
       adsurl = {https://ui.adsabs.harvard.edu/abs/2022arXiv220804885S},
      adsnote = {Provided by the SAO/NASA Astrophysics Data System}
}

@article {bers,
    AUTHOR = {Bers, Lipman},
     TITLE = {Holomorphic differentials as functions of moduli},
   JOURNAL = {Bull. Amer. Math. Soc.},
  FJOURNAL = {Bulletin of the American Mathematical Society},
    VOLUME = {67},
      YEAR = {1961},
     PAGES = {206--210},
      ISSN = {0002-9904},
   MRCLASS = {30.46},
  MRNUMBER = {122989},
MRREVIEWER = {L. Ahlfors},
       DOI = {10.1090/S0002-9904-1961-10569-7},
       URL = {https://doi.org/10.1090/S0002-9904-1961-10569-7},
}

@article {Teichmuller,
    AUTHOR = {Teichm\"{u}ller, Oswald},
     TITLE = {Bestimmung der extremalen quasikonformen {A}bbildungen bei
              geschlossenen orientierten {R}iemannschen {F}l\"{a}chen},
   JOURNAL = {Abh. Preuss. Akad. Wiss. Math.-Nat. Kl.},
    VOLUME = {1943},
      YEAR = {1943},
    NUMBER = {4},
     PAGES = {42},
   MRCLASS = {30.0X},
  MRNUMBER = {0017803},
MRREVIEWER = {L. Ahlfors},
}

@article {Mike_Harmonic,
    AUTHOR = {Wolf, Michael},
     TITLE = {The {T}eichm\"{u}ller theory of harmonic maps},
   JOURNAL = {J. Differential Geom.},
  FJOURNAL = {Journal of Differential Geometry},
    VOLUME = {29},
      YEAR = {1989},
    NUMBER = {2},
     PAGES = {449--479},
      ISSN = {0022-040X},
   MRCLASS = {58E20 (32G15 58D17)},
  MRNUMBER = {982185},
MRREVIEWER = {A. J. Tromba},
       URL = {http://projecteuclid.org/euclid.jdg/1214442885},
}

@article {Croke1998,
    AUTHOR = {Croke, Christopher B. and Sharafutdinov, Vladimir A.},
     TITLE = {Spectral rigidity of a compact negatively curved manifold},
   JOURNAL = {Topology},
  FJOURNAL = {Topology. An International Journal of Mathematics},
    VOLUME = {37},
      YEAR = {1998},
    NUMBER = {6},
     PAGES = {1265--1273},
      ISSN = {0040-9383},
   MRCLASS = {58G25 (53C20)},
  MRNUMBER = {1632920},
MRREVIEWER = {Carolyn\ Gordon},
       DOI = {10.1016/S0040-9383(97)00086-4},
       URL = {https://doi.org/10.1016/S0040-9383(97)00086-4},
}

@book {EvansPDE,
    AUTHOR = {Evans, Lawrence C.},
     TITLE = {Partial differential equations},
    SERIES = {Graduate Studies in Mathematics},
    VOLUME = {19},
   EDITION = {Second},
 PUBLISHER = {American Mathematical Society, Providence, RI},
      YEAR = {2010},
     PAGES = {xxii+749},
      ISBN = {978-0-8218-4974-3},
   MRCLASS = {35-01},
  MRNUMBER = {2597943},
MRREVIEWER = {Diego\ M.\ Maldonado},
       DOI = {10.1090/gsm/019},
       URL = {https://doi.org/10.1090/gsm/019},
}

@article {CyclicHiggsBundle,
    AUTHOR = {Dai, Song and Li, Qiongling},
     TITLE = {On cyclic {H}iggs bundles},
   JOURNAL = {Math. Ann.},
  FJOURNAL = {Mathematische Annalen},
    VOLUME = {376},
      YEAR = {2020},
    NUMBER = {3-4},
     PAGES = {1225--1260},
      ISSN = {0025-5831,1432-1807},
   MRCLASS = {14H60 (58D27)},
  MRNUMBER = {4081114},
MRREVIEWER = {Sarbeswar\ Pal},
       DOI = {10.1007/s00208-018-1779-4},
       URL = {https://doi.org/10.1007/s00208-018-1779-4},
}

\end{document}